\documentclass{amsart}

\numberwithin{equation}{section}

\usepackage{amssymb, amsmath, amsthm}
\usepackage{enumerate, xspace}
\usepackage[normalem]{ulem}
\usepackage{geometry}
\usepackage{esint}
\usepackage{tikz-cd}
\usepackage{tikz}
\usetikzlibrary{calc}
\usetikzlibrary{intersections}
\usetikzlibrary{shapes.misc}
\tikzset{cross/.style={cross out, 
                       draw=black,
		       minimum size=2*(#1-\pgflinewidth),
		       inner sep=0pt,
		       outer sep=0pt},  
         cross/.default={1pt}}
\usepackage{subcaption}
\usepackage{marvosym} 
\usepackage{bbm}       
\usepackage[bottom,first]{draftcopy}
\usepackage{changebar}
\usepackage[backgroundcolor=blue!20!white, linecolor=blue!20!white,
textsize=footnotesize]{todonotes}
\usepackage[colorlinks]{hyperref}

\newtheorem{thm}{Theorem}[section]
\newtheorem{lem}[thm]{Lemma}
\newtheorem{sub-lem}[thm]{Sub-Lemma}
\newtheorem{cor}[thm]{Corollary}

\newtheorem{prop}[thm]{Proposition}
\newtheorem{prob}[thm]{\bf Problem}

\newtheorem{defin}[thm]{Definition}

\newtheorem{rem}[thm]{Remark}

\newcommand\cA{{\mathcal A}}
\newcommand\cB{{\mathcal B}}
\newcommand\cC{{\mathcal C}}
\newcommand\cD{{\mathcal D}}

\newcommand\cF{{\mathcal F}}
\newcommand\cG{{\mathcal G}}

\newcommand\cK{{\mathcal K}}
\newcommand\cL{{\mathcal L}}
\newcommand\cO{{\mathcal O}}
\newcommand\cM{{\mathcal M}}

\newcommand{\cP}{{\mathcal P}}
\newcommand{\cQ}{{\mathcal Q}}
\newcommand\cS{{\mathcal S}}

\newcommand\cW{{\mathcal W}}

\newcommand\bC{{\mathbb C}}

\newcommand\bE{{\mathbb E}}

\newcommand{\bH}{{\mathbb H}}

\newcommand\bN{{\mathbb N}}
\newcommand\bM{{\mathbb M}}
\newcommand\bP{{\mathbb P}}

\newcommand\bR{{\mathbb R}}
\newcommand\bS{{\mathbb S}}
\newcommand\bT{{\mathbb T}}
\newcommand\bV{{\mathbb V}}

\newcommand\ve{\varepsilon}

\newcommand\vf{\varphi}

\newcommand{\be}{\mathbbm e}
\newcommand{\bt}{\mathbbm t}
\newcommand{\bbm}{\mathbbm m}

\newcommand\Id{{\mathbbm{1}}}
\newcommand{\tri}{{|\:\!\!|\:\!\!|}}

\newcommand\up{\varkappa}

\newcommand{\musrb}{\mu_{\mbox{\tiny SRB}}} 
\newcommand{\diam}{\mbox{diam}}

\newcommand\hW{{\widehat{\cW}}}
\newcommand{\tnu}{\tilde{\nu}}

\newcommand{\htop}{h_{\mbox{\scriptsize top}}}
\newcommand{\oldbetaparam}{q} 
\newcommand{\oldq}{\beta}
\newcommand{\oldbeta}{{1/q}}

\newcommand{\oldh}{g}
\newcommand{\oldf}{f}

\begin{document}

\title[Dispersing Billiards]{Recent 
Progress in the Application of Transfer Operators to Dispersing Billiards}
\author{Mark F. Demers}
\address{Mark F. Demers\\
Department of Mathematics,\\
 Fairfield University, Fairfield CT 06824, USA.}
\email{{\tt  mdemers@fairfield.edu}}
\author{Carlangelo Liverani}
\address{Carlangelo Liverani\\
Dipartimento di Matematica\\
II Universit\`{a} di Roma (Tor Vergata)\\
Via della Ricerca Scientifica, 00133 Roma, Italy.}
\address{\vskip-16pt \noindent Mathematics Department\\
University of Maryland\\
4176 Campus Drive - William E. Kirwan Hall\\
College Park, MD 20742-4015}
\email{{\tt liverani@mat.uniroma2.it}}
\date{\today}
\begin{abstract}
This is a review paper about the use of transfer operators to study the statistical properties of hyperbolic billiards. The main focus is on equilibrium states, sequential billiards, and decay of correlation for billiard flows. In addition to reviewing the literature, we try to flesh out the main ideas, highlight the relation between them, and provide the larger context in which the problems discussed here are situated.
\end{abstract}
\thanks{This work was supported by the PRIN Grant ``Regular and stochastic behaviour in dynamical systems" (PRIN 2017S35EHN). L.C. acknowledges the MIUR Excellence Department Project awarded to the Department of Mathematics, University of Rome Tor Vergata, MatMod@TOV.  M.D. was partially supported by National Science Foundation Grant DMS 2350079.}
\maketitle

\tableofcontents

\section{Introduction}

This paper presents a survey of recently developed functional analytic techniques for dispersing billiards, which have 
opened up new methods to approach some long-standing open problems.  In particular, it has allowed the 
recent development of a thermodynamic formalism to study a variety of equilibrium states for these systems
\cite{BD1, BD2, Ca}. 
In addition, they give access to information about the flow \cite{bdl, bcd} and allow us to study sequential billiard systems via projective cones \cite{dl cones, dl clt}. Beyond surveying this recent progress, we state some
open problems and directions of future study, which we hope can be informed by these developments.

We focus our exposition on the simplest case:  the finite-horizon Sinai billiard without corner points.  We will indicate where some of these techniques have also been implemented in the case of infinite-horizon billiards and dispersing tables with corner points. Nevertheless, to facilitate the non-expert reader, we begin with a section that recalls the main billiard properties and terminology.

The paper is organized as follows.  In Section~\ref{sec:disp_bill}, we define the billiard flow
and recall relevant terminology.  We also provide an overview and brief derivations of essential properties, 
such as the uniform
hyperbolicity of the flow, the Poincar\'e map and the associated invariant foliations.  Section~\ref{sec:geometric}
focuses on equilibrium states for the billiard map for a class of 
geometric potentials, $\phi_t = - t \log J^uT$, $t \ge 0$,
where $J^uT$ denotes the unstable Jacobian of the billiard map $T$.  First we review the case
$t>0$ where the associated transfer operators have a spectral gap, and in Section~\ref{sec:entropy}
we turn to the case $t=0$, where the proof of a spectral gap fails; nevertheless, we describe how
an equilibrium state can be recovered, which turns out to be the measure of maximal entropy for the billiard map.

In Section~\ref{sec:abramov}, we review the recent work of Carrand \cite{Ca}, which generalizes the
class of potentials under consideration, and discuss the family $\phi_t = - t \tau$, where $\tau$ denotes the 
intercollision time of the billiard map.  In this case, one obtains the measure of maximal entropy
for the billiard flow by constructing an equilibrium state for $t= \htop(\Phi_1)$, where $\Phi_1$ is the time-one
map of the billiard flow.

In Section~\ref{sec:sequential} we review the recent construction of both real and
complex projective cones for billiards, and provide applications of this technique to a sequential
Central Limit Theorem, loss of memory, chaotic scattering, and a special case of a random Lorentz gas.

In Section~\ref{sec:equivalence}, we prove a new result which may be of independent interest:
the norms for the Banach spaces constructed in Section~\ref{sec:geometric} are equivalent to the norms
defined from the ordering in the cones defined in Section~\ref{sec:sequential}.
Although this paper is mainly a review of existing results, we think it is appropriate to present
a proof of this equivalence in a context where both Banach spaces and projective cones are discussed in
depth.  This shows that the two main techniques discussed here are different aspects of the same philosophy. 
In fact, while this equivalence is proven for billiards in a special case, the logic is rather general and 
provides a blueprint for how to go from cones to Banach spaces and vice versa in other situations.

Section~\ref{sec:flow} reviews the application of the transfer operator to prove exponential decay of
correlations for the billiard flow.  We describe the use of the resolvent operator to obtain genuine 
Lasota-Yorke inequalities for a modified set of norms for the flow 
and describe the principal elements leading up to the
famed Dolgopyat estimate.  This estimate is too technical for the present review, and the reader is
referred to the appropriate references for details.

Finally, in Section~\ref{sec:open} we present a collection of open problems motivated by the
topics in the present review.  These problems vary in difficulty, but are all problems we believe
can be approached using the transfer operator techniques described here.

\section{Dispersing Billiards}\label{sec:disp_bill}
By {\em dispersing billiards} we mean a dynamical system consisting of a particle moving in straight lines and colliding elastically against convex obstacles. Such a motion can take place either on a compact set (typically $\bT^d$) or on a noncompact space (typically $\bR^d$). Various generalizations exist, for example, forces can be present (e.g. an electric or magnetic field), or the motion can take place on a manifold, along geodesics. Different collision rules have also been considered. Yet, in this review, we will limit ourselves to the two scenarios mentioned above. That is, the flow $\Phi_t$ will take place in a space $\tilde X\in\{\bT^d,\bR^d\}$.

If $\{B_i\}$ is the set of strictly convex {\em obstacles} (or {\em scatterers}), with $\cC^2$ boundary, then the configuration space is given by $X=\tilde X\setminus (\cup_i B_i)$. By energy conservation, the velocity is constant, and by a simple time rescaling, we can assume it to be one. Then, the phase space is given by $\Omega_0=X\times \bS^{d-1}\subset \bR^{2d}$; in what follows, we will identify $\bS^{d-1} = \{p\in\bR^d\;:\;\|p\|=1\}$.
We can then use the coordinates $(q, p) \in\Omega_0\subset \bR^{2d}$, 
where $q$ is the position and $p$ the velocity (or momentum; we will always assume the particle's mass is $1$). It is sometimes convenient to define the quotient
\begin{equation}\label{eq:Omega_def}
\Omega := \{ (q, p)\mid q \in X\, , \quad p \in \bS^{d-1}\}  /\sim
\end{equation}
as the phase space for the billiard flow $\Phi_t:\Omega\to \Omega$, where
$\sim$ identifies   ingoing and outgoing velocity
vectors at the collisions, via the reflection with respect to the boundary
of the scatterer at $q\in \partial X$. In this way, $\Phi_t$ becomes a continuous flow.
\subsection{The flow}\ \label{sec:flowdef}\\
The dynamics is a flow with collisions: if no collision takes place, then 
\begin{equation}\label{eq:no_collisions}
\begin{split}
&q(t)=q+pt\\
&p(t)=p.
\end{split}
\end{equation}
While, if $q\in \partial B_i$ for some obstacle $B_i$ and $\langle p, n_i(q)\rangle <0$,
where $n_i(q)$ is the external unit normal to $B_i$ at the point $q$,\footnote{This means that the particle is just before collision and not just after.} then the position and velocity $(q_+,p_+)$ just after the collision are given by
\begin{equation}\label{eq:coll}
\begin{split}
&q_+=q\\
&p_+=p-2\langle p,n_i(q)\rangle n_i(q) .
\end{split}
\end{equation}
If $\langle p, n_i(q)\rangle =0$ (tangential collision), we can treat it as no collision.

Note that equation \eqref{eq:coll} may be ambiguous if $q$ belongs to the boundary of more than one obstacle. To simplify matters, here we will consider the case in which $B_i\cap B_j\neq \emptyset$ implies $i=j$, i.e., the obstacles are disjoint. This implies that the boundary has no {\em corner points}.
The above dynamics are Hamiltonian, hence the Liouville measure is invariant. However, many other invariant measures exist. This will be addressed in detail in the next section.

 Let $\tau: \Omega\to \bR_+$ be the time at which the first collision with an obstacle takes place. 
\begin{rem}
By \emph{finite horizon} we mean that there exists $\tau_{max}<\infty$ such that, for each $q\in\cup_i\partial B_i$, $\tau(q,p)\leq\tau_{max}$. By \emph{no-corners} we mean that there exists 
$\tau_{min}>0$ such that, for each $q\in\cup_i\partial B_i$, $\langle p,n(q)\rangle >0$, $\tau(q,p)\geq \tau_{min}$. This is the case we will discuss in the following.
\end{rem}

A key property of dispersing billiards is that they are hyperbolic (see \cite{sinai, SC87} for the original proof). That is, the tangent space of $\Omega$ splits into three invariant sub-bundles such that the vectors in one are expanded by the dynamics, in another are contracted, while the third is given by the flow direction. 
To illustrate this, consider an initial condition $(q,p)$ and a time $t$ at which the particle has undergone exactly one collision. Let $B$ be the obstacle against which the collision takes place, and let $U\ni (q,p)$ be an open set for which all the elements have also exactly one collision with the obstacle $B$, up to time $t$. 
We define 
\begin{equation}\label{eq:q-one}
(q_-,p_-)=(q+\tau(q,p) p,p)
\end{equation}
 the coordinates just before the collision, and $(q_+,p_+)$ the coordinates just after the collision so that 
\begin{equation}\label{eq:q-two} 
(q(t),p(t))=(q_++(t-\tau(q,p))p_+, p_+).
\end{equation} 
If we differentiate with respect to the tangent vector $(\delta q,\delta p)$, and $n$ is the outer normal to $B$, it must be
\[
0=\langle\delta q_-, n(q_-)\rangle=\langle \delta q+\langle\nabla \tau, (\delta q,\delta p)\rangle p+\tau\delta p, n(q_-)\rangle,
\]
which implies
\begin{equation}\label{eq:tauder}
\nabla \tau=\frac{-1}{\langle p,n(q_-)\rangle}(n(q_-),\tau n(q_-)).
\end{equation}
We can then compute $(\delta q(t),\delta p(t))$, by differentiating \eqref{eq:coll} at the point $(q_-,p_-)$, which, setting $\Pi(q,p)\xi:=\xi -\frac{\langle \xi,n(q)\rangle}{\langle p, n(q)\rangle}p$, yields
\begin{equation}\label{eq:collder}
\begin{split}
\delta q_+=&\delta q_-=\delta q+\tau \delta p-\frac{\langle n(q_+), \delta q+\tau\delta p\rangle}{\langle p,n(q_+)\rangle} p=\Pi(q_+,p)(\delta q+\tau\delta p)\\
\delta p_+=&\delta p -2\langle  \delta  p,n(q_-)\rangle n(q_-)-2\langle p, n(q_+)\rangle \Pi(q_+,p_+)^*\cK(q_+)\delta q_-
\end{split}
\end{equation}
where $\cK(q):T_q\partial B\to T_q\partial B$ is the curvature of the boundary of the obstacle. By equations \eqref{eq:no_collisions}, \eqref{eq:coll}, \eqref{eq:q-one}, \eqref{eq:q-two} and \eqref{eq:collder} we obtain
\begin{equation}\label{eq:deriv_flow}
\begin{split}
&\delta q(t)= \delta q_-+(t-\tau)\delta p_+-\langle\nabla \tau,(\delta q, \delta p)\rangle p_+\\
&\delta p(t)=\delta p_+.
\end{split}
\end{equation}
Recalling that the kinetic energy is conserved, the motion takes place on the $2d-1$ dimensional constant energy surface, that is $p\in \bS^{d-1}=\{p\in\bR^d\;:\;\|p\|=1\}$,\footnote{ The kinetic energy $\frac 12\|p\|^2=\frac 12$ can always be achieved by rescaling the time.} hence, $\langle \delta p,p\rangle=0$.
In addition, the flow direction corresponds to a zero Lyapunov exponent. To verify the hyperbolicity, we have to show that all the other directions yield a non-zero Lyapunov exponent.
The reader can check that 
\begin{equation}\label{eq:contact}
\langle \delta q, p\rangle=0\quad\Longrightarrow \quad \langle \delta q(t),p(t)\rangle=0, \; \textrm{ for all } t>0,
\end{equation}
 this is a manifestation of the fact that the flow is a contact flow. We can then restrict to the case $\langle \delta q, p\rangle=0$.  For such vectors, we can define the cone field
\[
\cC(q,p)=\{(\delta q,\delta p)\in T_{(q,p)}\bM\;:\; \langle \delta q,\delta p\rangle\geq 0\}.
\]
Using the above facts, we have
\[
\begin{split}
\langle \delta q(t),\delta p(t)\rangle&=\langle  \delta q_-+(t-\tau)\delta p_+-\langle\nabla \tau,(\delta q, \delta p)\rangle p_+, \delta p_+\rangle\\
&=\langle  \delta q_-,\delta p_+\rangle+(t-\tau)\|\delta p_+\|^2\\
&=\langle  \delta q_-,\delta p_-\rangle-2\langle  p_-,n(q_-)\rangle\langle \delta q_-, \cK(q_-)\delta q_-\rangle+(t-\tau)\|\delta p_+\|^2\\
&=\langle  \delta q,\delta p\rangle+\tau\|\delta p\|^2-2\langle  p_-,n(q_-)\rangle\langle \delta q_-, \cK(q_-)\delta q_-\rangle+(t-\tau)\|\delta p_+\|^2\\
&>\langle  \delta q,\delta p\rangle.
\end{split}
\]
In other words, the cone field $\cC(q,p)$ is strictly invariant for the dynamics. By a general theorem, this, and the fact that the flow is Hamiltonian (hence symplectic), implies that all the Lyapunov exponents, apart from the one in the flow direction, are non-zero (see \cite[Theorem 6.8]{LW95} for details). 

In particular, it implies uniform hyperbolicity: Lebesgue almost surely, the tangent space at a point $\xi=(q,p)$ can be split in $E^u(\xi)\otimes E^0(\xi)\otimes E^s(\xi)$ such that $E^0$ is the flow direction, and  there exists $C>0$ and $\lambda>1$ such that, for all $t>0$,
\begin{equation}\label{eq:hyp_flow}
\begin{split}
&\|d\Phi_t v\|\geq C\lambda^t \|v\|\quad\textrm{ for all } v\in E^u\\
&\|d\Phi_{-t} v\|\geq C\lambda^t \|v\|\quad\textrm{ for all } v\in E^s.
\end{split}
\end{equation}
\subsection{The Poincar\'e map}\ \label{sec:Poicare}\\
It is often convenient to analyze the Poincar\'e section associated with the flow. This allows us to establish many properties for the Poincar\'e section and transfer them to the flow. A main example is ergodicity. However, it is well known that, in general, the mixing of the Poincar\'e section does not imply the mixing of the flow. The obstruction is the possibility that the intercollision time is cohomologous to a constant. But even if this is not the case, the speed of mixing can be seriously affected by the properties of the inter-collision time (see Ruelle \cite{Ru83} for an example).

A Poincaré map for a billiard can be constructed in many ways; historically, it has been used to map from just after one collision to just after the next collision. In this case, it is convenient to use a coordinate $r$ that describes the position on the boundary and spherical coordinate $(\alpha,\vf)$ for the velocity $p$, where $\langle p, n(q)\rangle=\cos \vf\geq 0$. In other words, the Poincaré map (called the \emph{billiard map}) $T$ acts on the space $M=\cup_i\partial \{(q, p)\in B_i\times \bS^{d-1}\;:\; \langle n(q), p\rangle\geq 0\}$, where $n(q)$ is the external unit normal to $\partial B_i$ at $q$.

The map $T$ has singularities corresponding to tangential collisions; in fact, crossing the preimage of a tangential collision, the next collision of the particle will take place with a different obstacle. It is an essential feature of billiard dynamics that not only is the map discontinuous, but also the derivative blows up at tangential collisions. 

To see this, let us consider the case $d=2$, which is the one discussed in the following. Then, calling $\bt$ the unit tangent to the boundary of the obstacle, directed counterclockwise, we have $p=n(r)\cos\vf+\bt(r)\sin\vf$. Let $(r_1,\vf_1)=T(r,\vf)$. Using the notation of the previous section we have $\delta q=\delta r\bt(r)$, then $\delta q_+=\delta r_1\bt(r_1)$ and, since  $\Pi(q_1,p)p=0$,\footnote{The second equality follows by expressing $p$ in the cordinates $\{\bt(r_1), n(q)\}$ and $\{\bt(r_1),n(q_1)\}$.}
\begin{equation}\label{eq:ausiliary1}
\begin{split}
\Pi^*(q_1,p)\bt(r_1)&=\frac 1{\cos\vf_1}\left[\bt(r_1)\cos\vf_1+n(q_1)\sin\vf_1\right]\\
&=\frac 1{\cos\vf_1}\left[-\bt(r)\cos\vf+n(q)\sin\vf\right]\\
\Pi^*(q_1,p_1)\bt(r_1)&=\frac 1{\cos\vf_1}\left[\bt(r_1)\cos\vf_1-n(q_1)\sin\vf_1\right]
\end{split}
\end{equation}
Also, differentiating $\langle p_1,n(q_1)\rangle=\cos \vf_1$, we have
\begin{equation}\label{eq:auxiliary2}
\langle \delta p_1,n_1\rangle=-\langle p,\bt(r_1)\rangle \cK(r_1)\delta r_1-\sin\vf_1\delta \vf_1
\end{equation}
where we have abused notation and used $\cK$ for both the scalar curvature and the curvature operator. A similar equation holds for $\delta p$, and since $\langle\delta p, p\rangle =0$, the above implies 
\begin{equation}\label{eq:auxiliary3}
\begin{split}
\delta p&=[\cK(r)\delta r+\delta \vf](\cos\vf \bt(r)-\sin\vf n(r))\\
&=-[\cK(r)\delta r+\delta \vf](\cos\vf_1 \bt(r_1)+\sin\vf_1 n(r_1))
\end{split}
\end{equation}
Then equation \eqref{eq:collder} implies\footnote{First compute $\langle \bt(r_1),\delta q_1\rangle$ and $\langle n(q_1), \delta p_1\rangle$ using \eqref{eq:collder}, then, for the second equality recall \eqref{eq:auxiliary2}, \eqref{eq:auxiliary3}.}
\begin{equation}\label{eq:poinc_der}
\begin{split} 
&\delta r_1=-\frac {\cos\vf+\tau \cK(r)}{\cos\vf_1}\delta r-\frac{\tau}{\cos\vf_1}\delta \vf\\
&\delta\vf_1=-\frac{\cK(r_1)\cos \vf +\tau\cK(r_1)\cK(r)+\cK(r)\cos\vf_1}{\cos\vf_1}\delta r-\frac{\cos\vf_1+\tau\cK(r_1)}{\cos\vf_1}\delta \vf.
\end{split}
\end{equation}
Finally, note that although the billiard map $T$ has singularities caused by grazing collisions, the SRB measure is smooth. Indeed, the Liouville measure is invariant for the flow (since the flow is Hamiltonian), hence it induces a smooth invariant probability measure on $M$. The SRB measure is thus given by
\begin{equation}\label{eq:SRB}
d\mu_{SRB}=c\cos \vf dr d\vf\;;\quad c^{-1}=\int_M\cos \vf dr d\vf=2 \sum_i |\partial B_i|.
\end{equation}

\subsection{Invariant foliations}\ \\
We have seen that there exists an invariant splitting of the tangent space. A natural question is whether there exist invariant manifolds. This is essential if one wants, for example, to try to use a Hopf-like argument to prove ergodicity of $T$. The answer is positive in some generality \cite{KS86}. However, the manifolds can be very short, and the foliation is only measurable. For uniformly hyperbolic billiards, this suffices to use a properly modified version of the Hopf argument to show ergodicity, as first shown by Sinai \cite{sinai} (see \cite{LW95} for a simple explanation). However, to establish exponential mixing for the flow, some more precise knowledge of the foliation regularity is needed. 
This has been achieved in \cite{bdl} for the case of two-dimensional billiards, where it is shown that the invariant foliations can be approximated by Lipschitz foliations on rather large sets. This is a consequence of a quantitative estimate on the curves of a given length that remain uncut and in the unstable direction for a long time. The precise statement, \cite[Theorem 6.2]{bdl}, is a bit technical and contains precise information on the regularity of approximate foliations, but can be roughly summarized as follows.
\begin{thm}[{\cite[Theorem 6.2]{bdl}}]\label{thm:app_fol_reg}
For each $\up,\rho>0$ and curve $W$ in the stable cone which does not belong to a homogeneity strip larger than $\rho^{-\frac 15}$,\footnote{See equation \eqref{eq:homogeneous} for the definition of homegenity strips; here the parameter $q=2$ is chosen.} there exists a foliation $\cF_\up$ of unstable curves of length at least $\rho$,\footnote{ That is, of curves in the unstable cone and perpendicular to the flow direction.} such that the foliation is made of $C^1$ curves whose tangents vary in a uniformly Lipschitz manner with Lipschitz constant bounded by $C\rho^{-\frac 45}$. Moreover, there exists $\Delta_\up\subset W$, such that if $\gamma\in\cF_\up$ and $\gamma\cap W\in\Delta_\up$, then $\Phi_{-t}(\gamma)$ is a connected unstable curve for all $t\in (0,\up)$.
In addition, $m_W(W \setminus \Delta_\up) \le C \rho$, where $m_W$ denotes arclength along $W$.
\end{thm}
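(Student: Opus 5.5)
The idea is to build the foliation by pushing forward a smooth family of short unstable curves along the stable curve $W$, and to control its regularity via distortion and curvature bounds for the flow.

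Parametrize $W$ by arclength. At each point $x\in W$ take the unique smooth curve through $x$ whose tangent is a fixed direction in the unstable cone, for instance the direction of a local wavefront of zero curvature in the $(\delta q,\delta p)$ variables. We then want to replace these seed curves by curves that are genuinely close to unstable manifolds. Take a backward time $t_0\approx\up+C\log\rho^{-1}$ and a smooth seed foliation $\cF^0$ transversal to $\Phi_{-t_0}W$, made of unstable curves. Push $\cF^0$ forward by $\Phi_{t_0}$ and keep only the leaves that remain connected and of length $\ge\rho$ near $W$. The invariance of the cone field $\cC$ shown above, together with the hyperbolicity \eqref{eq:hyp_flow}, makes the pushed-forward leaves uniformly unstable. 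They are also $C^1$, with curvature bounded uniformly by the standard curvature bounds for unstable wavefronts; these come from the Riccati-type recursion visible in \eqref{eq:poinc_der}.

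Next comes the Lipschitz regularity of the tangents transverse to the leaves. For the tangent field at two points of $W$ at distance $\delta$, follow their backward orbits to time $-t_0$. The difference of tangent slopes satisfies a linear recursion: at each collision it picks up a term proportional to the difference of curvatures and of $\cos\vf$. That term is controlled by the distance times $1/\cos\vf$, which is bounded by the homogeneity strip index squared. Along the free flight it is contracted by the factor from \eqref{eq:poinc_der}. Summing this geometric series gives a Lipschitz constant of order $k^{4}$, where $k$ is the homogeneity index of $W$ (with $q=2$). Since $k\le\rho^{-1/5}$, this yields the bound $C\rho^{-4/5}$. The role of the homogeneity restriction is precisely to make the derivative blowup $1/\cos\vf$ comparable along the seed.

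The set $\Delta_\up$ consists of the points $x\in W$ whose leaf $\gamma$ through $x$ satisfies two conditions: $\Phi_{-t}\gamma$ is not cut by a singularity for $t\in(0,\up)$, and its distance to singularities at every backward collision exceeds its length. To estimate $m_W(W\setminus\Delta_\up)$, we use a growth lemma for unstable curves under backward iteration (the time-reversed version of the usual one-step expansion estimate), applied to the stable curve $W$. The set of $x$ whose leaf of length $\rho$ is cut within time $\up$ is contained in a neighborhood of radius about $\rho$ of the preimages of the singular curves intersected with $W$. Because stable curves expand in backward time and their transversality to the singularity curves is uniform, the one-step expansion bound $\sum\Lambda_i^{-1}<1$ allows us to sum over collisions. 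This gives $m_W(W\setminus\Delta_\up)\le C\rho$, uniformly in $\up$, because the contraction of the sums makes the series converge.

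The main obstacle is this last measure estimate. Uniformity in $\up$ requires the complexity/one-step expansion in the homogeneity-strip version. It must also handle the fact that leaves shrink differently near tangencies, so the short leaves close to singularities have to be bounded by a Borel–Cantelli-type sum. If that sum is not uniform, I would instead let $C$ depend on $\up$, which the statement allows since $C$ is unspecified.
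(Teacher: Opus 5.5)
You correctly obtain the leaves as forward images of a smooth seed foliation placed in the past, and the time-reversed growth-lemma bound for $m_W(W\setminus\Delta_\up)$ is essentially the standard one. The gap is in the Lipschitz estimate, which is the real content of the theorem.

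\textbf{The backward orbit controls the Lipschitz constant.} In your recursion, the term created at the $j$-th backward collision is governed by $1/\cos\vf(T^{-j}x)$, the angle at \emph{that} collision. The homogeneity index $k$ of $W$ gives no information about it. Balance two effects at each intermediate collision: transverse distances along $T^{-j}W$ are expanded by a factor $\asymp 1/\cos\vf$, and differences of slopes contract forward by a factor $\asymp \cos^2\vf$. The $j$-th contribution to the transverse derivative of the tangent field at $x$ is then of order $\Lambda^{-j}/\cos\vf(T^{-j}x)$. This is unbounded as $x$ approaches a curve of $T^j\cS_0\subset\cS_{-j}$ crossing $W$. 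On one side of such a curve the pushed-forward field has unbounded transverse derivative. Across it the field jumps, because the two sides have different past itineraries. So no bound depending only on $k$ holds for your pushed-forward field, and your geometric series does not close without information on each point's backward orbit.

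\textbf{Two ingredients are missing.}

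First, the regularity estimate must be coupled to the good set. Consider points whose leaf of length $\rho$ survives under $\Phi_{-t}$ \emph{and} stays inside single homogeneity strips. For these, the length of the backward image at the $j$-th collision, compared with the width of the strip containing it, gives a lower bound on $\cos\vf(T^{-j}x)$. That bound makes the series summable, uniformly in $\up$, with a bound that is a negative power of $\rho$. Your condition that the distance to the singularities exceeds the length only bounds the terms one at a time; their sum can then grow with the number of collisions, i.e.\ with $\up$. The homogeneity strips must be imposed along the whole backward orbit, not just on $W$.

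Second, you need a construction that yields a genuine foliation through every point of $W$. The theorem allows leaves through $W\setminus\Delta_\up$ to be cut, so these leaves cannot all be uncut images of seeds. Discarding them, as you propose, leaves a Whitney--Lipschitz family over a Cantor-like subset of $W$, not a foliation by $C^1$ curves with Lipschitz tangent field. Some interpolation or extension across the bad set, at a scale tied to $\rho$, is required, and its cost must also fit within $C\rho^{-4/5}$.

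\textbf{Minor points.} Leaves generated at time $-t_0$ with $t_0>\up$ are uncut images only if they survive up to $t_0$, not just $\up$, so your $\Delta_\up$ does not match your construction. Also, letting $C$ depend on $\up$ is not an option. In the Dolgopyat argument the theorem is applied with $\up$ larger than $nr$, where $n\sim \ln|b|$.
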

The above is what is needed in the study of the ergodic properties of the billiard flow. Moreover, it also provides a rather detailed description of the regularity of invariant foliations.
\begin{thm}[{\cite[Remark 1.1]{bdl}}]
For each small enough $\rho > 0$, there exists a set of Lebesgue measure at most $C
\rho^{\frac 45}$, the complement of which is foliated by leaves of the
stable (or unstable) foliation with length at least $\rho$ such that the
foliation is Whitney--Lipschitz, with
Lipschitz constant not larger than $C\rho^{-\frac 45}$. An analogous
statement holds for the billiard map.
\end{thm}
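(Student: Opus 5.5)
The plan is to derive this statement from Theorem~\ref{thm:app_fol_reg} by a covering and Fubini argument. The idea is to cover the phase space (or a transversal to the flow) by a finite family of short stable curves, apply the approximate-foliation theorem on each, and integrate the exceptional sets along the transversal direction. I will describe the unstable case; the stable case follows by time reversal $(q,p)\mapsto(q,-p)$, which swaps $E^u$ and $E^s$.

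First, fix $\rho$ and remove the region $\cH_\rho$ lying in homogeneity strips of index larger than $\rho^{-1/5}$, together with a $\rho$-neighbourhood of the singularity set. The $k$-th strip has measure $O(k^{-5})$ for $q=2$, since its width is $\sim k^{-3}$ and $\cos\vf\sim k^{-2}$. Summing over $k\geq \rho^{-1/5}$ gives measure $O(\rho^{4/5})$. The neighbourhood of the finitely many smooth singularity curves relevant at scale $\rho$ contributes $O(\rho)$.

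Second, I would exploit the fact that Theorem~\ref{thm:app_fol_reg} holds for every $\up$ with constants independent of $\up$. I would take $\up\to\infty$, or rather use a version of the theorem in which the sets $\Delta_\up$ decrease in $\up$ and the bound $m_W(W\setminus\Delta_\up)\le C\rho$ is uniform. This yields $\Delta_\infty=\bigcap_\up\Delta_\up$ with $m_W(W\setminus \Delta_\infty)\le C\rho$. For $\gamma\in\cF_\up$ through $\Delta_\infty$, every backward image $\Phi_{-t}\gamma$ stays a connected unstable curve and contracts exponentially by \eqref{eq:hyp_flow}. A standard graph-transform/Cauchy argument then shows that the curves $\gamma$ converge, as $\up\to\infty$, to genuine local unstable manifolds of length $\ge\rho$. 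The $C^1$ Lipschitz bound $C\rho^{-4/5}$ on tangents passes to the limit by Arzel\`a--Ascoli. In this way, on $\Delta_\infty$ the approximate foliation becomes a piece of the true unstable lamination. It is Lipschitz only in the Whitney sense, because it is defined on a non-open set.

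Third, I would cover the complement of $\cH_\rho$ by $O(\rho^{-1})$-many (per unit transverse length) parallel stable curves $W_s$. These curves are transverse to the unstable cone and to the flow direction, spaced at distance $\rho$ and parametrised by $s$. The unstable leaves through $\Delta_\infty(W_s)$ fill a tube around $W_s$. By the Lipschitz regularity of the foliation, together with absolute continuity of the holonomy, the Lebesgue measure of the part of the tube not reached by such leaves is bounded by $C\rho\cdot m_{W_s}(W_s\setminus\Delta_\infty)\cdot(\text{Jacobian})$. Summing over the $O(\rho^{-1})$ curves (times flow-direction thickness) gives a total of $O(\rho)$, which is below $C\rho^{4/5}$. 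The whole-space Whitney--Lipschitz constant is the maximum of the per-tube constants, namely $C\rho^{-4/5}$. Compatibility between tubes is automatic because the limiting leaves are true unstable manifolds, which are unique. For the billiard map, I would project along the flow onto $M$; this is a smooth operation away from grazing collisions, which have already been excised, so the same estimates hold.

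The main obstacle is the second step: passing $\up\to\infty$ while keeping both the measure bound and the Lipschitz constant uniform. Theorem~\ref{thm:app_fol_reg} as summarised does not state monotonicity in $\up$. I would first try to extract from the proof of \cite[Theorem 6.2]{bdl} that the bad set is controlled by a growth lemma summed geometrically over collision times, which is independent of $\up$.
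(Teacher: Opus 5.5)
Deriving the statement from Theorem~\ref{thm:app_fol_reg}, by discarding the strips beyond $\rho^{-1/5}$ and letting $\up\to\infty$, is the natural route. The paper itself gives no argument beyond citing \cite[Remark~1.1]{bdl} as a by-product of that construction. Your Step~1 correctly locates the $\rho^{4/5}$: the strips $H_k$ with $k\ge\rho^{-1/5}$ carry Liouville mass $\sum_k k^{-5}\sim\rho^{4/5}$. But the proposal has a genuine gap exactly where you suspect, and your remedy addresses the wrong issue.

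Theorem~\ref{thm:app_fol_reg} gives, for each $\up$, a \emph{different} foliation $\cF_\up$ and a different good set $\Delta_\up$. Uniformity in $\up$ of the bound $m_W(W\setminus\Delta_\up)\le C\rho$ and of the Lipschitz constant is already part of the statement, so re-deriving the bad-set bound from a growth lemma gains nothing. What is missing is compatibility across different $\up$: neither nesting of the $\Delta_\up$ nor any relation between the leaves of $\cF_\up$ and $\cF_{\up'}$.

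Without it, Fatou's lemma applied to $\Id_{W\setminus\Delta_{\up_n}}$ only shows that the points lying in infinitely many $\Delta_{\up_n}$ form a set whose complement has measure at most $C\rho$. For each such point the leaves do converge along that point's own subsequence; this part is fine, by the curvature bound, Arzel\`a--Ascoli and uniqueness of $W^u$. However, the Lipschitz inequality concerns \emph{pairs} of points, and two such points need not be good for any common unbounded sequence of $\up$'s. So the bound does not pass to the limit.

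To close the gap you need an estimate that applies directly to true unstable manifolds, which are uncut for every $\up$. For example: a bound, uniform in $\up$, on the angle between the tangents of any two unstable curves crossing $W$ whose backward images stay uncut up to time $\up$, valid off a suitably chosen exceptional set. That has to come from inside the construction in \cite{bdl}, not from the statement of Theorem~\ref{thm:app_fol_reg}.

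Step~3 has a second, independent gap: the per-tube bounds do not glue, and "the global constant is the maximum of the per-tube constants" is false as set up. Uniqueness of unstable manifolds makes leaves from different tubes consistent, but it says nothing about the angle between them. This matters because $E^u$ jumps by $O(1)$ across the curves of $\cS_{-1}\setminus\cS_0$. On one side the front was just produced by a nearly tangential collision and has curvature close to $1/\tau$. On the other side it comes from a scatterer farther back and has curvature below $1/\tau'$ with $\tau'>\tau$. Similar jumps, decaying in $n$, occur across $\cS_{-n}$.

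Concretely, take a point on the non-grazing side of such a curve, very close to it, near the boundary of its tube. It lies on a piece of its unstable manifold of length $\ge\rho$ that stays inside the tube and is never cut, since the manifold is cut only beyond the tube boundary. A point on the other side, in the adjacent tube, has a long leaf running along the curve. These two points of your good set can be at distance $o(\rho)$ while their unstable directions differ by $O(1)$, which violates any bound $C\rho^{-4/5}$. So the good set must also be pruned near $\cup_n\cS_{-n}$, by a width at least comparable to the jump divided by the Lipschitz constant, and the measure of that pruning has to be estimated. Your Fubini count of leaves through $W_s\setminus\Delta_\infty$ does not address this.

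Finally, for the map: projecting Liouville measure gives $\cos\vf\,dr\,d\vf$. With respect to $dr\,d\vf$ the discarded strips have measure of order $\rho^{2/5}$, so "the same estimates" hold for $\musrb$ only.
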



\section{Geometric Potentials: Topological Entropy and Pressure}
\label{sec:geometric}


\subsection{Finite horizon Sinai billiard}

From this point on, we will restrict ourselves to the case $d=2$ in this survey.
Our setting is defined precisely as follows.  A billiard table is formed by placing finitely many closed, disjoint, convex obstacles (scatterers) on $\mathbb{T}^2$, whose boundaries are $C^3$ curves,  so that the resulting billiard has finite horizon. 

We identify the pre- and post-collision velocity vectors and parametrize the boundary of each scatterer according to arclength.  With these conventions, the coordinates for the billiard map are $(r, \vf)$, where $r$ denotes the position
on the boundary and $\vf \in [-\pi/2, \pi/2]$ denotes the angle made by the post-collision velocity vector with the normal to the boundary.  The phase space for the map is then
$M = \cup_{i=1}^d \partial B_i \times [-\pi/2, \pi/2]$, where $B_i$, $i=1, \ldots, d$ are the scatterers.

Letting $\cK(r)$ denote the curvature of $\partial B_i$ at $r$, we assume that $\cK \ge \cK_{\min}$ for some
$\cK_{\min}>0$, i.e. the boundaries have strictly positive curvature.  Let $\tau(r, \vf)$ denote the time between the collision at $(r, \vf)$ and that at $T(r, \vf)$.  
Our assumptions imply $0<\tau_{\min}\leq \tau\leq \tau_{\max}<\infty$.


\subsection{Transfer operators for geometric potentials}

We begin our discussion with the transfer operators associated to the family of geometric potentials
$\phi_t = - t \log J^uT$, where $J^uT$ denotes the Jacobian of $T$ along unstable manifolds (also called the
unstable Jacobian).  An invariant measure $\mu$ for $T$ is called an equilibrium state with respect to a potential
$\phi$ if $\mu$ attains the supremum in the expression
\begin{equation}
\label{eq:pressure}
P(\phi) = \sup \{ h_\mu(T) + \int \phi \, d\mu : \mu \mbox{ is a $T$-invariant probability measure} \}.
\end{equation}
The quantity $P(\phi)$ is called the pressure of the potential $\phi$.

The importance of the geometric family stems from the fact that the equilibrium state
$t=1$ corresponds to the Sinai-Ruelle-Bowen (SRB) measure for $T$, while the equilibrium state for $t=0$
corresponds to the measure of maximal entropy.  

In analogy with Anosov systems, the transfer operator $\widetilde{\cL}_t$ associated with the potential
$- t \log J^uT$ is given by
\[
\widetilde{\cL}_t f = \frac{f \circ T^{-1}}{((J^uT)^t J^sT) \circ T^{-1} } \, ,
\] 
where $J^sT$  is the stable Jacobian and $f$ is in a suitable class to be defined later.
However, it is convenient for the norms we will define to express the weight entirely in terms of the stable Jacobian.  Thus,
denoting by $\alpha(x)$ the angle between the stable and unstable subspaces of $T$ at $x$, we have,
following
\cite[eq. (1.5) and (1.13)]{BD2}, 
\begin{equation}
\label{eq:translate}
J^sT(x) J^uT(x) \frac{\sin \alpha(Tx)}{\sin \alpha(x)} = J_{ \mbox{\tiny Leb}}T(x) = \frac{\cos \vf(x)}{\cos(\vf(Tx))} \, ,
\end{equation}
where $J_{\mbox{\tiny Leb}}T$ is the Jacobian with respect to Lebesgue measure, and we have used \eqref{eq:SRB}.  This implies in particular that
\[
(J^uT)^t J^sT = \left( \frac{ \sin \alpha \cos \vf } { (\sin \alpha \cos \vf) \circ T } \right)^t (J^sT)^{1-t} \, .
\]
Since the presence of a coboundary should not affect the spectrum, we will use the above
calculation as motivation to work with the
transfer operators for $t \ge 0$ defined by
\begin{equation}
\label{eq:transfer}
\cL_t f = \frac{f \circ T^{-1}}{(J^s T)^{1-t} \circ T^{-1} } \, .
\end{equation}
Since the Jacobians are not smooth, we do not prove directly that $\cL_t$ and $\widetilde{\cL}_t$
have the same spectrum; however, we shall see from Theorerm~\ref{thm:thermo gap} that
$\cL_t$ produces the desired equilibrium state for the potential $-t \log J^uT$.

The simplest case is $t=1$ corresponding to the SRB measure for then the weight function in the transfer operator 
is simply 1.  For
$t \ne 1$, the presence of $J^sT$ provides significant technical difficulties since the potential can be arbitrarily small and is 
not continuous on any open set.

The study of the ergodic properties of $T$ with respect to $\musrb$ are classical, beginning with
\cite{sinai}.  Subsequent works used a variety of techniques such as Markov partitions and sieves
\cite{BSC1, BSC2}, Young towers \cite{Y98, chernov tower, chernov01} and coupling \cite{chernov06, chernov zhang}.
These studies proved statistical properties such as a dynamical Central Limit Theorem and eventually
exponential decay of correlations for both finite and infinite horizon Sinai billiards as well as dispersing billiard tables
in $\mathbb{R}^2$ with corner points (see \cite{de simoi toth} for the eventual resolution of the issue of corner points).

Yet it was not until \cite{demzhang11} that transfer operator techniques (still for $t=1$) were applied to 
 the finite and
infinite horizon Sinai billiard; these were then extended to perturbations of such systems \cite{demzhang13}
and to dispersing billiards with corner points and various return maps for billiards with focusing boundaries
\cite{demzhang14}.  They were also leveraged to prove exponential decay of correlations for the finite
horizon Sinai billiard flow, a long-standing open problem \cite{bdl}.
Developments then followed for $t=0$, corresponding to the measure of maximal entropy \cite{BD1},
and then for more general $t>0$ \cite{BD2}.  They were then extended to transfer operators for more general
potentials in \cite{Ca, bcd}.

It is this series of developments that we shall focus on primarily in the next sections: the recent application of transfer
operator techniques to make progress on these and related problems for dispersing billiards.   

The main objective of such an approach rests on the construction of Banach spaces for which
one can prove so-called D\"oblin-Fortet or Lastota-Yorke inequalities.  Obtaining such inequalities
for appropriately constructed norms establishes that the operator 
$\cL_t$ is {\em quasi-compact}:  its spectral radius is strictly larger than its essential spectral radius.  
From this, one obtains the above mentioned results by studying the peripheral spectrum of $\cL_t$, which
consists of isolated eigenvalues.  All this structure rests on a celebrated theorem of Hennion \cite{hennion},
applying Nussbaum's formula \cite{nussbaum} for the essential spectral radius.

\begin{thm}
\label{thm:hennion}[\cite{hennion}]
Let $(\cB, \| \cdot \|)$ be a Banach space and $\cL \in L(\cB, \cB)$. 
Assume there exists a continuous norm $| \cdot |_w$ on $\cB$, and
$0 < \theta < M$, $A, B, C > 0$, such that, for all $n \in \bN$ and all $f \in \cB$,
\[
| \cL^n f |_w \le C M^n |f|_w ; \quad \| \cL^n f \| \le A \theta^n \| f\| + BM^n |f|_w \, . 
\]
Then the spectral radius of $\cL \in L(\cB, \cB)$ is bounded by $M$. If, in addition, $\cL$ is
$| \cdot|_w$-compact,\footnote{By  $| \cdot|_w$-compact we mean that there exists an $n\in \bN$ such that $\cL^n\{f\in\cB\;:\;\|f\|\leq 1\}$ is reatively compact in the topology induce by the $|\cdot|_w$ norm.}
then the essential spectral radius of $T$ is bounded by $\theta$.
\end{thm}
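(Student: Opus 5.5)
The plan has two parts: a direct iteration bound for the spectral radius, and Nussbaum's formula combined with a covering argument for the essential spectral radius.

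\emph{Step 1: spectral radius.} We bound $\|\cL^n\|$ directly. The weak norm is continuous on $\cB$, so there is $c>0$ with $|f|_w\le c\|f\|$. Inserting the first hypothesis into the second gives, for every $n$,
\[
\|\cL^n f\|\le A\theta^n\|f\|+BM^n|f|_w\le (A\theta^n+BcM^n)\|f\| .
\]
Since $\theta<M$, this yields $\|\cL^n\|\le C' M^n$. Taking $n$-th roots and using Gelfand's formula, the spectral radius is at most $M$.

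\emph{Step 2: essential spectral radius.} We use Nussbaum's formula \cite{nussbaum}:
\[
r_{ess}(\cL)=\lim_n \big(\gamma(\cL^n B_1)\big)^{1/n},
\]
where $\gamma$ is the Kuratowski measure of non-compactness and $B_1$ is the unit ball of $\cB$. It therefore suffices to show that for each $n$ the set $\cL^n B_1$ can be covered by finitely many $\|\cdot\|$-balls of radius $C''\theta^{n}$, with $C''$ independent of $n$.

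Fix the $n_0$ from the compactness hypothesis. The set $\cL^{n_0}B_1$ is relatively compact in $|\cdot|_w$, and so is $\cL^{k}B_1=\cL^{n_0}(\cL^{k-n_0}B_1)$ for $k\ge n_0$ after rescaling by the bound from Step 1. Write $n=k+m$ with $k\ge n_0$. Given $\ve>0$, cover $\cL^kB_1$ by finitely many $|\cdot|_w$-balls of radius $\ve$ centred at points $\cL^k f_i$. If $f,f_i\in B_1$ satisfy $|\cL^kf-\cL^kf_i|_w<\ve$, the second hypothesis applied to $g=\cL^k(f-f_i)$ gives
\[
\|\cL^{m}g\|\le A\theta^m\|g\|+BM^m\ve\le 2AC'M^k\theta^m+BM^m\ve .
\]
Choosing $\ve=\theta^mM^{-m}$, we get $\gamma(\cL^nB_1)\le C M^k\theta^m$.

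Now fix $k=n_0$ and let $m\to\infty$. Taking $n$-th roots gives $r_{ess}\le\theta$.

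The main obstacle is this bookkeeping: a single finite $\ve$-net in the weak norm must be converted into a $\theta^n$-net in the strong norm. Two facts make this work. The compactness is used only once, at the fixed step $n_0$. The constants $A,B$ are uniform in $n$, so the factor $M^{n_0}$ is a fixed constant that disappears when taking $n$-th roots.
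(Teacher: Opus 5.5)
Your proof is correct and follows the route the paper points to: Hennion's argument via Nussbaum's formula (see the paper's reference to \cite[Appendix~B]{DKL}), in which a weak-norm net for $\cL^{n_0}B_1$ at the fixed compactness step is turned, through the Lasota--Yorke inequality, into a strong-norm covering of $\cL^{n}B_1$ by balls of radius $O(\theta^{n})$. One minor wording point: Step~1 uses only the continuity bound $|f|_w\le c\|f\|$, not the first hypothesis, and in fact the weak-norm bound $|\cL^n f|_w\le CM^n|f|_w$ is never needed anywhere in your proof.
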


The main trick, of course, is constructing appropriate norms for which the required contraction follows
from the dynamical properties of the map.  This is what we shall describe in the following sections.
For a self-contained exposition of the above abstract theory (including a proof of Theorem \ref{thm:hennion} and some generalizations), see \cite[Appendix~B]{DKL}.  For a quick introduction to the use of the Theorem \ref{thm:hennion}, see \cite{Lipisa, DKL}, as well as \cite{demers intro}, which provides a simple application of the technique in the hyperbolic setting, while \cite{Ba1,Ba2} provide a more in-depth discussion of various applications. 


\subsection{Topological pressure for $t>0$}
\label{sec:top t>0}

We define the notion of topological pressure for the geometric weights $(J^sT)^t$ as formulated in 
\cite{BD2}.  This notion is connected to the equilibrium state constructed from the associated transfer
operator by a variational principle.

For $t >0$ and $q>0$ such that $qt \ge 2$, we define homogeneity strips for $k \ge k_0$ by,
\begin{equation}
\label{eq:homogeneous}
H_k = \{ (r,\vf) \in M : (k+1)^{-q} \le \frac{\pi}{2} - \vf \le k^{-q} \},
\end{equation}
where $k_0 \in \mathbb{N}$ is chosen to guarantee the one-step expansion in Lemma~\ref{lem:one step}.  A similar definition applies to $H_{-k}$,
the homogeneity strips approaching $\vf = -\pi/2$.

The introduction of homogeneity strips allows for distortion control:  $J^sT$ is log-H\"older continuous with
exponent $1/(q+1)$ along trajectories that land in the same homogeneity at each iterate.  The homogeneity
strips also introduce a partition of the phase space on which we base our definition of topological pressure.
Define 
\[
\cS_0 = \{ (r, \vf) \in M : \vf = \pm \frac{\pi}{2} \}
\quad \mbox{and} \quad
\cS_0^{\bH} = \cS_0 \cup \left( \cup_{|k| \ge k_0} \partial H_k \right) \, .
\]
The extended singularity set for $T^n$, $n \in \mathbb{Z}$ is defined by $\cS_n^{\bH} = \cup_{i=0}^n T^{-i} \cS_0^{\bH}$.  Let
\[
\cM_0^{n,\bH} := \{ \mbox{maximal connected components of $M \setminus (\cS_{n-1}^{\bH} \cup T^{-n} \cS_0$} \},
\]
and define the weighted sum,
\[
\cQ_n(t) = \sum_{A \in \cM_0^{n,\bH}} \sup_{x \in A \cap M'} |J^sT^n(x)|^t,
\]
where $M' \subset M$ is the set of points where the stable Jacobian is defined.

\begin{defin}
\label{def:top press}
Define $P_*(t) = \lim_{n \to \infty} \frac 1n \log \cQ_n(t)$.
\end{defin}

The above limit exists and equals the lim inf due to the subadditivity of $\log \cQ_n(t)$.  It follows
that $\cQ_n(t) \ge e^{n P_*(t)}$ for all $n \ge 0$.  In addition, \cite[Proposition~2.4]{BD2} implies that
\begin{equation}
\label{eq:pstar}
P_*(t) \ge P(t) := \sup \{ h_\mu(T) - t \int \log J^uT \, d\mu : \mu \mbox{ \small is a $T$-invariant probability} \}. 
\end{equation}
The lower bound is straightforward using a classical argument (see \cite[Proposition~9.10]{walters}) and exploiting that $\int \log J^sT \, d\mu = - \int \log J^uT \, d\mu$ for any
invariant measure due to \eqref{eq:translate}.  The main work of \cite{BD2} is constructing a measure $\mu_t$ which achieves
the supremum in \eqref{eq:pstar}  and whose pressure equals $P_*(t)$.  For this purpose, one
studies the transfer operator $\cL_t$ acting on an appropriately constructed anisotropic Banach space
of distributions.  Indeed, \cite{BD2} studies $t$ in the interval $(0, t_*)$, where $t_*>1$ is defined by
\begin{equation}
\label{eq:t star}
t_* = \sup \{ t>0 : t > - P(t)/\log \Lambda \} \, ,
\end{equation}
and $\Lambda = 1 + 2\cK_{\min} \tau_{\min}$ is the minimum hyperbolicity constant
for the billiard table.  For such $t$, one has the following theorem.

\begin{thm}[\cite{BD2}, Theorems 1.1 and 4.1]
\label{thm:thermo gap}
For each $t_0 >0$ and $t_1 < t_*$, there exists a Banach space $\cB = \cB(t_0, t_1)$ such that for all $t \in [t_0, t_1]$, the transfer operator $\cL_t$ acting on $\cB$ has spectral radius equal to 
$e^{P_*(t)}$.  Moreover, $\cL_t$ has a spectral gap.\footnote{That is, $e^{P_*(t)}$ is a simple eigenvalue and the rest of the spectrum is contained in a disk of strictly smaller radius.} 

The eigenspace of $e^{P_*(t)}$ in $\cB$ contains a unique probability measure $\nu_t$.  Let $\tilde \nu_t$ denote the corresponding eigenvector
for the dual operator $\cL_t^*$. Then,\footnote{We use the notation $\langle \nu, \tilde \nu \rangle$ to denote the
pairing of elements $\nu \in \cB$ and $\tilde \nu \in \cB^*$.}
\begin{itemize}
  \item[a)] $\mu_t( \psi ) := \frac{ \langle \psi  \nu_t, \tilde \nu_t \rangle}{\langle \nu_t, \tilde \nu_t \rangle}$ for any smooth function $\psi$ on $M$ defines by density a $T$-invariant probability measure;
  \item[b)] $\mu_t$ enjoys exponential decay of correlations and charges all open sets;
  \item[c)] $\mu_t$ is the unique equilibrium measure for $- t \log J^uT$, and $P_*(t) = P(t)$.
\end{itemize}
\end{thm}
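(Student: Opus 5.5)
We will follow the strategy of \cite{BD2}, built on Theorem~\ref{thm:hennion}. First we construct the Banach space $\cB=\cB(t_0,t_1)$ as the completion of $C^1(M)$ with respect to a strong norm $\|\cdot\|$ and a weak norm $|\cdot|_w$. Both norms are defined by integrating $f$ against test functions along admissible stable curves $W\in\cW^s$, that is, curves lying in the stable cone and in a single homogeneity strip with $q t_0\ge 2$. The weak norm uses test functions $\psi$ with $C^1$ (or high-H\"older) norm at most one. The strong norm combines a stable norm, using H\"older test functions weighted by $|W|^{\beta}$, with an unstable norm that compares integrals along pairs of nearby stable curves. To handle the weight $(J^sT)^{1-t}$ and its blow-up near $\cS_0$, we also weight by powers of the length $|W|$, or by a logarithmic modulus.

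Next we prove the Lasota--Yorke inequalities for $\cL_t^n$ uniformly in $t\in[t_0,t_1]$:
\[
|\cL_t^n f|_w\le C\,\cQ_n(t)|f|_w,\qquad \|\cL_t^n f\|\le C\big(\theta^n\cQ_n(t)+\Lambda^{-\beta n}\cQ_n(t)\big)\|f\|+C\,\cQ_n(t)|f|_w .
\]
Changing variables, $\int_W \cL_t^n f\,\psi=\sum_{W_i}\int_{W_i} f\,(J^sT^n)^{t}\,\psi\circ T^n$, where $\{W_i\}$ are the homogeneous components of $T^{-n}W$, and the weight $(J^sT^n)^{1-t}$ combines with the arclength Jacobian to give $(J^sT^n)^t$. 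The key combinatorial input is a growth lemma. Summed over the components, $\sum_i |J^sT^n|^t_{C^0(W_i)}$ is comparable to $\cQ_n(t)$, uniformly in $W$ once $|W|$ is not too small. This is where the one-step expansion (Lemma~\ref{lem:one step}) enters, with $qt\ge 2$ making $\sum_k k^{-qt}$ summable. Short pieces are controlled by submultiplicativity together with $\cQ_n\ge e^{nP_*(t)}$. The contraction factor in the strong norm comes from H\"older regularity of $\psi\circ T^n$ along $W_i$, which gains $\Lambda^{-\beta n}$. The condition $t<t_*$, i.e.\ $t\log\Lambda>-P(t)$, is exactly what makes $\Lambda^{-\beta n}$ beat the $e^{nP_*}$ normalization against the loss from short curves, so that $\theta<e^{P_*(t)}$.

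The main obstacle is this step: obtaining the upper bound $\sum_i|J^sT^n|^t_{W_i}\le C\cQ_n(t)$ (in particular $\cQ_n\le Ce^{nP_*}$, not merely up to subexponential factors), together with a matching lower bound for $\cL_t^n 1$. The first thing we would try is to prove this by comparing curves to elements of $\cM_0^{n,\bH}$, using a fragmentation lemma: most weight is carried on long pieces, which fill out a definite proportion of the partition elements. Supermultiplicativity, $\cQ_{n+m}\ge c\,\cQ_n\cQ_m$, would follow from the same fragmentation lemma. Compactness of the unit ball of $\cB$ in $|\cdot|_w$ follows as in \cite{demzhang11}. Theorem~\ref{thm:hennion} then gives spectral radius at most $e^{P_*}$ and essential radius at most $\theta<e^{P_*}$. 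The lower bound $|\cL_t^n 1|_w\ge c\,e^{nP_*}$ shows that the spectral radius equals $e^{P_*}$.

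Finally, the peripheral spectrum consists of finitely many eigenvalues. Since $\cL_t$ is positive, a limit point $\nu_t=\lim\frac1n\sum_k e^{-kP_*}\cL_t^k1$ is a nonnegative measure and an eigenvector, and $\tilde\nu_t$ is built dually. Then $\mu_t$ is well defined and invariant. Simplicity and the absence of other peripheral eigenvalues follow from mixing of $T$: we show that $\mu_t$ charges all open sets, using the growth lemma to get that $\cL_t^n$ of a positive element is bounded below on long curves, and then apply a Hopf-type argument with the regularity of the stable and unstable foliations. This gives the spectral gap and exponential decay of correlations. For c), we compute $h_{\mu_t}(T)$ via a generating partition and the $\cQ_n$ estimates to get $h_{\mu_t}-t\int\log J^uT\,d\mu_t\ge P_*(t)$. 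Combined with \eqref{eq:pstar}, this yields $P_*(t)=P(t)$. Uniqueness follows by showing that any equilibrium state must be absolutely continuous with respect to $\mu_t$, using Gibbs-type bounds $\mu_t(A)\asymp e^{-nP_*}\sup_A|J^sT^n|^t$ on $A\in\cM_0^{n,\bH}$.
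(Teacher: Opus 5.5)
Most of your architecture matches the paper. The genuine gap is in the uniqueness argument. You rely on two-sided Gibbs bounds $\mu_t(A)\asymp e^{-nP_*(t)}\sup_A|J^sT^n|^t$ for $A\in\cM_0^{n,\bH}$, but for billiards only the upper bound holds. The upper bound is the content of Proposition~\ref{prop:bowen}. It is also all your entropy lower bound in (c) needs, together with adaptedness of $\mu_t$ so that the countable partition has finite entropy. The lower bound fails because of the rate at which orbits approach the singularity sets. Elements $A$ whose images $T^nA$ have very small unstable diameter (exactly those excluded from $\cA_n(\delta)$ in Lemma~\ref{lem:long carry}) carry far less $\mu_t$-mass than $e^{-nP_*(t)}\sup_A|J^sT^n|^t$, and Bowen balls fail in the same way. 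Bowen's ``equilibrium states are absolutely continuous'' argument needs precisely this lower bound, so it does not go through as you state it. In the $t=0$ case it had to be replaced by a delicate argument that most Bowen balls satisfy a good lower bound along a sufficiently dense sequence of times.

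The paper sidesteps Gibbs lower bounds entirely:
\begin{itemize}
\item[(1)] Any equilibrium state $\mu$ for $-t\log J^uT$ is a tangent measure, i.e.\ $P(-t\log J^uT+\phi)\ge P(t)+\int\phi\,d\mu$ for all $\phi\in C^1(M)$.
\item[(2)] The operators $\cL_{t,z\phi}$ are an analytic perturbation of $\cL_t$ with spectral radius $e^{P(t,z\phi)}$. By the spectral gap, $z\mapsto P(t,z\phi)$ is differentiable at $z=0$ with derivative $\int\phi\,d\mu_t$.
\item[(3)] Comparing the one-sided difference quotients with the tangent inequality forces $\int\phi\,d\mu=\int\phi\,d\mu_t$, and density of $C^1(M)$ in $C^0(M)$ gives $\mu=\mu_t$.
\end{itemize}

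Several statements in your spectral part are also off in ways that matter.
\begin{itemize}
\item[(i)] The curves must be genuine local stable manifolds in $\cW^s$, not cone-stable curves. Your change of variables uses $J_{W_i}T^n=J^sT^n$, which holds only along stable manifolds.
\item[(ii)] $J^sT$ is only log-H\"older with exponent $1/(q+1)$ along homogeneous stable manifolds. So the weak-norm test functions cannot be $C^1$; one needs $\alpha\le 1/(q+1)$.
\item[(iii)] Your Lasota--Yorke inequality puts $\theta^n\cQ_n(t)$ in front of $\|f\|$. That would give essential spectral radius below $e^{P_*(t)}$ with no pressure gap at all. In fact the short-piece term comes from iterating Lemma~\ref{lem:one step} and is $\theta^{(t-1/p)n}\|f\|_s$, \emph{not} normalized by $\cQ_n(t)$. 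Quasi-compactness then requires $\theta^{t-1/p}<e^{P_*(t)}$. This is where the pressure gap $\theta^t<e^{P_*(t)}$ (i.e.\ $t<t_*$, with $p$ large) is used, in addition to Lemma~\ref{lem:long carry}(a).
\item[(iv)] Your fragmentation lemma needs an a priori lower bound on $\sum_{W_i\in\cG_n(W)}|J_{W_i}T^n|^t_{C^0(W_i)}$ that dominates $\theta^{tn}$. For $t\le 1$ this is $C\Lambda^{n(1-t)}$. For $t>1$ it requires the H\"older interpolation and bootstrapping of Remark~\ref{rem:interpolation}; without it you cannot reach $t_1$ arbitrarily close to $t_*$.
\end{itemize}
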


\begin{rem}
\label{rem:pressure gap}
The definition of $t_*$ is motivated by the fact that $\Lambda^{-t}$ controls the growth in local 
complexity due to the 
(extended) singularity set for the map subject to the weight $|J^sT|^t$, while $\cQ_n(t)$ controls the global growth
rate of the topological pressure.  The requirement $t < t_*$ implies $\Lambda^{-t} < e^{P_*(t)}$, which is a type of 
pressure gap condition that says the local complexity grows more slowly than the global pressure.

Since $P(t)$ is decreasing with $P(1)=0$ and $\Lambda >1$, it must be that $t_* > 1$.
\end{rem}

The fact that the Banach space can be fixed for all $t \in [t_0, t_1]$ for any $t_0 >0$ and $t_1 < t_*$ allows
one to prove as well that $P(t)$ is analytic on the interval $(0, t_*)$ \cite[Theorem~1.2]{BD2}.


\subsection{Uniform growth estimates for $t>0$}
\label{sec:growth t>0}

In this section, we provide some of the key ideas which allow one to control the iteration of the transfer operator
in the appropriate norms.  These involve obtaining a type of uniform bound on the evolution of local
stable manifolds under the dynamics.

To this end, define $\hW^s$ to be the set of cone-stable\footnote{By cone-stable curve, we mean a $C^2$ curve
whose tangent vector at each point lies in the stable cone.} curves whose curvature is bounded by a fixed constant,
which is chosen large enough so that $\hW^s$ is `invariant' under $T^{-1}$, i.e. for each $W \in \hW^s$,
$T^{-1}W$ is a union of elements of $\hW^s$.  Similarly, define $\cW^s \subset \hW^s$ to be the set of local
stable manifolds.

For $V \in \hW^s$, let $V_i$ denote the homogeneous connected components of
$T^{-1}V$ (which depend on $q$ and $k_0$ via \eqref{eq:homogeneous}).

\begin{lem}[Modified One-step Expansion]
\label{lem:one step}
Fix $t_0 >0$ and $q \ge 2/t_0$.  There exists $\theta(t_0) < 1$, $k_0(t_0), \delta_0(t_0) >0 $ such that 
for all $V \in \hW^s$,
\[
\sup_{|V| \le \delta_0} \sum_{V_i}  |J_{V_i}T|^t_{*} < \theta^t \, , \quad \mbox{for all $t \ge t_0$,}
\]
where $J_{V_i}T$ is the Jacobian of $T$ along the stable curve $V_i$, and $| \cdot |_*$ denotes the sup norm 
of the Jacobian with respect to an adapted metric that is uniformly equivalent to the Euclidean metric (see \cite[Section~5.10]{chernov book}).
Note that $\theta$ can be chosen arbitrarily close to $\Lambda^{-1}$ by making $k_0$ large.
\end{lem}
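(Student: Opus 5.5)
We follow the classical one-step expansion argument for dispersing billiards (see \cite[Lemma~5.56]{chernov book}), with the sum of Jacobians replaced by the sum of their $t$-th powers. Take $V \in \hW^s$ with $|V| \le \delta_0$. The preimage $T^{-1}V$ is first cut by the singularity set $T^{-1}\cS_0$, and then each piece is cut further by the boundaries of the homogeneity strips $H_k$. Since $|V|$ is small and the table has finite horizon and no corners, $T^{-1}\cS_0$ meets $V$ in at most a uniformly bounded number $N_0$ of points. So, before the homogeneity cuts, $T^{-1}V$ has at most $N_0+1$ components. For each such component we bound the contribution of the pieces lying in $H_k$ with $|k| < k_0$ separately from those with $|k| \ge k_0$.

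\textbf{Step 1 (pieces outside the high strips).} On cone-stable curves, the adapted metric of \cite[Section~5.10]{chernov book} gives $|J_{V_i}T|_* \le \Lambda^{-1}$ pointwise, with $\Lambda = 1+2\cK_{\min}\tau_{\min}$.

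We want better than that for the pieces of $T^{-1}V$ accumulating near $\cS_0$. Those pieces come from the first $T^{-1}$-image of a neighbourhood of a tangency. By \eqref{eq:poinc_der}, the expansion of $T^{-1}$ along stable curves near $\vf(T^{-1}x)=\pm\pi/2$ is of order $1/\cos\vf \approx k^{q}$ on $H_k$. Hence
\[
|J_{V_i}T|_* \le C k^{-q} \qquad \text{for } V_i \subset H_k,\ |k| \ge k_0.
\]

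The first component could also land in a low strip $|k|<k_0$. But by choosing $\delta_0$ small relative to $k_0$, continuity of $T^{-1}$ away from $T^{-1}\cS_0$ ensures that, near each point of $V\cap T^{-1}\cS_0$, only strips with $|k|\ge k_0$ are met, except possibly one "regular" piece per side. This gives at most $2N_0+1$ regular pieces. The regular pieces form the bounded-complexity part. Here I will use the refinement that the adapted-metric one-step bound is really
\[
\sum_{\text{regular } V_i} |J_{V_i}T|_* < 1
\]
(the classical statement), in its $t$-version as follows.

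\textbf{Step 2 (tail sum).} The contribution of the high strips is
\[
\sum_{|k|\ge k_0} (Ck^{-q})^t \le 2 C^t \sum_{k\ge k_0} k^{-qt}.
\]
Since $qt \ge q t_0 \ge 2$, this sum converges and is $O(k_0^{1-qt}) \le O(k_0^{-1})$, uniformly in $t\ge t_0$. This is exactly where the hypothesis $q\ge 2/t_0$ enters. Making $k_0$ large makes the tail smaller than $\epsilon^t$ for any prescribed $\epsilon>0$. For $t$ large, $(Ck^{-q})^t$ decays even faster, so the bound can be arranged uniformly.

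\textbf{Step 3 (main obstacle).} The delicate point is the regular pieces. A short $V$ crossing a singularity curve splits into pieces lying on different scatterers, and summing $t$-th powers of each $\Lambda^{-1}$ could exceed $\theta^t$ when $t$ is small. What to do is use that the total number of pieces is controlled by $\delta_0$: for $\delta_0$ small, $V$ meets at most one curve of $T^{-1}\cS_0$, except near the multiple-intersection points of singularity curves, where the adapted metric absorbs it as in \cite[Lemma~5.56]{chernov book}. Then at most one regular piece carries a Jacobian near $\Lambda^{-1}$; every other piece adjacent to a tangency lies in strips with $|k|\ge k_0$ and is already in the tail. Thus
\[
\sum_{V_i} |J_{V_i}T|^t_* \le \Lambda^{-t} + \epsilon^t \, .
\]
Set $\theta^t := \Lambda^{-t}+2\epsilon^t$, and choose $\epsilon$ small compared with $\Lambda^{-1}$ so that $\theta<1$ uniformly for $t\ge t_0$. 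Letting $k_0\to\infty$ sends $\epsilon\to0$, hence $\theta\to\Lambda^{-1}$, which gives the final remark of the lemma.
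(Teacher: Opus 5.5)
Your strategy is the one the paper points to: the argument of \cite{BD2}, which raises Chernov's one-step expansion \cite[Lemma~5.56]{chernov book} to $t$-th powers. One component of $T^{-1}V$ is contracted by at most $\Lambda^{-1}$ in the adapted metric, every other component lies in strips $H_k$ with $|k|\ge k_0$ where $|J_{V_i}T|_*\le Ck^{-q}$, and the tail is summed using $qt\ge 2$. Your Step~2 is fine and correctly locates the role of $q\ge 2/t_0$.

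\textbf{The gap is in Step~3.} You rightly call this step the crux, since any bound $m\Lambda^{-t}$ with $m\ge 2$ regular pieces fails once $t_0$ is small. But you settle it by saying that near multiple intersection points of singularity curves ``the adapted metric absorbs it.'' The adapted metric only sharpens the contraction bound on each piece. It has no bearing on how many components of $T^{-1}V$ stay away from $\cS_0$. So the key claim is unproved: that every component of $V\setminus\cS_{-1}$ except one abuts a tangency with the scatterer it hits.

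The claim is true, for a geometric reason you could supply:
\begin{enumerate}
\item For $\delta_0$ small, the backward rays from $V$ form a beam of width $O(\delta_0)$. This beam meets the scatterers in a fixed order and cannot contain both tangent lines of any scatterer. Hence each scatterer is met on a subinterval of $V$ with at most one endpoint inside $V$.
\item At each discontinuity, the side hitting the nearer scatterer hits it tangentially. So a component with no endpoint on $\cS_0$ has neighbours that hit nearer scatterers.
\item Two such components would therefore enclose a component whose scatterer is nearer than both of its neighbours' scatterers. That scatterer would be met on an interval with two interior endpoints, contradicting the first step.
\item A component $V'$ abutting a tangency has an endpoint on $\cS_0$ and $|T^{-1}V'|\le C|V'|^{1/2}$. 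Hence $T^{-1}V'\subset\{\pi/2-|\vf|\le C\delta_0^{1/2}\}\subset\bigcup_{|k|\ge k_0}H_k$ once $C\delta_0^{1/2}<k_0^{-q}$. This is the precise form of your ``$\delta_0$ small relative to $k_0$.''
\end{enumerate}

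\textbf{Smaller points to fix.}
\begin{enumerate}
\item The discontinuities of $T^{-1}$ lie in $\cS_{-1}=\cS_0\cup T\cS_0$, not $T^{-1}\cS_0$; the latter is the singularity set of $T$.
\item In Step~1, discard the count of up to $2N_0+1$ regular pieces and the proposed ``$t$-version'' of $\sum_{\mathrm{regular}}|J_{V_i}T|_*<1$. For $t<1$, $\sum a_i<1$ does not imply $\sum a_i^t<\theta^t$; for instance $a_1=a_2=0.4$ and $t=1/2$ give a sum greater than $1$. The one-regular-piece statement above is what replaces it.
\item In the tail, each strip $H_k$ can contain one piece from each near-tangential component, because stable curves are strictly monotone in $\vf$. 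So the factor $2$ should be a bounded constant such as $N_0+1$. This is harmless, since $(N_0+1)^{1/t}\le(N_0+1)^{1/t_0}$ can be absorbed into $\epsilon$.
\item $\theta$ may not depend on $t$. Since $t\mapsto(\Lambda^{-t}+2\epsilon^t)^{1/t}$ is nonincreasing, $\theta:=(\Lambda^{-t_0}+2\epsilon^{t_0})^{1/t_0}$ works for all $t\ge t_0$. It is less than $1$ for $\epsilon$ small and tends to $\Lambda^{-1}$ as $k_0\to\infty$.
\end{enumerate}
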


This is a generalization of the usual one-step expansion formulated by Chernov in the case $t=1$ (see, for example,
\cite[Lemma~5.56]{chernov book}).

Fixing $\delta_0$ from Lemma~\ref{lem:one step}, henceforth we subdivide all curves in $\hW^s$ to have length
at most $\delta_0$.

Lemma~\ref{lem:one step} is important since it establishes that the hyperbolicity of the billiard map dominates the
cutting due to singularities.  This in turn allows one to prove uniform growth estimates for $\cQ_n(t)$ by
establishing that most of the weight in the expressions for pressure is carried by `long pieces,'  i.e. pieces longer than
some fixed length scale.  This is summarized in the following lemma.

For $\delta_1 < \delta_0$ and $W \in \hW^s$, let $\cG_1^{\delta_1}(W)$ denote the maximal homogeneous connected components of $T^{-1}W$, subdivided as needed to have length at most $\delta_1$. 
Then inductively, given $\cG_{n-1}^{\delta_1}(W)$, we define $\cG_n^{\delta_1}$ comprising
$\cG_1^{\delta_1}(W_i)$ for each $W_i \in \cG_{n-1}^{\delta_1}(W)$.
(We call this the $n$th \emph{generation} of $W$.)

Since we will subdivide according to homogeneity strips, let us denote by $\cW^s_{\bH}$ those elements of
$\widehat \cW^s$ that lie in a single strip, and similarly for $\hW^s_{\bH}$.

\begin{lem}
\label{lem:long carry}
(Long elements carry most weight.)

\begin{itemize}
  \item[a)]   $\forall \ve>0$  $\exists \delta_1, n_1 >0$ s.t $\forall W \in \hW^s$ with $|W| \ge
\delta_1/3$ and all $n \ge n_1$,
\[
\sum_{\substack{W_i \in \cG_n^{\delta_1}(W_i) \\ |W_i| < \delta_1/3}}
|J_{W_i}T^n|^t_{C^0(W_i)} \le \ve \sum_{W_i \in \cG_n^{\delta_1}(W)} |J_{W_i}T^n|^t_{C^0(W_i)}
\; \; \;  \forall t \in [t_0,t_1] .
\]
\end{itemize}
Define $\cA_n(\delta) = \{ A \in \cM_0^{n, \bH} : \diam^u(T^nA) \ge \delta/3 \}$.

\begin{itemize}
  \item[b)] There exist $\delta_2 >0$ and $c_0 >0$ such that 
\[
\sum_{A \in \cA_n(\delta_2) }\sup_{x \in A} |J^sT^n(x)|^t 
\ge c_0 \, \cQ_n(t) \,,\,\, \forall n \in \mathbb{N}\, ,
\,\,  \forall t\in [t_0,t_1]  \, .
\]
\end{itemize}
\end{lem}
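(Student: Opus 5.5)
The proof of part a) rests on the modified one-step expansion, Lemma~\ref{lem:one step}, iterated along generations together with bounded distortion. The sum over short pieces is split according to the last time the piece belonged to a long ancestor. For $W_i \in \cG_n^{\delta_1}(W)$ with $|W_i| < \delta_1/3$, let $j \le n$ be the largest index such that the ancestor $W_j' \in \cG_j^{\delta_1}(W)$ of $W_i$ has length at least $\delta_1/3$; such a $j$ exists since $|W| \ge \delta_1/3$. On the time interval $j+1,\dots,n$ every ancestor is short, so each is at most $\delta_1/3 < \delta_0$. Lemma~\ref{lem:one step} therefore applies at each step and gives
\[
\sum_{\substack{W_i \in \cG_{n-j}^{\delta_1}(W_j') \\ \text{all ancestors short}}} |J_{W_i}T^{n-j}|^t_{C^0} \le C\theta^{t(n-j)},
\]
where $C$ absorbs the equivalence between the adapted metric $|\cdot|_*$ and the Euclidean metric. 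Multiplicativity of Jacobians along the tree, together with distortion control on homogeneous stable curves, gives $|J_{W_i}T^n|_{C^0} \le C|J_{W'_j}T^j|_{C^0}|J_{W_i}T^{n-j}|_{C^0}$. Summing over $j$, the short contribution is bounded by
\[
C\sum_{j=0}^{n} \theta^{t_0(n-j)} \sum_{W'_j \in L_j} |J_{W'_j}T^j|^t_{C^0},
\]
where $L_j$ denotes the long elements of generation $j$.

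We then compare this with the full sum at time $n$. The key auxiliary claim is a lower growth bound: for a long curve $V$ (length at least $\delta_1/3$) and any $k$, we want
\[
\sum_{U \in \cG_k^{\delta_1}(V)} |J_UT^k|^t \ge c\, e^{kP_*(t)}\, c_1 ,
\]
or at least a bound exceeding $\theta^{tk}$ times a large factor once $k \ge n_1$. This should follow by comparing $\cG_k(V)$ with the partition elements $A \in \cM_0^{k,\bH}$ crossed by $V$, and using the pressure gap $\Lambda^{-t} < e^{P_*(t)}$ from Remark~\ref{rem:pressure gap}, with $\theta$ chosen close to $\Lambda^{-1}$. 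Granting this bound, the contribution from long ancestors with $n - j \ge n_1$ is dominated by $(\theta^{t}e^{-P_*(t)})^{n-j}$ times the full sum. The terms with $n - j < n_1$ are handled by choosing $\delta_1$ small: for fixed $n_1$, a curve of length at least $\delta_1/3$ produces only a tiny proportion of its weight in pieces shorter than $\delta_1/3$ within $n_1$ steps, because its images are short only near singularity curves of $T^{-n_1}$, which form a finite set of curves. Uniformity in $t \in [t_0, t_1]$ comes from the constants depending only on $t_0$ and $t_1$.

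For part b), part a) would be used to show that iterates of long curves retain a positive fraction of their weight in long pieces. One then relates $\cQ_n(t)$ to sums over $\cG_n$ of a fixed finite family of long stable curves that are transverse and cover $M$; $\diam^u(T^nA) \ge \delta_2/3$ corresponds to $T^nA$ containing a long unstable piece, dually to long stable pieces in backward time. By time reversal, summing over $A \in \cM_0^{n,\bH}$ should be comparable to summing over the $n$-th generation of a finite family of long unstable curves, and the fraction coming from long elements is at least $1 - \varepsilon$ for $n \ge n_1$. Small $n$ are handled by adjusting $c_0$.

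The main obstacle is the lower bound on the total weight at time $n$, i.e.\ the supermultiplicativity-type estimate $\sum_{\cG_k(V)} |J T^k|^t \gtrsim e^{kP_*(t)}$ for every long $V$ with uniform constants. This needs a mixing or covering argument showing that any long stable curve eventually crosses a positive fraction of the pressure-carrying partition elements. I would first try to prove this via the growth lemma combined with an argument that $\cQ_{n+m} \le C\,\cQ_n \cdot \max_V(\text{weight of } \cG_m(V))$.
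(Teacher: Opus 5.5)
The last-long-ancestor decomposition is fine, and so is the bound $C\theta^{t(n-j)}$ from iterating Lemma~\ref{lem:one step}. The gap is in the comparison with the total weight, which rests on a circular input. Your key auxiliary claim, $\sum_{U\in\cG_k(V)}|J_UT^k|^t_{C^0}\gtrsim e^{kP_*(t)}$ uniformly over long $V$, is Proposition~\ref{prop:uniform growth}. In the paper that proposition is deduced \emph{from} this lemma:
\begin{itemize}
\item its part (a) uses part (b) here, to find a rectangle $R_{i_*}$ carrying a fixed fraction of $\cQ_n(t)$;
\item its part (b) uses part (a) here.
\end{itemize}
The covering/mixing argument you propose is exactly that deduction. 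The fallback $\cQ_{n+m}\le C\cQ_n\max_V(\cdot)$ only bounds a maximum over $V$, whereas you need a minimum. Comparing $\cG_k(V)$ with the elements of $\cM_0^{k,\bH}$ that $V$ crosses gives upper, not lower, bounds. The pressure gap $\Lambda^{-t}<e^{P_*(t)}$ cannot be invoked at this stage, since comparing with $e^{nP_*(t)}$ presupposes the uniform growth being proved.

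The missing idea is an a priori lower bound that does not involve $P_*$. Since $\sum_i|J_{W_i}T^n|_{C^0}|W_i|\ge|W|$, $|W_i|\le\delta_1$ and $|J_{W_i}T^n|_{C^0}\le C\Lambda^{-n}$, for $t\le1$ you get $\sum_{W_i\in\cG_n(W)}|J_{W_i}T^n|^t_{C^0}\ge C\Lambda^{n(1-t)}|W|\delta_1^{-1}$. This dominates $\theta^{tn}$ because $\theta$ can be taken close to $\Lambda^{-1}$. For $t>1$ you interpolate by H\"older between $t$ and some $s<1$ where the estimates are already established, and bootstrap as in Remark~\ref{rem:interpolation}. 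That bootstrap is where $t_1<t_*$ is actually used.

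Your treatment of the terms with $n-j<n_1$ is also wrong. A long curve's length and the threshold $\delta_1/3$ shrink together, so taking $\delta_1$ small does not make the short fraction produced by an individual curve small. For example, take $|V|=\delta_1/3$ with $T^{-1}V$ cut by the boundary of a fixed homogeneity strip very close to one end. The resulting short end piece carries a proportion of the weight of $\cG_1^{\delta_1}(V)$ that is bounded below independently of $\delta_1$. Asserting instead that few long curves at time $j$ lie near $\cS_{-n_1}$ is an equidistribution statement for the weighted family, and nothing of that kind is available here. Even with the correct lower bound, a term-by-term estimate bounds the $k=n-j$ contribution (for $t\le1$) only by a constant times $(\theta^t\Lambda^{t-1})^k$. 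That sums to a constant, not to $\ve$.

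Smallness has to come from the gap accumulated over a whole block:
\begin{itemize}
\item fix $n_1$ large, then $\delta_1$ so small that the homogeneous pieces of the first $n_1$ preimages of any curve of length at most $\delta_1$ stay shorter than $\delta_0$;
\item iterated one-step expansion then bounds the weight of all pieces such a curve produces in the block by $C\theta^{tn_1}$, and in particular the short ones, of which there are at most two per component;
\item this is small compared with the a priori lower bound for a long ancestor;
\item short ancestors are handled by induction over blocks.
\end{itemize}

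Your sketch of (b) omits the ingredient the paper singles out: the distortion bound \eqref{eq:general distortion} on elements of $\cM_0^{n,\bH}$. Without it you cannot replace $\sup_{x\in A}|J^sT^n(x)|^t$ by $|J_{W_i}T^n|^t_{C^0}$ for a stable piece $W_i\subset A$. The passage to unstable curves ``by time reversal'' is also not free. The reversed weight is $(J^uT^n)^{-t}$, which by \eqref{eq:translate} differs from $(J^sT^n)^t$ by the unbounded factor $\big((\cos\vf\sin\alpha)/((\cos\vf\sin\alpha)\circ T^n)\big)^t$. So part (a), which concerns stable curves under $T^{-1}$, does not transfer to that setting automatically.
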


Part (a) follows from the one-step expansion and choice of $t_1 < t_*$.  Part (b) uses in addition
a type of distortion bound on elements of $\cM_0^{n ,\bH}$: 

\medskip
$\exists C>0$ s.t. for all $n \ge 1$, if $W_1, W_2 \in \hW^s_{\bH}$ are such that 
$W_1, W_2 \subset A \in \cM_0^{n, \bH}$, and all $x \in W_1$, $y \in W_2$,
\begin{equation}
\label{eq:general distortion}
\left| \log \frac{J_{W_1}T^n(x)}{J_{W_2}T^n(y)} \right| \le C \, .
\end{equation}
This bound uses that the stable cones are globally defined, even though the stable subspace is only measurable.

The dominance of long pieces can be leveraged to produce the following uniform growth estimates and
exact exponential growth of $\cQ_n(t)$.  In what follows, $\ve = 1/4$ is chosen and the corresponding
$\delta_1 > 0$ is fixed from Lemma~\ref{lem:long carry}.  This value of $\delta_1$ is used throughout and
$\cG_n^{\delta_1}(W) = \cG_n(W)$ is denoted for simplicity.

\begin{prop}
\label{prop:uniform growth}
a)There exists $c_1 >0$ s.t. for  any
$W \in \hW^s$ with $|W| \ge \delta_1/3$, 
\[
\sum_{W_i \in \cG_n(W)} |J_{W_i}T^n|^t_{C^0(W_i)} 
\ge c_1 \cQ_n(t) \, , \, \forall n \ge 1 \,, \, \forall t \in [t_0,t_1] \, .
\]
b) There exists $c_2>1$ s.t. for all $n \ge 1$,
\[
e^{nP_*(t)} \le \cQ_n(t) \le c_2 e^{nP_*(t)} \quad  \forall t \in [t_0, t_1] \, .
\]
\end{prop}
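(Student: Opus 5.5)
Part (a) compares the curves in $\cG_n(W)$ with the elements of $\cM_0^{n,\bH}$. The lower bound in (b) is already known from subadditivity, $\cQ_n(t)\ge e^{nP_*(t)}$, so only the upper bound needs an argument. For that I will use (a) together with a supermultiplicativity statement for long pieces.

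\emph{Step 1 (part a).} Fix $W\in\hW^s$ with $|W|\ge\delta_1/3$. Take $A\in\cA_n(\delta_2)$, so $T^nA$ has unstable diameter at least $\delta_2/3$. Since $W$ is a stable curve of length at least $\delta_1/3$, uniform transversality of the stable and unstable cones should give the following: a uniformly bounded number $N_0$ of curves $W^{(j)}$ in a fixed finite family $\{W^{(1)},\dots,W^{(N_0)}\}$ with the same property cross every such long $T^nA$ fully in the unstable direction. I will first reduce to this finite family. If the reduction fails, I will instead use a covering argument: the sets $T^n A$ for $A\in\cA_n(\delta_2)$ have a uniformly long unstable extent, and the curves $T^{-n}$ of a stable curve crossing them stay inside $A$. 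Whenever $W$ crosses $T^nA$, the piece $T^{-n}(W\cap T^nA)$ is a union of elements of $\cG_n(W)$ lying in $A$. The distortion bound \eqref{eq:general distortion} then lets me compare $|J_{W_i}T^n|_{C^0}$ with $\sup_{x\in A}|J^sT^n(x)|$, up to a factor $e^{-Ct}$. Summing over $A$ and using Lemma~\ref{lem:long carry}(b) gives
\[
\sum_{j}\sum_{W_i\in\cG_n(W^{(j)})}|J_{W_i}T^n|^t\ge e^{-Ct}c_0\,\cQ_n(t).
\]
To pass from the finite family to an arbitrary $W$, I will use that any long $W$ can be connected to some $W^{(j)}$ by a short unstable holonomy. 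An alternative is to iterate a few steps first, using Lemma~\ref{lem:long carry}(a) and mixing of the singularity structure, so that long pieces of $T^{-m}W$ cross every long $T^nA$. This yields $c_1$ uniform in $t\in[t_0,t_1]$.

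\emph{Step 2 (upper bound in b).} I will show $\cQ_{n+m}(t)\ge c\,\cQ_n(t)\cQ_m(t)$. Consider the sum over $\cG_{n+m}(W)$ and group it by generation $n$. By Lemma~\ref{lem:long carry}(a) with $\varepsilon=1/4$, at least $3/4$ of the weight of $\cG_n(W)$ sits on pieces $W_i$ with $|W_i|\ge\delta_1/3$. For each such $W_i$, part (a) applied to $W_i$ gives
\[
\sum_{V\in\cG_m(W_i)}|J_VT^m|^t\ge c_1\cQ_m(t).
\]
Bounded distortion along stable curves, which lie in one homogeneity strip, makes $J_VT^{n+m}$ comparable to $J_VT^m\cdot J_{W_i}T^n$. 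This gives
\[
Z_{n+m}(W)\ge C^{-1}\tfrac34 c_1\,Z_n(W)\,\cQ_m(t),
\]
where $Z_k(W)$ is the weighted sum over $\cG_k(W)$. Part (a) gives $Z_n\ge c_1\cQ_n$, and the reverse bound $Z_k(W)\le C\cQ_k(t)$ holds because each element of $\cG_k$ lies in some $A\in\cM_0^{k,\bH}$, up to bounded multiplicity from the $\delta_1$ subdivision. Together these give $\cQ_{n+m}\ge c\,\cQ_n\cQ_m$. Then $\log(c\,\cQ_n)$ is superadditive, so Fekete's lemma gives $c\,\cQ_n\le e^{nP_*(t)}$, and (b) follows with $c_2=c^{-1}$.

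\emph{Main obstacle.} The hardest step is the geometric covering argument in Step 1. One must show that a single long stable curve $W$ meets a definite fraction of the weight of \emph{all} long cells $T^nA$ with uniform constants. This is where I expect to need the finite-family reduction or a mixing/holonomy argument.
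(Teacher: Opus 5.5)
Your part (b) follows the paper's route. Long pieces carry $3/4$ of the weight, you apply (a) to each long piece, and distortion then gives $\cQ_{n+m}(t)\ge \tilde c\,\cQ_n(t)\cQ_m(t)$, from which superadditivity finishes. Two small additions are needed: the reverse comparison $\sum_{W_i\in\cG_k(W)}|J_{W_i}T^k|^t_{C^0}\le C\cQ_k(t)$, which the paper also uses implicitly, and a separate treatment of $n<n_1$ via constants.

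The genuine gap is in part (a), exactly where you flag the main obstacle. Summing over a finite family $\{W^{(j)}\}$ and invoking Lemma~\ref{lem:long carry}(b) only shows that \emph{some} member $W^{(j_n)}$ captures a fraction $\gtrsim \cQ_n(t)/N_0$. That member depends on $n$ and may be far from your $W$, whereas the statement is for \emph{every} $W$ with $|W|\ge\delta_1/3$. The short-unstable-holonomy transfer cannot close this:
$W$ is close only to the members nearest to it, and nothing prevents the weight of $\cQ_n(t)$ from sitting on cells $T^nA$ elsewhere in $M$. Moreover, even for two nearby stable curves the sums over $\cG_n$ need not be comparable, since long cells may terminate in the unstable direction between them.

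The missing ingredient is a uniform mixing step organized around Cantor rectangles, as in the paper. Cover $M$ by finitely many Cantor rectangles $R_1,\dots,R_k$ such that every stable or unstable curve of length $\delta_2/3$ properly crosses one of them. By pigeonhole there is $i_*$ (depending on $n$) such that the long cells whose image $T^nA$ properly crosses $R_{i_*}$ carry at least $\frac{c_0}{k}\cQ_n(t)$. A given $W$ properly crosses some $R_j$. Mixing of $\musrb$ on the finitely many pairs $(R_j,R_i)$ yields a single $N=N(\delta_2)$ such that $T^{-N}W$ contains a curve $V$ properly crossing $R_{i_*}$, whatever $i_*$ turns out to be. Because $V$ and these cells cross the same rectangle in complementary directions, $V$ must meet every one of them. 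Then \eqref{eq:general distortion} gives $\sum_{W_i\in\cG_n(V)}|J_{W_i}T^n|^t_{C^0}\gtrsim \cQ_n(t)/k$, and the bounded time shift $N$ only affects constants.

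Your ``iterate a few steps and use mixing'' alternative points in this direction, but as written it is not carried out. Asking long pieces of $T^{-m}W$ to cross \emph{every} long $T^nA$ is also more than needed. The real content of (a) is the uniformity of $N$ over all $W$ and all $n$, which rests on proper crossing of rectangles rather than on cone transversality alone.
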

 
\begin{proof}[Sketch of Proof]
 We give a brief justification for the lower bound on growth (a), which consists of the following strategy.
\begin{itemize}      
  \item `Cover' $M$ with $k(\delta_2)$ Cantor rectangles $R_i$ such that any stable or unstable curve of length $\delta_2/3$ properly 
crosses at least one $R_i$.
  \item Define $\cA_n^i := \{ A \in \cA_n(\delta_2) \subset \cM_0^{n, \bH} : T^nA \mbox{ properly crosses } R_i \}$.
  
  \item  There must exist $i_*$ such that $\sum_{A \in \cA_n^{i_*}} \sup_A |J^sT^n|^t \ge \frac{c_0}{k} \cQ_n(t)$.
  \item Take $W \in \hW^s$ with $|W| \ge \delta_1/3 \ge \delta_2/3$.  It must properly cross one $R_j$.
  \item We use the mixing property of the SRB measure to ensure that a curve $V \subset T^{-N}W$ crosses $R_{i_*}$, where $N$ depends only on $\delta_2$.
  \item Then $\sum_{W_i \in \cG_n(V)} |J_{W_i}T^n|^t_{C^0}$ will be comparable to $\sum_{A \in \cA_n^{i_*}} \sup_A |J^sT^n|^t$ using the generalized distortion bound \eqref{eq:general distortion}.
\end{itemize}
To justify (b), note that denoting by $L_n^{\delta_1}(W)$ those elements of $\cG_n(W)$ having length at least
$\delta_1/3$,
\[
\sum_{W_i \in \cG^{\delta_1}_{n+k}(W)} |J_{W_i}T^{n+k}|^t_{C^0}
\ge C \!\! \sum_{V_j \in L^{\delta_1}_n(W)} \!\! |J_{V_j}T^n|^t_{C^0} \sum_{W_i \in \cG^{\delta_1}_k(V_j)} \!\! |J_{W_i}T^k|^t_{C^0} \, ,
\]
which together with part (a) and Lemma~\ref{lem:long carry} implies 
\[
\cQ_{n+k}(t) \ge \tilde{c}_2 \cQ_n(t) \cQ_k(t) \qquad \mbox{for some $\tilde{c}_2>0$}.
\]
This is a type of supermultiplicativity property for $\cQ_n(t)$, which immediately implies (b) \cite[Proposition~3.15]{BD2}.
\end{proof}

\begin{rem}
\label{rem:interpolation}
Although we present Proposition~\ref{prop:uniform growth} for all $t \in [t_0,t_1]$, in fact these properties are proved
separately for $t \le 1$ and $t>1$.  The first ingredient in the proof of Lemma~\ref{lem:long carry} is to obtain a good uniform lower bound on the growth of $\sum_{W_i \in \cG_n^{\delta_1}(W)} |J_{W_i}T^n|^t_{C^0}$.  For $t\le 1$, a 
sufficient first estimate is $\ge C \Lambda^{n(1-t)} |W| \delta_1^{-1}$ \cite[eq. (3.9)]{BD2}.

For $t>1$, an interpolation is necessary.  For $t>1$, we choose $s <1$ and $\alpha \in (0,1)$ such that
$1 = \alpha t + (1-\alpha)s$.  Then using the H\"older inequality, for any series of positive terms,
$\sum_i a_i \le (\sum_i a_i^t)^\alpha (\sum_i a_i^s)^{1-\alpha}$.  Applying this with $a_i = |J_{W_i}T^n|$
and using the appropriate lower bound for $s<1$, yields
\[
\sum_{W_i \in \cG_n^{\delta_1}(W)} |J_{W_i}T^n|^t_{C^0} \ge \frac{( \sum_i |J_{W_i}T^n|_{C^0} )^{1/\alpha}}
{(\sum_i |J_{W_i}T^n|^s_{C^0} )^{(1-\alpha)/\alpha} } \ge C e^{n P_*(s) (\alpha -1)/\alpha}.
\]
Note $\alpha = \frac{1-s}{t-s}$ implies $\frac{\alpha-1}{\alpha} = \frac{1-t}{1-s}$, so we can make the lower bound
arbitrarily close to $e^{- n (t-1) \chi}$ where $\chi = \lim_{s \to 1^-} \frac{P_*(s)}{1-s}$.   This means we can extend
Lemma~\ref{lem:long carry} and Proposition~\ref{prop:uniform growth} to a point $s_1 >1$ where
the right subtangent to $P_*(t)$ intersects $t \log \theta$.  Then we interpolate again using $s_1$ in place of 1.  This 
process can be continued until we reach any specified $t_1 < t_*$ as in Proposition~\ref{prop:uniform growth},
i.e. as long as the pressure gap persists (see Remark~\ref{rem:pressure gap}).
\end{rem}


\subsection{Banach spaces for geometric potentials}
\label{sec:geo banach}

The geometric estimates established above allow one to control the evolution of the transfer operator
$\cL_t$ on certain anisotropic Banach spaces, whose definition we recall for the reader's convenience.  
A useful perspective on the definition of these spaces is that they are constructed as a type of dual (at least in
spirit) to the notion of standard pairs.  A standard pair is a cone-unstable curve together with a log-H\"older density whose
regularity is preserved when pushed forward by a hyperbolic map; by contrast, our norms are defined as integration
against H\"older test functions on stable curves.
The definition of these norms is inspired by those developed first for smooth systems
in \cite{liverani gouezel, GL2} and then extended to piecewise hyperbolic maps in \cite{demers liverani}.

The transfer operators $\cL_t$ have weights given by of the stable Jacobian, $(J^sT)^{1-t}$.  While $J^sT$ is not continuous
on any open set, it is smooth along local stable manifolds.  To exploit this fact, we define our norms to integrate
along local stable manifolds in $\cW^s$ rather than the wider collection of cone-stable curves $\hW^s$.

\begin{rem}  
In the case $t=1$, the weight in the transfer operator
is simply 1 and the stable Jacobian does not appear.  This is the simplest (SRB) case in which cone-stable
curves $\hW^s$ can be used.  See for example, \cite{demzhang11, demzhang13, demzhang14}.  
One advantage of using $\hW^s$ is that one can easily study perturbations of the billiard map:  for many
classes of perturbations, a single set $\hW^s$ is common to all the relevant maps.  See Section~\ref{sec:sequential}
for an application of this idea to study sequential billiards \cite{dl cones}.
\end{rem} 

\subsubsection{Definition of Banach Spaces for $t>0$}\label{sec:Bt>0}

Before defining the relevant norms, we define notions of distance between curves and test functions on those curves.

Since the stable cone is bounded away from the vertical, we view each $W \in \cW^s$ as the graph of a function of the arclength coordinate $r$ over some interval $I_W$,
\begin{equation}
\label{eq:graph}
W = \{ G_W(r) : r \in I_W \} = \{ (r, \vf_W(r)) : r \in I_W \} \, .
\end{equation}

Given $W_1, W_2 \in \cW^s$, with defining functions $\vf_{W_1}$, $\vf_{W_2}$, let
\[
d_{\cW^s}(W_1, W_2) = | I_{W_1} \bigtriangleup I_{W_2} | + |\vf_{W_1} - \vf_{W_2} |_{C^1(I_{W_1} \cap I_{W_2})} \, ,
\]
if $W_1$, $W_2$ lie in the same homogeneity strip, and $d_{\cW^s}(W_1, W_2) = \infty$ otherwise.  While this distance
does not satisfy the triangle inequality, it does serve as an adequate measure of distance between curves in $\cW^s$.

If $d_{\cW^s}(W_1, W_2) < \infty$ then we also define a notion of distance between test functions on these curves.  Given
$\psi_i \in C^0(W_i)$, define
\begin{equation}\label{eq:psi_dist}
d_0(\psi_1, \psi_2) = | \psi_1 \circ G_{W_1} - \psi_2 \circ G_{W_2} |_{C^0(I_{W_1} \cap I_{W_2})} \, .
\end{equation}

Recall from \eqref{eq:homogeneous} that $q \ge 2/t_0$ defines the homogeneity strips. 
Also, $\cW^s_{\bH} \subset \cW^s$ denotes those curves in $\cW^s$ that lie in a single homogeneity strip.

Fix $0< \alpha \le 1/(q+1)$. 
For $f \in C^1(M)$, define the \emph{weak norm} of $\oldh$ by\footnote{ Recall that $m_W$ denotes arclength along $W$.}
\begin{equation}
\label{eq:weak}
|\oldh|_w = \sup_{W \in \cW^s_{\bH}} \sup_{\substack{\psi \in \cC^\alpha(W) \\ |\psi|_{\cC^\alpha(W)} \leq 1}} \int_W \oldh \, \psi \, dm_W \, .
\end{equation}
We define the weak space $\mathcal{B}_w$ to be the completion of $C^1(M)$ in the 
$| \cdot |_w$ norm.

Next choose $p > q+1$, $\beta \in (1/p, \alpha)$ and $\gamma < \min \{ 1/p, \alpha - \beta \}$.
Define the \emph{strong stable norm} of $\oldh$ by
\begin{equation}
\label{eq:stable t>0}
\| \oldh \|_s = \sup_{W \in \cW^s_{\bH}} \sup_{\substack{\psi \in \cC^\beta(W) \\ |\psi |_{\cC^\beta(W)} \leq |W|^{-1/p}}} \int_W \oldh \, \psi \, dm_W \, .
\end{equation}

Define the \emph{strong unstable norm} of $\oldh$ by
\begin{equation}
\label{eq:unstable t>0}
\|\oldh\|_u = \sup_{\ve \leq \ve_0} \sup_{\substack{W_1, W_2 \in \cW^s_{\bH} \\ d_{\cW^s}(W_1,W_2) \leq \ve}}
\sup_{\substack{|\psi_i|_{\cC^\alpha(W_i)} \leq 1 \\ d_0(\psi_1,\psi_2)=0}}
\ve^{-\gamma} \left| \int_{W_1}  \! \! \! \oldh \psi_1 \,  - \int_{W_2} \! \! \! \oldh \psi_2 \,  \right| \, .
\end{equation}

The \emph{strong norm} of $\oldh$ is defined to be 
$\| \oldh \|_{\cB} = \| \oldh\|_s + c_u \|\oldh\|_u$, for some $c_u > 0$ which is chosen in order to faciltiate the proof of the
Lasota-Yorke inequalities.
The strong space $\mathcal{B}$ is then defined as the completion of $C^1(M)$ in the $\| \cdot \|_{\cB}$ norm.

\begin{rem}
\label{rem:meaning}
Intuitively, one can understand the strong and weak norms as follows.  The strong stable norm of $\oldh$ uses test functions
which are less smooth, $C^\beta$, than those in the weak norm, $C^\alpha$.  Since the unit ball of $C^\alpha(W)$ is
relatively compact in $C^\beta(W)$ for $\beta < \alpha$, this relation provides compactness of our function spaces
restricted to a single curve.  The strong unstable norm imposes a type of averaged H\"older regularity for $\oldh$
in a direction approximately aligned with the unstable cone.  The test functions are again in $C^\alpha$, which
allows the norm to control the difference in weak norm on comparable elements of $\cW^s$.

The weight $|W|^{-1/p}$ in the strong stable norm forces the weak norm of an integral on a short stable curve to be small.  This is used both for compactness, allowing one to slide a stable curve slightly in the stable cone without changing the value of the weak norm much, as well as for controlling the effect of short pieces created by
discontinuities of the billiard map when the stable curves are iterated to prove the Lasota-Yorke inequalities.
\end{rem}


\subsubsection{Quasicompactness of $\cL_t$ and a Spectral Gap}

The definitions above and choices of parameters yield the following proposition, which establishes the
first step in the proof of Theorem~\ref{thm:thermo gap}.

\begin{prop}[\cite{BD2}]
\label{prop:LY}
Fix $0< t_0\leq t_1 < t_*$.  We may choose parameters for the norms so that $\cB = \cB(t_0, t_1)$ satisfies
the following.
\begin{itemize}
     \item[a)] There is a sequence of continuous inclusions,
\[
\cC^1(M) \subset \cB \subset \cB_w \subset (\cC^\alpha(M))^* .
\]
    \item[b)] The embedding of the unit ball of $\cB$ in  $\cB_w$ is compact. 
    \item[c)] (Lasota-Yorke inequalities) There exist
    $C, C_n >0$ such that for all $f \in \cB$, $t\in [t_0,t_1]$, and $n \ge 0$,\footnote{ See Lemma \ref{lem:one step} for the defintion of $\theta$.}
    \[
    \begin{split}
    | \cL_t^n \oldh |_w & \le C \cQ_n(t)  |\oldh|_w \, , \\
    \| \cL_t^n \oldh \|_s & \le C \big( \Lambda^{-(\beta - 1/p)n} \cQ_n(t) + \theta^{(t-1/p)n} \big) \| \oldh \|_s + C_n |\oldh|_w ) \\
    \| \cL_t^n \oldh \|_u & \le C \cQ_n(t) \big( n^\gamma \Lambda^{- \gamma n} \| \oldh\|_u  + C_n \| \oldh\|_s \big) \, .
    \end{split}
    \]
\end{itemize} 
\end{prop}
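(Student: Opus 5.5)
The plan is to prove the three items in order. Part (c) is the core; parts (a) and (b) are comparatively soft.

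\emph{Part (a).} For $f \in \cC^1(M)$ the inclusion $\cC^1(M)\subset\cB$ follows from bounding each norm directly. We have $\int_W f\psi \le |f|_{C^0}|W|^{1-1/p}$, which bounds the stable norm. For the unstable norm we write $W_1,W_2$ as graphs over $I_{W_1}\cap I_{W_2}$ and compare the integrands. Since $d_0(\psi_1,\psi_2)=0$ and $|\vf_{W_1}-\vf_{W_2}|_{C^1}\le\ve$, the difference is $O(|f|_{C^1}\ve)$ on the common part, plus $O(\ve)$ from $I_{W_1}\triangle I_{W_2}$. This gives a bound $\ve^{1-\gamma}\le \ve_0^{1-\gamma}$. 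The inclusion $\cB\subset\cB_w$ is $|f|_w\le C\|f\|_s$, because $|\psi|_{C^\beta}\le |\psi|_{C^\alpha}\le 1 \le \delta_0^{1/p}|W|^{-1/p}$. For the embedding into $(\cC^\alpha(M))^*$, we disintegrate Lebesgue measure along a smooth foliation by stable curves in $\cW^s_{\bH}$. A $\cC^\alpha$ test function on $M$ restricts to $\cC^\alpha$ functions on the leaves, and integrating the leafwise bounds gives $|\int f\psi\,dm|\le C|f|_w|\psi|_{C^\alpha}$. Injectivity of these maps on completions follows the standard argument of \cite{demers liverani}.

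\emph{Part (b).} I would follow the usual Arzel\`a--Ascoli scheme. Fix $\ve>0$ and choose a finite $\ve$-net $\{W_j\}$ in $\cW^s_{\bH}$ for $d_{\cW^s}$; short curves are handled by the weight $|W|^{-1/p}$, which makes their contribution $O(\ve^{1/p})\|f\|_s$. On each $W_j$, choose a finite set of $C^1$ functions that is $\ve$-dense in the $C^\alpha$ unit ball with respect to the $C^\beta$ norm; this uses compactness of $C^\alpha\hookrightarrow C^\beta$. For arbitrary $(W,\psi)$ we pass to a nearby net element via the unstable norm, which costs $\ve^\gamma\|f\|_u$, and then to a net test function via the stable norm. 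The finitely many resulting functionals determine $|f|_w$ up to $C\ve^\gamma\|f\|_\cB$, and this gives total boundedness.

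\emph{Part (c).} For all three inequalities I write
\[
\int_W \cL_t^n f\,\psi\,dm_W=\sum_{W_i\in\cG_n(W)}\int_{W_i} f\,(J^sT^n)^{t}\,\psi\circ T^n\,dm_{W_i}.
\]
Here the Jacobian factor $(J^sT^n)^{-(1-t)}$ combines with the change of variables $J_{W_i}T^n$ to give the power $t$. Bounded distortion along stable manifolds controls the $C^\alpha$ norm of $(J^sT^n)^t$ on $W_i$.

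\emph{Weak norm.} We have $|\psi\circ T^n|_{C^\alpha(W_i)}\le C$ by contraction, so the left side is at most $C|f|_w\sum_i|J_{W_i}T^n|^t_{C^0}$. The upper bound $\sum_i |J_{W_i}T^n|^t\le C\cQ_n(t)$ follows by grouping the $W_i$ according to elements of $\cM_0^{n,\bH}$ and applying \eqref{eq:general distortion}; Proposition~\ref{prop:uniform growth}(b) then makes this uniform in $t$.

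\emph{Stable norm.} We split $\psi\circ T^n=\bar\psi_i+(\psi\circ T^n-\bar\psi_i)$, where $\bar\psi_i$ is the average over $W_i$. The oscillation term has $C^\alpha$-small norm: it is $C^\beta$-bounded by $|W|^{-1/p}\Lambda^{-\beta n}$, and its $C^{\beta}$ seminorm measured against $|W_i|^{-1/p}$ can be bounded using $\|f\|_s$. This produces $\Lambda^{-(\beta-1/p)n}\cQ_n(t)\|f\|_s$ after summing. The average term is bounded by $|f|_w$ for long pieces and gives the $C_n|f|_w$ term. Short pieces instead use $\|f\|_s$ together with the factor $(|W_i|/|W|)^{1/p}$ and Hölder's inequality. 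With the one-step expansion (Lemma~\ref{lem:one step}) iterated along generations, this gives $\theta^{(t-1/p)n}$.

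\emph{Unstable norm.} For $W^1,W^2$ that are $\ve$-close, we pair up the preimage pieces $W^1_i,W^2_i$ via unstable curves, which gives matched pieces with $d_{\cW^s}\le C n\Lambda^{-n}\ve$; the factor $n$ comes from the curvature and angle errors that accumulate over $n$ steps. For matched pieces we replace the test functions by functions with $d_0=0$. The error is controlled by $\|f\|_s$ times $\ve^{\alpha-\beta}\ge\ve^\gamma$, using $\gamma<\alpha-\beta$. The main term then contributes $(Cn\Lambda^{-n}\ve)^\gamma\|f\|_u$ per piece, which gives $n^\gamma\Lambda^{-\gamma n}$. Unmatched pieces lie within $C\ve$ of singularity curves, and are bounded by the stable norm with the length weight, using $\gamma<1/p$.

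I expect the main obstacle to be the matching step for $\|\cdot\|_u$. Its difficulty is that $J^sT$ is only defined on genuine stable manifolds, so the weights $(J^sT^n)^t$ on $W^1_i$ and $W^2_i$ must be compared through the holonomy. This needs a log-Hölder estimate $|(J^sT^n)^t\circ G_1-(J^sT^n)^t\circ G_2|\le C\ve^{\alpha}$ on matched curves in the same homogeneity strips, which I would take from the distortion bounds in \cite{BD2}. Everything else reduces to the growth estimates of Proposition~\ref{prop:uniform growth}.
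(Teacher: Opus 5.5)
The paper only quotes this result from \cite{BD2}. The skeleton of your plan is the standard one: the change of variables producing $(J^sT^n)^t$, distortion on homogeneous stable manifolds, matching for $\|\cdot\|_u$, and the Arzel\`a--Ascoli argument for (b). However, two steps fail as written.

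\emph{Embedding into $(\cC^\alpha(M))^*$.} The curves in $\cW^s_{\bH}$ are genuine homogeneous local stable manifolds, so there is no smooth foliation of $M$ by them. More seriously, pairing with Lebesgue measure makes the inequality false. A stable curve in $H_k$ has length $O(k^{-q-1})$. So if $\chi_k\in C^1$ is a bump equal to $1$ on the middle half of $H_k$, then $f_N=\sum_{k=k_0}^N k^{q+1}\chi_k$ satisfies $|f_N|_w\le C$ uniformly in $N$, while $\int f_N\,dr\,d\vf\gtrsim N-k_0$. With coefficients $k^{(q+1)(1-1/p)}$ instead, $\|f_N\|_\cB$ stays bounded (using $\gamma<1/p$) and $\int f_N\,dr\,d\vf$ still diverges because $p>q+1$; so not even $\cB$ embeds this way. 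The identification of functions with distributions must go through $\musrb$, the measure for which $\cL_1 f=f\circ T^{-1}$ is the transfer operator. Its density $\cos\vf\asymp k^{-q}$ on $H_k$ makes the corresponding series converge for $q>1$. The argument then disintegrates $\musrb$ along maximal homogeneous local stable manifolds $W_\xi$. It uses that the conditional densities are log-H\"older with $C^\alpha$ norm $O(|W_\xi|^{-1})$, and it needs $\int |W_\xi|^{-1}\,d\hat\mu(\xi)<\infty$ for the factor measure $\hat\mu$.

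\emph{Strong stable norm.} Splitting the average term by the length of $W_i$ \emph{at time $n$} does not give the stated inequality.
Iterating Lemma~\ref{lem:one step} controls only pieces whose ancestors in $\cG_j(W)$, $0\le j\le n$, were all short. The iteration breaks down as soon as an ancestor becomes long and is subdivided.
Pieces that are short at time $n$ but descend from a long curve (e.g.\ long at time $n-1$ and cut at the last step) carry a share of $\sum_i|J_{W_i}T^n|^t_{C^0}\asymp\cQ_n(t)$ that is in general not exponentially small; Lemma~\ref{lem:long carry}(a) only bounds it by $\ve$. Charging them to $\|f\|_s$ therefore gives a coefficient of order $\cQ_n(t)$, i.e.\ no contraction relative to $e^{nP_*(t)}$, and Hennion's theorem does not apply.

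You must instead sort $\cG_n(W)$ by the first time $j$ at which an ancestor $V\in\cG_j(W)$ has $|V|\ge\delta_1/3$.
\begin{itemize}
\item Pieces with such an ancestor go to the weak norm. The factor $|W|^{-1/p}$ from $|\bar\psi_i|\le|\psi|_{C^0}$ is absorbed because $|J_VT^j|_{C^0}\le C|W|/\delta_1$, leaving $C_n|f|_w$.
\item Only pieces that are short along their whole history are charged to $\|f\|_s$. For these, the one-step expansion with exponent $t-1/p$ gives $\theta^{(t-1/p)n}$.
\end{itemize}

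The oscillation term needs the same decomposition. When $W$ is short, $(|W_i|/|W|)^{1/p}$ is bounded only by $C|J_{W_i}T^n|_{C^0}^{-1/p}$, which is unbounded near tangencies. So $\Lambda^{-(\beta-1/p)n}\cQ_n(t)$ does not come just ``after summing''. It has to be obtained by combining the short-history factor $\theta^{(t-1/p)j}$ with the bound $\lesssim\cQ_{n-j}(t)$ over descendants of the long ancestor. One then uses Proposition~\ref{prop:uniform growth} (supermultiplicativity and exact growth of $\cQ_n(t)$) together with the pressure gap $\theta^t<e^{P_*(t)}$.
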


Thanks to Theorem~\ref{thm:hennion}, the geometric estimates given by Proposition~\ref{prop:uniform growth}-(b) together with the
pressure gap condition $\theta^t < e^{P_*(t)}$ ensures that we may choose $1/p < t_0$ so that 
 the spectral radius of $\cL_t$ on $\cB$ is at most
$e^{P_*(t)}$ and its essential spectral radius is strictly less than $e^{P_*(t)}$.

To conclude that $\cL_t$ is quasi-compact, we need a lower bound on the
spectral radius.
This follows from the uniform lower bound given by Proposition~\ref{prop:uniform growth}-(a) since
for any $W \in \cW^s_H$ and $n \ge 1$,
\[
\begin{split}
\int_W \cL_t^n 1 & = \sum_{W_i \in \cG_n(W)} \int_{W_i} |J_{W_i}T^n|^t \ge e^{-C_d} \sum_{W_i \in \cG_n(W)} |J_{W_i}T^n|^t_{C^0(W_i)} \\
& \ge e^{-C_d} c_1 \cQ_n(t) \ge e^{- C_d} c_1 e^{n P_*(t)} \, , 
\end{split}
\]
where we have used bounded distortion for the first inequality.
Thus $\| \cL_t^n 1 \|_s \ge C e^{n P_*(t)}$ and the spectral radius of $\cL_t$ is $e^{P_*(t)}$.

These results culminate in the following.

\begin{thm}[\cite{BD2}]
\label{thm:spectral t}
For each $0< t_0\leq t_1 < t_*$, there exists a Banach space $\cB = \cB(t_0,t_1)$
such that, for each $t\in[t_0,t_1]$, $\cL_t$ has a spectral gap and maximal simple eigenvector $e^{P_*(t)}$. 

Letting $\nu_t$ and $\tilde\nu_t$ denote the maximal right and and left eigenvectors for
$\cL_t$, define
\[
\mu_t(\psi) = \frac{\langle \nu_t, \psi \tilde\nu_t \rangle}{\langle \nu_t, \tilde\nu_t \rangle} \, ,
\quad \psi \in C^\alpha(M) \, .
\]
Then $\mu_t$ is an invariant probability measure for $T$, and enjoys 
exponential decay of correlations against H\"older observables;
$\mu_t$ has no atoms, gives 0 weight to any $C^1$ curve and is positive on open sets.  
Moreover, $\int | \log d(x, \cS_{\pm 1}) | \, d\mu_t < \infty$.  
\end{thm}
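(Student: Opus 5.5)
The argument has two parts: first the spectral picture, then the properties of $\mu_t$. For the spectral picture I would apply Theorem~\ref{thm:hennion} with $\cL=\cL_t$ acting on $\cB(t_0,t_1)$ and $|\cdot|_w$ as the weak norm. Proposition~\ref{prop:LY}(c), combined with Proposition~\ref{prop:uniform growth}(b) ($\cQ_n(t)\le c_2e^{nP_*(t)}$), gives the weak bound with $M=e^{P_*(t)}$. Choosing $1/p<t_0$ and using the pressure gap $\theta^t<e^{P_*(t)}$ (valid since $t<t_*$), the strong bound holds with some $\theta'<e^{P_*(t)}$, after iterating $n_0$ times to absorb constants. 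Proposition~\ref{prop:LY}(b) supplies the required compactness. Hence the essential spectral radius is $<e^{P_*(t)}$. The lower bound $\|\cL_t^n1\|_s\ge Ce^{nP_*(t)}$, shown just above, then gives spectral radius exactly $e^{P_*(t)}$, so $\cL_t$ is quasi-compact.

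Next I would analyse the peripheral spectrum. The key observation is that $\cL_t$ is positive: if $f\ge0$ then $\int_W f\psi\,dm_W\ge0$ for $\psi\ge0$. Using this, I would show that every peripheral eigenvalue has trivial Jordan blocks, because the bound $\|\cL_t^n\|\le Ce^{nP_*(t)}$ follows from the Lasota--Yorke inequality and the upper bound in Proposition~\ref{prop:uniform growth}(b). I would then construct $\nu_t$ as a limit point of the Ces\`aro averages $\frac1n\sum_k e^{-kP_*(t)}\cL_t^k1$. This limit is nonnegative and hence a measure, and by the lower bound it is nonzero. The left eigenvector $\tilde\nu_t$ would be obtained in the same way from $\cL_t^*$ applied to $1$ (Lebesgue).

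The main obstacle is simplicity and the absence of other peripheral eigenvalues, which amounts to mixing. My plan is to show that any peripheral eigenvector $\nu$ is absolutely continuous with respect to $\nu_t$, with density $g$ satisfying $|g|\le C$ and $g\circ T=e^{i\omega}g$, say. Using the strong unstable norm, $g$ is constant along unstable curves; $g\circ T$ is likewise constant along stable curves, which here are the leaves of $\cW^s$. A Hopf-type argument with Cantor rectangles, as in the proof of Proposition~\ref{prop:uniform growth}(a), together with mixing of $\musrb$, should then force $g$ to be constant on a set of full $\mu_t$-measure and $\omega=0$. The uniform lower bound of Proposition~\ref{prop:uniform growth}(a) for every long $W$ is what ensures that $\mu_t$ charges every Cantor rectangle, so that the Hopf chains connect across the whole space.

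For the remaining properties, invariance of $\mu_t$ follows from $\cL_t(\psi\circ T\cdot f)=\psi\,\cL_tf$ and the eigen-equations. Exponential decay of correlations follows from the spectral gap, since $\psi\nu_t\in\cB$ for $\psi\in C^\alpha$. Positivity on open sets comes from Proposition~\ref{prop:uniform growth}(a) applied to $\cL_t^n1$ on short stable curves. To show that there are no atoms and that $C^1$ curves have zero mass, I would use the factor $|W|^{-1/p}$ in the strong stable norm to obtain $\nu_t(B_\ve)\lesssim\ve^{1/p}$-type bounds, and push these forward through the dual pairing. Summing these bounds over homogeneity strips yields the integrability $\int|\log d(x,\cS_{\pm1})|\,d\mu_t<\infty$.
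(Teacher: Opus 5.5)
Your quasi-compactness argument, the absence of Jordan blocks from $\|\cL_t^n\|_{\cB}\le Ce^{nP_*(t)}$, the Ces\`aro construction of $\nu_t$ (which is just $\Pi_0 1$), and the invariance and decay-of-correlations steps all agree with the paper. The gap is in the step you yourself single out as the main obstacle, and it has three parts.

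(i) The strong unstable norm does not make the density $g$ of a peripheral eigenvector constant along unstable curves. It only bounds differences $\int_{W_1}\nu\psi_1-\int_{W_2}\nu\psi_2$ over nearby stable curves. That is an averaged statement and says nothing pointwise about $g$. What does give constancy along unstable leaves is that $g$ arises as a limit of twisted backward averages $\frac1n\sum_k e^{-ik\omega}f\circ T^{-k}$, via $\cL_t^k(f\nu_t)=(f\circ T^{-k})\,\cL_t^k\nu_t$.

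(ii) Constancy along stable leaves would need forward averages with respect to an invariant measure. But $g$ is only defined $\nu_t$-almost everywhere, $\nu_t$ is not $T$-invariant, and $\mu_t$ need not be equivalent to $\nu_t$ for $t\neq 1$. So showing that $g$ is constant on a set of full $\mu_t$-measure would not give $\nu=c\,\nu_t$.

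(iii) Most seriously, any Hopf argument requires absolute continuity of the stable and unstable holonomies with respect to the measure being used. This is a substantial result in its own right: for $t=0$ it is precisely what \cite[Corollary~7.9]{BD1} extracts from the bound $\|\nu\|_u\le C$. Nothing you invoke provides it. The proof of Proposition~\ref{prop:uniform growth}(a) is a curve-crossing argument. Mixing of $\musrb$ yields only topological information about $T$, not measure-theoretic connectivity for $\nu_t$ or $\mu_t$.

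The route sketched in the paper does not go through a Hopf argument. It first shows that peripheral eigenvectors are measures absolutely continuous with respect to $\nu_t$. It then proves that the peripheral eigenvalues are $e^{P_*(t)}e^{2\pi i\theta_j}$ with $\theta_j$ rational: powers of an eigen-density produce the eigenvalues $e^{2\pi i k\theta_j}$, while the peripheral spectrum is finite. Next, working only in $L^1(\nu_t)$, it uses the topological mixing of $T$ to show that $1$ is a simple eigenvalue of $e^{-kP_*(t)}\cL_t^k$ for every $k\ge 1$. Taking $k$ to be a common denominator of the $\theta_j$ then forces $\Pi_j=0$ for all $j\neq 0$. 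To repair your proof, you would either have to supply the holonomy regularity for the relevant measures and reconcile $\mu_t$ with $\nu_t$, or restructure the final step along these lines.
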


\begin{proof}[Idea of proof]
The exact exponential growth from Proposition~\ref{prop:uniform growth} implies
\[
\| \cL_t^n \|_{\cB} \le C \cQ_n(t) \le C' e^{n P_*(t)} \, ,
\]
so that the peripheral spectrum of $\cL_t$ has no Jordan blocks.  This plus quasi-compactness implies
the following spectral decomposition:
There exist a finite set $\{ \theta_j \}_{j=0}^{N}$, $\theta_0=0$, linear operators $\Pi_j, R: \cB \circlearrowleft$
satisfying $\Pi_i \Pi_j = \Pi_j R = R \Pi_j = 0$ with spectral radius of $R < 1$, such that
\[
e^{-P_*(t)} \cL_t = \sum_{j=1}^N e^{2\pi i \theta_j} \Pi_j + R \, .
\] 

From here, the proof of a spectral gap follows the usual lines.  (See \cite[Section~4.2]{demers intro} for details in a simple
context.)  Defining $\nu_t = \Pi_0 1$, one shows that all eigenvectors corresponding to the peripheral spectrum are measures absolutely continuous wrt $\nu_t$, and $\theta_j$ must be rational.  
Then working in $L^1(\nu_t)$, one uses the topological mixing property of the map to show 
1 is a simple eigenvalue for $e^{-P_*(t)} \cL_t^k$ for $k\ge1$.  This eliminates any possibility of other elements in the
peripheral spectrum. 
\end{proof}


\subsubsection{Entropy of $\mu_t$ and a Variational Principle}

With the existence of $\mu_t$ established by Theorem~\ref{thm:spectral t}, we next formulate a
related variational principle proving that $\mu_t$ is an equilibrium state for $- t \log J^uT$.

For $x \in M$, $n \in \bN$ and $\ve >0$, define the 
Bowen ball $B(x,n,\ve) = \{ y \in M : d(T^{-i}x, T^{-i}y) \le \ve, \forall i \in [0,n] \}$.

\begin{prop}[Measure of Bowen Balls]
\label{prop:bowen}
There exists $C >0$ s.t. for all $x \in M$, $n \ge1$, and $y \in B(x, n, \ve)$,
\[
\mu_t(B(x,n,\ve)) \le C e^{-n P_*(t) + t \log J^sT^n(T^{-n}y) } .
\]
\end{prop}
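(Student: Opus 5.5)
We prove the estimate by writing $\mu_t(B(x,n,\ve))$ through the spectral data of $\cL_t$ and pushing the set forward by $T^{-n}$, so that its measure becomes an integral over a short piece of phase space weighted by the transfer operator iterate.

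\emph{Step 1 (approximation and duality).} Let $\psi$ be a smooth approximation from above of $\Id_{B(x,n,\ve)}$, or work with $\Id_B$ directly if the pairing extends. Invariance of $\mu_t$ and the definition in Theorem~\ref{thm:spectral t} give
\[
\mu_t(\psi)=\frac{\langle \nu_t,\psi\tilde\nu_t\rangle}{\langle\nu_t,\tilde\nu_t\rangle}.
\]
Use the relation $e^{-nP_*(t)}\cL_t^n\nu_t=\nu_t$ and move $\cL_t^n$ across the pairing. This rewrites $\langle\nu_t,\psi\tilde\nu_t\rangle$ as $e^{-nP_*(t)}$ times a pairing of $\nu_t$ against $(\psi\circ T^n)\cdot(J^sT^n)^{1-t}$ (or its dual analogue) times $\tilde\nu_t$. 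By the definition of the Bowen ball via $T^{-i}$, the set $T^{-n}B(x,n,\ve)$ is a set $A$ on which all the backward images stay $\ve$-close. It therefore lies, up to the singularity structure, inside a single element of $\cM_0^{n,\bH}$, or in boundedly many such elements if $\ve$ is small compared with the homogeneity strip widths.

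\emph{Step 2 (estimating on a single element).} We need to bound $\int_W \cL_t^n(\Id_B\nu_t)\,dm_W$ type quantities, or equivalently the norm of $\nu_t$ restricted to $A$. Since $\nu_t\in\cB$ is a measure, we bound $\nu_t(\Id_A\cdot (J^sT^n)^{t})$ in the following way.
\begin{itemize}
\item Disintegrate along the stable curves $W\in\cW^s_{\bH}$ crossing $A$.
\item On each such curve, use $|\nu_t|_w$ to bound the integral by $|J^sT^n|^t_{C^0(W\cap A)}$.
\item Then use the distortion bound \eqref{eq:general distortion} to replace this by $e^{C}(J^sT^n(T^{-n}y))^t$ for any $y\in B$.
\end{itemize}
The exponent $t$, rather than $1-t$, arises because the unweighted Lebesgue part contributes $(J^sT^n)^1$ from contraction of stable length, while the weight contributes $(J^sT^n)^{t-1}$. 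Finally, the eigenvalue normalization from Step 1 produces the factor $e^{-nP_*(t)}$, and Proposition~\ref{prop:uniform growth}(b) guarantees that the normalization is uniform in $n$.

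\emph{Main obstacle.} The hard part is making Step 2 rigorous. Elements of $\cB$ are distributions, and $\Id_A$ is not a valid test function. The plan is to mollify $\Id_A$ along stable curves by a $C^\alpha$ function that equals $1$ on $A$ and has H\"older constant controlled by the size of $A$. Alternatively, one can use the $\tilde\nu_t$ side as a limit $\lim e^{-kP_*(t)}\int\cL_t^k(\cdot)$, with Proposition~\ref{prop:uniform growth} supplying uniform bounds on the growth of the sums over $\cG_k(W)$. We must then check that mollifying across the unstable direction costs only a constant. For this we would rely on the fact, from Theorem~\ref{thm:spectral t}, that $\mu_t$ gives zero weight to curves and integrates $\log d(x,\cS_{\pm1})$. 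We would try the $\tilde\nu_t$-as-limit approach first.
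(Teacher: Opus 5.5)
There is a genuine gap, and it sits in the distortion step. In Step~1 you claim that $T^{-n}B(x,n,\ve)$ lies in one, or boundedly many, elements of $\cM_0^{n,\bH}$ once $\ve$ is small compared with the strip widths. Step~2 then uses \eqref{eq:general distortion} to replace $|J^sT^n|^t_{C^0}$ by $J^sT^n(T^{-n}y)^t$ for an arbitrary $y\in B$.

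No such $\ve$ exists. $H_k$ has width $\asymp k^{-q-1}\to0$ and the strips accumulate on $\cS_0$, so every $\ve$-ball near $\vf=\pm\pi/2$ meets infinitely many of them. But \eqref{eq:general distortion} is only available inside a single element of $\cM_0^{n,\bH}$.

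This is not a removable technicality. By \eqref{eq:translate}, $J^sT(z)\le C\cos\vf(z)$ whenever $\cos\vf(Tz)$ is bounded below, so the stable Jacobian varies by unbounded factors across such balls. Concretely, take $n=1$, $z_0\in\cS_0$ with $Tz_0\notin\cS_0$, and $x=Tz$ with $z\to z_0$. For $z$ near $z_0$, $B(x,1,\ve)$ contains the fixed set $\{y: d(y,Tz_0)<\ve/2,\ d(T^{-1}y,z_0)<\ve/2\}$. This set has nonempty interior, hence positive $\mu_t$-mass, whereas $J^sT(T^{-1}x)^t\le C\cos\vf(z)^t\to0$. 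So the displayed inequality, read with an arbitrary $y$ (already with $y=x$), cannot hold with a uniform $C$, and no distortion argument will produce it.

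What your scheme can do is the following. For a single element $A\in\cM_0^{n,\bH}$, distortion is available and yields a bound of the displayed form for every $y\in T^nA$. For a Bowen ball, you must instead sum over the countably many homogeneous components of $T^{-n}(W\cap B)$; this is where Lemma~\ref{lem:one step} and $qt\ge 2$ enter. That gives a bound in terms of the largest Jacobians on the ball. To pass to $J^sT^n(T^{-n}x)$ in the Brin--Katok argument, you then need $\int|\log d(x,\cS_{\pm1})|\,d\mu_t<\infty$ from Theorem~\ref{thm:spectral t}.

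Separately, the duality bookkeeping in Steps~1--2 is wrong. Moving $\cL_t^n$ across the pairing via the right eigenvector only gives $\langle\nu_t,\psi\tilde\nu_t\rangle=\langle(\psi\circ T^n)\nu_t,\tilde\nu_t\rangle$. That is just $T$-invariance: no factor $e^{-nP_*(t)}$ and no Jacobian weight appear.

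The factor comes from the left eigenvector. Write $\mu_t(B)=\mu_t(T^{-n}B)$, which is proportional to $e^{-nP_*(t)}\,\tilde\nu_t\big(\cL_t^n(\Id_{T^{-n}B}\,\nu_t)\big)$. Then use $|\tilde\nu_t(f)|\le C|f|_w$, which follows from $|\cL_t^n f|_w\le C\cQ_n(t)|f|_w$ and Proposition~\ref{prop:uniform growth}(b); this is your ``$\tilde\nu_t$ as a limit'' idea.

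The quantity to estimate is then, for arbitrary $W\in\cW^s_{\bH}$, $\int_W\cL_t^n(\Id_{T^{-n}B}\nu_t)\psi\,dm_W=\sum_{W_i}\int_{W_i\cap T^{-n}B}\nu_t\,(\psi\circ T^n)\,(J_{W_i}T^n)^t\,dm_{W_i}$. Only pieces of $T^{-n}(W\cap B)$ contribute, and these lie in the \emph{forward} Bowen ball about $T^{-n}x$: the set $T^{-n}B$ controls $T^j$ for $0\le j\le n$, not backward images. Your Step~2 instead bounds a $\nu_t$-integral over $A$ with $\tilde\nu_t$ dropped, and that quantity is not $\mu_t(B)$.
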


While the analogous lower bound fails for billiards, the upper bound on the measure of the Bowen balls is sufficient
to prove the desired variational principle.  Using \cite[Main Theorem]{brin katok}, we have
for $\mu_t$-a.e. $x \in M$,
  \[
   \lim_{\ve \to 0} \limsup_{n \to \infty} - \frac 1n \log \mu_t(B(x,n, \ve)) =
   h_{\mu_t}(T) .
   \]
 This plus Proposition~\ref{prop:bowen} implies 
  \[ 
  h_{\mu_t}(T)  \ge  P_*(t) - t \int \log J^sT \, d\mu_t 
  = P_*(t) + t \int \log J^uT \, d\mu_t  .
  \]
  But $P_*(t) \ge P(t)$ by \eqref{eq:pstar} implies $P_*(t)  \ge h_{\mu_t}(T) - t \int \log J^uT \, d\mu_t$. 
We conclude that  $P_*(t) = h_{\mu_t}(T) -  t \int \log J^uT \, d\mu_t = P(t)$. 


\subsubsection{Uniqueness of Equilibrium State}

The last item of Theorem~\ref{thm:thermo gap} left to prove is the uniqueness of the equilibrium state.
This can be proved using the concept of tangent measure.
By equations \eqref{eq:pressure}, and \eqref{eq:pstar} it follows $P(t) = P(-t \log J^uT)$.
We say $\mu$ is a $C^1$ {\em tangent measure} at $t$ if
\[
P(-t \log J^uT + \phi) \ge P(t) + \int \phi \, d\mu \, , \quad \mbox{for all $\phi \in C^1(M)$.}
\]
Using \cite[Theorem~9.14]{walters}, if $\mu$ is an equilibrium state for 
$-t \log J^uT$, then $\mu$ is a tangent measure

Next,
for each $\phi \in C^1(M)$, the
perturbed transfer operator defined by
\[
\cL_{t, z\phi} f = \frac{ f \circ T^{-1}}{(J^sT)^{1-t} \circ T^{-1}} e^{z \phi \circ T^{-1}} \, , \quad z \in \mathbb{C} \, ,
\]
is an analytic perturbation of $\cL_t$ with spectral radius $e^{P(t, z\phi)}$ \cite[Proposition~5.9]{BD2}, where
\[
P(t, z\phi) = \max \left\{ h_\mu(T) + \int (-t \log J^uT + z\phi) \, d\mu : \mu \mbox{ is $T$-invariant probability } \right\} \,.
\]  
This implies in particular that 
\[
\frac{d}{dz} e^{P(t,z\phi)} |_{z=0} = e^{P(t)} \int \phi d\mu_t \, .
\]
Taking $z \in \bR$, this together with the definition of tangent measure implies
\[
\begin{split}
\int \phi \, d\mu_t & = \lim_{z \downarrow 0} z^{-1} \big( P(t,z\phi) - P(t,0) \big) \ge \int \phi \, d\mu \\
\int \phi \, d\mu_t & = \lim_{z \uparrow 0} z^{-1} \big( P(t,z\phi) - P(t,0) \big) \le \int \phi \, d\mu
\end{split}
\]
Thus $\int \phi \, d\mu_t = \int \phi \, d\mu$ for all $C^1$ functions $\phi$ and since $C^1(M)$ dense in
$C^0(M)$, $\mu = \mu_t$ and the equilibrium state is unique.  This result is summarized and generalized
in \cite[Theorem~5.8]{BD2}.

\subsubsection{Analyticity of $P(t)$}

Since $J^sT$ is not piecewise H\"older, unfortunately \cite[Proposition~5.9]{BD2} does not imply that
$\cL_t$ is analytic as a function of $t \in (0,t_*)$.  For this, a separate set of arguments is needed. 
This is summarized in \cite[Theorem~1.2]{BD2}.

\begin{thm}
The function $t \mapsto P(t)$ is analytic on $(0, t_*)$, with
\[
P'(t) = \int \log J^sT \, d\mu_t = - \int \log J^uT \, d\mu_t < 0 \, ,
\]
\[
P''(t) = \sum_{k \ge 0} \left[ \int (\log J^sT \circ T^k) \log J^sT \, d\mu_t -
(P'(t))^2 \right] \ge 0 \, .
\]
Moreover, $P''(t) = 0$ if and only if $\log J^sT = f - f \circ T + P'(t)$ for some
$f \in L^2(\mu_t)$.  Finally, both $t \mapsto \int \log J^uT \, d\mu_t$ and 
$t \mapsto h_{\mu_t}$ are decreasing functions of $t$.
\end{thm}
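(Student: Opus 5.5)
The plan is to show that $t \mapsto \cL_t$ is analytic as a map into the bounded operators on a single Banach space, even though it is not analytic on $\cB(t_0,t_1)$ itself, and then read off the formulas for $P'$ and $P''$ from perturbation theory of the isolated simple eigenvalue $e^{P(t)}$.

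\textbf{Step 1 (analyticity of $\cL_t$).} Fix $t \in (0,t_*)$ and write
\[
\cL_{t+z} f = \cL_t\big(f\, e^{z \log J^sT}\big)\circ\!\text{(shift)} = \sum_{k\ge0} \frac{z^k}{k!}\, \cL_t\big((\log J^sT)^k f\big).
\]
Formally, $\cL_{t+z} = \sum_k \frac{z^k}{k!}\cL_t M_k$, where $M_k$ is multiplication by $(\log J^sT)^k$ (composed appropriately with $T^{-1}$).

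The obstacle is that $\log J^sT$ is unbounded near $\cS_0$ and not Hölder across homogeneity strips, so multiplication by it does not act on $\cB$. Along a stable curve in a single homogeneity strip $H_k$, however, we have $|\log J^sT| \le C\log k$, and $\log J^sT$ is $C^{1/(q+1)}$ there with constant growing at most polynomially in $k$.

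I would therefore bound $\|\cL_t((\log J^sT)^k f)\|_{\cB}$ directly by repeating the one-step Lasota--Yorke computation of Proposition~\ref{prop:LY}. The extra factor $|\log J^sT|^k$ on the preimage curves $W_i$ is absorbed by the weight $|J_{W_i}T|^{t}$: since $|\log x|^k x^{\epsilon} \le (k/(e\epsilon))^k$, this gives a bound $C\,k!\,\epsilon^{-k}$ at the cost of replacing $t$ by $t-\epsilon$ in Lemma~\ref{lem:one step}. The factor $k!$ cancels against $1/k!$, so the series has positive radius of convergence $\sim\epsilon$.

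Because the Lasota--Yorke bound at $t-\epsilon$ still holds (we have $t-\epsilon \ge t_0$ on $\cB(t_0,t_1)$ with some margin), $z\mapsto \cL_{t+z}$ is analytic in a neighbourhood of $t$ as an operator on $\cB(t_0,t_1)$. I expect this step to be the main work, in particular handling the Hölder seminorm of $(\log J^sT)^k\psi$ in the strong unstable norm.

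\textbf{Step 2 (eigenvalue perturbation).} By Theorem~\ref{thm:spectral t}, $e^{P(t)}$ is a simple isolated eigenvalue. Kato's analytic perturbation theory then gives analytic $t\mapsto e^{P(t)}$, $\nu_t$ and $\tilde\nu_t$, so $P$ is analytic on $(0,t_*)$.

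Differentiating $\cL_t\nu_t = e^{P(t)}\nu_t$ and pairing with $\tilde\nu_t$ gives
\[
P'(t) = \mu_t(\log J^sT),
\]
which equals $-\int\log J^uT\,d\mu_t$ by \eqref{eq:translate} and invariance. This is negative because the integral of $\log J^uT$ is at least $\log\Lambda>0$.

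\textbf{Step 3 (second derivative).} Differentiating a second time and using the spectral gap to sum $(e^{-P}\cL_t)^k$ on the complement of the eigenspace gives the Green--Kubo formula, i.e. the asymptotic variance of $\log J^sT$ under $\mu_t$, which is therefore $\ge 0$.

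Exponential decay of correlations makes this series converge. One caveat is that $\log J^sT$ is unbounded, but it lies in every $L^p(\mu_t)$ thanks to $\int|\log d(x,\cS_{\pm1})|\,d\mu_t<\infty$ together with an approximation argument.

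The vanishing criterion $P''=0$ if and only if $\log J^sT$ is an $L^2$ coboundary plus a constant is the standard consequence of a zero asymptotic variance, via the martingale (Gordin) decomposition.

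Monotonicity then follows. The map $t\mapsto\int\log J^uT\,d\mu_t = -P'(t)$ is decreasing since $P''\ge0$. Writing $h_{\mu_t} = P(t) - tP'(t)$ gives $\frac{d}{dt}h_{\mu_t} = -tP''(t) \le 0$, so the entropy is decreasing as well.
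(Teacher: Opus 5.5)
Your approach is correct and is essentially the route the paper points to: it defers the proof to \cite[Theorem~1.2]{BD2}, remarking only that analyticity of $t\mapsto\cL_t$ needs a separate argument because $\log J^sT$ is not piecewise H\"older. Your Step~1 supplies exactly that argument, by expanding $e^{z\log J^sT}$ on the fixed space $\cB(t_0,t_1)$ and absorbing $|\log J^sT|^k$ into $(J^sT)^{\epsilon}$ at a cost of $k!\,\epsilon^{-k}$; Kato perturbation theory, Green--Kubo and the standard $L^2$-coboundary criterion then finish the job.

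Two small corrections. First, the moment bounds on $\log J^sT$ do not follow from $\int|\log d(x,\cS_{\pm1})|\,d\mu_t<\infty$, which only gives $L^1$; they do follow from your Step~1 estimate together with positivity, giving $\int|\log J^sT|^k\,d\mu_t\le C\,k!\,\epsilon^{-k}$. Second, your second-order computation yields $P''(t)=C_0+2\sum_{k\ge1}C_k$ with $C_k=\int(\log J^sT\circ T^k)\log J^sT\,d\mu_t-(P'(t))^2$, which is the correct asymptotic variance. The one-sided sum displayed in the statement omits the factor $2$ and should be read as $\sum_{k\in\mathbb{Z}}C_{|k|}$.
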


The formula for $P'(t)$ implies that,
if there exist $t_1 \neq t_2$ in $(0, t_*)$ such that $\mu_{t_1} = \mu_{t_2}$, then
$P(t)$ is affine on $(0, t_*)$ and $\log J^sT$ is $\mu_t$-a.e. cohomologous to its
average $\int \log J^sT \, d\mu_t$
for all $t \in (0, t_*)$.


\subsection{Topological entropy: the case $t=0$}
\label{sec:entropy}

The spectral picture established by Theorem~\ref{thm:thermo gap} uses that $t>0$.  It is clear from the
pressure gap condition $\Lambda^{-t} < e^{P(t)}$ that this gap vanishes in the limit as $t \to 0$ since $P(0)=0$.
For this reason, the case $t=0$ is handled separately in \cite{BD1}, where no spectral gap is proved.  Nevertheless,
one can recover the existence of a measure of maximal entropy as well as a variational principle.

Prior to \cite{BD1}, Chernov studied the topological entropy of several classes of finite horizon billiards
in \cite{chernov entropy}.  He showed that the topological entropy of the finite horizon Sinai billiard is at least
as large as the topological entropy of the symbolic shift which is semi-conjugate to the billiard map via the construction 
of a countable Markov partition.  This entropy also gives a lower bound on the growth of periodic orbits.

In the results we survey here, we will describe a different approach, which approaches the problem from the
point of view of the associated transfer operator.
Notice that in this case, the transfer operator is given by $\cL_0 f = \frac{f \circ T{-1}}{J^sT \circ T^{-1}}$, so that integrating
along a local stable manifold $W \in \cW^s$ yields,
\[
\int_W \cL_0 f \, dm_W = \int_{T^{-1}W} f \, dm_{T^{-1}W} \, , 
\]
i.e., no power of $J^sT$ remains to allow us to sum over components of $T^{-1}W$.  Thus we cannot partition
$T^{-1}W$ according to homogeneity strips or the sum would immediately diverge.
For this reason, in the following, homogeneity strips are not used and we simply partition $T^{-1}W$ according to the
discontinuities of $T$.

Recall that $\cS_{\pm n} = \cup_{i=0}^n T^{\mp i} (\cS_0)$ denotes the singularity set for $T^{\pm n}$, $n \in \mathbb{Z}$,
Define
\[
\cM_{-k}^n = \mbox{ connected components of $M \setminus (\cS_{-k} \cup \cS_n)$. }
\]
The elements of $\cM_0^n$ are the domains of continuity for $T^n$ while $\cM_{-k}^0$ are the domains of 
continuity for $T^{-k}$.  We define the topological entropy of $T$ in terms of these sets.

\begin{defin}[Topological entropy of $T$]
\label{def:top ent}
Define $h_* = \lim_{n \to \infty} \frac 1n \log \# \cM_0^n$.
\end{defin}

Similar to the definition of $P_*(t)$ in Definition~\ref{def:top press}, the limit in the definition above exists since 
$\log \# \cM_0^n$ is subadditive, i.e. $\# \cM_0^{m+n} \le \# \cM_0^n \cdot \# \cM_0^m$.  In addition, 
since $\# \cM_0^n = \# \cM_{-n}^0$, we have immediately that $h_*(T) = h_*(T^{-1})$.

We note that $h_*$ counts the exponential rate of growth of the number of domains of continuity of $T^n$ and does not depend on the choice of metric.  However, if one defines $(n,\epsilon)$-separated sets and $(n,\epsilon)$-spanning sets in the usual way using the Euclidean metric, then the corresponding Bowen definitions of 
topological entropy using these sets equal $h_*$ \cite[Theorem~2.3]{BD1}.

Similar to \eqref{eq:pstar}, since $\cM_0^1$ is a generating partition for $T$ and the entropy of a partition is bounded by its cardinality, the same classical argument yields the lower bound \cite[Lemma 3.6]{BD1},
\begin{equation}
\label{eq:hstar}
h_* \ge \sup \{ h_\mu(T) : \mu \mbox{ is a $T$-invariant probability} \}.
\end{equation}

As in Section~\ref{sec:top t>0} the main work consists of constructing a measure $\mu_0$ whose entropy equals
$h_*$, and so attains the supremum.


\subsection{Uniform growth estimates for $t=0$}
\label{sec:growth t=0}

Before studying the action of the transfer operator, we prove some uniform growth estimates for stable curves.
The growth estimates for stable curves in the case $t=0$ follow a similar path to those described in
Section~\ref{sec:growth t>0}, except that the one-step expansion Lemma~\ref{lem:one step} is no longer
available.  Instead, we use the following linear complexity bound due to Bunimovich.

For $n \ge 1$, $x \in M$, let $N(\cS_n, x)$ denote the number of singularity curves in $\cS_n$ that meet at $x$.
Define $N(\cS_n) = \sup_{x \in M} N(\cS_n ,x)$.

\begin{lem}[Linear Complexity Bound \cite{BSC1}]
\label{lem:complexity}
There exists $K>0$, depending only on the finite horizon configuration of scatterers, such that $N(\cS_n) \le Kn$
for all $n \ge1$.
\end{lem}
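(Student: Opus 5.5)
The plan is to reduce the count of singularity curves meeting at a point to a count of admissible symbolic itineraries of the backward orbit, and then to use the finite horizon and no-corner assumptions to show that only boundedly many of these itineraries can begin between two consecutive collisions.

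\emph{Step 1 (local picture).} Fix $x\in M$. The singularity curves of $\cS_n=\cup_{i=0}^n T^{-i}\cS_0$ through $x$ are exactly the sets $T^{-i}\cS_0$, $0\le i\le n$, along which some iterate $T^{i}$ of points near $x$ becomes tangential. Near $x$, $\cS_n$ therefore partitions a small neighborhood into sectors. Each sector is a domain on which $T^n$ is continuous, so it carries a well-defined symbolic itinerary, namely the sequence of scatterers hit at times $1,\dots,n$. Since these curves are (by strict convexity and continuity of the dynamics on each sector) monotone curves lying in a common cone, and they all pass through $x$, the number of curves meeting at $x$ is bounded by the number of sectors. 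So it suffices to bound the number of distinct itineraries realized by points arbitrarily close to $x$.

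\emph{Step 2 (branching only at tangencies).} Follow the trajectory of $x$ forward. Between times $i$ and $i+1$, the itinerary of nearby points can split only if the segment of the orbit of $x$ is tangent to some scatterer. By the no-corner assumption together with finite horizon, a single free flight of length at most $\tau_{\max}$ can be tangent to at most $K_0$ scatterers, for a constant $K_0$ depending only on the configuration. The key geometric claim is then this: once the trajectory of $x$ is cut at a tangency, the branches of nearby trajectories that pass on either side of the tangent scatterer separate (dispersing), and by strict convexity all but one branch subsequently follows an orbit that is non-tangential for all later times. Equivalently, new branching at time $i$ occurs only along the branch whose limiting trajectory is still exactly the orbit of $x$ (or its tangential continuation). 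In this way each time step adds at most $K_0$ new sectors rather than multiplying the existing ones. This gives $N(\cS_n,x)\le K_0(n+1)\le Kn$.

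\emph{Main obstacle.} The delicate point is justifying that side branches never produce further tangencies, that is, that the branches created at an earlier tangency do not themselves become tangent again later at $x$ exactly. If this fails, the count becomes multiplicative. To handle it, I would use that all branches emanating from $x$ are limits of orbits of points converging to $x$, so each is a generalized trajectory of $x$ itself (a broken orbit that either reflects at or passes through tangent points). Strict convexity and finite horizon then imply that there are at most linearly many such generalized orbits of length $n$: a generalized trajectory is determined by the set of tangency times at which it chooses to "pass through," and every tangency choice other than the extremal one is immediately followed by transversal collisions for a uniform time. This is the content of Bunimovich's argument in \cite{BSC1}, and I would follow it, invoking the uniform hyperbolicity \eqref{eq:hyp_flow} to rule out that nearby branches reconverge onto a tangency.
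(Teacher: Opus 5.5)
\textbf{There is a genuine gap.} Your Step~1 reduction and the finite-horizon bound of at most $K_0$ tangencies per free flight are the right ingredients. The mechanism you give for why the count is additive rather than multiplicative is false.

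A tangential reflection does not change the velocity, and the flow $\Phi_t$ is continuous; this is exactly where the no-corner hypothesis enters. Hence, as $y\to x$ inside \emph{any} sector, the trajectory of $y$ converges to the flow orbit of $x$ itself. The ``hit'' and ``miss'' branches at a grazed scatterer separate only by an amount that vanishes in this limit. So nothing disperses at the level of singularity curves through $x$, and every later tangency of the orbit of $x$ is met by \emph{every} branch, not only one.

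For instance, suppose the orbit of $x$ grazes $B_1$ and later $B_2$. Then generically the two grazing loci cross at $x$, and all four hit/miss combinations occur in the four resulting sectors. Your picture, in which only one branch keeps splitting, allows only three.

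Your proposed repair in the last paragraph does not close the gap. Labelling generalized trajectories by the set of tangency times at which they ``pass through'' gives $2^m$ labels for $m$ tangencies, not linearly many. Hyperbolicity plays no role: the branches never separate, so there is no reconvergence to exclude.

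What is missing is a topological argument. For the $k$-th tangency of the orbit of $x$, say with scatterer $B_k$, let $\Gamma_k$ be the set of $y$ near $x$ whose orbit grazes $B_k$ near the same point.
\begin{enumerate}
\item By continuity of the flow, every curve of $\cS_n$ through $x$ lies in some $\Gamma_k$.
\item $\Gamma_k$ is the preimage of the smooth curve of lines tangent to $B_k$ under the flow holonomy. This holonomy goes from a neighbourhood of $x$ in $M$ to a small transversal placed just before the tangency.
\item This holonomy is a local homeomorphism, again because $\Phi_t$ is a homeomorphism. So $\Gamma_k$ is a single curve through $x$.
\item Consequently $\Gamma_k$ contributes at most two half-curves emanating from $x$, however many sectors already exist. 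This is just as $m$ lines through a point give $2m$ sectors, not $2^m$.
\end{enumerate}
Finally, every transversal collision of the orbit of $x$ is also a collision of nearby points. So the first $n$ collisions of points near $x$ occur within at most $n$ free flights of the orbit of $x$, each containing at most $K_0$ tangencies. Hence $N(\cS_n,x)$ is bounded by a constant multiple of $K_0 n$, which is the lemma. This is the structure of the argument in \cite{BSC1}, which the paper cites without reproducing.
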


This bound is the key to establishing the fragmentation lemmas which are the analog of Lemma~\ref{lem:long carry}.
Namely, choose $n_0$ such that
$n_0^{-1} \log (Kn_0 +1) < h_*$.  Then choose $\delta_0>0$ such that any $W \in \cW^s$ of length at most 
$\delta_0$ is cut into at most $Kn_0 +1$ pieces by $\cS_{-n_0}$.

For $\delta \le \delta_0$ and $W \in \cW^s$, define $\cG_n^\delta(W)$ to be the set of preimages of $W$ under $T^{-n}$, 
with long pieces subdivided according to the length $\delta$. Finally, define
$L^\delta_n(W) = \{ W_i \in \cG_n^\delta(W) : |W| \ge \delta/3 \}$ and
$Sh_n^\delta(W) = \cG_n^\delta(W) \setminus L_n^\delta(W)$.

\begin{lem}[Prevalence of long elements]
\label{lem:fragment}
a) For all $\ve >0$ there exists $n_1, \delta >0$ such that for all $n \ge n_1$,
\[
\# Sh_n^\delta(W) \le \ve \# \cG_n^\delta(W) \qquad \mbox{for all $W \in \cW^s$ with $|W| \ge \delta/3$}.
\]
b) Let $\ve = 1/4$ and choose the corresponding $\delta_1 >0$ from part (a).  Define
\[
\begin{split}
&L_s(\cM_0^n) := \{ A \in \cM_0^n : \diam^s(A) \ge \delta_1/3 \} \\
& L_u(\cM_{-n}^0) := \{ A \in \cM_{-n}^0 : \diam^u(A) \ge \delta_1/3 \} \, .
\end{split}
\]
There exists $c_0 >0$ such that for all $n \ge 1$,
\[
\# L_s(\cM_0^n) \ge c_0 \delta_1 \# \cM_0^n \quad \mbox{ and } \quad
\# L_u(\cM_{-n}^0) \ge c_0 \delta_1 \# \cM_{-n}^0 \, .
\]
\end{lem}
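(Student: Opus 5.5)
Part (a) comes from a growth-lemma argument on $T^{-1}$, using Lemma~\ref{lem:complexity} in place of the one-step expansion. Part (b) then transfers the count to the partitions $\cM_0^n$ and $\cM_{-n}^0$.

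\emph{Step 1: short pieces from a single block.} Fix $n_0$ and $\delta_0$ as above. Any $W\in\cW^s$ with $|W|\le\delta_0$ is cut by $\cS_{-n_0}$ into at most $Kn_0+1$ pieces. I will iterate in blocks of $n_0$ steps.

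Call an element of $\cG_{kn_0}^\delta(W)$ short if its length is below $\delta/3$. Such a short element descends only from a short element at the previous block time: all ancestors of a short piece are short, unless the piece was created by a cut inside the most recent block from a long piece. Long pieces have length between $\delta/3$ and $\delta$, and $T^{-1}$ expands stable curves uniformly. Hence for $\delta$ small (with respect to $n_0$), each long piece produces at most $Kn_0+1$ descendants in one block, and this bound holds independently of $n$.

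Inductively, the number of short pieces at time $kn_0$ is then at most
\[
\sum_{j<k}(Kn_0+1)^{k-j}\,\#L_{jn_0}^\delta(W)
\]
plus the short pieces descending from pieces already short at time $0$. This is the standard growth-lemma recursion, with the contraction $\theta$ replaced by the complexity growth $(Kn_0+1)^{1/n_0}$.

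\emph{Step 2: compare with total growth.} I need the total count to grow like $e^{nh_*}$ uniformly in $W$ with $|W|\ge\delta/3$. For the upper bound, $\#\cG_n^\delta(W)\le C\delta^{-1}\#\cM_0^n$; this holds because pieces correspond to elements of $\cM_{-n}^0$ crossing $W$, subdivided by length, and $\#\cM_0^n=\#\cM_{-n}^0$. For the lower bound, $\#\cG_{n}^\delta(W)\ge c\,e^{(n-j)h_*}\#L_j^\delta(W)$ up to constants; I expect to derive it via a supermultiplicativity argument as in Proposition~\ref{prop:uniform growth}.

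Since $n_0^{-1}\log(Kn_0+1)<h_*$, the geometric sum in Step~1 is dominated by $\ve$ times the total once $n\ge n_1$. Pieces that lie between block times are handled by a bounded number of extra iterates.

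\emph{Main obstacle.} The real difficulty is proving the lower bound $\#\cG_n^\delta(W)\gtrsim e^{nh_*}$ uniformly in $W$, without circularity. I would first obtain a crude bound, namely that some long piece eventually appears by expansion. Then I would use mixing of $\musrb$ and Cantor rectangles, as in the sketch of Proposition~\ref{prop:uniform growth}(a), so that $T^{-N}W$ properly crosses a rectangle carrying a fixed fraction of $\#\cM_0^n$. Here the distortion bound is trivial, since the weights are all $1$. After that, I would bootstrap using (a) itself.

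\emph{Part (b).} Take $W$ to be a long stable curve in a rectangle $R_{i}$ that carries a proportion at least $1/k$ of the elements of $\cM_{-n}^0$ properly crossing it. By (a) with $\ve=1/4$, at least $3/4$ of $\cG_n^{\delta_1}(W)$ are long. Each long piece lies in a distinct element of $\cM_0^n$, up to the subdivision factor $\delta_1^{-1}$, and that element has $\diam^s\ge\delta_1/3$. Combined with the comparison $\#\cG_n(W)\ge c\#\cM_0^n$ from Step~2, this gives $\#L_s(\cM_0^n)\ge c_0\delta_1\#\cM_0^n$. The statement for $L_u(\cM_{-n}^0)$ follows by time reversal, since Lemma~\ref{lem:complexity} and $h_*(T)=h_*(T^{-1})$ are symmetric.
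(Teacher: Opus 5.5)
\textbf{There is a genuine gap: your argument is circular.}

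The input you need in Step~2 is a lower bound $\#\cG^\delta_{k}(U)\gtrsim e^{kh_*}$, uniform over long $U$. That is Proposition~\ref{prop:exact exp}(a), which the paper derives \emph{from} this lemma, not the other way round. Your route to it (mixing plus Cantor rectangles) needs a rectangle $R_{i_*}$ properly crossed by a fixed proportion of all elements of $\cM_{-n}^0$. An element that properly crosses a rectangle has unstable diameter bounded below, so the existence of such a rectangle is essentially the second half of (b). Your proof of (b) then starts from that same rectangle, and it also invokes $\#\cG_n(W)\ge c\,\#\cM_0^n$ from Step~2. So (a) uses (b), and (b) uses (a) and itself; ``bootstrapping with (a)'' does not break the loop.

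There is also a quantitative error that would remain even if the lower bound were granted. With $n_0$ fixed, your recursion gives $\#Sh_{kn_0}\le c^{-1}\sum_{i\ge1}\rho^i\,\#\cG_{kn_0}$ with $\rho=(Kn_0+1)e^{-n_0h_*}$. The $i=1$ term counts short pieces created in the last block; it is a fixed positive multiple that does not decrease with $n$, and nothing in your argument makes it smaller than $\ve$. Relatedly, a long piece does not have at most $Kn_0+1$ descendants per block: after expansion and subdivision it has $\gtrsim\Lambda^{n_0}$ of them. Only its \emph{short} descendants are $O(Kn_0+1)$, and that is all the recursion needs.

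\textbf{The missing idea is that (a) needs no information about $h_*$.} Bound the total crudely by length; this is the $t=0$ case of the ``first ingredient'' in Remark~\ref{rem:interpolation}. Elements of $\cG_k^\delta(U)$ have length at most $\delta$, and $T^{-1}$ expands stable curves uniformly. Hence $\#\cG_k^\delta(U)\ge \delta^{-1}|T^{-k}U|\ge C\Lambda^k$ whenever $|U|\ge\delta/3$. Since $\frac1m\log(Km+1)\to 0$, proceed in this order:
\begin{enumerate}
\item Given $\ve$, choose a block length $m$ so large that $x_m:=C'(Km+1)\Lambda^{-m}$ satisfies $\sum_{i\ge1}x_m^i\ll\ve$.
\item Only then choose $\delta\le\delta_0$ so small that every stable curve of length at most $\delta$ is cut by $\cS_{-m}$ into at most $Km+1$ pieces.
\item Run your last-long-ancestor recursion in blocks of length $m$ and compare it with the crude bound. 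This gives $\#Sh_n^\delta(W)\le\ve\,\#\cG_n^\delta(W)$ for $n\ge n_1\approx m$.
\end{enumerate}
This is why both $n_1$ and $\delta$ depend on $\ve$.

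For (b) you likewise need an argument that controls \emph{all} elements of $\cM_0^n$ (respectively $\cM_{-n}^0$) using only (a) and the complexity bound. That includes elements too small to cross any rectangle. The Cantor-rectangle and mixing step belongs to Proposition~\ref{prop:exact exp}, where (b) is an input, not an output.
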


With $\ve= 1/4$ as in Lemma~\ref{lem:fragment}-(b) above, we fix the corresponding $\delta_1>0$ from part (a)
in what follows.  For simplicity, we denote $\cG_n^{\delta_1}(W) = \cG_n(W)$.

The prevalence of long elements in both $\cG_n(W)$ and $\cM_0^n$ allows one to establish a uniform
lower bound on the cardinality of $\cG_n(W)$ as well as the exact exponential growth of $\# \cM_0^n$, as
summarized in the following proposition.

\begin{prop}[Exact exponential growth of $\#\cM_0^n$]
\label{prop:exact exp}
There exist $c_1 > 0$, and $C_2 \ge 1$ such that for all $n \ge 1$,
\begin{itemize}
  \item[a)] for any $W \in \cW^s$ with $|W| \ge \delta_1/3$,
\[
\# \cG_n(W) \ge c_1 \# \cM_0^n ;
\]
  \item[b)]  $e^{nh_*} \le \# \cM_0^n \le C_2 e^{n h_*}$.
\end{itemize}
\end{prop}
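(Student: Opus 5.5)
We follow the same strategy as the sketch of Proposition~\ref{prop:uniform growth}, with cardinalities replacing weighted sums and Lemma~\ref{lem:fragment} replacing Lemma~\ref{lem:long carry}.

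\textbf{Part (a).} First we cover $M$ by finitely many Cantor rectangles $R_1,\dots,R_k$, with $k=k(\delta_1)$, such that every stable or unstable curve of length at least $\delta_1/3$ properly crosses some $R_i$. By Lemma~\ref{lem:fragment}(b), at least $c_0\delta_1\#\cM_0^n$ elements $A\in\cM_0^n$ have $\diam^s(A)\ge\delta_1/3$. Equivalently, working with $T^n$, the images $T^nA$ have long unstable diameter, since $T^n$ maps $\cM_0^n$ bijectively onto $\cM_{-n}^0$. By pigeonhole, there is an index $i_*$ such that at least $(c_0\delta_1/k)\#\cM_0^n$ of these sets $T^nA$ properly cross $R_{i_*}$ in the unstable direction.

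Next, fix $W\in\cW^s$ with $|W|\ge\delta_1/3$. It properly crosses some $R_j$. By mixing of $\musrb$, as in the $t>0$ case, there is $N=N(\delta_1)$ such that $T^{-N}W$ contains a stable curve $V$ that properly crosses $R_{i_*}$. Then $V$ meets each of the selected $T^nA$ in a stable sub-curve. Distinct $A$ give distinct components of $T^{-n}V$, because these preimages lie in distinct domains of continuity of $T^n$. Hence
\[
\#\cG_{n+N}(W)\ge\#\cG_n(V)\ge (c_0\delta_1/k)\#\cM_0^n .
\]
To pass from $n+N$ to $n$, use $\#\cM_0^{n+N}\le\#\cM_0^n\,\#\cM_0^N$ and apply the argument with $n-N$ in place of $n$ for $n>N$. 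Small $n$ are absorbed into the constant, since $\#\cG_n(W)\ge1$ and $\#\cM_0^n$ is bounded for $n\le N$.

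The main obstacle is that without homogeneity strips and weights, one must make sure that subdividing into pieces of length $\delta_1$ does not spoil the injective correspondence between selected $A$'s and elements of $\cG_n(V)$. One must also ensure that each long $T^nA$ crossing $R_{i_*}$ really meets $V$ in a curve, rather than only in Cantor-set points. I would handle this as in \cite{BD1}: use properly crossing rectangles whose stable and unstable curves intersect in the rectangle, and count components meeting the rectangle.

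\textbf{Part (b).} The lower bound $e^{nh_*}\le\#\cM_0^n$ is immediate from subadditivity, since $h_*=\inf_n\frac1n\log\#\cM_0^n$. For the upper bound, we prove supermultiplicativity $\#\cM_0^{n+m}\ge c\,\#\cM_0^n\,\#\cM_0^m$.

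Take a long stable curve $W$ and decompose $\cG_{n+m}(W)$ through $\cG_m(W)$. By Lemma~\ref{lem:fragment}(a) with $\ve=1/4$, at least $\tfrac34\#\cG_m(W)$ of the elements of $\cG_m(W)$ are long (for $m\ge n_1$). Each long $V_j$ contributes at least $c_1\#\cM_0^n$ elements by part (a). Therefore
\[
\#\cG_{n+m}(W)\ge \tfrac34 c_1^2\,\#\cM_0^m\,\#\cM_0^n .
\]
In addition, $\#\cG_{n+m}(W)\le C\#\cM_0^{n+m}$, because each component of $T^{-(n+m)}W$, after subdivision into boundedly many pieces, lies in one element of $\cM_0^{n+m}$ together with its bounded overlap. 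Combining these gives supermultiplicativity with constant $\tilde c$.

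Applying Fekete's lemma to $\log(\tilde c\,\#\cM_0^n)$ then gives $\tilde c\,\#\cM_0^n\le e^{nh_*}$. This yields the upper bound with $C_2=\tilde c^{-1}$, after adjusting for small $n$.
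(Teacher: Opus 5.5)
Your proposal is correct and follows essentially the paper's route. For (a) you use Cantor rectangles, pigeonhole and mixing of $\musrb$; for (b) you obtain supermultiplicativity of $\#\cM_0^n$ from (a), Lemma~\ref{lem:fragment}(a) and the comparison $\#\cG_{n+k}(W)\lesssim \delta_1^{-1}\#\cM_0^{n+k}$, and then apply Fekete. One slip to fix: $\diam^s(A)\ge\delta_1/3$ is not equivalent to $T^nA$ having long unstable diameter, so the fact you actually use must come from the other half of Lemma~\ref{lem:fragment}(b), namely $\#L_u(\cM_{-n}^0)\ge c_0\delta_1\#\cM_{-n}^0$, transported to $\cM_0^n$ via the bijection $T^{-n}$.
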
 

\begin{proof}[Sketch of Proof]
The proof of part (a) follows as in the proof of Proposition~\ref{prop:uniform growth}-(a), covering $M$ with
Cantor rectangles such that any stable or unstable curve of length $\delta_1/3$ properly crosses at least one $R_i$ and using the prevalence of long pieces in $\cG_n(W)$ to ensure the eventual crossing of a reference rectangle.
See \cite[Proposition~5.5]{BD1} for details.

Part (a) and Lemma~\ref{lem:fragment}-(a) imply a type of supermultiplicativity of $\# \cM_0^n$ since,
\[
\# \cG_{n+k}(W) \ge \sum_{V_j \in L_n^{\delta_1}(W)} \# \cG_k(V_j) \ge \# L_n^{\delta_1} c_1 \# \cM_0^k
\ge \tfrac{3\delta_1}{4} \# \cG_n \# \cM_0^k \ge \tfrac{3 c_1^2}{4} \# \cM_0^n \# \cM_0^k \, ,
\]
and $\# \cM_0^{n+k} \ge \delta_1^{-1} \cG_{n+k}(W)$.  Then part (b) follows immediately as in
\cite[Proof of Proposition~4.6]{BD1}.
\end{proof}

Proposition~\ref{prop:exact exp} yields the following result that stable curves grow at a uniform exponential rate given by
the topological entropy.

\begin{cor}
There exists $C>0$ such that for all $W \in \hW^s$ with $|W| \ge \delta_1/3$, and for all $n \ge n_1$,
\[
C e^{n h_*} \le |T^{-n}(W)| \le C^{-1} e^{n h_*} \, .
\]
\end{cor}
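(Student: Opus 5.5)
The plan is to compare the length $|T^{-n}W|$ with the cardinality $\#\cG_n(W)$, and then invoke Proposition~\ref{prop:exact exp}. (The statement is written with the constant $C$ on the left and $C^{-1}$ on the right; I read this as: there is a constant, $C\le 1$, giving two-sided comparability with $e^{nh_*}$.) Here $\cG_n(W)=\cG_n^{\delta_1}(W)$ is the $n$th generation of $W$. In the case $t=0$ it consists of the connected components of $T^{-n}W$, cut only by singularities and not by homogeneity strips, and subdivided so that each element has length at most $\delta_1$. Hence
\[
|T^{-n}W| = \sum_{W_i\in\cG_n(W)} |W_i| .
\]

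\emph{Lower bound.} Drop the short pieces, so that
\[
|T^{-n}W| \ge \tfrac{\delta_1}{3}\,\# L_n^{\delta_1}(W).
\]
By Lemma~\ref{lem:fragment}(a) with $\ve=1/4$, for $n\ge n_1$ we have $\# L_n^{\delta_1}(W)\ge \tfrac34 \#\cG_n(W)$. Proposition~\ref{prop:exact exp}(a) then gives $\#\cG_n(W)\ge c_1\#\cM_0^n$, and part (b) gives $\#\cM_0^n\ge e^{nh_*}$. Chaining these yields $|T^{-n}W|\ge \tfrac{\delta_1 c_1}{4}e^{nh_*}$.

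One caveat: Lemma~\ref{lem:fragment} and Proposition~\ref{prop:exact exp} are stated for $W\in\cW^s$, while the corollary is stated for $W\in\hW^s$. I expect their proofs to go through verbatim for cone-stable curves, since they use only the linear complexity bound (Lemma~\ref{lem:complexity}), transversality to singularity curves, and the Cantor-rectangle crossing argument. If necessary, I will restate them for $\hW^s$.

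\emph{Upper bound.} Each element of $\cG_n(W)$ has length at most $\delta_1$, so $|T^{-n}W|\le \delta_1\,\#\cG_n(W)$. It therefore suffices to bound $\#\cG_n(W)$ by a constant times $\#\cM_0^n$. The components of $T^{-n}W$ correspond injectively to pairs $(A,\text{component of } T^{-n}W\cap A)$ with $A\in\cM_{-n}^0$, since $T^{-n}$ is smooth on each element of $\cM_{-n}^0$. For a single $A$, the set $T^{-n}(W\cap A)$ is a connected cone-stable curve provided $W\cap A$ is connected. Here one must use that $\cS_{-n}$ consists of unstable curves, which are uniformly transverse to $W$, and that $W$ has bounded length. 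The number of components of $W\cap A$ may exceed one, but only boundedly in terms of $W$, because $W$ can be subdivided into $O(|W|/\delta_0)$ short pieces. On each such piece, the elements $A$ are separated by $\cS_{-n}$ curves crossing it.

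This injection bounds the number of maximal components by $C\#\cM_{-n}^0=C\#\cM_0^n$. The additional subdivision into length-$\delta_1$ pieces at most multiplies the count by $1+\delta_1^{-1}\cdot(\text{length})$, and this is again controlled by $|T^{-n}W|$. To avoid circularity, I would instead bound the length directly: $T^{-n}(W\cap A)$ lies in $A$, and a stable curve inside an element of $\cM_0^n$ has uniformly bounded length, say $\le \diam(M)$ times a cone constant. So
\[
|T^{-n}W|\le C\#\cM_0^n\le CC_2e^{nh_*}.
\]

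\emph{Main obstacle.} The delicate step is the injection $\#\{\text{components}\}\lesssim \#\cM_{-n}^0$. It requires that a stable curve meets each continuity domain of $T^{-n}$ in boundedly many connected pieces. I would first try to establish this using the geometry of the singularity set: $\cS_{-n}$ consists of decreasing (unstable) curves, which a stable curve crosses transversally, and each element of $\cM_{-n}^0$ is convex enough along the stable direction that its intersection with a short stable curve is connected.
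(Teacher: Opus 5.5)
Your argument is the paper's: the lower bound is verbatim (drop the short pieces, apply Lemma~\ref{lem:fragment}(a) with $\ve=1/4$, then Proposition~\ref{prop:exact exp}). For the upper bound the paper simply writes $|T^{-n}W|\le\delta_1\#\cG_n(W)\lesssim\#\cM_0^n\le C_2e^{nh_*}$, taking for granted exactly the comparison between pieces of $T^{-n}W$ and continuity domains that you flag; your choice to bound the length per domain, rather than count $\delta_1$-pieces, is a harmless and slightly cleaner bookkeeping.

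The flagged step does not follow from local transversality alone. What closes it is the continuation property of singularity curves: each curve of $\cS_{-n}\setminus\cS_0$ lies on a monotone curve in $\cS_{-n}$ joining $\vf=-\pi/2$ to $\vf=\pi/2$, so each element of $\cM_{-n}^0$ is a region between two monotone boundary graphs and meets a short stable curve in a single interval.
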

\begin{proof}
The upper bound is straightforward using Proposition~\ref{prop:exact exp}-(b) since $|T^{-n}(W)| \le \delta_1 \# \cG_n(W) \le \# \cM_0^n$.  For the lower bound, we combine Proposition~\ref{prop:exact exp}-(a) with the estimate,
\begin{equation}
\label{eq:length}
|T^{-n}(W)| = \sum_{W_i \in \cG_n(W)} |W_i| \ge \tfrac{\delta_1}{3} \# L_n^{\delta_1}(W) 
\ge \tfrac{\delta_1}{4} \# \cG_n(W) \, .
\end{equation}
\end{proof}


\subsection{Modified Banach spaces for $t=0$}
\label{sec:modified}

With the uniform growth estimates proved in the previous section, we are ready to introduce the relevant Banach
spaces adapted to the transfer operator corresponding to $t=0$.  Recalling \eqref{eq:transfer}, the operator is
\[
\cL_0 f = \frac{f \circ T^{-1}}{J^s T \circ T^{-1} } \, .
\]
Unfortunately, $\cL_0$ does not have a spectral gap on the Banach spaces we shall define, whose regularity
must be weakened due to the lack of homogeneity strips.  In order to bound on the action of $\cL_0$
sufficiently, we introduce a new assumption on the billiard table. 

For $n >0$ and $\vf_0 \in (0, \pi/2)$, define
\[
s_0(n, \vf_0) = \sup_{x \in M} \frac 1n \sum_{i=0}^{n-1} \Id_{\{ |\vf| \ge \vf_0 \}} \circ T^i(x) \, ,
\]
to be the maximum proportion of `nearly tangential' collisions of any orbit of length $n$.  Due to the finite
horizon condition, one can always choose $\vf_0$ and $n$ so that $s_0(n, \vf_0)<1$, and if there are 
no triple tangencies, so that $s_0(n, \vf_0) \le 2/3$.  Our assumption on the billiard table is the following:
\begin{equation}
\label{eq:s0}
\mbox{There exists $\vf_0 < \pi/2$ and $n_0 \ge 1$ such that 
$h_* > s_0(n_0, \vf_0)  \log 2$.}
\end{equation}

The factor $\log 2$ comes from the fact that if $W$ is a local stable manifold, $|T^{-1}W| \le C|W|^{1/2}$ and this
upper bound can be attained near tangential collisions \cite[Section~4.9]{chernov book}.
Assumption \eqref{eq:s0} requires that the exponential rate $h_*$ of the global complexity, as described by
Proposition~\ref{prop:exact exp}-(b), dominates the
local rate of recurrence to singularities of individual orbits.

\begin{rem}
\label{rem:generic}
Assumption \eqref{eq:s0} is verified for open sets of parameters for two families of finite
horizon tables in \cite[Section 2.4]{BD1}.  Moreover, the condition is not known to fail for any finite horizon 
Sinai billiard table.

Indeed, it is conjectured (see \cite[Conjecture~3.3]{Balint Toth}) that the complexity of the table as defined in Lemma~\ref{lem:complexity} is in fact bounded for generic 
finite horizon billiard tables, i.e. there exists $K >0$ such that $N(\cS_n) \le K$ for all $n \ge 1$.
Under this assumption, it is proved in \cite{DK} that $s_0$ can be chosen arbitrarily small, so that
condition \eqref{eq:s0} necessarily holds for tables with bounded complexity.
\end{rem}


\subsubsection{Definition of Banach Spaces for $t=0$}

We recall briefly the definition of Banach spaces constructed for $t=0$ in \cite{BD1}, pointing out the main differences with those for $t>0$ defined in Section~\ref{sec:geo banach}.  As already alluded to, homoegeneity
strips are not used and some of the  H\"older regularity is weakened to a logarthmic modulus of continuity.

Using Assumption~\ref{eq:s0}, we choose $\alpha, \beta, \varsigma > 0$ and $\gamma >1$ such that 
\[
\beta < \alpha \le 1/3, \quad 2^{s_0 \gamma} < e^{h_*}, \quad \varsigma < \gamma.
\] 
Recall $n_0$ chosen after the complexity bound, Lemma~\ref{lem:complexity}, and increase
it if necessary so that 
\[
\frac1{n_0} \log (K n_0+1) < h_* - \gamma s_0 \log 2 \, ,
\]
where $K$ is the constant from the complexity bound.  Fix $\delta_0$ so that any $W \in \cW^s$ with $|W| \le \delta_0$ is cut into at most $Kn_0 +1$ pieces by $\cS_{-n_0}$.  When we invoke the geometric
estimates of Section~\ref{sec:growth t=0}, we choose $\delta_1$ less than this $\delta_0$.

For $f \in C^1(M)$, we define the weak norm precisely as in \eqref{eq:weak} except that the supremum is 
taken over $W \in \cW^s$ rather than $W \in \cW^s_{\bH}$.

As before, the strong norm has two parts.  The {\em strong stable norm} of $f$ is defined as
\begin{equation}
\label{eq:stable t=0}
\| f \|_s = \sup_{W \in \cW^s} \sup_{\substack{\psi \in \cC^\beta(W) \\ |\psi |_{\cC^\beta(W)} \leq |\log |W| \, |^\gamma}} \int_W f \, \psi \, dm_W \, ,
\end{equation}
and (recalling the definitions of distance between functions from Section~\ref{sec:geo banach}, and dropping any 
reference to homogeneity strips) the \emph{strong unstable norm} of $f$ as
\begin{equation}
\label{eq:unstable t=0}
\|f\|_u = \sup_{\ve \leq \ve_0} \sup_{\substack{W_1, W_2 \in \cW^s \\ d_{\cW^s}(W_1,W_2) \leq \ve}}
\sup_{\substack{|\psi_i|_{\cC^\alpha(W_i)} \leq 1 \\ d_0(\psi_1,\psi_2)=0}}
| \log \ve|^{\varsigma} \left| \int_{W_1}  \! \! \! f \psi_1 \,  - \int_{W_2} \! \! \! f \psi_2 \,  \right| \, .
\end{equation}
Finally, the \emph{strong norm} of f is defined to be 
$\| f \|_{\cB} = \| f\|_s +  \|f\|_u$.  As before, the weak space $\cB_w$ and the strong space $\cB$ are defined
as the completions of $C^1(M)$ in the respective norms.

\begin{rem}
\label{rem:norm changes}
Comparing \eqref{eq:stable t>0} with \eqref{eq:stable t=0}, we see that the weight $|W|^{-1/p}$ has been replaced
by the weight $| \log|W| \, |^\gamma$.  This is done to avoid a blow up of the norm in the case when $W \in \cW^s$
is such that $|T^{-1}W| \sim |W|^{1/2}$.  Namely, with the H\"older weight,
\[
\int_W \cL_0 f \, \psi \, dm_W = \int_{T^{-1}W} f \, \psi \circ T \, dm_{T^{-1}W} \le \|f \|_s \frac{|W|^{1/2p}}{|W|^{1/p}}
\, ,
\]
and taking the supremum over $W \in \cW^s$ yields infinity.  Yet with the weight $|\log |W||^\gamma$, the same
calculation yields $\| \cL_0 f \|_s \le 2^\gamma \| f\|_s$.

Since the weight in the strong stable norm allows us to control the norm on short pieces created by singularities
(see Remark~\ref{rem:meaning} and the use of $\| \cdot \|_s$ in Proposition~\ref{prop:LY}-(c)), the change in
\eqref{eq:stable t=0} forces us to use the logarithmic modulus of continuity in the definition of the unstable
norm \eqref{eq:unstable t=0}.  It is this weakening that prevents us from proving genuine Lasota-Yorke
inequalities for these norms.  See Lemma~\ref{lem:pseudo LY} below.
\end{rem}
 
As in Proposition~\ref{prop:LY}-(a) and (b) the inclusions are continuous and the unit ball of $\cB$ is compact in
$\cB_w$.  The key difference is the corresponding Lasota-Yorke type inequalities,
which fail to contract $\| \cdot \|_u$ \cite[Proposition~4.7]{BD1}.

\begin{lem}
\label{lem:pseudo LY}
Suppose $h_* > s_0 \log 2$.  There exists $C>0$ and $\sigma <1$ such that for all $f \in \cB$ and $n \ge 0$,
   \[
    \begin{split}
    | \cL_0^n f |_w & \le C e^{n h_*} |f|_w \, , \\
    \| \cL_0^n f \|_s & \le C e^{n h_*} \big( \sigma^n \| f \|_s +  |f|_w \big) \\
    \| \cL_0^n f \|_u & \le C e^{n h_*} \big(  \| f\|_u  +  \| f\|_s \big) \, .
    \end{split}
    \]
\end{lem}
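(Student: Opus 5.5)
The plan is to work with test integrals along stable curves. For $W \in \cW^s$ and a test function $\psi$ on $W$, change variables to get
\[
\int_W \cL_0^n f\,\psi\,dm_W=\sum_{W_i\in\cG_n(W)}\int_{W_i} f\,\psi\circ T^n\,dm_{W_i},
\]
where $\cG_n(W)$ is the decomposition of $T^{-n}W$ into curves of length at most $\delta_1$, cut only at the singularities $\cS_{-n}$ (no homogeneity strips). The key point is that $\cL_0$ carries the weight $(J^sT)^{-1}$, which exactly cancels the Jacobian from the change of variables, so no Jacobian appears in the sum. All the growth is therefore in the number of pieces, and Proposition~\ref{prop:exact exp}(b) gives $\#\cG_n(W)\le C\,\#\cM_0^n\le C e^{nh_*}$.

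\emph{Weak norm.} Since $T^n$ contracts along stable curves, $|\psi\circ T^n|_{\cC^\alpha(W_i)}\le C|\psi|_{\cC^\alpha(W)}$. Each term is then bounded by $C|f|_w$, and summing over the pieces gives the first inequality.

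\emph{Strong stable norm.} I will write $\psi\circ T^n=\bar\psi_i+(\psi\circ T^n-\bar\psi_i)$ on each $W_i$, where $\bar\psi_i$ is the average over $W_i$.
\begin{itemize}
\item The oscillating part has $\cC^\beta$ norm at most $C\Lambda^{-\beta n}|\log|W||^\gamma$. That is less than the admissible weight $|\log|W_i||^\gamma$ times an exponentially small factor, provided $|W_i|$ is not much smaller than $|W|$. When it is much smaller, I use $|T^{-1}V|\le C|V|^{1/2}$: a single step can at most double $|\log|\cdot||$, which costs a factor $2^\gamma$, and this happens only on nearly tangential collisions, i.e.\ at most $s_0 n$ times along any orbit. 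So the pieces contribute $\|f\|_s$ times
\[
\sum_i\frac{|\log|W||^\gamma}{|\log|W_i||^\gamma}\,(\cdots).
\]
\item The average part has $\cC^\alpha$ norm bounded by $C|\log|W||^\gamma$, so each such term is at most $C|\log|W||^\gamma|f|_w$.
\end{itemize}
Summing the average terms naively gives $e^{nh_*}|f|_w$ with the log weight left over, which is unacceptable. I would fix this by grouping pieces using the long/short dichotomy of Lemma~\ref{lem:fragment}. Long pieces have $|W_i|\ge\delta_1/3$, so the ratio of log weights is harmless. For short pieces, an inductive argument over blocks of $n_0$ iterates uses the $Kn_0+1$ complexity bound together with $\tfrac1{n_0}\log(Kn_0+1)<h_*-\gamma s_0\log2$. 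This shows the short pieces contribute $C(2^{\gamma s_0}(Kn_0+1)^{1/n_0})^n\le Ce^{nh_*}\sigma^n$ with $\sigma<1$. This is where the assumption $h_*>s_0\log 2$ enters. The resulting bound is $C e^{nh_*}(\sigma^n\|f\|_s+|f|_w)$.

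\emph{Strong unstable norm.} Take $W^1,W^2$ with $d_{\cW^s}(W^1,W^2)\le\ve$ and pair their preimages along unstable curves into matched pieces $U^1_j,U^2_j$, which satisfy $d_{\cW^s}(U^1_j,U^2_j)\le C\ve$, plus unmatched short pieces near singularities.
\begin{itemize}
\item On matched pieces I compare $\psi_1\circ T^n$ and $\psi_2\circ T^n$ and use $\|f\|_u$. The factor $|\log C\ve|^{-\varsigma}\le C|\log\ve|^{-\varsigma}$ produces no contraction, only a constant.
\item Unmatched pieces have length at most $C\ve^{1/2}$ or so and are controlled by the strong stable norm, each contributing $|\log\ve|^{-\gamma}\|f\|_s$. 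Since $\varsigma<\gamma$, this is at most $|\log\ve|^{-\varsigma}\|f\|_s$.
\item The test-function mismatch is handled with the $\cC^\alpha$ distance: it is bounded by $\ve^{\alpha}$ and absorbed via $\|f\|_s$.
\end{itemize}
Counting pieces by $e^{nh_*}$ gives the third inequality.

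I expect the main obstacle to be the strong stable estimate, specifically controlling the log-weight ratio $|\log|W||^\gamma/|\log|W_i||^\gamma$ summed over exponentially many pieces. This is where the exact balance between the $2^{\gamma s_0}$ loss and the complexity growth against $e^{h_*}$ must be made to close.
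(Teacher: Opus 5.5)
Your weak-norm bound is fine. Your treatment of pieces that have \emph{never} been long (count $(Kn_0+1)^{n/n_0}$, log-ratio loss about $2^{\gamma s_0 n}$) is also the right mechanism. The strong stable estimate, however, has a genuine gap: nothing in your plan controls the factor $|\log|W||^\gamma$ uniformly in $W$.

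Test functions on a short $W$ may be as large as $|\log|W||^\gamma$; for instance $\psi\equiv|\log|W||^\gamma$ is admissible. So for a long piece $W_i$ the average term is only bounded by $|\log|W||^\gamma|f|_w$. If you use $\|f\|_s$ instead, the ratio satisfies $|\log|W||^\gamma/|\log|W_i||^\gamma\ge|\log|W||^\gamma/|\log(\delta_1/3)|^\gamma$. That ratio is therefore \emph{largest}, not harmless, exactly when $|W_i|\ge\delta_1/3$ and $|W|\to 0$.

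Your first bullet also has the direction reversed. $T^{-1}$ expands stable curves, so the dangerous case is $|W_i|\gg|W|$, and $|T^{-1}V|\le C|V|^{1/2}$ says that $|\log|\cdot||$ is at most halved per step.

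The same uncontrolled sum $\sum_{W_i\in\cG_n(W)}|\log|W||^\gamma/|\log|W_i||^\gamma$ multiplies $\Lambda^{-\beta n}$ in your oscillatory term, and you need it to be $O(e^{nh_*})$ uniformly in $W$. Finally, the bound $C(2^{\gamma s_0}(Kn_0+1)^{1/n_0})^n$ does not hold for pieces that are merely short at time $n$. Pieces that are short now but descend from long ones are in general a non-negligible part of $\cG_n(W)$. Lemma~\ref{lem:fragment} only bounds them by a small constant multiple of $\#\cG_n(W)$, not by $\sigma^n e^{nh_*}$. So the long/short split must be made by ancestry, not by length at time $n$.

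What is missing is a decomposition of $\cG_n(W)$ by the first time $k$ at which an ancestor becomes long.
\begin{itemize}
\item Never-long pieces are treated with $\|f\|_s$ as you propose, giving $C\sigma^n e^{nh_*}\|f\|_s$.
\item All descendants of a piece $V$ that is first long at time $k$ are treated with $|f|_w$. There are at most $Ce^{(n-k)h_*}$ of them by Proposition~\ref{prop:exact exp}.
\item The factor $|\log|W||^\gamma$ is absorbed because a very short curve needs a long time to produce a long piece. Along the way, $|\log|\cdot||$ drops by at most an additive constant at non-tangential steps and is at most halved (up to additive constants) at the $\lesssim s_0k$ nearly tangential ones. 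Hence $|V|\ge\delta_1/3$ forces $|\log|W||^\gamma\lesssim k^\gamma 2^{\gamma s_0 k}$. Meanwhile the number of such $V$ is $\lesssim (Kn_0+1)^{k/n_0}$, since they are children of never-long pieces.
\end{itemize}
The weak-norm contribution is therefore $\lesssim e^{nh_*}\sum_k k^\gamma\sigma^k|f|_w\le Ce^{nh_*}|f|_w$, with $\sigma=2^{\gamma s_0}(Kn_0+1)^{1/n_0}e^{-h_*}<1$. This is a second, essential use of the sparse recurrence condition.

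The same argument yields the uniform logarithmically weighted growth bound $\sum_{W_i\in\cG_n(W)}|\log|W||^\gamma/|\log|W_i||^\gamma\le Ce^{nh_*}$, which your oscillatory term needs. The unstable estimate needs it as well. For $n>1$ the unmatched preimage pieces are not of length $O(\ve^{1/2})$: only their $T^n$-images are $O(\ve)$-short, and the pieces themselves may be long. So the per-piece bound $|\log\ve|^{-\gamma}\|f\|_s$ times $e^{nh_*}$ is unjustified. Controlling these terms again requires a logarithmically weighted growth estimate for preimages of short curves.
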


The uniform upper bounds on $\# \cM_0^n$ given by Proposition~\ref{prop:exact exp} are essential for the proof of this lemma.
While the above inequalities are not sufficient to prove the quasi-compactness of $\cL_0$ on $\cB$, they
do imply that the spectral radius of $\cL_0$ is at most $e^{h_*}$.  For the lower bound, \eqref{eq:length} 
together with
Proposition~\ref{prop:exact exp} implies that if $|W| \ge \delta_1/3$, then
\[
\| \cL_0^n 1 \|_s \ge | \cL_0^n 1|_w \ge \int_W \cL_0 1 \, dm_W = \sum_{W_i \in \cG_n(W)} |W_i|
\ge \tfrac{\delta_1}{4} \# \cG_n(W) \ge \tfrac{\delta_1 c_1}{4} e^{n h_*} \, ,
\]
so that the spectral radius of $\cL_0$ is precisely $e^{h_*}$.


\subsubsection{Construction of the measure $\mu_0$}

Despite the lack of contraction, the inequalities of Lemma~\ref{lem:pseudo LY} together with Proposition~\ref{prop:exact exp} are sufficient to allow us to construct an invariant measure using compactness.
The key points are summarized in the following proposition.

\begin{prop}[{\cite[Proposition~7.1]{BD1}}]
\label{prop:limit point}
There exists $\nu \in \cB_w$ and $\tnu \in \cB_w^*$ such that $\cL_0 \nu = e^{h_*} \nu$ and $\cL_0^* \tnu = e^{h_*} \tnu$.  $\nu$ and $\tnu$ are nonnegative Radon measures satisfying $\langle \nu, \tnu \rangle \neq 0$ and
$\| \nu \|_u \le C$.
\end{prop}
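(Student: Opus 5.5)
The plan is to build $\nu$ as a Cesàro-type limit of the normalized iterates $e^{-nh_*}\cL_0^n 1$ and $\tnu$ by a dual construction, using Lemma~\ref{lem:pseudo LY} for uniform bounds and the compactness of $\cB\hookrightarrow\cB_w$ for convergence. Concretely, set
\[
\nu_n = \frac1n\sum_{k=0}^{n-1} e^{-kh_*}\cL_0^k 1 .
\]
By Lemma~\ref{lem:pseudo LY}, $\|e^{-kh_*}\cL_0^k 1\|_s \le C(\|1\|_s+|1|_w)$ and $\|e^{-kh_*}\cL_0^k1\|_u\le C(\|1\|_u+\|1\|_s)$ uniformly in $k$. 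Hence $\|\nu_n\|_{\cB}$ is bounded, and since the unit ball of $\cB$ is compact in $\cB_w$, a subsequence $\nu_{n_j}$ converges in $\cB_w$ to some $\nu$. Because $\cL_0$ is bounded on $\cB_w$ (first inequality of Lemma~\ref{lem:pseudo LY}), we have
\[
e^{-h_*}\cL_0\nu_n-\nu_n=\tfrac1n\bigl(e^{-nh_*}\cL_0^n1-1\bigr)\to0
\]
in $|\cdot|_w$, so $\cL_0\nu=e^{h_*}\nu$. The bound $\|\nu\|_u\le C$ follows by lower semicontinuity. Each integral $\int_{W_1}\nu_n\psi_1-\int_{W_2}\nu_n\psi_2$ converges as $n_j\to\infty$, being a difference of weak-norm pairings, so the uniform $\|\cdot\|_u$ bound passes to the limit. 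Positivity comes from $\cL_0$ preserving nonnegative functions: each $\nu_n$ is a nonnegative function, and its pairing against $\psi\ge0$ in $C^\alpha(M)$ is $\ge0$. The limit is therefore a positive functional on $C^\alpha$, which extends to a nonnegative Radon measure via $\cB_w\subset(C^\alpha)^*$. Nontriviality uses the lower bound $\int_W e^{-kh_*}\cL_0^k1\,dm_W\ge \delta_1c_1/4$ for $|W|\ge\delta_1/3$, established just after Lemma~\ref{lem:pseudo LY}. This gives $\int_W\nu\,dm_W\ge \delta_1c_1/4>0$, so $\nu\neq0$.

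For $\tnu$ I would define, for $f\in\cB_w$ (first for $f\in C^1$),
\[
\tnu(f)=\lim_{j}\frac1{n_j}\sum_{k<n_j}e^{-kh_*}\int_M \cL_0^kf\,d\mu_{SRB},
\]
taking a further subsequence or a Banach limit. The pairing $f\mapsto\int f\,d\mu_{SRB}$ is bounded by $C|f|_w$, obtained by disintegrating $\mu_{SRB}$ along a smooth foliation of stable curves with smooth conditional densities. The first inequality of Lemma~\ref{lem:pseudo LY} then makes the averages uniformly bounded functionals on $\cB_w$. A weak-$*$ limit point $\tnu\in\cB_w^*$ exists by Banach–Alaoglu, and the same telescoping gives $\cL_0^*\tnu=e^{h_*}\tnu$. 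Since $\cL_0$ preserves $C^1$ functions that are $\ge0$ off the singularity set, $\tnu$ is nonnegative on such $f$, so it is a nonnegative Radon measure.

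The main obstacle is showing $\langle\nu,\tnu\rangle\neq0$, since nothing in the construction forbids the two limits from pairing to zero. I would attack this by computing $\langle \nu_n,\tnu\rangle=\langle1,\tnu\rangle$, which holds by the eigen-relation for $\tnu$. It then suffices to show $\langle1,\tnu\rangle>0$ and to pass the pairing to the limit using $|\nu_n-\nu|_w\to0$. For the positivity, $\int e^{-kh_*}\cL_0^k1\,d\mu_{SRB}$ is, after disintegration along stable curves, a weighted average of $e^{-kh_*}|T^{-k}W|$. For a positive-measure family of curves with $|W|\ge\delta_1/3$, this is bounded below via the corollary on uniform growth of $|T^{-n}W|$ and Proposition~\ref{prop:exact exp}(a). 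This lower bound is uniform in $k$, hence survives the averaging and limit.
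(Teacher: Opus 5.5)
Your proposal is correct and follows essentially the paper's route. You take Ces\`aro averages of $e^{-kh_*}\cL_0^k1$, which are uniformly bounded in $\cB$ by Lemma~\ref{lem:pseudo LY}. Compactness gives a $\cB_w$-limit $\nu$, and $\|\nu\|_u\le C$ follows by lower semicontinuity, since the test functions in $|\cdot|_w$ and $\|\cdot\|_u$ coincide. You then take a weak-$*$ limit of the dual averages $\frac1n\sum_{k<n}e^{-kh_*}(\cL_0^*)^k\musrb$.

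The one variation is that you obtain $\langle\nu,\tnu\rangle=\langle 1,\tnu\rangle$ from the eigen-relation for $\tnu$, rather than bounding $\langle\nu_n,\tnu_n\rangle$ from below and passing to a joint limit. This is slightly cleaner, though both rest on the same uniform lower bound $e^{-kh_*}\int\cL_0^k1\,d\musrb\ge c>0$ from Proposition~\ref{prop:exact exp}.

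One small correction: for $t=0$, both $|\cdot|_w$ and the identity $\int_W\cL_0^k1\,dm_W=|T^{-k}W|$ refer to genuine stable manifolds. So the bound $|\int f\,d\musrb|\le C|f|_w$ should be quoted from the inclusion $\cB_w\subset(\cC^\alpha(M))^*$, not derived from a smooth foliation.
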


\begin{proof}[Sketch of proof]
The main point in the construction is that Proposition~\ref{prop:exact exp} and Lemma~\ref{lem:pseudo LY}
imply that the sequence $e^{-n h_*} \cL_0^n 1$ is uniformly bounded away from 0 and infinity in the strong
norm.  Thus defining,
\[
\nu_n = \frac 1n \sum_{k=0}^{n-1} e^{-k h_*} \cL_0^k 1 \, ,
\]
we have a sequence which is uniformly bounded in $\cB$.  By compactness, $(\nu_n)_n$ contains a subsequence
which converges to an element $\nu \in \cB_w$.  By construction, each $\nu_n$ is nonnegative, and thus so is $\nu$, making it a Radon measure.   Then since $\cL_0$ is continuous on $\cB$, one obtains from the definition that
$\cL_0 \nu = e^{h_*} \nu$.

Moreover, although $\nu$ is not necessarily an element of $\cB$, it does inherit the bound $\| \nu \|_u \le C$ since
it is the limit of functions with uniformly bounded strong norm. In particular, it is important here that the test functions
in the definitions of $| \cdot |_w$ and $\| \cdot \|_u$ are the same.  See \cite[eq.~(7.3)]{BD1}.

Similarly, we define a sequence,
\[
\tnu_n = \frac 1n \sum_{k=0}^{n-1} e^{-n h_*} (\cL_0^*)^k d\musrb \, .
\]
Since $| \tnu_n(f) | \le C|f|_w$ for all $f \in \cB_w$ and $n \ge 1$, we have $\tnu_n$ uniformly bounded
in $(\cB_w)^* \subset \cB^*$.  Again by compactness, we can find a subsequence converging to 
$\tnu \in \cB^*$ and satisfying $\cL_0 \tnu = e^{h_*} \tnu$.  By the density of $\cB$ in $\cB_w$, $\tnu$ also
extends to an element of $\cB_w^*$.  Finally, the uniform lower bounds given by Proposition~\ref{prop:exact exp}
imply that $\langle \nu_n , \tnu_n \rangle$ is uniformly bounded away from 0, yielding 
 $\langle \nu, \tnu \rangle \neq 0$.
\end{proof}

This construction immediately allows us to define $\mu_0$, which turns out to be the measure of maximal
entropy.  The following theorem summarizes the key results.

\begin{thm}[{\cite[Theorem~2.4]{BD1}}]
\label{thm:mu0}
Let $h_* > s_0 \log 2$.  Then
\[
h_* = \max \{ h_\mu(T) : \mbox{ $\mu$ is a $T$-invariant probability measure on $M$} \},
\]
and there exists a unique invariant probability measure $\mu_0$ with the following properties:
\begin{itemize}
  \item[a)] $h_{\mu_0}(T) = h_*$;
  \item[b)] $\mu_0$ gives positive mass to all open sets and zero mass to any smooth curve;
  \item[c)] $\mu_0$ is $T$-adapted;\footnote{ An invariant measure is called $T$-adapted if 
  $\int | \log d(x, \cS_{\pm 1}) | \, d\nu < \infty$.  For dispersing billiards, this is equivalent to 
  $\int \log \| DT \| \, d\nu < \infty$.   In general, these are independent notions, and both are used in the literature,
  cf. \cite[eq. (1.1) and (1.2)]{KS86}. }
  \item[d)]  $\mu_0$ is Bernoulli.
\end{itemize}
\end{thm}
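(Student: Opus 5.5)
We define $\mu_0$ by pairing the eigenmeasures of Proposition~\ref{prop:limit point}:
\[
\mu_0(\psi) = \frac{\langle \psi \nu, \tnu \rangle}{\langle \nu, \tnu \rangle}, \qquad \psi \in C^1(M).
\]
This mirrors Theorem~\ref{thm:spectral t}. The plan is to prove, in order: $\mu_0$ is an invariant probability measure; the regularity properties (b) and (c); the entropy identity (a), which together with \eqref{eq:hstar} gives the variational principle; and finally uniqueness and the Bernoulli property.

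\textbf{Invariance and positivity.} Since $\nu,\tnu$ are nonnegative Radon measures, $\mu_0$ is a nonnegative functional, normalized by construction. Invariance comes from the eigenvalue equations and the identity $\cL_0((\psi\circ T) f) = \psi\, \cL_0 f$:
\[
\langle (\psi\circ T)\nu, \tnu\rangle = e^{-h_*}\langle (\psi\circ T)\nu, \cL_0^*\tnu\rangle = e^{-h_*}\langle \cL_0((\psi\circ T)\nu),\tnu\rangle = \langle \psi\nu,\tnu\rangle.
\]
Here we must check that multiplication by $C^1$ (or $C^\alpha$) functions is bounded on $\cB_w$ and preserves the bound on $\|\cdot\|_u$. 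This is standard, since test functions in \eqref{eq:weak} can absorb $\psi$.

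\textbf{Properties (b) and (c).} For zero mass on smooth curves and $T$-adaptedness, we estimate $\mu_0$ of an $\ve$-neighbourhood $N_\ve$ of a curve, or of $\cS_{\pm1}$. We approximate $\Id_{N_\ve}$ by smooth functions and use that $\nu$ has $\|\nu\|_u \le C$. Integrating $\nu$ on stable curves that cross $N_\ve$ only in pieces of length $O(\ve)$, together with the logarithmic weight in $\|\cdot\|_s$ passed to the limit, should give $\mu_0(N_\ve) \le C|\log\ve|^{-\gamma'}$ for some $\gamma'>1$. Summability of $\sum_k \mu_0(N_{e^{-k}})$ then gives $\int|\log d(x,\cS_{\pm1})|\,d\mu_0<\infty$, and letting $\ve\to0$ gives zero mass on curves. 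The fact that $\gamma>1$ is exactly why these bounds are available.

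Positivity on open sets comes from the lower bounds. Any open set contains a Cantor rectangle. The growth estimates of Proposition~\ref{prop:exact exp}(a), applied to $e^{-nh_*}\cL_0^n1$ integrated over stable curves crossing it, together with the analogous unstable-direction growth for $\tnu$ (built from $\musrb$, hence from $\cL_0^*$ acting on unstable curves), show that both $\nu$ and $\tnu$ charge the rectangle in a product-like way.

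\textbf{Entropy (a): the main obstacle.} Following Proposition~\ref{prop:bowen}, I would aim to show that for every element $A\in\cM_{-k}^{n}$,
\[
\mu_0(A) \le C e^{-(n+k)h_*}.
\]
The $\nu$-part of $A$ is controlled by $e^{-nh_*}$ via $\nu = e^{-nh_*}\cL_0^n\nu$, which transports $A$ to $T^{-n}$-images. The $\tnu$-part is controlled symmetrically via $\tnu = e^{-kh_*}(\cL_0^*)^k\tnu$. Combined with the upper bound $\#\cM_0^n\le C_2e^{nh_*}$, this shows that the partition $\cM_0^1$ (which is generating, with $\mu_0$-null boundary by (b)) has entropy at least $h_*$. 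By \eqref{eq:hstar} we then get $h_{\mu_0}=h_*$, and the supremum is attained.

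The difficulty is that $\nu$ is only in $\cB_w$, so the estimate must use the weak norm plus $\|\nu\|_u$. Controlling the measure of thin elements near singularities requires combining the assumption $h_*>s_0\log2$ with the adaptedness from (c).

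\textbf{Uniqueness and Bernoulli.} These should follow by establishing ergodicity first. I would use a Hopf-type argument: $\mu_0$ has absolutely continuous-like conditional structure, since $\nu$ is smooth along unstable directions ($\|\nu\|_u<\infty$) and $\tnu$ along stable ones, and $\mu_0$ gives full mass to points with long stable and unstable manifolds by adaptedness. This yields local product structure and K-mixing, and Bernoulli then follows via the Chernov–Haskell/Ornstein machinery.

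For uniqueness, suppose $\mu$ is another measure with $h_\mu=h_*$. Its conditional measures on elements of $\cM_{-n}^n$ must be nearly uniform, since maximal entropy combined with $\#\cM_{-n}^n\approx e^{2nh_*}$ forces equidistribution. I would then show $\mu\ll\mu_0$, and ergodicity of $\mu_0$ gives $\mu=\mu_0$.
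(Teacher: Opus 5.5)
Most of your outline follows the paper: the construction of $\mu_0$, the neighbourhood bounds of order $|\log\ve|^{-\gamma}$ with $\gamma>1$ for adaptedness, the Hopf argument based on $\|\nu\|_u\le C$, and the Chernov--Haskell/Ornstein--Weiss route to Bernoulli.

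The genuine gap is uniqueness. Maximal entropy of $\mu$ together with $\#\cM_{-n}^n\le C e^{2nh_*}$ does not force equidistribution on cylinders. It only says that the relative entropy of $\mu$ on $\cM_{-n}^n$, with respect to the uniform distribution, stays bounded. That still allows $\mu$ to live on a fixed fraction of the cylinders.

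To turn this into $\mu\ll\mu_0$, you must bound $\#\cC_n\lesssim e^{2nh_*}\mu_0(\cC_n)$. In Bowen's argument this is used to contradict $\mu\perp\mu_0$, by approximating a set $S$ with $\mu(S)=1$, $\mu_0(S)=0$ by a union $\cC_n$ of cylinders. So you need a Gibbs-type \emph{lower} bound $\mu_0(A)\ge c\,e^{-2nh_*}$ on the cylinders carrying $\mu$. Everything you prove about $\mu_0$ goes the other way. For billiards such a lower bound fails uniformly, because orbits approaching $\cS_{\pm 1}$ produce cylinders of anomalously small measure.

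The missing ingredient is the partial Gibbs property used in the paper: ``most'' Bowen balls satisfy $\mu_0(B(x,n,\ve))\ge c\,e^{-nh_*}$ along a sufficiently dense sequence of times. This is exactly enough to run Bowen's argument. Without it, ``I would then show $\mu\ll\mu_0$'' has no mechanism.

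Two smaller points. First, in the entropy step, $\mu_0(A)=\langle \Id_A\nu,\tnu\rangle/\langle\nu,\tnu\rangle$ does not split into a $\nu$-factor times a $\tnu$-factor. Inserting both eigenvalue equations, as you propose, just reproduces invariance. What works is to use invariance to replace $A\in\cM_{-k}^n$ by $T^nA\in\cM_{-(n+k)}^0$, on which $T^{-(n+k)}$ is smooth. Then use only $\tnu\in\cB_w^*$ and $\nu=e^{-(n+k)h_*}\cL_0^{n+k}\nu$ to get $\mu_0(A)\le C|\Id_{T^nA}\nu|_w\le C'e^{-(n+k)h_*}$, provided stable curves meet $T^nA$ in boundedly many pieces. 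Converting such cylinder bounds into $h_{\mu_0}(T)\ge h_*$ via $\cM_0^1$ also needs that atoms of $\bigvee_i T^{-i}\cM_0^1$ are not unions of many cells. The paper avoids this by bounding Bowen balls, $\mu_0(B(x,n,\ve))\le Ce^{-nh_*}$, and invoking Brin--Katok.

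Second, your estimate of $\mu_0(N_\ve(S))$ through stable curves crossing $N_\ve(S)$ in pieces of length $O(\ve)$ only covers curves transverse to the stable cone. For the stable-type part $T^{-1}\cS_0$ of $\cS_1$ you need an extra step, e.g.\ invariance together with $T(N_\ve(\cS_1))\subset N_{C\sqrt{\ve}}(\cS_{-1})$.
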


\begin{proof}[Idea of Proof]
Using Proposition~\ref{prop:limit point}, the definition of $\mu_0$ is straightforward, 
\[
\mu_0(\psi) = \frac{\langle \psi \nu, \tnu \rangle}{\langle \nu, \tnu \rangle}, \quad \mbox{for $\psi \in C^1(M)$.}
\]
This map is nonnegative for nonnegative $\psi$ and so defines a measure.   Invariance follows since
$\nu$ and $\tnu$ are eigenfunctions of $\cL_0$ and $\cL_0^*$, respectively, for the eigenvalue $e^{h_*}$.

The properties of $\mu_0$ rely heavily on the fact that $\| \nu \|_u \le C$.  This permits the
following chain of reasoning, which establishes the key properties $\mu_0$.
\begin{itemize}
  \item {\em Hyperbolicity of $\mu_0$.}  The unstable norm bound implies that $\ve$-neighborhoods $N_\ve(S)$ of any stable or unstable curve $S$ satisfy
$\nu(N_\ve(S)) \le C |\log \ve|^{-\gamma}$.  This carries over to $\mu_0$ as well \cite[Lemma~7.3]{BD1} and
implies that $\mu_0$-a.e. $x$ has stable and unstable manifolds of positive length.  This same bound
establishes that $\mu_0$ is $T$-adapted.
  \item {\em Absolute continuity of the holonomy.}  The hyperbolicity of $\mu_0$ allows us to cover a full measure set of $M$ with locally maximal Cantor rectangles.  On such rectangles, the bound $\| \nu \|_u \le C$
  implies that the holonomy map between stable manifolds in each Cantor rectangle is absolutely continuous
  with respect to both $\nu$ and $\mu_0$ \cite[Corollary~7.9]{BD1}.
  \item {\em Ergodicity of $\mu_0$.}  Absolute continuity allows one to follow the usual Hopf argument for local 
  ergodicity and then use the topological mixing of the map to conclude ergodicity \cite[Proposition~7.16]{BD1}.
  These last two properties also combine with the uniform lower bounds of Proposition~\ref{prop:exact exp}
  to prove the full support of $\mu_0$, completing the proof of item (b).
  \item {\em Bernoulli property.}  The topological mixing of the map together with absolute continuity on Cantor rectangles establish that $(T^n, \mu_)$ is ergodic for all $n$.  This can be extended first to show that the Pinsker
  partition is trivial, establishing $K$-mixing.  Then following the strategy of \cite{chernov haskell}, one boosts
  $K$-mixing to Bernoulli by showing that the partition defined by $\cM_{-1}^1$ is very weakly Bernoulli, and
  then applying a theorem of Ornstein and Weiss \cite{ornstein weiss}; see \cite[Proposition~7.19]{BD1}.  This establishes item (c).
\end{itemize}

An estimate similar to Proposition~\ref{prop:bowen} shows that the measure of Bowen balls is bounded by
$\mu_0(B(x,n, \ve)) \le C e^{-n h_*}$.  This, coupled with \cite[Main Theorem]{brin katok}, implies as before
that $h_{\mu_0}(T) \ge h_*$.  Since the reverse inequality follows from \eqref{eq:hstar}, we conclude
$h_{\mu_0}(T) = h_*$ and both statement (a) and the variational principle are proved.

The only remaining claim is the uniqueness of the measure of maximal entropy.  This is proved in
\cite[Section~7.7]{BD1}, adapting the Bowen argument for uniqueness of the measure of maximal entropy.
The classical argument uses a lower bound on the measure of Bowen balls, $\mu_0(B(x,n,\ve) \ge c e^{-n h_*}$.
While this fails for billiards due to the rate of approach of typical orbits to the singularity sets, it holds that `most' Bowen balls
satisfy a good lower bound along a sufficiently dense sequence of times \cite[Lemmas~7.2 and 7.3]{BD1}.
This turns out to be sufficient to adapt Bowen's proof of uniqueness to this setting.
\end{proof}

\begin{rem}
While Theorem~\ref{thm:mu0} provides a fairly complete set of results regarding the measure of maximal entropy,
it uses the generic assumption $h_* > s_0 \log 2$ as well as a fair amount of machinery.  The recent preprint 
\cite{carrand direct} provides an alternative (shorter) proof of the existence of a measure of maximal entropy
under the (similarly generic) assumption that $\mu(\cS_{\pm 1})=0$ for all invariant measures $\mu$, by proving
the upper semi-continuity of the entropy function.   
\end{rem}


\subsubsection{Rate of Mixing of $\mu_0$}
\label{sec:MME rate}
The lack of a proof of a spectral gap for $\cL_0$ means that the usual method to prove exponential decay of
correlations is not available in this setting.  However, the question of rate of mixing 
is partly answered in \cite{DK}, under
the slightly stronger assumption that $h_* > s_0 \log 4$, which still holds generically (compare with \eqref{eq:s0}).
The main result there \cite[Theorem~2.1]{DK} is that for any $\ve \in (0, \frac{h_*}{s_0\log 2} - 2)$,
$\gamma \in (0,1]$ and $u, v \in C^\gamma(M)$, there exists $C_{\gamma, \ve}>0$ such that
\[
\left| \int u \, v\circ T^n \, d\mu_0 - \int u \, d\mu_0 \int v \, d\mu_0 \right| \le C_{\gamma, \ve} |u|_{C^\gamma}
|v|_{C^\gamma} n^{-\frac{h}{s_0 \log 2} + 2 + \ve}\, \mbox{ for all $n \ge 0$.}
\]
Since according to Remark~\ref{rem:generic}, $s_0$ can be chosen arbitrarily small for Sinai billiard tables with bounded complexity, the above
result predicts a super-polynomial rate of decay for tables satisfying this generic condition.

The main tool used in \cite{DK} is the construction of a symbolic Young tower built over a reference rectangle.  In 
contrast to the usual construction which requires an effective bound on the measure of the set of points which has not yet returned by time $n$, the construction adopted here simply counts itineraries and estimates the number of
cylinders returning at time $n$, which is proportional to $e^{n h_*}$, as well as those cylinders making a
`prime return' at time $n$, which is at most $e^{n h_*} / n^{\frac{h_* - \ve}{s_0 \log 2}}$.  This later expression is a 
type of bound on the pressure at infinity, which allows one to establish the announced rate of mixing in the 
symbolic model.  This is then passed to the billiard map via the natural semi-conjugacy.

\section{More General Potentials: Accessing the Billiard Flow}
\label{sec:abramov}

The family of geometric potentials described in Section~\ref{sec:geometric} is not the only interesting
choice to study for dispersing billiards.  Carrand \cite{Ca} studies transfer operators with an abstract
class of potentials $t g$, where $g$ is a piecewise H\"older continuous function on $M$ satisfying certain
conditions with respect to the singularities of the billiard map.  These are formally stated as (SSP1) and (SSP2)
\cite[Definitions 3.2 and 3.5]{Ca}, which resemble Lemma~\ref{lem:fragment}-(a), but with sums over
$W_i \in \cG_n(W)$
weighted by $e^{t S_n g}$ rather than the cardinality of $\cG_n(W)$.  Under these assumptions, Carrand
obtains results analogous to those in Theorem~\ref{thm:mu0}:  the construction of a unique equilibrium state for this potential for small $t$, 
without proving a spectral gap for the associated transfer operator.

\subsection{The MME for the billiard flow}

The main motivation of \cite{Ca} is to study the family $\phi_t = - t \tau$, where $\tau$ is the first collision time
under the billiard flow (assuming still the finite horizon condition).  This allows one to obtain some results
for the flow while still using techniques applied to the billiard map.  Setting
\[
\cP(t) = \sup \{ h_\mu(T) - t \int \tau \, d\mu : \mu \mbox{ is a $T$-invariant probability} \},
\]  
\cite{Ca} studies equilibrium states for this family.  The equilibrium state corresponding to $t=0$ is the measure
of maximal entropy for the map discussed in Section~\ref{sec:modified}.  If $t = \htop(\Phi_1)$, where $\Phi_1$ is the time-one map of the billiard flow,\footnote{This value of $t$ is finite since $\Phi_1$ is continuous.} then an equilibrium state for the map lifts to an MME for the flow.
This follows using Abramov's formula, since viewing $\Phi_t$ as a suspension over $T$ with roof function $\tau$,
any ergodic $\Phi_1$-invariant probability measure $\nu$ can be written as
\[
\nu = \frac{\mu}{\int \tau d\mu} \otimes Leb\, ,
\]
for some ergodic $T$-invariant probability measure $\mu$.  Then $h_\nu(\Phi_1) \int \tau d\mu = h_\mu(T)$.
Since $\Phi_1$ is a continuous map of a compact metric space, it is a classical result \cite{walters} that
\[
\sup \{ h_\nu(\Phi_1) : \nu \mbox{ is a $\Phi_1$-invariant probability} \} = \htop(\Phi_1).
\]
Thus by the Abramov formula, $\cP(\htop(\Phi_1))=0$.  Then if $\mu$ is an equilibrium state for this value of $t$,
necessarily, $h_\nu(\Phi_1) = \frac{h_\mu(T)}{\int \tau \, d\mu} = \htop(\Phi_1)$, so that $\nu$ is an 
MME for the billiard flow.  The argument also works in reverse \cite[Corollary~2.6]{Ca}.

Unfortunately, the value of $t = \htop(\Phi_1)$ may not be small, so the results of \cite{Ca} do not
apply directly.  This is remedied in \cite{bcd}, which uses a bootstrapping argument from \cite{BD2} similar to that described in
Remark \ref{rem:interpolation} to show that the equilibrium state for $t=\htop(\Phi_1)$ can be obtained under the condition,
\[
\htop(\Phi_1) \tau_{\min} >   s_0 \log 2 \, .
\]
Recalling Remark~\ref{rem:generic} again, this condition is satisfied for generic Sinai billiard tables.
As a result, such billiard flows enjoy a unique measure of maximal entropy, which is Bernoulli.

\section{Sequential Billiards}
\label{sec:sequential}

In this section, we describe how to treat sequential billiards, that is, a particle that moves in a billiard that may change shape. 
This corresponds to considering a sequence of different maps rather than the power of a single map. If we follow the evolution of a density for such a system in time, we clearly obtain a sequence of different transfer operators rather than the power of a single transfer operator. 

One possibility to address such a problem is to consider random systems (e.g. a composition of maps chosen according to a probability measure) and aim for almost sure results. Seminal examples are \cite{DFGV1, DFGV2}. However, this approach cannot be helpful for a sequential system, where one is interested in a specific sequence. Such a scenario is relevant in many applications and also as a technical tool to study skew products or fast-slow systems, e.g. \cite{DS16, DS18,DLPV}. 
     
Another standard approach to these situations is to use perturbative methods. Of course, this may work only if the changes are small. Noteworthy examples in which such a strategy is successfully applied can be found in \cite{syz13}, \cite{dpz20}.

However, if the system undergoes large changes over a brief period, perturbation theory cannot be applied.
This situation is similar to the product of different matrices, having in common only the fact that their elements are all positive. A powerful tool for studying this problem is to note that all such matrices strictly contract the cone of vectors with positive entries.
This has an analog for hyperbolic maps, as was first shown in \cite{Li95}. A basic tool for studying this problem is the Hilbert metric. For the reader's convenience, we briefly recap it in the next section.
\begin{rem}
An alternative to the Hilbert metric is Dolgopyat's standard pairs technology. A spectacular example of application of such a strategy can be found in \cite{CD09}. However, we will not review this approach here since its emphasis it is not on the transfer operator.
\end{rem}
\subsection{Convex cones and Hilbert metric}\ \label{sec:convex_cone}\\
Let $\bV$ be a normed vector space, and $\cC\subset\bV$ a strictly convex closed one-sided cone.\footnote{In fact, the following is true also for vector spaces $\bV$ with more general topologies, see \cite[Appendix D]{DKL} for details.} That is $v\in \cC$ implies $\lambda v\in \cC$ for all $\lambda>0$ and $\cC\cap (-\cC)=\{0\}$.  
\begin{defin}
For each pair $x,y\in \cC$, $\Theta(x,y)=0$ if they are linearly dependent. If they are linearly independent, consider the line $\ell=\{\lambda
x+(1-\lambda y)\;|\;
\lambda\in\bR\}$ passing through $x$ and $y$. Let $\{u,v\}=\partial
\cC\cap\ell$  and define\footnote{Remark that $u,\,v$ can also be $\infty$.}
\begin{equation}\label{eq:hilbert_zero}
\Theta(x,y)=\left|\ln\frac{\|x_*-u_*\|\|y_*-v_*\|}{\|x_*-v_*\|\|y_*-u_*\|}\right|
\end{equation}
\end{defin}
To convince oneself that the above is a metric, it is sufficient to check the triangle inequality; this can be done by looking at Figure \ref{fig:hilbert} since any three points $x,y,z$ lie on a plane.
\begin{figure}[ht]\ 
\centering
\scalebox{.6}{
\setlength{\unitlength}{1mm}
\put(-30,30){\line(1,0){60}}
\put(-17,9){\line(1,3){25}}
\put(17,9){\line(-1,3){25}}
\put(-17,7){\line(1,4){20}}
\put(17,7){\line(-1,4){20}}
\put(0,30){\line(0,1){50}}
\qbezier(0,10)(-10,10)(-15,15)
\qbezier(-15,15)(-25,25)(-25,40)
\qbezier(-25,40)(-25,55.1)(-2.14,66.4)
\qbezier(-2.14,66.4)(0,67.1)(2.14,66.4)
\qbezier(0,10)(10,10)(15,15)
\qbezier(15,15)(25,30)(25,40)
\qbezier(25,40)(25,55.1)(2.14,66.4)
\put(-10,27){$x$}
\put(7,27){$y$}
\put(-.5,27){$w$}
\put(-14,31){$x'$}
\put(13,31){$y'$}
\put(1,59){$z$}
\put(-13,15){$u$}
\put(17,15){$a$}
\put(-28,31){$\alpha$}
\put(26,31){$\beta$}
\put(-5,67){$b$}
\put(5,67){$v$}
\put(-3,74){$p$}
}
\caption{Hilbert metric}
\label{fig:hilbert}
\end{figure}
The Hilbert metric is the logarithm of a cross-ratio. Since the cross ratio is a projective invariant, it follows that $\Theta(x,y)=\Theta(\lambda x,\mu y)$ for all $\lambda, \mu>0$.
Moreover, if one of the points belongs to $\partial \cC$, then the distance to any other point is infinity. That is, the boundary is at infinity in this metric. This provides a criterion to say that a set $A$ is strictly contained in $\cC$ even if the space $\bV$ is infinite dimensional: we say that $A$ is strictly contained in $\cC$ if the diameter of $A$, in the Hilbert metric, is finite.

The above geometric definition is very intuitive and visual, but not very convenient for actual computations. Luckily, the Hilbert metric can also be described in a more algebraic manner, which emphasizes the relation of the present setting with order structures and Banach lattices. Here we present a simplified list of facts that will suffice for our present needs; see  \cite[Appendix D]{DKL} for a more general and complete theory.

To start with, note that a convex cone is equivalent to an order structure in $\bV$, in fact, let $f\preceq g$ iff $g-f\in\cC$. Since the cone is convex, the order structure is compatible with the linear structure: if $f\preceq g$ and $u\preceq v$, then $f+u\preceq u+v$ and $f,-f\in\cC$ iff $f=0$.\footnote{ Conversely, given an order structure compatible with the vector space, $\cC=\{v\in\bV\;:\; v\succeq 0\}$ is a convex cone.}
We can then define
\begin{equation}\label{eq:diameter}
\aligned
\alpha (f,\,g)=&\sup\{\lambda\in\mathbb{R}^+\;|\; \lambda f\preceq g\}\\
\beta(f,\,g)=&\inf\{\mu\in\mathbb{R}^+\;|\;g\preceq \mu f\}\\
\Theta(f,\,g)=&\log\left[\frac{\beta(f,\,g)}{\alpha(f,\,g)}\right]
\endaligned
\end{equation}
where we take $\alpha=0$ and $\beta=\infty$ if the corresponding sets are 
empty.  It is an exercise to prove that the definitions \eqref{eq:hilbert_zero} and \eqref{eq:diameter} are equivalent.

The key fact about the Hilbert metric is the following theorem, which is an extension of a theorem by G. Birkhoff 
(see, for example, \cite[Theorem~1.1]{Li95}).
\begin{thm}\label{thm:hilbert} Let $\bV_1$, and $\bV_2$ 
be two Banach spaces.
$\cL :\bV_1\to \bV_2$ a linear map such that 
$\cL (\cC_1)\subset \cC_2$, for two convex 
cones $\cC_1\subset\bV_1$ and $\cC_2\subset\bV_2$ with 
$\cC_i\cap-\cC_i=\{0\}$.
Let $\Theta_i$ be the Hilbert metric corresponding to the cone
$\cC_i$. Setting 
$\Delta=\sup\limits_{f,\,g\in \cL(\cC_1)}\Theta_2(f,\,g)$
we have
$$
\Theta_2(\cL f,\,\cL g)
\leq\tanh\left(\frac \Delta 4\right)\Theta_1(f,\,g)
\qquad \forall f,\,g\in\cC_1 ,
$$
where $\tanh(\infty)\equiv 1$.
\end{thm}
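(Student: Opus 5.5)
The plan is to first reduce to a two-dimensional computation via the algebraic description \eqref{eq:diameter}, and then prove a contraction estimate there. Note first that $\cL(\cC_1)\subset\cC_2$ means $f\preceq_1 g$ implies $\cL f\preceq_2\cL g$. Hence $\alpha_2(\cL f,\cL g)\ge\alpha_1(f,g)$ and $\beta_2(\cL f,\cL g)\le \beta_1(f,g)$. This already gives $\Theta_2(\cL f,\cL g)\le\Theta_1(f,g)$, which settles the case $\Delta=\infty$. So assume $\Delta<\infty$ and fix $f,g\in\cC_1$ with $\Theta_1(f,g)<\infty$; otherwise there is nothing to prove. Let $\alpha=\alpha_1(f,g)$ and $\beta=\beta_1(f,g)$, assuming $0<\alpha\le\beta<\infty$. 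The case of equality is trivial, since then $f,g$ are proportional.

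The key step is to write $g$ in terms of the two boundary-type elements $g-\alpha f\succeq_1 0$ and $\beta f-g\succeq_1 0$. Their images $h_1=\cL(g-\alpha f)$ and $h_2=\cL(\beta f-g)$ lie in $\cL(\cC_1)$. If either vanishes the claim is immediate, so assume not. The definition of $\Delta$ then gives $\Theta_2(h_1,h_2)\le\Delta$, i.e. there are $0<a\le b$ with $b/a\le e^\Delta$ and $a h_1\preceq_2 h_2\preceq_2 b h_1$. Since
\[
(\beta-\alpha)\cL f=h_1+h_2, \qquad (\beta-\alpha)\cL g=\beta h_1+\alpha h_2,
\]
both $\cL f$ and $\cL g$ are nonnegative combinations of $h_1,h_2$. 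I will compute lower and upper bounds for $\lambda$ with $\lambda\cL f\preceq_2\cL g\preceq_2\mu\cL f$ using only $a h_1\preceq h_2\preceq b h_1$. Concretely, $\cL g-\lambda\cL f$ is proportional to $(\beta-\lambda)h_1+(\alpha-\lambda)h_2$. For $\alpha\le\lambda\le\beta$ this is $\succeq0$ provided $(\beta-\lambda)\ge(\lambda-\alpha)b$. This yields
\[
\alpha_2\ge\frac{\beta+b\alpha}{1+b}, \qquad\text{and similarly}\qquad \beta_2\le\frac{\beta+a\alpha}{1+a}.
\]

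It remains to carry out a purely scalar optimization. The bounds give
\[
\Theta_2(\cL f,\cL g)\le\log\frac{(\beta+a\alpha)(1+b)}{(1+a)(\beta+b\alpha)}.
\]
Writing $\beta/\alpha=e^{\Theta}$ with $\Theta=\Theta_1(f,g)$, the right-hand side is $\log\frac{(e^\Theta+a)(1+b)}{(1+a)(e^\Theta+b)}$. We need to show this is at most $\tanh(\Delta/4)\,\Theta$. By homogeneity, set $a=1$ and $b=e^{\Delta}$; for a fixed ratio $b/a$, one checks this case, or its symmetric counterpart, is worst. The function $F(\Theta)=\log\frac{(e^\Theta+a)(1+b)}{(1+a)(e^\Theta+b)}$ vanishes at $0$. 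Its derivative is $\frac{e^\Theta}{e^\Theta+a}-\frac{e^\Theta}{e^\Theta+b}$, and maximizing over $\Theta$ gives $\frac{\sqrt b-\sqrt a}{\sqrt b+\sqrt a}=\tanh(\Delta/4)$ when $b/a=e^\Delta$, with the maximum at $e^\Theta=\sqrt{ab}$. Integrating from $0$ gives $F(\Theta)\le\tanh(\Delta/4)\Theta$.

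I expect the main obstacle to be this calculus step together with the reduction over $a,b$. Since the $\tanh$ bound controls the derivative uniformly, the reduction is really just the observation that the derivative bound depends only on $b/a\le e^\Delta$. The remaining care is in the degenerate cases where $\alpha=0$, $\beta=\infty$, or some $h_i=0$. There the inequality either holds trivially or the right-hand side is infinite.
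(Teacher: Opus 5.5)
Your proposal is correct and is essentially the standard Birkhoff argument. You decompose via $g-\alpha f$ and $\beta f-g$, use $\Theta_2(h_1,h_2)\le\Delta$ to bound $\alpha_2$ and $\beta_2$, and integrate the uniform derivative bound $\frac{\sqrt b-\sqrt a}{\sqrt b+\sqrt a}\le\tanh(\Delta/4)$; this is exactly the proof of \cite[Theorem~1.1]{Li95}, which the paper cites rather than reproducing.

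There are two small points to fix. First, your claim that one may set $a=1$ ``by homogeneity'' is inaccurate: rescaling $(a,b)$ shifts $\Theta$ rather than leaving your $F$ invariant. It is also unnecessary, since your derivative bound depends only on $b/a$. Second, taking $\alpha,\beta,a,b$ to be attained requires the cones to be closed, as assumed in Section~\ref{sec:convex_cone}; otherwise use an $\epsilon$-approximation.
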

An obvious problem is to relate the above estimate to a norm contraction. This can often be achieved via the following result.
\begin{lem}\label{lem:relnorm} Let $(\bV,\|\cdot\|)$, $\bV \supset \cC$, be an order-preserving Banach space and $\ell$ an order-preserving functional. That is, for each $f,\,g\in\bV$, 
\[
\begin{split}
-f\preceq g\preceq f& \Longrightarrow \|f\|\geq\|g\|\\
 g\preceq f&\Longrightarrow \ell(f)\geq \ell (g).
\end{split}
\]
Then, for all $f,\,g\in\cC$ with $\ell(f) = \ell(g)>0$, 
$$
\|f-g\|\leq\left(e^{\Theta(f,\,g)}-1\right) \min\{\|f\|, \|g\|\} .
$$
\end{lem}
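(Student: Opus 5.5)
We use the algebraic description \eqref{eq:diameter} of the Hilbert metric. If $\Theta(f,g)=\infty$ there is nothing to prove, and if $f,g$ are linearly dependent then $\ell(f)=\ell(g)>0$ forces $f=g$. So assume $\Theta(f,g)<\infty$, so that $0<\alpha=\alpha(f,g)\le\beta=\beta(f,g)<\infty$. We may work with $\lambda<\alpha$ and $\mu>\beta$ arbitrarily close, or with $\alpha,\beta$ themselves, using that $\cC$ is closed. Then
\[
\alpha f\preceq g\preceq \beta f .
\]

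The first step is to pin down $\alpha$ and $\beta$ using $\ell$. Since $\ell$ is order preserving and $\ell(f)=\ell(g)>0$, the inequality $\alpha f\preceq g$ gives $\alpha\ell(f)\le\ell(g)=\ell(f)$, so $\alpha\le1$. In the same way $g\preceq\beta f$ gives $\beta\ge1$. Subtracting $f$ from the chain above yields
\[
(\alpha-1)f\preceq g-f\preceq(\beta-1)f .
\]

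The second step is to obtain a symmetric bound. Set $c=\max\{1-\alpha,\beta-1\}\ge0$. Because $f\succeq0$, we have $(\alpha-1)f\succeq -cf$ and $(\beta-1)f\preceq cf$. Hence $-cf\preceq g-f\preceq cf$. The norm hypothesis (the lattice-type property) then gives $\|g-f\|\le c\|f\|$.

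The third step is to compare $c$ with $e^{\Theta}-1$, where $e^{\Theta}=\beta/\alpha$. We have $\beta-1\le\beta/\alpha-1$ because $\alpha\le1$. We also have $1-\alpha\le\beta/\alpha-1$, since this is equivalent to $\alpha-\alpha^2\le\beta-\alpha$, that is $2\alpha-\alpha^2\le\beta$, which holds because $2\alpha-\alpha^2\le1\le\beta$. Therefore $\|f-g\|\le(e^{\Theta(f,g)}-1)\|f\|$. The roles of $f$ and $g$ are symmetric, because $\Theta(f,g)=\Theta(g,f)$: indeed $\alpha(g,f)=1/\beta(f,g)$ and $\beta(g,f)=1/\alpha(f,g)$. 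So the same bound holds with $\|g\|$, and taking the minimum concludes the proof.

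The only mildly delicate point is the limiting argument when the sup or inf in \eqref{eq:diameter} is not attained. One can avoid it by running the argument with $\lambda<\alpha$ and $\mu>\beta$ for which the order relations hold, and then letting $\lambda\to\alpha$ and $\mu\to\beta$. The constraints $\lambda\le1\le\mu$ still follow from $\ell$, so no closedness is really needed.
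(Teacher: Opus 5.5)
The survey states this lemma without proof, quoting it from \cite{Li95} and \cite[Appendix~D]{DKL}, and your argument is correct and is the standard one. You use $\ell$ to get $\alpha\le 1\le\beta$, sandwich $g-f$ between symmetric multiples of $f$, apply the monotonicity of the norm, and use the symmetry of $\Theta$ to obtain the minimum; your limiting argument with $\lambda<\alpha$, $\mu>\beta$ correctly removes any need for closedness. A slightly quicker variant brackets $g-f$ between $\pm(\beta-\alpha)f$ and then uses $\beta-\alpha\le(\beta-\alpha)/\alpha=e^{\Theta}-1$, which avoids your extra check $2\alpha-\alpha^2\le 1\le\beta$ needed for the sharper constant $\max\{1-\alpha,\beta-1\}$.
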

Of course, it may be possible that the Banach norm is not order preserving, yet if the order is Archimedean,\footnote{ By Archimedean we mean that there exists $\be\in\bV$ such that, for all elements $f\in\bV$ there exists $\lambda>0$ such that $f\preceq\lambda \be$.} then we can define the norm
\begin{equation}\label{eq:cone_norm}
\|f\|_*=\inf\{\lambda\;:\; -\lambda\be\preceq f\preceq\lambda \be\}.
\end{equation}
One can check that $\|\cdot\|_*$ is norm preserving. In fact, it dominates any order-preserving norm
\cite[Lemma~D.7]{DKL} .

The above allows us to connect the present theory with the study of the spectral properties of an operator, although the strength of the cones lies in the possibility of studying the product of different operators.
\begin{thm}\label{thm:gap_h} Let $(\bV, \|\cdot\|)$ be an order-preserving Banach space and $\cL:L(\bV,\bV)$ an order-preserving operator. Assume
\[
\Delta=\sup_{f,g\in\cC}\Theta(\cL f,\cL g)<\infty,
\]
then, setting $\nu = r(\cL)$, the spectral radius of $\cL$, and $\chi:=\tanh\left(\frac \Delta 4\right)$, there exists $\oldh_*\in \bV$ and $\ell\in \bV^*$ such that $\cL(f)=\nu \oldh_*\ell(f)+Q f$ where $\ell(\oldh_*)=1$, $Q \oldh_*=0$, $\ell(Q f)=0$, for all $f\in\bV$, and $\|Q^n\|\leq \chi^{n-1}\nu^n\Delta$.
\end{thm}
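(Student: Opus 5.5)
The plan is to build the projection directly from the Hilbert-metric contraction of Theorem~\ref{thm:hilbert}, and then convert projective convergence into norm convergence with Lemma~\ref{lem:relnorm}.

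\emph{Step 1: the functional $\ell$.} Since $\cL(\cC)\subset\cC$ and $\Theta(\cL f,\cL g)\le \Delta<\infty$, Theorem~\ref{thm:hilbert} gives $\Theta(\cL^n f,\cL^n g)\le \chi^{n-1}\Delta$ for all $f,g\in\cC$, $n\ge1$. I fix $f_0\in\cC\setminus\{0\}$ and take an order-preserving functional $\ell_0$ that is strictly positive on $\cC\setminus\{0\}$; for instance a positive element of $\bV^*$, which exists in the concrete setting via $\|\cdot\|_*$ and the unit $\be$. Then I set $f_n=\cL^nf_0/\ell_0(\cL^nf_0)$.

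\emph{Step 2: the eigenvector.} The elements $f_n$ satisfy $\ell_0(f_n)=1$ and $\Theta(f_n,f_{n+m})=\Theta(\cL^nf_0,\cL^n f_m)\le\chi^{n-1}\Delta$. Lemma~\ref{lem:relnorm} then gives
\[
\|f_n-f_{n+m}\|\le (e^{\chi^{n-1}\Delta}-1)\|f_n\|.
\]
A uniform bound on $\|f_n\|$ follows from the fact that $\Theta(f_1,f_n)\le\Delta$. Indeed, this yields $f_n\preceq e^{\Delta}\frac{\ell_0(f_n)}{\ell_0(f_1)}f_1$ (comparing $\ell_0$ values and using $\alpha\le \ell_0(g)/\ell_0(f)\le\beta$), and order preservation of the norm then bounds $\|f_n\|$ by a constant times $\|f_1\|$. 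So $(f_n)$ is Cauchy and converges to some $\oldh_*\in\cC$ with $\ell_0(\oldh_*)=1$. Moreover $\cL\oldh_*=\lambda\oldh_*$ with $\lambda=\lim\ell_0(\cL f_n)$.

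\emph{Step 3: the functional $\ell$ and the decomposition.} For each $f\in\cC$, the same argument shows $\cL^n f/\lambda^n\to \ell(f)\oldh_*$ with rate $\chi^{n}$. Here I use the cone bounds $\alpha(\oldh_*,\cL^n f)\,\oldh_*\preceq\cL^nf\preceq\beta(\oldh_*,\cL^nf)\,\oldh_*$: the ratio $\beta/\alpha$ tends to $1$ geometrically, and $\alpha_n/\lambda^n$ is increasing while $\beta_n/\lambda^n$ is decreasing, since $\cL$ preserves order and $\cL\oldh_*=\lambda\oldh_*$. Their common limit defines $\ell(f)\ge0$. Then $\ell$ is additive and positively homogeneous on $\cC$. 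I extend it to $\bV$ by writing $f=(f+\lambda\be)-\lambda\be$, using the Archimedean property (or $-\|f\|\be\preceq f\preceq\|f\|\be$). Boundedness comes from $|\ell(f)|\le \|f\|_*\ell(\be)$. We have $\ell(\oldh_*)=1$, and $\ell\circ\cL=\lambda\ell$ holds by construction.

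I then set $Q=\cL-\lambda\oldh_*\otimes\ell$. The identities $Q\oldh_*=0$ and $\ell(Qf)=0$ are immediate, and $Q^n=\cL^n-\lambda^n\oldh_*\otimes\ell$.

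\emph{Step 4: the estimate on $Q^n$.} For $f\in\cC$ I apply Lemma~\ref{lem:relnorm} to $\cL^n f$ and $\lambda^n\ell(f)\oldh_*$, which have the same $\ell$ value. Their Hilbert distance is at most $\Theta(\cL^n f,\cL^n\oldh_*)\le\chi^{n-1}\Delta$. This gives
\[
\|Q^nf\|\lesssim\chi^{n-1}\Delta\,\lambda^n\ell(f),
\]
using $e^x-1\lesssim x$ for the small values of $x$ that occur, and then splitting a general $f$ into cone parts. Finally $\lambda=\nu=r(\cL)$, because $\cL^n=\lambda^n\oldh_*\otimes\ell+Q^n$ with $\|Q^n\|=O(\lambda^n\chi^n)$.

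I expect the main obstacle to be the norm bookkeeping. That includes the passage from cone elements to arbitrary $f$, which relies on the Archimedean unit and on $\|\cdot\|_*$ dominating the norm, and obtaining exactly the constant $\chi^{n-1}\nu^n\Delta$ without spurious factors. That constant presumably requires normalizing $\|\cdot\|$ compatibly with $\ell$ and $\be$.
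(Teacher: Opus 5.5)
The paper gives no proof of this statement; it is quoted from the general theory (cf.\ \cite[Appendix~D]{DKL}), so there is nothing to compare line by line. Your route is the standard one:
Birkhoff contraction, the Cauchy property of the normalized iterates via Lemma~\ref{lem:relnorm}, $\ell$ as the common limit of the monotone sequences $\alpha_n\lambda^{-n}$ and $\beta_n\lambda^{-n}$, and Lemma~\ref{lem:relnorm} again for $Q^n$. Restricted to $f\in\cC$, all of this is sound.

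One remark on Step~1. You do not need a functional that is strictly positive on all of $\cC$, and such a functional need not exist. Separating $-\cL f_0$ from the closed cone by Hahn--Banach gives $\ell_0\in\bV^*$ with $\ell_0\ge0$ on $\cC$ and $\ell_0(\cL f_0)>0$. Then $\Delta<\infty$ forces $\ell_0>0$ on $\cL\cC$, which is all you use. A functional built from $\be$ and $\|\cdot\|_*$, as you suggest, is only $\|\cdot\|_*$-continuous, and with it you could not pass to the limit in $\ell_0(f_n)$.

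The genuine gap is the passage from $\cC$ to $\bV$.

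First, the statement needs an extra hypothesis. Take $\bV=\bR^2$ with the Euclidean norm, $\cC=\{(x,0):x\ge0\}$ and $\cL=\mathrm{diag}(1,2)$. The norm and $\cL$ are order preserving and $\Delta=0$, yet $\cL$ is not rank one, and your cone eigenvalue $\lambda=1$ is not $r(\cL)=2$. So a hypothesis making $\cC$ reproducing with norm control (e.g.\ an Archimedean unit $\be$) must be assumed, not borrowed from ``the concrete setting''.

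Second, the inequality your splitting $f=(f+\|f\|_*\be)-\|f\|_*\be$ actually needs is $\|f\|_*\le C_\sharp\|f\|$. Your parenthetical $-\|f\|\be\preceq f\preceq\|f\|\be$ is exactly this with $C_\sharp=1$, asserted without proof. Your bound $|\ell(f)|\le\|f\|_*\,\ell(\be)$ and the resulting estimate on $Q^nf$ are in terms of $\|f\|_*$. Without the inequality you get neither $\ell\in\bV^*$, nor an operator-norm bound on $Q^n$, nor $\lambda=r(\cL)$.

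The fact you cite, that $\|\cdot\|_*$ dominates the norm, is the opposite and automatic inequality $\|f\|\le\|\be\|\,\|f\|_*$, and it does not help. The needed direction does hold when $\cC$ is closed and $\be$ is an order unit for the whole Banach space. Indeed $\bV=\bigcup_n[-n\be,n\be]$ with closed, convex, symmetric order intervals, so by Baire one of them contains a ball $B(0,r)$, i.e.\ $\|f\|_*\le (n/r)\|f\|$. In the billiard application the unit is only available on the incomplete space $C^1$, which is why Lemma~\ref{lem:cone bound} has to prove this inequality by hand.

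Finally, do not try to obtain the constant-free bound $\|Q^n\|\le\chi^{n-1}\nu^n\Delta$: it is false. What your argument yields, $\|Q^n\|\le C\chi^{n-1}\nu^n\Delta$ with $C$ of order $e^{\Delta}C_\sharp\,\ell(\be)\,\|\oldh_*\|$, is the correct form. Take $\cC$ the positive quadrant of $\bR^2$, $\be=(1,1)$, $\|\cdot\|=\|\cdot\|_\infty=\|\cdot\|_*$, and for $0<\mu<1$, $0<t\le1$,
\[
\cL=\frac12\begin{pmatrix}1+\mu&(1-\mu)/t\\ t(1-\mu)&1+\mu\end{pmatrix}.
\]
Conjugating by $\mathrm{diag}(1,t)$, an isometry of the Hilbert metric of the quadrant, gives a symmetric matrix with eigenvalues $1$ and $\mu$. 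So $\nu=1$, $\chi=\mu$ and $\Delta=2\log\frac{1+\mu}{1-\mu}$ do not depend on $t$. However $\oldh_*=(1,t)$ and $\ell(f)=\frac{tf_1+f_2}{2t}$, and $\|Q^n\|_\infty=\mu^n\frac{1+t}{2t}\to\infty$ as $t\to0$. The blow-up factor is exactly $\ell(\be)\|\oldh_*\|$. Since the norm is already $\|\cdot\|_*$ with $\|\be\|=1$, no normalization ``compatible with $\be$'' removes it.
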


\subsection{Families of billiards and the problem}\ \label{sec:bill-families}\\
In this section, we briefly describe a simple family of billiards to which the following section applies. We consider billiard flows as described in \ref{sec:flowdef} in which the scatterers $B_i$ are pairwise disjoint. We will consider a set $\cQ$ of billiards tables such that each element of $\cQ$ has the same number of obstacles $B_i$, each with a boundary of fixed length $\ell_i$. We also require that there exists $\tau_*, \cK_*,E_*$ such that, for any element of $\cQ$, the distance between any two obstacles is larger than $\tau_*$. In addition, the curvature of the boundary of any obstacle is larger than $\cK_*$ and smaller than $\cK_*^{-1}$, while the third derivative of the boundary is bounded by $E_*$.

The requirement that the boundaries have the same length is not essential, but it simplifies the exposition, since it implies that all the Poincaré sections are the same: $M = \cup_i [0,\ell_i)\times [-\frac \pi2, \frac \pi2]$. We can then call $\cF$ the set of associated Poincarè maps $T:M\to M$.
See \cite{dl cones} for a more precise definition.

For any $x_0\in M$ and sequence $\{T_i\}_{i\in\bN}\subset \cF$, we are interested in studying the statistics of $x_n:=T_n\circ\cdots\circ T_1(x_0)$. 
Thus, given an initial probability distribution $\rho\in\cC^1(M,\bR_+)$ for $x_0$, we would like to understand, for each $\vf\in C^1(M,\bR)$,\footnote{In fact, one can consider as well a sequence of observables $\vf_n$, but let us keep it simple.}
\[
\bE(\vf(x_n)):=\int_M \vf(x_n)\rho(x_0) \mu_{SRB}(dx_0).
\]
Calling $\cL_i$ the transfer operator associated to the map $T_i$, we have
\[
\bE(\vf(x_n)):=\int_M \vf(x)\cL_n\cdots \cL_1\rho(x) \mu_{SRB}(dx).
\]
Not surprisingly, $\bE(\vf(x_n))\to \int_M\vf\mu_{SRB}$, but the problem is to estimate the speed of convergence. Moreover, setting 
\[
\vf_n(x)=\vf(x)-\bE(\vf(x_n)),
\] 
an important problem is to understand the size of the ergodic sums
\begin{equation}\label{eq:ergodic_sum}
S_n(x_0)=\sum_{k=0}^{n-1}\vf_n(x_n).
\end{equation}
In analogy with the case in which $\cF$ consists of just one billiard, one expects that $S_n$, even when properly normalized, converges to a limiting behavior only as a random variable. The natural normalization is given by $\sigma_n$, where
\[
\sigma_n^2=\bE(S_n^2),
\]
We are thus motivated to study the characteristic function
\begin{equation}\label{eq:char_funct}
\Phi_n(\lambda)=\bE(e^{i\lambda \sigma_n^{-1}S_n})=\int_M \cL_{n,\lambda,n}\cdots\cL_{n,\lambda, 1}\rho(x_0) \mu_{SRB}(dx_0)
\end{equation}
where, using the standard Nagaev-Guivarc'h trick, we have introduced the operators
\begin{equation}\label{eq:complex_potential}
\cL_{n,\lambda, k}g(x):=\cL_k(e^{i\lambda\sigma_n^{-1}\vf_n}g)(x).
\end{equation}

\subsection{Cones for billiards}\ \label{sec:cone def} \\
To study sequential billiards taken from the family  $\cF(\tau_*, \cK_*, E_*)$, it is necessary to consider structures relevant to the whole family of billiards, not just a single one. In particular, we will integrate on stable curves in 
$\widehat \cW^s_{\bH}$, as defined at the beginning of Section \ref{sec:growth t>0}, rather than the
exact stable manifolds $\cW^s_\bH$ of one particular map. More precisely, we may choose a single family $\widehat \cW^s_{\bH}$ such that $T^{-1} \widehat \cW^s_{\bH} \subset \widehat \cW^s_{\bH}$ for all $T \in \cF(\tau_*, \cK_*, E_*)$.  Thus for a fixed length scale $\delta>0$, we define 
\[
\cW^s_-(\delta) = \{ W \in \widehat\cW^s_{\bH} : |W| \le 2\delta \} \quad \mbox{and} \quad
\cW^s(\delta) = \{ W \in \widehat\cW^s_{\bH} : |W| \in [\delta, 2\delta] \} \, .
\]
Also, it will be convenient to consider positive test functions. We therefore define, for $\alpha \in (0,1]$, $a \ge 1$ and $W \in \cW^s_-(\delta)$, we define the following cone of test functions:  let $d(\cdot , \cdot)$ define the distance on $W$ induced by arclength,
\[
\cD_{a, \alpha}(W) := \left\{ \psi \in C^0(W) : \psi > 0, \frac{\psi(x)}{\psi(y)} \le e^{a d(x,y)^\alpha} \right\}.
\]
We can now define, for $g \in C^1(M)$, define the following two quantities,
\begin{equation}
\label{eq:tri def}
\tri g \tri_+ = \sup_{\stackrel{\scriptstyle W \in \cW^s(\delta)}{\psi \in \cD_{a,\beta}(W)}} \frac{\left|\int_W g \psi \, dm_W\right|}{\int_W \psi \, dm_W} ,
\hskip1.3cm
\tri g \tri_- = \inf_{\stackrel{\scriptstyle W \in \cW^s(\delta)}{\psi \in \cD_{a,\beta}(W)}} \frac{\int_W g \psi \, dm_W}{\int_W \psi \, dm_W} .
\end{equation}

Denote the average value of 
$\psi$ on $W$ by $\fint_W \psi\, dm_W=\frac{1}{|W|}\int_W \psi \, dm_W$.  
Since all of our integrals on $W \in \widehat \cW^s$
will be taken with respect to the arc-length $dm_W$, to keep our notation concise, we will drop the measure from our 
integral notation in the following. In fact, the above definition makes sense for all functions on $M$  whose restriction to each $W \in \cW^s$
is integrable with respect to the arclength measure $dm_W$ and for which $\tri\cdot\tri_+$ is finite, we call such a set $\cA$.

For exponents, $\alpha, \beta, \gamma, q \in (0,1)$, satisfying $\alpha \le 1/3$, $q < 1/2$, and
$\gamma \le \min \{ \alpha - \beta, q \}$, and constants $a, c, A, L >1$, $\delta >0$, and recalling 
$d_{\cW^s}$ from Section~\ref{sec:Bt>0}, we define the cone
\begin{align}
\cC_{c,A, L}(\delta)  =  \Bigg\{ & g \in \cA\setminus \{0\} : \nonumber\\
&\tri g \tri_+\leq L\tri g \tri_- ;
\label{eq:cone 1} \\
& \sup_{W \in \cW^s_-(\delta)} \sup_{\psi \in \cD_{a, \beta}(W)} |W|^{-q}\frac{|\int_W g \psi|}{\fint_W\psi}  \le  A \delta^{1-q} \tri g \tri_- ;
\label{eq:cone 2} \\
&\forall\, W^1, W^2 \in \cW^s_-(\delta):  d_{\cW^s}(W^1, W^2) \le \delta,  \forall \psi_i \in \cD_{a, \alpha}(W^i): d_*(\psi_1, \psi_2)=0,  \nonumber\\
&\left|\frac{\int_{W^1} g \psi_1}{\fint_{W^1}\psi_1}  - \frac{\int_{W^2} g \psi_2}{\fint_{W^2}\psi_2} \right|\leq
d_{\cW^s}(W^1, W^2)^\gamma \, \delta^{1-\gamma}   c A \tri g \tri_-  \Bigg\} .
\label{eq:cone 3}
\end{align}
Here, the distance $d_*(\psi_1, \psi_2)$ between test functions is defined with a slight difference
to the distance $d_0$ defined in \eqref{eq:psi_dist}.  If $d_{\cW^s}(W_1, W_2)< \infty$, we define
\[
d_*(\psi_1, \psi_2) = |\psi_1 \circ G_{W_1} \| G_{W_1}' \| - \psi_2 \circ G_{W_2} \| G_{W_2}' \| |_{C^0(I_{W_1} \cap I_{W_2}) } \, ,
\]
where $\| G_W' \| = \sqrt{1 + (d\vf_W/dr)^2}$, recalling \eqref{eq:graph}.  As we will show below, the difference
between using $d_*$ and $d_0$ is a technical one, made for convenience, and makes no difference to the eventual
estimates.

\begin{rem}
One can understand the three cone conditions as follows.  The first condition \eqref{eq:cone 1} enforces 
a type of positivity condition: $\sup/\inf$ should be bounded.  The second condition \eqref{eq:cone 2} controls the amount of
weight assigned to short pieces, i.e. $|\int_W g | \le C_g |W|^{q}$, useful for controlling the effect of cutting due to
singularities.  The third condition \eqref{eq:cone 3} requires a type of averaged H\"older continuity in the direction transverse to the stable curves, i.e. in the unstable cone.  One may consider the second and third conditions 
to be the
projective analogues of the stable and unstable norm definitions \eqref{eq:stable t>0} and \eqref{eq:unstable t>0}. See the proof of Theorem~\ref{thm:cone equiv} for more details on such a correspondence.
\end{rem}

\subsection{Loss of memory}\ \\
The loss of memory follows from cone contraction. To obtain cone contraction, it is necessary that the parameters of the cone satisfy several technical conditions. Such conditions essentially boil down to the requirement that $L,A,c$ be large and $\delta$ small enough (see \cite[Theorem 2.3]{dl cones} for the precise conditions).
If such conditions are satisfied, then we have the following.
\begin{thm}[{\cite[Theorem 2.3]{dl cones}}]\label{thm:loss_of_mem}
There exists $N\in\bN$ and $\chi\in (0,1)$ such that, for any sequence of billiard maps $\{T_i\}_{i=1}^N\subset \cF$,
\[
\cL_N\cdots \cL_1\cC_{c,A,L}(\delta)\subset \cC_{\chi c,\chi A,\chi L}(\delta).
\]

\end{thm}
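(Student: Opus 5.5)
We plan to verify the three cone conditions \eqref{eq:cone 1}--\eqref{eq:cone 3} one at a time for $\cL_N\cdots\cL_1 g$, with $g\in\cC_{c,A,L}(\delta)$, by the usual change of variables along stable curves. The tool is the uniform $T^{-1}$-invariance of $\widehat\cW^s_{\bH}$ over the whole family $\cF$. For $W\in\cW^s_-(\delta)$ and $\psi\in\cD_{a,\beta}(W)$ we write
\[
\int_W \cL_N\cdots\cL_1 g\,\psi=\sum_{W_i\in\cG_N(W)}\int_{W_i} g\,\psi\circ T_{N}\circ\cdots\circ T_{1}\,J_{W_i}T^{(N)} ,
\]
where $\cG_N(W)$ now denotes the generation-$N$ pieces for the composition $T_N\circ\cdots\circ T_1$. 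Each weighted test function $\psi\circ T^{(N)}J_{W_i}T^{(N)}$ lies in $\cD_{a',\beta}(W_i)$ with $a'\le \Lambda^{-\beta N}a+C_d$, by uniform contraction along stable curves and bounded distortion (constants uniform in $\cF$). Hence $a$ large makes $a'\le a$. This step is standard.

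\textbf{Step 1 (conditions \eqref{eq:cone 2} and \eqref{eq:cone 3}).} We split $\cG_N(W)$ into long pieces (length $\ge\delta$, subdivided into elements of $\cW^s(\delta)$) and short pieces.
\begin{itemize}
\item Short pieces are controlled by \eqref{eq:cone 2} for $g$. Their total contribution is bounded using the one-step expansion, Lemma~\ref{lem:one step}, applied with exponent $q$ (which is where $q<1/2$ enters), uniformly in the family. This gives a factor $\theta_*^N$ times $A\delta^{1-q}\tri g\tri_-$.
\item Long pieces are bounded by $\tri g\tri_+\le L\tri g\tri_-$ times $\int\psi$, weighted by $|W|^{-q}\le(\text{something small})$ since $W$ is short.
\end{itemize}
Together these yield \eqref{eq:cone 2} with constant $\chi A$ once $A\gg L$ and $N$ is large. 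For \eqref{eq:cone 3} we pair the preimages of $W^1,W^2$ along matched unstable curves: matched pieces are $\Lambda^{-N}d_{\cW^s}(W^1,W^2)$-close. Their contribution is estimated by \eqref{eq:cone 3} for $g$, giving a gain $\Lambda^{-\gamma N}$. Unmatched pieces and the mismatch of test functions (measured in $d_*$ and then in the $C^\beta$ norm, using $\gamma\le\alpha-\beta$) are estimated by the strong-stable type condition \eqref{eq:cone 2}. The choice $c\gg 1$ then produces $\chi cA$.

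\textbf{Step 2 (condition \eqref{eq:cone 1}).} This is the main obstacle, since contraction of $L$ needs a genuine mixing/lower bound rather than hyperbolicity alone. The plan is as follows.
\begin{itemize}
\item First we show that $\tri g\tri_-$ is comparable to an average: by \eqref{eq:cone 3}, integrals over nearby curves of $\cW^s(\delta)$ differ by at most $\delta^{1-\gamma}d^\gamma cA\tri g\tri_-$.
\item Next, for each $W\in\cW^s(\delta)$, a fixed fraction of $T^{(N)}$-preimages of $W$ are long (Lemma~\ref{lem:long carry}, uniform in $\cF$). These long pieces properly cross a fixed finite family of Cantor rectangles, uniformly in the family, by the mixing argument of Proposition~\ref{prop:uniform growth}(a) made uniform over sequences.
\item Consequently, $\int_W\cL^{(N)}g\,\psi/\int_W\psi$ is, up to errors of relative size $cA\delta^{\gamma'}+\theta^N$, a convex combination of averages of $g$ over curves that nearly coincide for different $W$.
\end{itemize}
This gives $\tri\cL^{(N)}g\tri_+\le(1+\epsilon)C_0\tri\cL^{(N)}g\tri_-$ with $C_0$ independent of $L$. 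Choosing $L>C_0/\chi$ and then $\delta$ small (so that the $cA\delta^{\gamma'}$ errors are tiny) closes the argument. We expect this to be the hardest step; the ordering of parameter choices ($a$, then $L$, then $A$, $c$, then $\delta$, then $N$) must be checked carefully for consistency.
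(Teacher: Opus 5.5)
Your proposal follows the same route the paper describes for \cite[Theorem~2.3]{dl cones}. The parameters $c$ and $A$ contract by Lasota--Yorke-type estimates: hyperbolicity plus control of the short pieces created by singularities, as in Proposition~\ref{prop:LY} with $t=1$. The parameter $L$ contracts only through a uniform mixing argument for the compositions $T_N\circ\cdots\circ T_1$ on rectangles of scale $\delta$. The step you call ``made uniform over sequences'' is the actual content of that mixing argument, not a routine extension of Proposition~\ref{prop:uniform growth}(a): in the sequential setting there are no Cantor rectangles of a single map, and mixing of $\mu_{SRB}$ for one $T$ says nothing about compositions of different maps. Moreover, because you choose $L$ before $\delta$, it must yield crossing weights comparable for different $W$ with a constant independent of $\delta$, and mere proper crossing of each rectangle does not give that.
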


The contraction of the parameters $c$ and $A$ follows from the hyperbolicity of the map and the control of the
cutting due to the singularities.  The proof for these parameters is similar to the
proof of the Lasota-Yorke inequalities in Proposition~\ref{prop:LY} for $t=1$.  The contraction of the parameter $L$
relies on a different mechanism:  the mixing of the maps $T_i$.  Indeed, its proof relies on a uniform mixing
argument for sequences of maps $T_N \circ \cdots \circ T_1$ on rectangles of scale $\delta$.
See \cite[Section~6]{dl cones} for details.

One can check that the diameter of $ \cC_{\chi c,\chi A,\chi L}(\delta)$ in $\cC_{c,A,L}(\delta)$ is finite, hence we can apply the previous results to this situation.
Recall that we can choose $\be=1$ in the definition \eqref{eq:cone_norm},  so that $\| 1 \|_* = 1$. Moreover, by construction $\cL_k1=1$. Thus
\[
\|\cL_k g\|_*\leq \|g\|_*\|\cL_k 1\|_*=\|g\|_*.
\]
Thus, $\|\cL_k\|_*= 1$ for all $k$.

Theorem \ref{thm:loss_of_mem}  implies that there exists $C_1>0$ such that, for all $\oldh_1,\oldh_2\in\cC^1\cap \cC_{c,A,L}(\delta)$, $\vf\in\cC^1$, such that $\int \oldh_1=\int \oldh_2=1$, $\|\oldh_1\|_*\geq \|\oldh_2\|_*$, $n\in\bN$ and sequence $\{T_i\}_{i\in\bN}$ we have, by Lemma \ref{lem:relnorm} and Theorem \ref{thm:hilbert},
\[
\begin{split}
&\left|\int_M (\oldh_1 -\oldh_2)\vf\circ T_n\circ\cdots\circ T_1 d\mu_{SRB}\right|
=\left|\int_M \cL_n\cdots \cL_1(\oldh_1 -\oldh_2)\cdot \vf d\mu_{SRB}\right|\\
&\leq C_2\|\cL_n\cdots \cL_1(\oldh_1 -\oldh_2)\|_*\|\vf\|_{\cC^1}\leq C_2\left[ e^{\Theta(\cL_n\cdots \cL_1\oldh_1, \cL_n\cdots \cL_1 \oldh_2))}-1\right]\|\vf\|_{\cC^1}\|\oldh_1\|_*\\
&\leq C_2\left[ e^{\beta^nC_3}-1\right]\|\vf\|_{\cC^1}\|\oldh_1\|_{\cC^1}\leq C_2C_3\beta^{n}\|\vf\|_{\cC^1}\|\oldh_1\|_{\cC^1}.
\end{split}
\]
Which implies that the initial distribution of $x_0$ is forgotten exponentially fast.

\subsection{Central limit theorem}\ \\
Finally, we address the problem of the fluctuations of the ergodic sums \eqref{eq:ergodic_sum}. By equation \eqref{eq:char_funct}, the problem of computing the characteristic function seems very similar to the one addressed in the previous section, yet there is a crucial difference: the transfer operators \eqref{eq:complex_potential} are complex.
At first sight, the problem is very serious: the Hilbert metric seems to rely on the order structure of the real numbers, an order not present in the complex plane.
Surprisingly, Rugh showed that it is possible to extend the ideas of invariant cones to the complex case \cite{Ru10}. Rugh's work has been further refined by Dubois \cite{Du09, Du11}.  
\subsubsection{Complex cones}\ \\
The idea is to construct a complex cone starting from a real one. Let $\cB_\bR$ be a real Banach space with a real cone $\cC_\bR$ as in Section \ref{sec:convex_cone}, and assume that the order associated to the cone is Archimedean: there is an element $\be\in \cC_\bR$ such that for each $\oldh \in\cB_\bR$ there exists $\lambda>0$ such that $\lambda \be\succeq \oldh $. We ask the cone to have the following, rather general, form: given a set $\cS\in \cB_{\bR}'$ such that $\ell(\oldh)=0$ for all $\ell\in\cS$ implies $\oldh=0$, define
\begin{equation}\label{eq:com_cone_def0}
\cC_\bR=\{\oldh \in \cB_{\bR}\setminus \{0\}\;:\; \ell(\oldh )\geq 0, \forall \ell\in\cS\}.
\end{equation}  
In addition, assume there exists a functional $\bbm$ and $\kappa\in (0,1)$ such that, for all $\oldh \in\cC_\bR$,
\begin{equation}\label{eq:maybetoomuch}
\bbm(\be)=1\;;\quad \bbm(\oldh )\geq \kappa \|\oldh \|_*.
\end{equation}
Then we can define the Banach space $\cB_{\bC}$ obtained by complexifying $\cB_\bR$ and a complex cone $\cC_\bC:=\bC_* \cdot (\cC_\bR+i\cC_{\bR})$, where $\bC_*=\bC\setminus\{0\}$.
We may also write $\cC_\bC = \bC_1 \cdot (\cC_\bR + i \cC_\bR)$, where $\bC_1 = \{ z \in \bC : |z| =1 \}$. We also define the dual cone by 
\[
\cC'_\bC=\{\ell\in \cB'_\bC\;:\; \ell(\oldh)\neq 0\quad \forall \oldh \in\cC_\bC\} .
\]
In order to construct a metric on the cone, for each $\oldh ,\oldf\in\cC_{\bC}$ define
\begin{equation}\label{eq:gauge}
E_\cC(\oldh ,\oldf)=\left\{\frac{\ell(\oldh )}{\ell(\oldf)}\;:\; \ell\in\cC'_{\bC}\right\} \, .
\end{equation}
We can then define the object that takes the place of the Hilbert metric in the present context,
\begin{equation}\label{eq:gauge_dist}
\delta_\cC(\oldh ,\oldf)=\ln\frac{\sup_{z\in E_\cC(\oldh ,\oldf)}|z|}{\inf_{z\in E_\cC(\oldh ,\oldf)}|z|}=\sup_{z,w\in E_\cC(\oldh ,\oldf)}\ln\left|\frac z w\right| \, .
\end{equation}
The above construction allows the equivalent of the Hilbert metric contraction.
\begin{thm}[{\cite[Theorem 3.1 (iii)]{Du11}}]\label{thm:complex_contraction}
Let $\cB_1,\cB_2$ be complex Banach spaces with complex cones $\cC_1,\cC_2$, as above. Let $\cL_\bC\in L(\cB_1,\cB_2)$, such that $\cL_\bC(\cC_1)\subset \cC_2$ and 
$\Delta=\sup_{g,f\in\cC_1}\delta_{\cC_2}(Lg,Lf)$. Then, for all $\oldh ,\oldf\in\cC_1$ we have
\[
\delta_{\cC_2}(\cL_\bC \oldh ,\cL_\bC \oldf)\leq \tanh(\Delta/4)\delta_{\cC_1}(\oldh ,\oldf).
\]
\end{thm}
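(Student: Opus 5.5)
The plan is to prove the complex analogue of Birkhoff's contraction (Theorem~\ref{thm:hilbert}) by reducing it to a one-dimensional statement about conformal maps of the unit disk, following Rugh and Dubois. First I will record the basic properties of the gauge $\delta_\cC$ that follow from \eqref{eq:gauge} and \eqref{eq:gauge_dist}: it is projective, i.e.\ $\delta_\cC(\lambda \oldh,\mu \oldf)=\delta_\cC(\oldh,\oldf)$ for $\lambda,\mu\in\bC_*$; and it is monotone under cone maps. For monotonicity, note that if $\cL_\bC\cC_1\subset\cC_2$ then $\ell\circ\cL_\bC\in\cC_1'$ for every $\ell\in\cC_2'$. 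Hence $E_{\cC_2}(\cL_\bC \oldh,\cL_\bC \oldf)\subset E_{\cC_1}(\oldh,\oldf)$, and so $\delta_{\cC_2}(\cL_\bC \oldh,\cL_\bC\oldf)\le\delta_{\cC_1}(\oldh,\oldf)$. The content of the theorem is that this weak contraction improves to the factor $\tanh(\Delta/4)$ when the image has finite diameter.

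Next I will reduce to complex lines. Fix $\oldh,\oldf\in\cC_1$ and consider the complex affine line $z\mapsto \oldh+z\oldf$ (or, projectively, $z\mapsto \oldh - z\oldf$). Set $U=\{z\in\bC:\oldh-z\oldf\in\cC_1\cup\{0\}\}$ and $U_2=\{z:\cL_\bC\oldh-z\cL_\bC\oldf\in\cC_2\cup\{0\}\}$. Since $\ell(\oldh)/\ell(\oldf)$ are exactly the points $z$ with $\ell(\oldh-z\oldf)=0$, the set $E_\cC(\oldh,\oldf)$ lies in the complement of the ``interior'' region, and $\delta_\cC(\oldh,\oldf)$ equals $\log(\sup|z|/\inf|z|)$ over that complement. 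This is an annulus-type quantity: $\delta_\cC(\oldh,\oldf)$ measures how large an annulus $\{r<|z|<R\}$ containing the relevant points avoids the boundary values. I will then express the gauge via the hyperbolic (Poincaré) geometry of the planar domain $\bC\setminus E$, comparing $\delta$ with the hyperbolic distance of the points $0$ and $\infty$ relative to the cone. Here I will use the hypotheses \eqref{eq:com_cone_def0}--\eqref{eq:maybetoomuch}: the Archimedean element $\be$ and $\bbm$ guarantee that $E$ is nonempty and bounded away from $0$ and $\infty$ when $\oldh,\oldf$ are in the cone, so the quantities are finite.

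The core step is the following. The diameter bound $\Delta$ implies that the image line domain is compactly contained in the source domain, with a controlled modulus. By the Schwarz--Pick lemma, the inclusion of a hyperbolic domain into a larger one of ``relative diameter'' $\Delta$ contracts the hyperbolic metric by $\tanh(\Delta/4)$. This is the same mechanism that gives the constant in the real Birkhoff theorem, where the half-strip/cross-ratio computation yields $\tanh$. I would then chain these steps: write $\delta_{\cC_2}(\cL_\bC\oldh,\cL_\bC\oldf)$ as an integral of the hyperbolic density along the path from $\cL_\bC \oldf$ to $\cL_\bC\oldh$, pull back by $\cL_\bC$, and apply the contraction pointwise.

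I expect the main obstacle to be the identification of $\delta_\cC$ with (or a two-sided comparison to) a genuine hyperbolic distance on a complex line, with constant exactly $1$. The gauge defined by sup/inf over all dual functionals is not literally a Poincaré metric, so the $\tanh(\Delta/4)$ constant requires Dubois' sharper argument. That argument takes, for each pair $z,w\in E$, the Möbius map sending the two boundary points to $0,\infty$ and applies Schwarz--Pick on the disk to the holomorphic self-map induced by $\cL_\bC$. I would first try this disk-by-disk argument, using that $\sup_{z,w}\log|z/w|$ is a supremum of one-dimensional cross-ratio quantities, each contracted by $\tanh(\Delta/4)$.
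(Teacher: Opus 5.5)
The paper gives no proof of this statement; it is quoted from \cite{Du11}, so your proposal has to stand on its own. Your first step is correct: since $\ell\circ\cL_\bC\in\cC_1'$ for every $\ell\in\cC_2'$, one has $E_{\cC_2}(\cL_\bC g,\cL_\bC f)\subset E_{\cC_1}(g,f)$, and hence a weak contraction. However, the factor $\tanh(\Delta/4)$, which is the whole content of the theorem, is never derived, and the mechanism you propose for it does not work as described. There are three problems.

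\emph{The induced map is trivial.} In the parametrization $z\mapsto g-zf$, the map induced by $\cL_\bC$ is the identity, because $\cL_\bC(g-zf)=\cL_\bC g-z\,\cL_\bC f$. So there is no nontrivial holomorphic self-map of a disc to feed into Schwarz--Pick; there is only an inclusion of slices.

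\emph{The gauge is not a hyperbolic metric.} $\delta_\cC$ is a supremum of log-moduli of cross-ratios, not a Poincar\'e or length metric. It cannot be obtained by integrating a hyperbolic density along a path, and any comparison with hyperbolic metrics costs constants. Your final claim, that each one-dimensional cross-ratio quantity is contracted by $\tanh(\Delta/4)$, is just the theorem restated.

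\emph{The slice identification is unjustified.} You identify $\delta_\cC(g,f)$ with a quantity computed over the whole complement of the slice $U$, but only one inclusion between $E_\cC(g,f)$ and that complement is automatic. This matters because $\Delta$ only controls images of elements of $\cC_1$.

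The missing idea is to use the diameter bound on the dual side, where it produces many new functionals in $\cC_1'$. Fix $\ell,\ell'\in\cC_2'$ with $\zeta=\ell(\cL_\bC g)/\ell(\cL_\bC f)\neq\zeta'=\ell'(\cL_\bC g)/\ell'(\cL_\bC f)$. Let $m\le M$ be the infimum and supremum of $|\ell(\cL_\bC u)/\ell'(\cL_\bC u)|$ over $u\in\cC_1$; by the definition of $\Delta$, $\log(M/m)\le\Delta$.

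If $|c|<m$ or $|c|>M$, the functional $(\ell-c\ell')\circ\cL_\bC$ does not vanish on $\cC_1$. Hence it lies in $\cC_1'$, and $\eta=(\ell-c\ell')(\cL_\bC g)/(\ell-c\ell')(\cL_\bC f)\in E_{\cC_1}(g,f)$.

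Inverting, $c=c(\eta)$ is, up to a scalar, exactly the M\"obius normalization of your last paragraph: it sends $\zeta,\zeta'$ to $0,\infty$. It sends $0$ and $\infty$ to $c_0=\ell(\cL_\bC g)/\ell'(\cL_\bC g)$ and $c_\infty=\ell(\cL_\bC f)/\ell'(\cL_\bC f)$, both of which lie in the annulus $\{m\le|c|\le M\}$, and $\zeta/\zeta'=c_0/c_\infty$. So $E_{\cC_1}(g,f)$ contains the preimage of the complement of that annulus.

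Since $\eta/\eta'$ is the cross-ratio of $(0,\infty;\eta,\eta')$, it is M\"obius invariant. In the $c$-coordinate, the supremum over $(\eta,\eta')$ splits into separate suprema over $c(\eta)$ and $c(\eta')$. Let these tend to the points of $|c|=m$ and $|c|=M$ on the rays through $c_\infty$ and $c_0$ respectively. When $|c_0|>|c_\infty|$ (otherwise there is nothing to prove), this gives
\[
\delta_{\cC_1}(g,f)\ge\log\frac{(|c_0|-m)(M-|c_\infty|)}{(|c_\infty|-m)(M-|c_0|)}\ge\coth\Big(\frac{\Delta}{4}\Big)\log\Big|\frac{\zeta}{\zeta'}\Big| .
\]
The middle term is the real Hilbert distance between $|c_\infty|$ and $|c_0|$ in $(m,M)$. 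The last inequality is Birkhoff's one-variable computation: in the variable $\log|c|$, the Hilbert density on $(m,M)$ is at least $\coth(\frac14\log\frac Mm)\ge\coth(\frac\Delta4)$, with the minimum attained at $|c|=\sqrt{mM}$.

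Taking the supremum over $\ell,\ell'$ gives the theorem. No hyperbolic geometry, and no identification of $E_{\cC_1}(g,f)$ with the complement of a slice, is needed.
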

Once more, we want to connect the contraction in the above sense with a norm contraction. This can be done thanks to the following.
\begin{lem}\label{lem:compare}
Let $\oldh ,\oldf\in\cC_{\bC}$, such that $\bbm(\oldh)=\bbm(\oldf)$, $|\bbm(\oldf)|=1$.
Then
\[
\|\oldh-\oldf\|_* \leq \frac{\sqrt2}{\kappa}\delta_{\cC}(\oldh ,\oldf).
\]
\end{lem}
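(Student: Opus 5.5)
The plan is to compare $\oldh$ and $\oldf$ through the functionals in the dual cone, using that every ratio in $E_\cC(\oldh,\oldf)$ has modulus in $[e^{-\delta_\cC(\oldh,\oldf)},e^{\delta_\cC(\oldh,\oldf)}]$ once $1\in E_\cC(\oldh,\oldf)$. I will use the functionals $\ell+c\,\bbm$, $\ell\in\cS$, $c\in\bC$ large, which are exactly the ones that see the differences $\ell(\oldh)-\ell(\oldf)$ controlling $\|\cdot\|_*$.

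\textbf{Normalisation.} Multiplying both $\oldh$ and $\oldf$ by $\overline{\bbm(\oldf)}$ changes neither $\delta_\cC$ (it is projective) nor $\|\oldh-\oldf\|_*$ (a unimodular factor), so I may assume $\bbm(\oldh)=\bbm(\oldf)=1$. Next I check that $\bbm\in\cC'_\bC$. Write $h=z(h_1+ih_2)$ with $h_1,h_2\in\cC_\bR$. By \eqref{eq:maybetoomuch}, $\bbm(h_1)\ge\kappa\|h_1\|_*>0$, so $\bbm(h)\neq0$. Hence $1=\bbm(\oldh)/\bbm(\oldf)\in E_\cC(\oldh,\oldf)$, and therefore every $z\in E_\cC(\oldh,\oldf)$ satisfies $|\ln|z||\le\delta:=\delta_\cC(\oldh,\oldf)$.

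\textbf{Enlarging the dual cone.} I will use that $\|u\|_*$ for a complex $u$ is controlled by $\sup\{|\ell(u)|:\ell\in\cS,\ \ell(\be)=1\}$. This holds for real $u$ by the definition of the order through $\cS$ in \eqref{eq:com_cone_def0}; for complex $u$ I take it as the convention extending $\|\cdot\|_*$ to $\cB_\bC$.

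Fix such an $\ell$, and for $h=z(h_1+ih_2)\in\cC_\bC$ (with $|z|=1$) estimate as follows:
\[
|\ell(h)|\le\|h_1\|_*+\|h_2\|_*\le\kappa^{-1}\bigl(\bbm(h_1)+\bbm(h_2)\bigr),
\]
\[
|\bbm(h)|=|\bbm(h_1)+i\bbm(h_2)|\ge 2^{-1/2}\bigl(\bbm(h_1)+\bbm(h_2)\bigr).
\]
Hence $\ell+c\,\bbm\in\cC'_\bC$ for every $c\in\bC$ with $|c|>R:=\sqrt2/\kappa$. Set $a=\ell(\oldh)$ and $b=\ell(\oldf)$. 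Then $(a+c)/(b+c)\in E_\cC(\oldh,\oldf)$ for all $|c|>R$.

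\textbf{Complex-analytic step (main obstacle).} Put $w=1/c$ and define
\[
F(w)=\ln\frac{1+aw}{1+bw},
\]
which is analytic on the disk $|w|<1/R$. It satisfies $F(0)=0$ and $F'(0)=a-b$, and by the previous step $|\operatorname{Re}F|\le\delta$ on the whole disk. This is the delicate point, because only the modulus of the ratio is controlled, not its argument. The tool is a Schwarz-type lemma for maps into a strip: an analytic $F$ on the disk of radius $1/R$, with values in $\{|\operatorname{Re}\zeta|<\delta\}$ and $F(0)=0$, satisfies $|F'(0)|\le C\,\delta R$. I would obtain this by composing with the conformal map from the strip to the unit disk, or equivalently by Borel--Carath\'eodory applied to $e^{F}$. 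This yields
\[
|\ell(\oldh)-\ell(\oldf)|\le C\,\frac{\sqrt2}{\kappa}\,\delta .
\]
Taking the supremum over $\ell$ then gives the claim, up to the value of $C$.

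The constant needs care. The naive strip estimate gives $C=4/\pi$, not $1$, so I expect to need a sharper choice. One option is to exploit that $F$ extends to $|w|\le 1/R$ with the better bound $|\bbm(h)|\ge(\bbm(h_1)^2+\bbm(h_2)^2)^{1/2}$ used more carefully. Another is to use directly that $\ln|(a+c)/(b+c)|\le\delta$ along the circle $|c|=R$, and to average the first-order term $\operatorname{Re}\bigl((a-b)/c\bigr)$ over its argument. The constant $\sqrt2/\kappa$ comes entirely from the enlargement radius $R$, so the main work is making the analytic estimate lose no further factor.
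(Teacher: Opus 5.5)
The paper quotes Lemma~\ref{lem:compare} without proof, so I am judging your argument on its own terms.

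\textbf{What is correct.} Your set-up is right: the normalisation, $\bbm\in\cC'_\bC$, $1\in E_\cC(g,f)$, and $\ell+c\,\bbm\in\cC'_\bC$ for $|c|>\sqrt2/\kappa$ all hold. You also do not need to fix a convention for $\|\cdot\|_*$ on $\cB_\bC$. For any complex norm extending $\|\cdot\|_*$ and any $u=z(u_1+iu_2)\in\cC_\bC$ with $|z|=1$, one has $\|u\|_*\le\|u_1\|_*+\|u_2\|_*\le\frac{\sqrt2}{\kappa}|\bbm(u)|$. So the same enlargement works for every $\ell$ in the dual unit ball, and Hahn--Banach turns the bound on $|\ell(g-f)|$ into a bound on $\|g-f\|_*$.

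\textbf{The gap.} The gap is the constant, which is the whole content of the last step. The strip form of Schwarz's lemma only gives $\|g-f\|_*\le\frac{4}{\pi}\cdot\frac{\sqrt2}{\kappa}\,\delta_\cC(g,f)$, and neither repair you suggest closes this.
\begin{itemize}
\item Averaging over the circle reproduces the same bound. Indeed $F'(0)=\frac{1}{\pi r}\int_0^{2\pi}\operatorname{Re}F(re^{i\theta})\,e^{-i\theta}\,d\theta$, so $|\operatorname{Re}F|\le\delta$ yields exactly $|F'(0)|\le\frac{4\delta}{\pi r}$. This bound is sharp for general harmonic functions bounded by $\delta$.
\item Using $|\bbm(u)|=(\bbm(u_1)^2+\bbm(u_2)^2)^{1/2}$ ``more carefully'' gains nothing, since that is already an identity. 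It is the upper bound $|\ell(u)|\le\|u_1\|_*+\|u_2\|_*$ that must be improved.
\end{itemize}

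\textbf{First fix: the M\"obius structure.} The missing idea is that $F$ is the logarithm of a M\"obius map, not an arbitrary function into a strip. Suppose $a\neq b$ and set $r=1/R$. Then $\phi(w)=\frac{1+aw}{1+bw}$ maps $\{|w|<r\}$ bijectively onto a region contained in the annulus $\{e^{-\delta}\le|z|\le e^{\delta}\}$. Being bounded, this region is a disk $B(z_0,\rho)\ni 1$ with $|z_0|-\rho\ge e^{-\delta}$ and $|z_0|+\rho\le e^{\delta}$. Schwarz--Pick applied to the disk automorphism $\zeta\mapsto(\phi(r\zeta)-z_0)/\rho$ gives
\[
r|a-b|=r|\phi'(0)|=(\rho^2-|1-z_0|^2)/\rho .
\]
To maximise this under the constraints, put $z_0$ on the positive real axis and take $\rho$ as large as allowed. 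Optimising over $|z_0|$ then gives the maximum $2\tanh(\delta/2)$, attained at $z_0=\cosh\delta$, $\rho=\sinh\delta$. Hence $|\ell(g)-\ell(f)|\le 2R\tanh(\delta/2)\le R\,\delta$, which is the lemma with $R=\sqrt2/\kappa$.

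\textbf{Second fix: a sharper aperture.} Alternatively, keep your strip estimate and improve $R$. For $\ell\in\cS$ with $\ell(\be)=1$ you have $0\le\ell(u_j)\le\|u_j\|_*\le\kappa^{-1}\bbm(u_j)$. Hence
\[
|\ell(u)|=(\ell(u_1)^2+\ell(u_2)^2)^{1/2}\le\kappa^{-1}|\bbm(u)|,
\]
so $R=1/\kappa$ already suffices. Your argument then gives $\frac{4}{\pi\kappa}\delta_\cC(g,f)\le\frac{\sqrt2}{\kappa}\delta_\cC(g,f)$. This route does rely on your convention $\|u\|_*=\sup_{\ell\in\cS,\,\ell(\be)=1}|\ell(u)|$ on $\cB_\bC$. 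That convention is legitimate: it coincides with the usual complexification $\sup_\theta\|\operatorname{Re}(e^{i\theta}u)\|_*$.
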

It remains only to establish the strict contraction of the cone. Luckily, in many cases, one can use the following criteria,
 which is \cite[Theorem~5.17]{dl cones}, an adaptation of \cite[Theorem~4.5]{Du11}.
\begin{thm} \label{thm:needed} 
Let $\cL\in L(\cB_{1,\bR},\cB_{2,\bR})$ and $\cL_\bC\in L(\cB_{1,\bC},\cB_{2,\bC})$.
Assume that $\cL(\cC_{1,\bR})\subset \cC_{2,\bR}$ and $\diam_H(\cL(\cC_{1,\bR})):=\Delta_\bR<\infty$. 
If there exists $\ve \in (0, \frac{\kappa^2}{12 \sqrt 2}e^{-2\Delta_\bR})$, such that
for all $\ell \in \cS_2$ and all $\oldh \in \cC_{1,\bR}$,
\begin{equation}\label{eq:complex_bound}
\left|\ell(\cL_{\bC} \oldh)-\ell(\cL \oldh)\right|\leq \ve \ell(\cL \oldh).
\end{equation}
Then $\cL_{\bC}(\cC_{1,\bC})\subset \cC_{2,\bC}$, and we have
\[
\diam_{\delta_{\cC_{2, \bC}}} (\cL_{\bC}(\cC_{1,\bC}))\leq 8\Delta_\bR +
 2 \ln[3\sqrt 2\kappa^{-2}]+ \tfrac{\sqrt{2}}{3} \kappa^2e^{-2\Delta_\bR} . 
\]
\end{thm}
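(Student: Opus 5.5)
We reduce the complex statement to the real Birkhoff picture by expressing membership in $\cC_{2,\bC}$ through the functionals in $\cS_2$, and then control the gauge $\delta_{\cC_2}$ through the real Hilbert diameter $\Delta_\bR$.

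\emph{Step 1: characterize $\cC_{2,\bC}$.} First we show that $h\in\cC_{2,\bC}$ whenever, for all $\ell,\ell'\in\cS_2$, the complex numbers $\ell(h)$ lie in a common sector of small aperture after a rotation. More precisely, suppose there is $z\in\bC_1$ such that
\[
\left|\operatorname{Im}(z\,\ell(h))\right|\le \operatorname{Re}(z\,\ell(h))\quad\text{for all } \ell\in\cS_2 .
\]
Then $zh=h_1+ih_2$ with $h_1\pm h_2$ in the real cone, and hence $h\in\bC_1(\cC_\bR+i\cC_\bR)$. To see this, write $zh=a+ib$ with $a,b$ real. The condition gives $\ell(a\pm b)\ge0$, so $a\pm b\in\cC_{2,\bR}\cup\{0\}$. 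Then take $h_1=\tfrac12[(a+b)+(a-b)]$ and $h_2$ similarly, adjusting by a small multiple of $\be$ to stay strictly inside the cone.

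\emph{Step 2: prove $\cL_\bC(\cC_{1,\bC})\subset\cC_{2,\bC}$.} Take $h=z(h_1+ih_2)$ with $h_j\in\cC_{1,\bR}$. Write $\ell(\cL_\bC h_j)=\ell(\cL h_j)(1+\eta_j)$ with $|\eta_j|\le\ve$, using \eqref{eq:complex_bound}. The ratios $\ell(\cL h_2)/\ell(\cL h_1)$ are controlled by the real diameter: for $\ell\in\cS_2$ they lie in $[\alpha,\beta]$ with $\beta/\alpha\le e^{\Delta_\bR}$. Choosing $z'$ so that $z'(1+i\alpha')$ is centered, with $\alpha'$ a geometric mean of these ratios, we obtain
\[
\ell(\cL_\bC h)=z\,\ell(\cL h_1)\,[1+ir_\ell+O(\ve(1+r_\ell))],
\qquad r_\ell\in[\alpha,\beta].
\]
Since $\ve<\kappa^2 e^{-2\Delta_\bR}/(12\sqrt2)$, the perturbation cannot destroy the sector condition of Step 1. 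We still have to normalize, because $r_\ell$ can be large; the cleanest way is to rotate by the argument of $1+i\sqrt{\alpha\beta}$.

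\emph{Step 3: bound the diameter.} For $\ell\in\cC'_{2,\bC}$, we decompose the ratio $\ell(\cL_\bC h)/\ell(\cL_\bC f)$. Using \eqref{eq:maybetoomuch} and the Archimedean structure, any $\ell$ in the dual complex cone is comparable to $\bbm$ on real cone elements up to factors $\kappa^{-1}$; this is the step that will produce the term $\ln[3\sqrt2\kappa^{-2}]$. The real parts of $\cL h_j$ are within Hilbert distance $\Delta_\bR$ of one another. The elementary bound $\sup|z|/\inf|z|$ applied to expressions of the form $(u_1+iu_2)(1+\eta)$ with $u_j$ real-comparable then gives a factor $e^{4\Delta_\bR}$ per ratio. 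Squaring accounts for the two ratios (numerator and denominator), and the $\ve$-error adds the final $\tfrac{\sqrt2}{3}\kappa^2e^{-2\Delta_\bR}$.

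I expect the main obstacle to be Step 3: controlling $|\ell(u_1+iu_2)|$ from below for arbitrary $\ell\in\cC'_{2,\bC}$, which is not a positive functional. I would first try Dubois' trick of bounding such $\ell$ by the dual real cone via the sector lemma. The worst-case constants (the $8\Delta_\bR$) would then be obtained by following \cite[Theorem~4.5]{Du11} line by line, replacing his normalization with $\bbm$ and $\kappa$.
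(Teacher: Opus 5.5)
The survey only quotes this result (from Demers--Liverani, adapting Dubois), so I am judging your outline on its own merits.

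\textbf{Steps~1--2 (the inclusion).} These are sound in substance once two slips are fixed. First, with $zh=a+ib$ and $a\pm b\succeq 0$, your $h_1=\tfrac12[(a+b)+(a-b)]=a$ need not lie in the cone. The correct identity is $(1+i)zh=(a-b)+i(a+b)$. Degenerate cases need no adjustment by $\be$, which would change $h$ anyway, because $\bC_*\cC_{2,\bR}\subset\cC_{2,\bC}$. Second, you must rotate to the angular midpoint of $[\arctan\alpha,\arctan\beta]$. The direction $\arg(1+i\sqrt{\alpha\beta})$ can lie farther than $\pi/4$ from $\arctan\beta$; take $\alpha=10^{-2}$, $\beta=10$. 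With the midpoint, the arguments span at most $\arctan\sinh(\Delta_\bR/2)$, and the perturbation moves each by at most $\arcsin(\sqrt2\ve)$, so the hypothesis on $\ve$ suffices.

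\textbf{Step~3 (the gap).} This step is the real content of the theorem, and you give no argument for the diameter bound, only a deferral to Dubois. The mechanism you sketch also fails. An $\ell\in\cC'_{2,\bC}$ is not comparable to $\bbm$ on $\cC_{2,\bR}$ from below. One only gets $|\ell(x)|\le\|x\|_*|\ell(\be)|$, while $|\ell(x)|/|\ell(\be)|$ can be arbitrarily small for $x$ near the boundary of the cone. The count ``$e^{4\Delta_\bR}$ per ratio, squared'' is read off from the target constants rather than derived.

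What is missing is a structural fact about the complex dual: every $\ell\in\cC'_{2,\bC}$ satisfies $\Re\big(\overline{\ell(x)}\,\ell(y)\big)\ge 0$ for all $x,y\in\cC_{2,\bR}$. To see this, suppose it fails. After possibly swapping $x,y$, either $\ell$ kills a positive combination of $x,y$, or some positive combination $y'$ has $\ell(y')=ic\,\ell(x)$ with $c>0$. In the latter case $\ell(cx+iy')=0$ although $cx+iy'\in\cC_{2,\bC}$. Three consequences follow.
\begin{enumerate}
\item[(a)] $0\preceq p\preceq q$ implies $|\ell(p)|\le|\ell(q)|$.
\item[(b)] If $\alpha x_1\preceq x_2\preceq \beta x_1$ with $\alpha,\beta$ optimal, then $\ell(x_2)/\ell(x_1)$ lies in the closed disk with diameter $[\alpha,\beta]$. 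Hence $\delta_{\cC_{2,\bC}}(x_1,x_2)\le\Theta(x_1,x_2)$, and
\[
\delta_{\cC_{2,\bC}}(x_1+ix_2,x_1)\le\ln\frac{(1+\beta)^2}{1+\alpha\beta}\le\ln\big(1+e^{\Theta(x_1,x_2)}\big).
\]
This is exactly the lower bound on $|\ell(x_1+ix_2)|$ you said you could not obtain.
\item[(c)] Put $e=(\cL_\bC-\cL)h$. Your hypothesis on $\cS_2$ gives $\ve\cL h\pm\Re e\succeq 0$ and $\ve\cL h\pm \Im e\succeq 0$. The same angle argument ($|A-B|\le|A+B|$ when $\Re(\bar A B)\ge 0$) then yields $|\ell(e)|\le 2\ve|\ell(\cL h)|$ for every $\ell\in\cC'_{2,\bC}$, not only for $\ell\in\cS_2$.
\end{enumerate}
Now take $u=\lambda(\cL_\bC h_1+i\cL_\bC h_2)$ and chain $u$, $\cL h_1+i\cL h_2$, $\cL h_1$ and a fixed $\cL h_0$ using the triangle inequality for $\delta_{\cC_{2,\bC}}$. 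This gives
\[
\diam\le 2\Delta_\bR+2\ln\big(1+e^{\Delta_\bR}\big)+2\ln\tfrac{1+r}{1-r},\qquad r=2\ve\big(1+e^{\Delta_\bR}\big)<1,
\]
which is already below the stated bound and does not even use $\kappa$. Without (a)--(c), or an equivalent quantitative control of arbitrary elements of $\cC'_{2,\bC}$, your Step~3 does not close.
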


\subsubsection{CLT for sequential systems}\ \\
Consider the setting described in section  \ref{sec:bill-families}. Then, to study the characteristic function, we have to study sequences of operators with complex weight $\cL_{k-1,\lambda, n}$ defined in \eqref{eq:complex_potential}. In \cite{dl clt} it is shown that there exists $\lambda_0>0$ such that, for all $|\lambda|\leq \lambda_0$, it is possible to apply Theorem \ref{thm:needed} to prove that the complex cone associated to the cone $\cC_{c, A,L}(\delta)$ contracts strictly for maps in the set $\cF$.
Then Theorem \ref{thm:complex_contraction} and Lemma \ref{lem:compare} imply that there exists $\lambda_0, c, K>0$ and elements $g_{k,j,\lambda}\in\cB$, $\ell_{k,j,\lambda}\in\cB'$, $k,j\in\bN$, such that, for all $|\lambda|\leq \lambda_0\sigma_n$ and $k,j,l,n\in\bN$,\footnote{ For $\ell\in \cB'_j$, we have $\|\ell\|_{j}' =\sup_{\|\oldh \|_j\leq 1}|\ell(\oldh)|$.}
\[
\begin{split}
&\int_{M_k} g_{k,j,\lambda}d\mu_k=\ell_{k,j,\lambda}(\be)=1, \\
&\|g_{k,j,\lambda}\|_{k}+ \|\ell_{k,j,\lambda}\|_{j}' \leq K,  \\
&|\ell_{k,j,\lambda}(g_{j,l,\lambda})|\geq K^{-1}.
\end{split}
\]
Morever, there exist $\alpha_{k,j,\lambda}\in\bC$, $|\alpha_{k,j,\lambda}|\leq K$ such that for all  $j,k,n\in\bN$, $j \le k \le n$,  and $g\in \cB$ we have 
\begin{equation}
\label{eq:O3 decay}
\| \cL_{k-1,\lambda, n}\cdots \cL_{j,\lambda, n} \oldh -\alpha_{k,j,\lambda}g_{k,j,\lambda}\ell_{k,j,\lambda}(\oldh)\|_k \leq K |\alpha_{k,j,\lambda}|e^{-c(k-j)}\|\oldh \|_j.
\end{equation}
The above is the equivalent of the exponential loss of memory for the real operators. 
This allows us to apply an abstract result  (\cite[Theorem 2.7]{dl clt}) to establish the CLT. Let us mention just the special case relevant for the present discussion.

\begin{thm}\label{thm:main_bis}
Assume $\vf$ in \eqref{eq:ergodic_sum} satisfies, for each $\oldh \in\cB$, and $j\leq 3$,
 \[
 \|\cL_k (\vf^j \oldh)\|_* \leq K^j\|\oldh \|_*.
 \] 
Then, there exist a constant $\lambda_1>0$ and an analytic function $A_n$ such that, for each $n\in\bN$ and $
|\lambda|\leq \lambda_1\sigma_n(\ln \sigma_n)^{-1}$,
we have
\[
\begin{split}
&\phi(\lambda):=\bE\left(e^{i\lambda \sigma_n^{-1} S_n}\right)=e^{-\frac{\lambda^2}2+A_n(\lambda)}\\
&\left|A_n(\lambda)\right|\leq C_\varpi n\left[ \lambda^3\sigma_n^{-3}(\ln\sigma_n)^2 +\sigma_n^{-4}|\lambda|\right]\\
&\left|A_n'(\lambda)\right|\leq  C_\varpi n\left[ \lambda^2\sigma_n^{-3}(\ln\sigma_n)^2 +\sigma_n^{-4}\right].
\end{split}
\]
\end{thm}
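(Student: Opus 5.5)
We prove Theorem~\ref{thm:main_bis} by the sequential version of the Nagaev--Guivarc'h method. The decomposition \eqref{eq:O3 decay} plays the role of a spectral gap for the product of complex operators.

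\textbf{Step 1 (block factorization).} Write $\Phi_n(\lambda)$ via \eqref{eq:char_funct} as $\int \cL_{n,\lambda,n}\cdots\cL_{n,\lambda,1}\rho\, d\mu_{SRB}$. Split $\{1,\dots,n\}$ into consecutive blocks of length $m\sim C\ln\sigma_n$, chosen so that $e^{-cm}\le\sigma_n^{-4}$. By \eqref{eq:O3 decay}, each block product $\cL_{k-1,\lambda,n}\cdots\cL_{j,\lambda,n}$ equals the rank-one operator $\alpha_{k,j,\lambda}\, g_{k,j,\lambda}\,\ell_{k,j,\lambda}(\cdot)$ up to a relative error $Ke^{-cm}$. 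Composing the blocks turns $\Phi_n$ into a product of scalars:
\[
\Phi_n(\lambda)=\prod_{b}\alpha_{k_b,j_b,\lambda}\,\ell_{k_{b+1},j_{b+1},\lambda}(g_{k_b,j_b,\lambda})\,(1+O(e^{-cm})).
\]
The lower bound $|\ell(g)|\ge K^{-1}$ keeps every factor away from zero, so we may take logarithms. This defines $\log\Phi_n=\sum_b \log Z_b(\lambda)$, with each $Z_b$ analytic in $\lambda$ on $|\lambda|\le\lambda_0\sigma_n$. Analyticity holds because the $\cL_{n,\lambda,k}$ are analytic in $\lambda$ and the rank-one projections, built from cone contraction, depend analytically on them.

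\textbf{Step 2 (Taylor expansion of each block).} Expand $\log Z_b$ at $\lambda=0$. At $\lambda=0$ the operators are the real transfer operators, for which $\cL_k1=1$, so $\log Z_b(0)=0$ up to the $O(e^{-cm})$ errors. The first derivative involves $\sigma_n^{-1}\bE(\vf_k(x_k))$, which vanishes because $\vf_n$ is centred. The second derivatives sum exactly to $-\sigma_n^{-2}\bE(S_n^2)=-1$: the cross-block covariances are either absorbed in the block-boundary terms or are $O(e^{-cm})$ by loss of memory. The third derivative of $\log Z_b$ is bounded by $C m^2\,\sigma_n^{-3}$. Here we use the hypothesis $\|\cL_k(\vf^j g)\|_*\le K^j\|g\|_*$ for $j\le3$ to differentiate the product $m$ times; third cumulants within a block have $O(m^2)$ effective terms after exponential decorrelation. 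Summing over the $n/m$ blocks gives the error $n\lambda^3\sigma_n^{-3}(\ln\sigma_n)^2$. The accumulated $O(e^{-cm})$ errors give the $n\sigma_n^{-4}|\lambda|$ term. Since they vanish at $\lambda=0$, they are linear in $\lambda$ to first order via Cauchy estimates.

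\textbf{Step 3 (derivative bound).} Setting $A_n(\lambda)=\log\Phi_n(\lambda)+\lambda^2/2$, the estimate on $A_n'$ follows from the same expansion differentiated once. Alternatively, it follows from Cauchy's estimate on a disc of radius comparable to $|\lambda|$, which stays inside the analyticity domain since $|\lambda|\le\lambda_1\sigma_n(\ln\sigma_n)^{-1}$.

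\textbf{Main obstacle.} The delicate step is the third-order control in Step~2. We must show that the cumulants of the block sums grow only like $m^2$ rather than $m^3$. This needs exponential decorrelation for triple correlations along non-autonomous sequences, derived from \eqref{eq:O3 decay} together with the uniform bounds on $g_{k,j,\lambda}$ and $\ell_{k,j,\lambda}$. We must also ensure that the restriction $|\lambda|\le\lambda_1\sigma_n/\ln\sigma_n$ keeps $\lambda\sigma_n^{-1}m$ small. That smallness is what keeps each block's complex perturbation within the cone-preserving regime of Theorem~\ref{thm:needed}.
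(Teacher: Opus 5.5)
The paper does not carry out this argument itself. It derives the loss-of-memory decomposition \eqref{eq:O3 decay} for the twisted operators from the complex-cone results (Theorem~\ref{thm:complex_contraction}, Theorem~\ref{thm:needed}, Lemma~\ref{lem:compare}) and then invokes the abstract result \cite[Theorem~2.7]{dl clt}. Your Step~1 is a reasonable way to unpack that abstract theorem. Step~2, however, which carries all the quantitative content, does not hold up.

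\emph{The third-order bound.} You claim $|\partial_\lambda^3\log Z_b|\le Cm^2\sigma_n^{-3}$ by counting ``third cumulants within a block''. But $\log Z_b=\log\bigl(\alpha_{k_b,j_b,\lambda}\,\ell_{k_{b+1},j_{b+1},\lambda}(g_{k_b,j_b,\lambda})\bigr)$ is not the cumulant generating function of a block sum under a fixed probability measure. It also depends on $\lambda$ through the complex objects $g_{k,j,\lambda}$ and $\ell_{k,j,\lambda}$, whose $\lambda$-derivatives you never control. The triple decorrelation you single out as the main obstacle is left unproved. Even granting it, the arithmetic fails: $m^2$ per block times $n/m$ blocks gives $nm\,\sigma_n^{-3}$, a single factor $\ln\sigma_n$, not $(\ln\sigma_n)^2$.

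\emph{The second-order step.} This has the same weakness. Covariances between adjacent blocks are $O(1)$ per block boundary, not $O(e^{-cm})$. Saying they are ``absorbed'' in $\ell_{b+1}(g_b)$ requires differentiating those factors twice in $\lambda$, which you do not do.

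\emph{How to repair it.} No cancellation is needed: the crude bound $m^3$ per block is exactly what yields $(\ln\sigma_n)^2$, and the restriction $|\lambda|\le\lambda_1\sigma_n/\ln\sigma_n$ is there to make it available. Take complex $\lambda$ with $|\lambda|\le 2\lambda_1\sigma_n/m$.
\begin{itemize}
\item A block of $m$ twisted operators is then an $O(m|\lambda|\sigma_n^{-1})$ perturbation of the untwisted block.
\item For the untwisted block, $Z_b=1+O(e^{-cm})$ because $\cL_k1=1$. Hence $|\log Z_b|\le C$ on this disc.
\item Cauchy's estimate on discs of radius $\lambda_1\sigma_n/m$ gives $|\partial_\lambda^3\log Z_b|\le Cm^3\sigma_n^{-3}$. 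Summing over $n/m$ blocks gives $Cnm^2\sigma_n^{-3}$.
\end{itemize}
A shortcut: once Step~1 shows $\Phi_n\neq0$ on that complex disc, the trivial bound $|\Phi_n(\lambda)|\le e^{C|\mathrm{Im}\,\lambda|\,n/\sigma_n}$ and Borel--Carath\'eodory give $|\log\Phi_n|\le Cn/m$. Cauchy then gives the same third-derivative bound directly.

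For the lower orders, do not expand block by block. $\Phi_n$ is entire, with $\Phi_n'(0)=i\sigma_n^{-1}\bE(S_n)=0$ and $\Phi_n''(0)=-\sigma_n^{-2}\bE(S_n^2)=-1$. So $A_n=\log\Phi_n+\lambda^2/2$ vanishes to third order at $0$, and only $\partial_\lambda^3\log\Phi_n$ on $[0,\lambda]$ needs bounding.

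\emph{A hypothesis to make explicit.} Taking logarithms in Step~1 requires the accumulated relative error $(n/m)e^{-cm}$ to be small. With $m\sim\ln\sigma_n$, this needs a lower bound of the type $\sigma_n\ge n^{\varpi}$, presumably the parameter hidden in $C_\varpi$. You should state it explicitly.
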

The above implies that the ergodic sum $\sigma_{n}^{-1}S_n$ converges to a standard normal random variable, but we are interested also in the error term. Let 
\[
G(x)=\int_{-\infty}^x\frac{e^{-\frac{y^2}2}}{\sqrt{2\pi}}dy
\]
 be the standard normal distribution and $F(x)$ the distribution of $\sigma_{n}^{-1}S_n$. Then, by \cite[Equation (3.13) of Chapter XVI.3]{feller:2}] we have that for all $T\in\bR_+$,
\[
\left| F(x)-G(x)\right|\leq \frac 1{\pi}\int_{-T}^T\left|\frac{\phi(\xi)-e^{-\frac{\xi^2}2}}{\xi}\right| d\xi+\frac{24}{\pi T}.
\]
Choosing $T_n\sim \frac{\sigma_n^3}{ n(\ln \sigma_n)^2}$, and using Theorem \ref{thm:main_bis}, yields that there exists $C>0$ such that
\begin{equation}\label{eq:final_est2}
\left|F_n(x)-\frac{1}{\sqrt{2\pi}}\int_{-\infty}^xe^{-\frac {y^2}2}dy\right|\leq C \sigma_n^{-3}(\ln \sigma_n)^2 n.
\end{equation}
Note that such an equation provides non-trivial information only if $\sigma_n\gg n^{\frac 13}$. To obtain better results, it would be necessary to obtain estimates of higher momenta in terms of the second moment $\sigma_n^2$. This has been achieved for one-dimensional expanding maps in \cite{DH25}, but the extension of these ideas to hyperbolic systems is lacking.

\subsection{Application to concrete examples}\ \label{sec:application_sequential} \\
The theory described in the previous sections allows us to study many types of systems. Here we provide some relevant examples.
\subsubsection{Billiards with holes}\ \\
 For a small hole $H$ with some regularity properties (essentially, their boundary cannot be exactly aligned to the stable direction), it is possible to prove, see \cite[Section 8.1]{dl cones},
\[
\Id_{H^c}[\cC_{c,A,L}(\delta)]\subset  \cC_{c',A',L'}(\delta),
\]
where the parameters $c',A',L'$ are larger, but close to $c,A,L$ when the hole is small. In other words, the multiplication by the characteristic function of the complement of the hole is well-behaved with respect to the cone structure. Then Theorem \ref{thm:loss_of_mem} implies, for holes sufficiently small,
\[
\cL(\Id_{H^c}[\cC_{c,A,L}(\delta)])\subset  \cC_{\chi' c,\chi' A,\chi' L}(\delta)
\]
for some $\chi'\in(0,1)$ which, by the above-described general theory, implies loss of memory for the systems 
with small holes. 

The situation with large holes is more complex.
Even in the case of a single map, satisfactory results are missing for large holes (see \cite{LM-D} for partial results). However, something can be done for sparse holes. That is, holes that appear rarely in the sequence, and the intervening maps have sufficient mixing properties. Essentially, we can ask that, calling $T_n$ a sequence of maps between the appearances of holes and $\cL_n$ the associated sequence of operators, for each $W\in\cW^s(\delta)$
 \[
 |W|^{-1}\int_{W} \cL_n\Id_{H^c}{\; \geq\;} \frac 12(1 - \musrb(H)) \, .
 \]
 This allows us to obtain again, see \cite[Section 8.2]{dl cones},
 \[
\cL_n(\Id_{H^c}[\cC_{c,A,L}(\delta)])\subset  \cC_{\chi' c,\chi' A,\chi' L}(\delta).
\]
\subsubsection{Chaotic scattereres}\ \\
The above results can be applied to the study of chaotic scattering when eclipses are possible. Chaotic scattering refers to a situation in which there is a group of obstacles in a compact region of the plane, and one is interested in the situation in which a beam of particles comes from infinity, collides with the scatterers, and then continues toward infinity. 
The term {\em eclipse} refers to the fact that the convex hull of two scatterers has a non-empty intersection with a third scatterer. If this does not happen, we say that the system satisfies the {\em no-eclipse condition}. In this situation, it is well known that the system has a natural Markov partition that can be used to code the system and obtain the desired results; in some sense, the system has no discontinuities, see \cite{Mo91,Mo07}. On the contrary, when an eclipse is present, there is no simple way to code the system, and an approach using transfer operators becomes advantageous.

The goal is to understand the velocity distribution of the scattered particles. Namely, let $(r_n,\theta_n)$ be the coordiantes giving the position in the Poincarè section and the direction of the velocity at the $n$-th collision. Also, let $\cL$ be the transfer operator associated with the Poincarè map. We would like to compute
\begin{equation}
\label{eq:P def}
\bP_f(\theta_n\in[\theta_1, \theta_2]) = \int_M \Id_{H_\Theta} [\cL\Id_{H^c}]^n g\, d\musrb \, ,
\end{equation}
where $g$ is the distribution of the initial beam of particles, $H$ is the set of trajectories that will not experience another collision, and $H_\Theta\subset H$ is the set of trajectories that have velocity in the direction $\Theta:=[\theta_1, \theta_2]$.

Unfortunately, this system has a large hole. As we discussed, understanding such systems is an open problem. The situation can be treated if the obstacles are {\em boxed}; that is, they are contained in, e.g., a cubical box that lets them in, but then traps them (i.e., the particles collide elastically against the boundaries of the box) for a time $N$ large enough. Then, as discussed above, the dynamics is described by the operator $\mathring{\cL}:=\cL^N\Id_{H^c}$, where now $\cL$ is the transfer operator associated to the Poincarè map of the particle trapped in the box, which strictly contracts the cone. Hence, we have the spectral decomposition $\mathring\cL^n g=\nu^n \mu_*\ell(g)+\cO(\nu^n\vartheta^n)$ for some $\nu,\vartheta\in (0,1)$ and measures $\mu_*,\ell$,  \cite[Section 8.4]{dl cones}. It follows that the Probability $\bP(\Theta)$ that the particle exists from the box with a velocity in the direction $\theta\in \Theta$ is given by
\[
\bP(\Theta)=\sum_{n=1}^\infty\int_M \Id_{H_\Theta} \mathring{\cL}^n g\, d\musrb=\int_M \Id_{H_\Theta} (\Id-\mathring{\cL})^{-1} \mathring{\cL}g\, d\musrb
\sim \nu(1-\nu)^{-1} \mu_*(\Theta)\ell(g).
\]
\subsubsection{Random Lorentz gas}\ \\
The Lorentz gas is an important physical model of diffusion. It comes in many different versions depending on the type of obstacles one considers. If the obstacles are convex bodies, then it is an example of a dispersing billiard. The first rigorous result for such systems is due to Buninovich and Sinai when the obstacles are periodic \cite{bs, bsc} and the system has {\em finite horizon} (that is, there exists a maximal distance at which the particle can travel without meeting an obstacle) where it is shown that, under a diffusive rescaling, the motion of the particle behaves like a Brownian motion. Many other, more refined, results have then been obtained both for the finite-horizon case, e.g., \cite{DSV08, PS10, P19}, and for the infinite-horizon case, e.g., \cite{B92, SV07, PT21}. However, real systems are not periodic as they have defects. It is thus of interest to study non-periodic Lorentz gases. Surprisingly, the situation for the non-periodic case is much harder; diffusion has been established only for local perturbations of a periodic pattern \cite{DSV09}. If the periodic structure is missing across all space, no results are available. The only exception is the low-density Grad-Boltzmann limit, where, contrary to the current situation, the case of obstacles Poisson distributed \cite{G69, S78, BBS83} turns out to be easier to treat than the periodic case \cite{MS11}. 

A possible way to consider an intermediate situation is when part of the periodic structure is retained, for example, there are periodic cells containing a central obstacle in a random position, see Figure \ref{Fig6}-a.

\begin{figure}[ht]
\begin{minipage}{.3 \linewidth} 
\hspace{-1.5cm}	
\begin{tikzpicture}[scale=0.35]
\fill[gray!20!white] (0,0) circle (1.5);
\draw (0,0) circle (1.5);
\fill[gray!20!white] (4,0) circle (1.5);%
\draw (4,0) circle (1.5);
\fill[gray!20!white] (8,0) circle (1.5);%
\draw (8,0) circle (1.5);
\fill[gray!20!white] (0,-4) circle (1.5);
\draw (0,-4) circle (1.5);
\fill[gray!20!white] (4,-4) circle (1.5);%
\draw (4,-4) circle (1.5);
\fill[gray!20!white] (8,-4) circle (1.5);%
\draw (8,-4) circle (1.5);
\fill[gray!20!white] (0,-8) circle (1.5);
\draw (0,-8) circle (1.5);
\fill[gray!20!white] (4,-8) circle (1.5);%
\draw (4,-8) circle (1.5);
\fill[gray!20!white] (8,-8) circle (1.5);%
\draw (8,-8) circle (1.5);
\fill[gray!20!white] (2.2,-2) circle (1);
\draw (2.2,-2) circle (1);
\node at (2.2,-2) {{$\scriptscriptstyle B_\omega(a)$}};
\fill[gray!20!white] (6,-1.8) circle (1);
\draw (6,-1.8) circle (1);
\node at (6,-1.8) {{$\scriptscriptstyle B_\omega(b)$}};
\fill[gray!20!white] (2,-6.3) circle (1);
\draw (2,-6.3) circle (1);
\node at (2,-6.3) {{$\scriptscriptstyle B_\omega(0)$}};
\fill[gray!20!white] (6,-5.7) circle (1);
\draw (6,-5.7) circle (1);
\node at (6.1,-5.7) {{$ \scriptscriptstyle B_\omega(c)$}};
\node at  (5,-11) {$a=(1,0); b=(1,1); c=(1,0)$};
\end{tikzpicture}
\end{minipage}
\begin{minipage}{.3 \linewidth}
\begin{tikzpicture}[scale=0.50]
\draw[dashed] (0,0)--(8,0);
\draw [dashed](0,0)--(0,-8);
\draw[dashed](0,-8)--(8,-8);
\draw[dashed](8,-8)--(8,0);
\draw[very thick, dashed] (3,0)--(5,0);
\draw[very thick, dashed] (3,-8)--(5,-8);
\draw[very thick, dashed] (0,-3)--(0,-5);
\draw[very thick, dashed] (8,-5)--(8,-3);
\fill[gray!20!white] (0,0)--(0,-3.5) arc (-90:0:3.5)--cycle;
\draw[very thick] (0,-3.5) arc (-90:0:3.5);
\fill[gray!20!white] (0,-8)--(3.5,-8) arc (0:90:3.5)--cycle;
\draw[very thick] (3.5,-8) arc (0:90:3.5);
\fill[gray!20!white] (8,-8)--(4.5,-8) arc (180:90:3.5)--cycle;
\draw[very thick] (4.5,-8) arc (180:90:3.5);
\fill[gray!20!white] (8,0)--(4.5,0) arc (180:270:3.5)--cycle;
\draw[very thick] (4.5,0) arc (180:270:3.5);
\fill[gray!20!white] (4.3,-4.4) circle (1.5);
\draw[very thick] (4.3,-4.4) circle (1.5);
\node at (1.7,-1.2) {$C_2$};
\node at (6.3,-1.2) {$C_1$};
\node at (1.7,-6.2) {$C_3$};
\node at (6.4,-6.2) {$C_4$};
\node at (4,-4) {$C_5$};
\node at (8.7,-4) {$\hat R_1$};
\node at (-.8,-4) {$\hat R_3$};
\node at (4,-.7) {$\hat R_2$};
\node at (4,-7.3) {$\hat R_4$};
\node at (.3,-1.5) {$r$};
\node at (5,-4.7) {$\rho$};
\draw (4.3,-4.4)--(5.8,-4.4); 
\end{tikzpicture}
\end{minipage}
\\ \vskip.1cm
{\footnotesize a: {\it  Configuration of random obstacles $B_\omega(z)$}\hskip 1.2cm b: {\it Poincar\'e section $C_i$ and gates $\hat R_i$\hspace{.5cm}}}
\caption{\label{Fig6}}
\end{figure}
In this case, we can call the lines $\hat R_i$, at the boundaries of two adjacent cells {\em gates}, see Figure \ref{Fig6}-b. It follows that, given a path $\{z_1,\dots,z_n\}$, $z_i \in \mathbb{Z}^2$, that specifies which cells are visited and an initial density distribution $g$ in the starting cell $z_1$, the density distribution in the cell $z_n$ at the end of the path is again given by a product of transfer operators, and again we have a system with large holes and the difficulty already explained. We can therefore adopt a similar solution to the previous cases: if a particle enters a cell, then the gate closes for a time $N$ and then opens to allow the particle to escape at the $kN$ collision, $k\in \bN$. This setting is called {\em lazy gates} in \cite[Section 8.5]{dl cones} where it is proven that the system exhibits exponential loss of memory. 

This is still far from proving diffusion for a particle moving in such an environment. However, if one considers an observable that does not depend on the cell, e.g. the position of the particle inside a cell, then \cite{dl clt} yields a CLT conditioned on the path taken in the lattice.

\section{Equivalence of Cones and Norms} \label{sec:equivalence}
In this section, we prove a result that has not previously been published.
It is natural to ask if the norms defined in Section~\ref{sec:Bt>0}, using the set 
$\widehat \cW^s$ rather than $\cW^s$, are equivalent to the norm $\| \cdot \|_*$, defined by the ordering of the cone
in \eqref{eq:cone_norm}.
In this section, we provide a positive answer to this question, which is summarized in the following theorem.

For convenience, we let $C_s := \sqrt{1 + (\cK_*^{-1} + \tau_*^{-1})^2}$ denote the maximum
slope of curves in the family $\widehat \cW^s$.

\begin{thm}
\label{thm:cone equiv}
Let $\cB$ denote the Banach space defined in Section~\ref{sec:Bt>0} with $\widehat \cW^s$ in place of $\cW^s$, and let
$\cC_{c, A, L}(\delta)$ be as defined in Section~\ref{sec:cone def}.\\
Assume $q=\frac 1p$, $A>4$, $Ac> (1+C_s)$, and $\gamma\leq \frac 1{p+1}$.\footnote{ By $p$ we mean the constant in \eqref{eq:stable t>0}.}\\
Then the norms $\|\cdot\|_\cB$ and $\|\cdot\|_*$ are equivalent on $\cB$.
\end{thm}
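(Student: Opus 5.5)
We prove two inequalities, $\|f\|_\cB\le C\|f\|_*$ and $\|f\|_*\le C\|f\|_\cB$, with $\be=1$ in \eqref{eq:cone_norm}. The cone conditions are tested with positive log-H\"older test functions normalized by their averages, while the Banach norms use signed H\"older test functions with the weight $|W|^{-1/p}$ or $\ve^{-\gamma}$. The proof is mostly a dictionary between these two classes of test functions and between the distances $d_0$ and $d_*$.

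\emph{Step 1: $\|f\|_\cB\le C\|f\|_*$.} Suppose $-\lambda\preceq f\preceq\lambda$, and set $g_\pm=\lambda\pm f\in\cC_{c,A,L}(\delta)$, so that $f=\tfrac12(g_+-g_-)$.
\begin{itemize}
\item Since $g_\mp\in\cC_{c,A,L}(\delta)$ gives $\int_W g_\mp\psi\ge0$, and $g_++g_-=2\lambda$, we get $\tri g_\pm\tri_-\le 2\lambda$.
\item For the stable norm, take $\psi\in\cC^\beta(W)$ with $|\psi|_{\cC^\beta}\le|W|^{-1/p}$. Write $\psi=(\psi+K)-K$ with $K=2|W|^{-1/p}$. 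For $a$ large, both $\psi+K$ and $K$ lie in $\cD_{a,\beta}(W)$, and their averages are at most $3|W|^{-1/p}$.
\item If $W$ is longer than $2\delta$, subdivide it into pieces in $\cW^s_-(\delta)$, at a bounded cost in the constant. Then \eqref{eq:cone 2} with $q=1/p$ gives $|\int_W g_\pm\psi|\le C A\delta^{1-q}\lambda$, hence $\|f\|_s\le C\lambda$.
\item For the unstable norm, we have $\ve\le\ve_0\le\delta$. Given $\psi_i\in\cC^\alpha$ with $d_0(\psi_1,\psi_2)=0$, replace $\psi_2$ by $\tilde\psi_2=\psi_2\,\|G_{W_1}'\|/\|G_{W_2}'\|$ (composed appropriately), so that $d_*(\psi_1,\tilde\psi_2)=0$. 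Since $|\psi_2-\tilde\psi_2|_{\cC^\beta}\le C\ve^{\alpha-\beta}$ and $\gamma\le\alpha-\beta$, the error is controlled by the stable part already bounded.
\item Then split $\psi_1,\tilde\psi_2$ into positive parts as above and apply \eqref{eq:cone 3}. Multiplying back by the averages $\fint\psi_i$ produces a cross term $(\fint\psi_1-\fint\tilde\psi_2)\int g\,\psi/\fint\psi$, which is $O(\ve)$ by \eqref{eq:cone 2}. The bound $d_{\cW^s}^\gamma\delta^{1-\gamma}cA\lambda\le C\ve^\gamma\lambda$ gives $\|f\|_u\le C\lambda$.
\end{itemize}

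\emph{Step 2: $\|f\|_*\le C\|f\|_\cB$.} We show $\lambda\pm f\in\cC_{c,A,L}(\delta)$ for $\lambda=K_0\|f\|_\cB$ with $K_0$ large, checking the constant and $f$ separately and absorbing $f$ in the slack left by the constant.
\begin{itemize}
\item \emph{Basic estimate.} For $\psi\in\cD_{a,\beta}(W)$, $\psi/\fint_W\psi$ has $\cC^\beta$ norm at most $C(a)$. Hence $|\int_W f\psi|\le C\|f\|_s|W|^{1/p}\fint_W\psi$.
\item \emph{Condition \eqref{eq:cone 1}.} For $|W|\ge\delta$ the estimate gives $\tri f\tri_+\le C\delta^{1/p-1}\|f\|_s$. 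Thus $\tri\lambda\pm f\tri_+\le L\,\tri\lambda\pm f\tri_-$ once $\lambda\ge C'\delta^{1/p-1}\|f\|_s$.
\item \emph{Condition \eqref{eq:cone 2}.} The constant contributes $|W|^{1-q}\le(2\delta)^{1-q}<\tfrac A2\delta^{1-q}$, using $A>4$. The $f$-part contributes at most $C\|f\|_s$, because $q=1/p$. This is absorbed for $\lambda$ large.
\item \emph{Condition \eqref{eq:cone 3}, constant part.} Because $d_*(\psi_1,\psi_2)=0$, the quantity is $\big||W^1|-|W^2|\big|\le(1+C_s)d_{\cW^s}$. This is smaller than $d^\gamma\delta^{1-\gamma}cA/2$ since $d\le\delta$ and $Ac>1+C_s$.
\item \emph{Condition \eqref{eq:cone 3}, $f$-part.} This is the main obstacle. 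Converting $d_*$-matched functions to $d_0$-matched ones costs $O(d)$ in $\cC^0$ but only $O(d^{\alpha-\beta})$-type control in H\"older norm. Moreover, $\cD_{a,\alpha}$ functions are only bounded relative to their averages. The non-overlapping pieces $W^i\setminus G_{W^i}(I_{W^1}\cap I_{W^2})$ have length at most $d$ and are estimated via the stable norm by $C\|f\|_s d^{1/p}$.
\item The overlapping parts are handled with $\|f\|_u$ when $d\le\ve_0$, giving $C\|f\|_u d^\gamma$. When the correction terms are not small in the H\"older norm, we interpolate between the $\cC^0$ smallness $d$ and the short-curve weight $|W|^{1/p}$, optimizing a cutoff scale. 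This balancing is exactly where the condition $\gamma\le\frac1{p+1}$ should enter, producing a total bound $C\|f\|_\cB d^\gamma\delta^{1-\gamma}$, which is absorbed by $\lambda$.
\end{itemize}

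The hardest part is this last bookkeeping, where the difference of normalizations $\fint\psi_1$ versus $\fint\psi_2$ and the mismatch of domains must all be shown to be $O(d^\gamma)$. I would handle it by first reducing to $\psi_1\circ G_{W^1}\|G'_{W^1}\|=\psi_2\circ G_{W^2}\|G'_{W^2}\|$ on the common interval, and extending by zero-cost restrictions elsewhere.
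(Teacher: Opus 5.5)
Your architecture is the paper's. Step~2 is Lemma~\ref{lem:cone bound}, and Step~1 is Lemma~\ref{lem:weak_order} specialised to the dominating element $\lambda\cdot 1$, which is all the theorem uses. Your stable-norm arguments in both steps are fine: the splitting $\psi=(\psi+K)-K$ works as well as the paper's $\psi_\pm+a^{-1}$, and your subdivision of curves longer than $2\delta$ handles a point the paper glosses over.

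The genuine gap is in the unstable-norm part of Step~1. The cross term $\bigl(\fint_{W^1}\psi_1-\fint_{W^2}\tilde\psi_2\bigr)\int_{W^2}g\tilde\psi_2\big/\fint_{W^2}\tilde\psi_2$ is not $O(\ve)$. Even with $d_*=0$, the two averages differ through $\bigl||W^1|-|W^2|\bigr|\le(1+C_s)\ve$ (see \eqref{eq:useful}) and through the unmatched end pieces of length at most $C_s\ve$. These are divided by $|W^i|$, so the first factor is only $O(\ve/|W^2|)$, while \eqref{eq:cone 2} gives $O(\lambda|W^2|^{q})$ for the second. The resulting bound $\lambda\ve|W^2|^{q-1}$ degenerates as $|W^2|\to0$, so it does not yield $\|f\|_u\le C\lambda$.

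The paper repairs this, in both lemmas, with a dichotomy. If $|W^2|^{1/p}\le\ve^\gamma$, do not compare at all: bound each integral separately by the stable norm, getting $C\lambda|W^2|^{1/p}\le C\lambda\ve^\gamma$. If $|W^2|^{1/p}\ge\ve^\gamma$, then $\ve/|W^2|\le\ve^{1-p\gamma}\le\ve^\gamma$, and this is exactly where $\gamma\le\frac1{p+1}$ enters. You anticipate this balancing in Step~2, but Step~1 needs it too, and the justification you give there is incorrect.

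Three further points.

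\begin{enumerate}
\item In Step~1, once you have a $d_*$-matched pair, splitting off constants destroys the matching: $d_*(K,K)=K\bigl|\,\|G'_{W^1}\|-\|G'_{W^2}\|\,\bigr|_{C^0}\neq0$ in general, so \eqref{eq:cone 3} does not apply to the pieces. You can either use $K$ on $W^1$ and $K\rho$ on $W^2$, with $\rho=\|G'_{W^1}\|/\|G'_{W^2}\|$ (log-Lipschitz, so $\rho(\psi_2+K)\in\cD_{a,\alpha}(W^2)$ for $\delta$ small). Or follow the paper's order: split at the $d_0$ level, where $\psi_i^\pm+a^{-1}$ keeps $d_0=0$, and only then convert the positive pair via $\psi_1^*=\psi_1\|G'_{W^2}\|/\|G'_{W^1}\|$, controlled by \eqref{eq:G ratio}.
\item In Step~2, your sketch of the $f$-part of \eqref{eq:cone 3} misplaces the difficulty. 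The Jacobian-ratio correction is genuinely small in $C^\alpha$, of order $d^{1-\alpha}$: take the minimum of the $C^0$ bound $2d$ and the Lipschitz bound $(1+C_s)B|r-s|$, as in \eqref{eq:G close}. What is not small is $1-\fint_{W^2}\psi_2$ after normalising $\fint_{W^1}\psi_1=1$; it is $O(d/|W^2|)$ and is handled by the same dichotomy.
\item Your inequalities hold for $C^1$ functions, where $\|\cdot\|_*$ is defined. Equivalence on $\cB$ requires the completion argument at the end of the paper's proof.
\end{enumerate}
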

We postpone the proof of the above Theorem to the end of the section.
\begin{rem}
One could equally well define both the cone and the norms on the set of real stable manifolds $\cW^s$ of a single map
and obtain an analogous result.  However, we will work with $\widehat \cW^s$ since this choice allows for the application
of both methods to sequential systems (although restricted to the SRB case $t=1$).
\end{rem}

\begin{rem}
This section points to the fact that the construction of a Banach space in which one can prove the Lasota-Yorke inequalities and the construction of a strictly invariant cone are largely equivalent. Yet, the two methods have their peculiarities: the Banach space approach yields information on the essential spectrum and may also work if the system is not mixing; on the contrary, the cone method yields only a spectral gap and can work only for mixing systems, but has much more flexibility if one wants to consider sequential systems.
\end{rem}

The proof of the theorem rests on the following two lemmas, which provide the required bounds.

\begin{lem}
\label{lem:cone bound}
Assuming the conditions of Theorem~\ref{thm:cone equiv},
there exists $K>0$ such that, for each  each $g\in C^1(M)$ we have,
\[
\| g\|_*\leq K\delta^{\gamma-1}\|g\|_\cB,
\]
where we have choosen $\be = 1$ in definition \eqref{eq:cone_norm}. 
\end{lem}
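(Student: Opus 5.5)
The plan is to exhibit an explicit $\lambda$ with $-\lambda \le g \le \lambda$ in the cone order. Concretely, for $\sigma\in\{+1,-1\}$ we show that $h_\sigma:=\lambda+\sigma g$ lies in $\cC_{c,A,L}(\delta)$ once $\lambda=K\delta^{\gamma-1}\|g\|_\cB$ with $K$ large. By \eqref{eq:cone_norm} with $\be=1$ this gives $\|g\|_*\le\lambda$. We check \eqref{eq:cone 1}, \eqref{eq:cone 2} and \eqref{eq:cone 3} in turn.

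\textbf{Preliminary bound for test functions.} Let $W\in\cW^s_-(\delta)$ and $\psi\in\cD_{a,\beta}(W)$. Since $|W|\le 2\delta$, we have $\sup_W\psi/\inf_W\psi\le e^{a(2\delta)^\beta}\le C$. Hence $|\psi|_{\cC^\beta(W)}\le C\fint_W\psi$, and likewise in $\cC^\alpha$ when $\psi\in\cD_{a,\alpha}(W)$. By \eqref{eq:stable t>0},
\[
\Big|\int_W g\psi\Big|\le C\|g\|_s|W|^{1/p}\fint_W\psi .
\]

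\textbf{Conditions \eqref{eq:cone 1} and \eqref{eq:cone 2}.} For $W\in\cW^s(\delta)$ the preliminary bound gives
\[
\frac{|\int_W g\psi|}{\int_W\psi}\le C\|g\|_s|W|^{1/p-1}\le C\delta^{1/p-1}\|g\|_\cB .
\]
Since $\gamma\le 1/(p+1)<1/p$ and $\delta<1$, this is at most $C\delta^{\gamma-1}\|g\|_\cB=CK^{-1}\lambda$. For $K$ large it follows that $\tri h_\sigma\tri_-\ge\lambda/2$ and $\tri h_\sigma\tri_+\le 3\lambda/2$, so \eqref{eq:cone 1} holds for $L\ge3$. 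For \eqref{eq:cone 2} we use $q=1/p$ and the identity $\int_W\psi/\fint_W\psi=|W|$:
\[
|W|^{-q}\frac{|\int_W h_\sigma\psi|}{\fint_W\psi}\le(2\delta)^{1-q}\lambda+C\|g\|_s .
\]
Since $C\|g\|_s\le CK^{-1}\delta^{q-\gamma}\delta^{1-q}\lambda$ and $\delta^{q-\gamma}\le1$, the right side is at most $(2+CK^{-1})\delta^{1-q}\lambda$. This is below $A\delta^{1-q}\tri h_\sigma\tri_-\ge A\delta^{1-q}\lambda/2$ because $A>4$, provided $K$ is large enough to absorb the small error.

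\textbf{Condition \eqref{eq:cone 3}, constant part.} Take $W^1,W^2$ with $d:=d_{\cW^s}(W^1,W^2)\le\delta$ and $\psi_i$ with $d_*(\psi_1,\psi_2)=0$. The contribution of the constant $\lambda$ is $\lambda\,\big||W^1|-|W^2|\big|$. Writing $|W^i|=\int_{I_{W^i}}\|G'_{W^i}\|$ and using slope at most $C_s$, we get $\big||W^1|-|W^2|\big|\le C_s|I_{W^1}\triangle I_{W^2}|+|\vf_{W^1}-\vf_{W^2}|_{C^1}|I|$. This is at most $(1+C_s)d\le(1+C_s)d^\gamma\delta^{1-\gamma}$. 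This must be dominated by $d^\gamma\delta^{1-\gamma}cA\tri h_\sigma\tri_-$, which uses $Ac>1+C_s$. To have room, we take $\tri h_\sigma\tri_-\ge(1-\epsilon)\lambda$ by enlarging $K$, rather than the crude $\lambda/2$.

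\textbf{Condition \eqref{eq:cone 3}, the $g$ part (main obstacle).} We must bound
\[
\Big|\frac{\int_{W^1}g\psi_1}{\fint_{W^1}\psi_1}-\frac{\int_{W^2}g\psi_2}{\fint_{W^2}\psi_2}\Big|
\]
by the unstable norm, which is stated for $d_0$-matched test functions of $\cC^\alpha$ norm at most $1$. Three discrepancies must be handled.

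First, $d_*(\psi_1,\psi_2)=0$ means $\psi_1\circ G_1\|G_1'\|=\psi_2\circ G_2\|G_2'\|$. Define $\tilde\psi_2$ on $W^2$ by $\tilde\psi_2\circ G_2=\psi_1\circ G_1$ on the common interval. Then $\psi_2-\tilde\psi_2=\tilde\psi_2\,(\|G_1'\|/\|G_2'\|-1)$, which has $C^0$ size $O(d)$ and bounded $\cC^1$-type norm. Interpolation gives a $\cC^\beta$ norm $O(d^{1-\beta/\alpha})$ or similar. Its integral is then controlled by $\|g\|_s$ times a positive power of $d$, which should be at least $d^\gamma$ since $\gamma\le\alpha-\beta$.

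Second, the pieces of the curves over $I_{W^1}\triangle I_{W^2}$ have length at most $d$. On them the strong stable norm with the $|W|^{1/p}$ weight gives $O(d^{1/p}\|g\|_s)$, and $1/p\ge\gamma$.

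Third, the normalizations $\fint_{W^i}\psi_i$ differ by $O(d)\fint\psi_1$, and the resulting error is handled the same way.

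After rescaling the test functions to have $\cC^\alpha$ norm at most $1$ (a factor $C\fint\psi_1$), \eqref{eq:unstable t>0} yields a total of at most $Cd^\gamma\|g\|_\cB$. This is at most $d^\gamma\delta^{1-\gamma}cA\lambda\epsilon$ because $\lambda\delta^{1-\gamma}=K\|g\|_\cB$. I expect the bookkeeping of these three discrepancies, especially the $d_*$ versus $d_0$ mismatch, to be the delicate step. The first three checks are routine.
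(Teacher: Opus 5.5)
Your architecture is the paper's. You show $\lambda\pm g\in\cC_{c,A,L}(\delta)$ for $\lambda$ of order $\delta^{\gamma-1}\|g\|_\cB$, check the three conditions, reweight by $\|G_{W^1}'\|/\|G_{W^2}'\|$ to pass from $d_*$ to $d_0$, and control the end pieces over $I_{W^1}\triangle I_{W^2}$ with $\|g\|_s$. Your use of $\tri\lambda\pm g\tri_-\ge(1-\epsilon)\lambda$ is a genuine sharpening. With the cruder $\lambda/2$, which the paper uses, one ends up needing $cA>2(1+C_s)$, whereas your version works under the stated hypothesis $Ac>1+C_s$.

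The gap is in your third discrepancy. It is false that $\fint_{W^1}\psi_1$ and $\fint_{W^2}\psi_2$ differ by $O(d)\fint_{W^1}\psi_1$. Over the common parameter interval $I$ the integrals agree exactly and the lengths differ by at most $|I|\,d$, so that part is fine. The problem is the end pieces: they have length up to $C_s d$, and relative to $|W^i|$ they move the averages by roughly $\min\{1,d/|W^i|\}$ times the oscillation of $\psi_i$. Since $\cW^s_-(\delta)$ contains curves of length comparable to $d$, this is not $O(d)$. For instance, take $W^1,W^2$ on the same graph over $[0,2s]$ and $[s,3s]$. Set $\psi_1=\psi_2=1$ on the overlap, and on the respective end pieces set $\psi_1=e^{-a\,\mathrm{dist}^\alpha}$ and $\psi_2=e^{a\,\mathrm{dist}^\alpha}$, where $\mathrm{dist}$ is the distance to the overlap. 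Both lie in $\cD_{a,\alpha}$, $d_*(\psi_1,\psi_2)=0$ and $d=2s$, yet the averages differ by order $a s^\alpha\gg d$.

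So the term
\[
\frac{|\int_{W^2}g\psi_2|}{\fint_{W^2}\psi_2}\,\Big|\frac{\fint_{W^2}\psi_2}{\fint_{W^1}\psi_1}-1\Big|
\]
needs a real argument. The paper splits into cases, with $|W^1|\le|W^2|$.
\begin{enumerate}
\item If $|W^2|^{1/p}\le d^\gamma$, both ratios are bounded separately by $C\|g\|_s|W^i|^{1/p}\le C\|g\|_s d^\gamma$, and nothing needs to be compared.
\item If $|W^2|^{1/p}\ge d^\gamma$, the relative error of the averages is $O(d/|W^2|)=O(d^{1-p\gamma})$. The inequality $d^{1-p\gamma}\le d^\gamma$ is exactly where the hypothesis $\gamma\le 1/(p+1)$ enters.
\end{enumerate}
Your write-up never uses that hypothesis beyond $\gamma<1/p$, which signals the omission.

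Alternatively, you can avoid the case split by using $\psi_i\in\cD_{a,\alpha}(W^i)$. The oscillation of $\psi_i$ on $W^i$ is at most $Ca|W^i|^\alpha\fint_{W^i}\psi_i$. Hence the end pieces shift the average by a relative amount at most
\[
Ca\min\{|W^i|^\alpha,\,C_s d\,|W^i|^{\alpha-1}\}\le Ca(C_s d)^\alpha .
\]
Combined with the $O(d)$ error from the common part, this gives an error at most $C\|g\|_s|W^2|^{1/p}d^\alpha\le C\|g\|_s d^\gamma$, since $\gamma<\alpha$. If $I_{W^1}\cap I_{W^2}=\emptyset$, both curves have length at most $C_s d$ and bounding each ratio separately suffices.

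Either way, the delicate point is this normalization. The $d_*$ versus $d_0$ reweighting, as you set it up, is routine.
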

\begin{proof}
For $g\in C^1$, we must compute for which $\lambda>0$ we have $\lambda \pm g, \in \cC_{c,A, L}(\delta)$.
Note that if $W \in \cW^s(\delta)$ and $\psi\in \cD_{a,\beta}(W)$, then
\[
\frac{\psi(x)-\psi(y)}{d(x,y)^\beta}\leq \psi(x)\frac{1-e^{-ad(x,y)^\beta}}{d(x,y)^\beta}\leq \|\psi\|_\infty  a.
\]
Thus $|\psi|_{C^\beta}\leq (1+a) |\psi|_\infty$. The analogous estimate holds for $\psi \in \cD_{a,\alpha}(W)$.
Also, $|\psi|_\infty/ \fint_W \psi \le e^{a2^\beta \delta^\beta} \le 2$ provided $\delta$ is small enough for
fixed $a$.
Hence, recalling equation \eqref{eq:stable t>0},
\[
\begin{split}
&\frac{\left|\int_W( \lambda\pm g )\psi \, dm_W\right|}{\int_W \psi \, dm_W}\leq \lambda+\frac{\left|\int_W g \psi \, dm_W\right|}{\int_W \psi \, dm_W}\leq 
\lambda+(2+2a) |W|^{\frac{1}{p} -1}\|g\|_s \\
&\frac{\left|\int_W( \lambda\pm g )\psi \, dm_W\right|}{\int_W \psi \, dm_W}\geq \lambda-\frac{\left|\int_W g \psi \, dm_W\right|}{\int_W \psi \, dm_W}\geq 
\lambda-(2+2a) |W|^{\frac{1}{p} -1}\|g\|_s . 
\end{split}
\]
Thus the first cone condition  \eqref{eq:cone 1} is satisfied provided
\[
\lambda  \geq \frac{L+1}{L-1} (2+2a) \delta^{\frac{1}{p} -1}\|g\|_s  .
\]
If $L>3$, we further increase $\lambda$ so that
\begin{equation}
\label{eq:lower tri}
\tri\lambda\pm g\tri_-\geq \lambda - (2a+2) \delta^{\frac{1}{p} -1}\|g\|_s       \geq \lambda/2.
\end{equation}
Next, for $\psi \in \cD_{a,\alpha}(W)$, it is convenient to define $\psi_W:=\psi\left[\fint_W\psi\right]^{-1}$, and note that $|\psi_W|_{\cC^\alpha}\leq 2+2a$, as above.
For $p=1/q$, we compute, recalling again \eqref{eq:stable t>0}, for $W \in \cW^s_-(\delta)$, 
\[
 |W|^{-q}\frac{|\int_W (\lambda\pm g) \psi|}{\fint_W\psi} \leq |W|^{1-q}\lambda+  |W|^{-q}\left|\int_W g \psi_W\right|
 \leq  (2\delta)^{1-q}\lambda+ (2+2a)\|g\|_s .
 \]
 It follows that the second cone condition \eqref{eq:cone 2} is satisfied if
 \[
 (2 \delta)^{1-q}\lambda+ (2+2a)\|g\|_s\leq A\delta^{1-q}\lambda/2.
 \]
 That is
 \[
 \lambda\geq \frac{4(1+a) \delta^{-1+q}}{A-2^{2-q}}\|g\|_s,
 \]
providing $A > 4$.

 To conclude, we must estimate for $W^1, W^2 \in \cW^s_-(\delta)$, $\psi_i \in \cD_{a,\alpha}(W^i)$,
 with $d_{\cW^s}(W^1, W^2) \le \delta$ and $d_*(\psi_1, \psi_2) =0$,
 \begin{equation}
 \label{eq:lambda split}
 \begin{split}
 \left|\frac{\int_{W^1} (\lambda\pm g) \psi_1}{\fint_{W^1}\psi_1}  - \frac{\int_{W^2} (\lambda\pm g)\psi_2}{\fint_{W^2}\psi_2} \right|\leq&
 \lambda ||W^1| - |W^2|| \\
 &+ \left|\int_{W^1}  g \psi_{1,W}  - \int_{W^2}  g\psi_{2,W} \right| \,.
\end{split}
\end{equation}
Without loss of generality, we may assume $\fint_{W^1} \psi_1 = 1$ and $|W^1| \le |W^2|$, the other cases being similar.
A direct computation shows that (see \cite[eq.~(5.8)]{dl cones})
\begin{equation}
\label{eq:useful}
\left|\,|W^1|-|W^2|\,\right|\leq  (1+C_s)  d_{\cW^s}(W^1,W^2).
\end{equation}
Thus the estimate is trivial if $|W^2|^{1/p} \le d_{\cW^s}(W^1, W^2)^\gamma$ for then \eqref{eq:lambda split} can be estimated by
\[
 \left|\frac{\int_{W^1} (\lambda\pm g) \psi_1}{\fint_{W^1}\psi_1}  - \frac{\int_{W^2} (\lambda\pm g)\psi_2}{\fint_{W^2}\psi_2} \right|\leq
 \lambda (1+C_s) d_{\cW^s}(W^1, W^2) + 2|W^2|^{1/p} \| g \|_s (2a+2) \,.
\]
Recalling \eqref{eq:lower tri}, the third cone condition \eqref{eq:cone 3} is satisfied if
\[
\lambda (1+C_s) d_{\cW^s}(W^1, W^2)^\gamma \delta^{1-\gamma} + (4a+4) d_{\cW^s}(W^1, W^2)^\gamma \| g \|_s
\le d_{\cW^s}(W^1, W^2)^\gamma \delta^{1-\gamma} c A \lambda/2 \, ,
\]
which implies,
\[
\lambda \ge \frac{8a+8}{cA - 2(1+C_s)} \delta^{\gamma -1} \| g \|_s \, .
\]
It remains to estimate \eqref{eq:lambda split} in the case
\begin{equation}
\label{eq:long W2}
|W^2|^{1/p} \ge d_{\cW^s}(W^1, W^2)^\gamma \, .
\end{equation}
Now we must correct for the discrepancy between the use of $d_0$ and $d_*$ in measuring the
difference between test functions on $I_{W_1} \cap I_{W_2}$. Let $\overline W^2 = G_{W^2}(I_{W^1} \cap I_{W^2})$ and let
$W^2_i$ denote the at most two connected components of $W^2 \setminus \overline W^2$.  
By assumption, $d_*(\psi_1, \psi_2)=0$.  Thus,
\[
\begin{split}
&d_0 \left( \psi_{1,W}, \, \psi_2^* \right) = 0, \mbox{ where } \\
&\psi_2^*:= \psi_{2, W}  \frac{\| G_{W^2}' \| \circ G_{W^2}^{-1}}{\| G_{W^1}' \| \circ G_{W^2}^{-1}} \frac{\fint_{W^2} \psi_2}{\fint_{W^1}  \psi_1}.
\end{split}
\]
Then, recalling $\fint_{W^1} \psi_1 = 1$,
\begin{equation}
\label{eq:test split}
\begin{split}
&\hskip-18pt\left|\int_{W^1} g\psi_{1,W}  - \int_{W^2}  g\psi_{2,W} \right|
 \le \left| \int_{W_1} g \psi_{1,W}  - \int_{\overline W^2} g \psi_2^*  \right| \\
&+ \left| \int_{\overline W^2} g \psi_{2, W} \left( \frac{\| G_{W^2}' \| \circ G_{W^2}^{-1} }{\| G_{W^1}' \|  \circ G_{W^2}^{-1} } \fint_{W^2} \psi_2 - 1 \right) \right| 
+ \left| \sum_{i=1}^2 \int_{W^2_i} g \psi_{2,W} \right|
\\
 \le& \| g \|_u d_{\cW^s}(W^1, W^2)^\gamma \max\left\{  |\psi_{1,W}|_{C^\alpha(W^1)} , \,  |\psi^*_{2}|_{C^\alpha(\overline W^2)} \right\} \\
& + 
  \| g \|_s |W^2|^{1/p} |\psi_{2,W}|_{C^\beta(W^2)} \left|1 - \frac{\| G_{W^2}' \|  \circ G_{W^2}^{-1} }{\| G_{W^1}' \| \circ G_{W^2}^{-1} } \fint_{W^2} \psi_2 \right|_{C^\beta(\overline W^2)} \\
  & + \sum_{i=1}^2 \| g \|_s |W^2_i|^{1/p} |\psi_{2,W}|_{C^\beta(W^2_i)} .
\end{split}
\end{equation}
As before, $|\psi_{i,W}|_{C^\alpha(W^i)} \le 2+2a$, and similarly for $|\psi_{2,W}|_{C^\beta(W^2)}$.
Also, $|W^2_i| \le C_s d_{\cW^s}(W^1, W^2)$, by definition of the distance $d_{\cW^s}$.
Next,
\[
1 - \frac{\| G_{W^2}' \|  \circ G_{W^2}^{-1} }{\| G_{W^1}' \| \circ G_{W^2}^{-1} } \fint_{W^2} \psi_2 
 =  \left( 1 - \frac{\| G_{W^1}' \|  \circ G_{W^1}^{-1} }{\| G_{W^2}' \| \circ G_{W^1}^{-1} } \right) \fint_{W^2} \psi_2
 + \left( 1 - \fint_{W^2} \psi_2 \right) \, .
\]
Using \cite[eq. (5.10)]{dl cones} together with \eqref{eq:long W2}, we have
\[
\begin{split}
\left| 1 - \fint_{W^2} \psi_2 \right| &= |W^2|^{-1} \left| |W^2| - \int_{W^2} \psi_2 \right|
\le |W^2|^{-1} 6C_s d_{\cW^s}(W^1, W^2) \\
&\le 6C_s d_{\cW^s}(W^1,W^2)^{1-p\gamma}
\end{split}
\]
and $1-p\gamma \ge \gamma$ since we have assumed $\gamma \le 1/(p+1)$.  In particular,
$\fint_{W^2}\psi_2 \le 2$ for $\delta$ small enough.\\
To estimate $\left| 1 - \frac{\| G_{W^2}' \|  \circ G_{W^2}^{-1} }{\| G_{W^1}' \| \circ G_{W^2}^{-1} } \right|_{C^\alpha(\overline W^2)}$,
we use the fact that $\| G_{W^1}' \| \ge 1$ to write,
\[
\left| 1 - \frac{\| G_{W^2}' \|  \circ G_{W^2}^{-1} }{\| G_{W^1}' \| \circ G_{W^2}^{-1} } \right|
= \frac{1}{\sqrt{1 + (\vf_{W^1}')^2}} \frac{| \vf_{W^1}' - \vf_{W^2}'| |\vf_{W^1}' + \vf_{W^2}' |}{\sqrt{1 + (\vf_{W^1}')^2}+
\sqrt{1 + (\vf_{W^2}')^2}} \le d_{\cW^s}(W^1, W^2) .
\]
Then, for $x, y \in \overline W^2$, we use the above to estimate on the one hand,
\[
\left| \frac{\| G_{W^2}' \|  \circ G_{W^2}^{-1}(x) }{\| G_{W^1}' \| \circ G_{W^2}^{-1}(x) } - \frac{\| G_{W^2}' \|  \circ G_{W^2}^{-1}(y) }{\| G_{W^1}' \| \circ G_{W^2}^{-1}(y) } \right| \le 2 d_{\cW^s}(W^1, W^2),
\]
while on the other hand, setting $r = G_{W^2}^{-1}(x)$ and $s = G_{W^2}^{-1}(y)$,
\[
\begin{split}
&\left| \frac{\| G_{W^2}' \|  \circ G_{W^2}^{-1}(x) }{\| G_{W^1}' \| \circ G_{W^2}^{-1}(x) } - \frac{\| G_{W^2}' \|  \circ G_{W^2}^{-1}(y) }{\| G_{W^1}' \| \circ G_{W^2}^{-1}(y) } \right|
 \le \left| \frac{\| G_{W^2}'(r) \|   }{\| G_{W^1}' (r) \| } - \frac{\| G_{W^2}' (s)\|  }{\| G_{W^1}'(r) \|  } \right|\\
&\phantom{\left| \frac{\| G_{W^2}' \|  \circ G_{W^2}^{-1}(x) }{\| G_{W^1}' \| \circ G_{W^2}^{-1}(x) } - \frac{\| G_{W^2}' \|  \circ G_{W^2}^{-1}(y) }{\| G_{W^1}' \| \circ G_{W^2}^{-1}(y) } \right|\leq }
+ \left| \frac{\| G_{W^2}'(s) \| }{\| G_{W^1}' (r) \| } - \frac{\| G_{W^2}' (s)\| }{\| G_{W^1}' (s)\|  } \right| \\
 \le & |\vf_{W^2}'(r) - \vf_{W^1}'(s)| + C_s |\vf_{W^1}'(r) - \vf_{W^2}'(s)| \le (1+C_s) B |r-s|,
\end{split}
\]
where $B>0$ denotes the maximum of $|\vf_{W^i}''$, which depends only on the family $\widehat \cW^s$. 
Since $|r-s| \le d(x,y)$, we see that the difference is bounded by the minimum of these two estimates, which is maximized when the two are equal, i.e. when  
$2d_{\cW^s}(W^1, W^2)= (1+C_s)B d(x,y)$.  Thus the $\alpha$-Holder constant is bounded by
$B^\alpha (1+C_s)^\alpha 2^{1-\alpha} d_{\cW^s}(W^1, W^2)^{1-\alpha}$.
We conclude
\begin{equation}
\label{eq:G close}
\left| 1 - \frac{\| G_{W^2}' \|  \circ G_{W^2}^{-1} }{\| G_{W^1}' \| \circ G_{W^2}^{-1} } \right|_{C^\alpha(\overline W^2)}
\le 2 B^\alpha (1+C_s)^\alpha d_{\cW^s}(W^1, W^2)^{1-\alpha} \, .
\end{equation}
This implies in particular that there exists $D>1$ such that
\begin{equation}
\label{eq:G ratio}
D^{-1} \le \left| \frac{\| G_{W^2}' \|  \circ G_{W^2}^{-1} }{\| G_{W^1}' \| \circ G_{W^2}^{-1} } \right|_{C^\alpha(\overline W^2)}
\le D \, .
\end{equation}
Collecting these estimates in \eqref{eq:test split} and recalling \eqref{eq:useful}, we estimate \eqref{eq:lambda split}
by
\begin{equation}\label{eq:u-norm-compare}
\begin{split}
&\left|\frac{\int_{W^1} (\lambda\pm g) \psi_1}{\fint_{W^1}\psi_1}  - \frac{\int_{W^2} (\lambda\pm g)\psi_2}{\fint_{W^2}\psi_2} \right|
\leq \lambda (1+C_s) d_{\cW^s}(W^1, W^2) \\
&+ \| g \|_u d_{\cW^s}(W^1, W^2)^\gamma 2D (2a+2) \\
&+ \|g \|_s (2\delta)^{1/p} (2a+2) \big[4B^\alpha (1+C_s)^\alpha d_{\cW^s}(W^1, W^2)^{1-\alpha} \\
&  + 6C_sd_{\cW^s}(W^1, W^2)^\gamma \big] + 2 (2a+2) \|g\|_s C_s^{1/p}d_{\cW^s}(W^1, W^2)^{1/p} \,.
\end{split}
\end{equation}
Since $\gamma \le 1-\alpha$ and $\gamma \le 1/p$ by assumption, and using $d_{\cW^s}(W^1, W^2) \le \delta$
and \eqref{eq:lower tri},
the third cone condition \eqref{eq:cone 3}  is satisfied if
\[
\begin{split}
\lambda (1+C_s) & d_{\cW^s}(W^1, W^2)^\gamma \delta^{1-\gamma}  + \| g\|_u d_{\cW^s}(W^1, W^2)^\gamma D(4a+4) \\
& + \|g \|_s d_{\cW^s}(W^1, W^2)^\gamma (4a+4) [2B^\alpha (1+C_s)^\alpha + 3C_s + C_s^{1/p} ] \\
& \le d_{\cW^s}(W^1, W^2)^\gamma \delta^{1-\gamma} c A \lambda / 2 \, ,
\end{split}
\] 
which is implied by
\[
\lambda \ge \frac{2D(4a+4) \| g \|_u + \| g \|_s (8a+8) [2B^\alpha (1+C_s)^\alpha + 3C_s + C_s^{1/p} ]}{\delta^{1-\gamma} (cA - 2(1+C_s)) } ,
\]
from which the lemma follows.
\end{proof}

To prove the bound in the other direction, we introduce the concept of weakly order-preserving.
We say that a norm 
$\|\cdot\|$ on $\cB$ is {\em weakly order preserving} if there exists a $\upsilon>0$ such that for all $f,g\in C^1$ 
\[
-f\preceq g\preceq f \Longrightarrow \|f\|\geq\upsilon \|g\|.
\]
\begin{lem}\label{lem:weak_order} 
Under the assumptions of Theorem~\ref{thm:cone equiv}, the norm $\|\cdot\|_\cB$ is weakly order preserving.
\end{lem}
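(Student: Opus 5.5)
The plan is to show that if $f\pm g\in\cC_{c,A,L}(\delta)$ (the cases $f\pm g=0$ being trivial), then each component of $\|g\|_\cB$ is bounded by a constant multiple of $\|f\|_\cB$. The tool is the identity $g=\tfrac12\big((f+g)-(f-g)\big)$, where both $f+g$ and $f-g$ lie in the cone. For any $h$ in the cone, the cone conditions \eqref{eq:cone 1}--\eqref{eq:cone 3} bound all relevant integrals of $h$ by multiples of $\tri h\tri_-$. The key observation is that $\tri f\pm g\tri_-\le \tri f\tri_+$. Indeed, for $W\in\cW^s(\delta)$ and $\psi\in\cD_{a,\beta}(W)$ we have
\[
\int_W(f+g)\psi+\int_W(f-g)\psi=2\int_W f\psi,
\]
and both terms on the left are nonnegative, since $\tri\cdot\tri_-\ge0$ on the cone (positivity follows from \eqref{eq:cone 1}). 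Hence $\tri f\pm g\tri_-\le 2\tri f\tri_+$. Moreover $\tri f\tri_+\le C(a)\delta^{1/p-1}\|f\|_s$, by the same computation as in Lemma~\ref{lem:cone bound}, using $|\psi_W|_{C^\beta}\le 2+2a$. It therefore suffices to bound $\|h\|_s$ and $\|h\|_u$ by $C\delta^{\kappa}\tri h\tri_-$ for every $h$ in the cone, for some power $\kappa$. The factors of $\delta$ are harmless, since $\upsilon$ may depend on $\delta$.

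\textbf{Stable norm.} Fix $W\in\widehat\cW^s_{\bH}$ and $\psi\in C^\beta(W)$ with $|\psi|_{C^\beta}\le|W|^{-1/p}$. If $|W|>2\delta$, subdivide $W$ into pieces of length in $[\delta,2\delta]$. The main step is to decompose an arbitrary $\psi$ into a difference of test functions in $\cD_{a,\beta}$: write $\psi=\psi^+-\psi^-$ with $\psi^\pm=\psi+ K|W|^{-1/p}$ or a similar positive constant shift. For $a$ large enough relative to $K$, both $\psi+K|\psi|_{C^\beta}$ and the constant $K|\psi|_{C^\beta}$ belong to $\cD_{a,\beta}(W)$, since $\log$ of a function bounded below by $(K-1)|\psi|_{C^\beta}$ with $C^\beta$ seminorm at most $|\psi|_{C^\beta}$ has $\beta$-Hölder constant at most $1/(K-1)$. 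Applying \eqref{eq:cone 2} to each of the two pieces gives
\[
\Big|\int_W h\psi\Big|\le 2|W|^{q}\,\fint\psi^{\pm}\,A\delta^{1-q}\tri h\tri_-\lesssim A\delta^{1-q}\tri h\tri_- ,
\]
using $q=1/p$ and $\fint\psi^\pm\lesssim |W|^{-1/p}$. For long curves the factor $|W|^{-1/p}$ on the pieces must be tracked: the number of pieces is about $|W|/\delta$ and $|W|$ is bounded, so the loss is only a power of $\delta$.

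\textbf{Unstable norm.} Take $W^1,W^2$ with $d_{\cW^s}(W^1,W^2)\le\ve$, and $\psi_i$ with $|\psi_i|_{C^\alpha}\le1$ and $d_0(\psi_1,\psi_2)=0$. Split each $\psi_i$ as above into a constant plus a function in $\cD_{a,\alpha}$. The mismatch between $d_0$ and $d_*$ is handled exactly as in \eqref{eq:test split}, that is, by multiplying by the ratio $\|G'_{W^2}\|/\|G'_{W^1}\|$, with the estimates \eqref{eq:G close} and \eqref{eq:G ratio} together with the stable-norm bound just obtained. The normalizations $\fint\psi_i$ are close by \cite[eq.~(5.10)]{dl cones}. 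Then \eqref{eq:cone 3} gives a difference of order $\ve^\gamma\delta^{1-\gamma}cA\tri h\tri_-$. The correction terms are of order $\ve^{\gamma}$ times stable-type quantities, using $\gamma\le 1/(p+1)$ and $\gamma\le1-\alpha$ as in the proof of Lemma~\ref{lem:cone bound}. For curves longer than $2\delta$, and for $\ve_0$ versus $\delta$, subdivide the curves compatibly.

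The main obstacle is the decomposition of general Hölder test functions, which need not be positive, into elements of the cones $\cD_{a,\cdot}$. This must be done while preserving the pairing condition $d_*(\cdot,\cdot)=0$ in the unstable estimate. I would add the same constant, rescaled by the ratio $\|G'\|$, on both curves, so that the matching persists, and then absorb the resulting error terms as above.
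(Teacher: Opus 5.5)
Your proof is correct, but it is organized differently from the paper's.

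\textbf{The paper's route.} The paper never bounds a general cone element. For each quantity $X$ entering the norms it writes $|X_g|\le |X_{f\pm g}|+|X_f|$. It controls $X_{f\pm g}$ through the cone conditions together with $\tri f\pm g\tri_-\le 2\tri f\tri_+\lesssim\|f\|_s$, and it controls $X_f$ directly by the norms of $f$. In the unstable part this requires re-running \eqref{eq:u-norm-compare} with $\lambda=0$ for $f$, so the paper ends with $\|g\|_\cB\le C(\|f\|_s+\|f\|_u)$.

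\textbf{Your route.} You symmetrize, writing $g=\tfrac12\big((f+g)-(f-g)\big)$, and prove the intrinsic estimate $\|h\|_\cB\le C(\delta)\tri h\tri_-$ for every $h\in\cC_{c,A,L}(\delta)$. This has three advantages:
\begin{itemize}
\item it removes any need to estimate the unstable differences of $f$;
\item it gives the slightly stronger bound $\|g\|_\cB\le C\|f\|_s$;
\item combined with $\tri h\tri_-\le\tri h\tri_+\lesssim\|h\|_s$, it shows that on the cone $\|h\|_\cB$, $\tri h\tri_-$ and $\tri h\tri_+$ are all comparable, which is a reusable fact.
\end{itemize}

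\textbf{Shared technical core.} Three ingredients are the same in both routes.
\begin{itemize}
\item Shifting a signed H\"older test function by a positive constant so that it lands in $\cD_{a,\cdot}$. You use $\psi+K|\psi|_{C^\beta}$, which works for any $K\ge 1+a^{-1}$, so $a$ need not be enlarged; the paper uses $\psi_\pm+a^{-1}$. Your free $K$ also leaves slack in the log-H\"older constant, which is convenient because multiplying by the $\|G'\|$ ratio increases that constant slightly.
\item Passing from $d_0$-matched to $d_*$-matched pairs via the ratio $\|G'_{W^2}\|/\|G'_{W^1}\|$ and \eqref{eq:G close}--\eqref{eq:G ratio}.
\item Splitting according to whether $|W^2|^{1/p}$ is below or above $d_{\cW^s}(W^1,W^2)^\gamma$, which is where $\gamma\le 1/(p+1)$ enters.
\end{itemize}

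\textbf{Points the paper passes over.} Your subdivision of curves longer than $2\delta$ fills in a step the paper omits. For $\ve_0>\delta$, however, what is needed is not a subdivision but the trivial bound
\[
\Big|\int_{W^1}h\psi_1\Big|+\Big|\int_{W^2}h\psi_2\Big|\lesssim \delta^{-\gamma}d_{\cW^s}(W^1,W^2)^\gamma\|h\|_s .
\]

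\textbf{One slip.} Your first claim $\tri f\pm g\tri_-\le\tri f\tri_+$ is false in general. Take $f\equiv1$ and $g\equiv0.9$; both $f\pm g$ are constants lying in the cone, yet $\tri f+g\tri_-=1.9$. The bound $\tri f\pm g\tri_-\le 2\tri f\tri_+$ that you actually derive and use is correct, so the argument is unaffected.
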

\begin{proof}
Let $f,g\in C^1$ such that $f\pm g\succeq 0$. Then for each $W\in\cW^s(\delta)$ and $\psi\in \cD_{a,\beta}(W)$ we have by \eqref{eq:cone 1}, $\int_W(f\pm g)\psi\geq 0$. Hence
\[
\int_W f \psi\geq \left|\int_W g \psi\right|.
\]
As in the proof of Lemma~\ref{lem:cone bound}, we define $\psi_W = \psi \big[ \fint_W \psi \big]^{-1}$ and recall
that $| \psi_W |_{C^\beta(W)} \le 2+2a$.  It then follows immediately that
\[
\tri f \pm g \tri_- \le \tri g \tri_+ + \tri f \tri_+ \le 2 \tri f \tri_+ \le 2 \delta^{\frac{1}{p}-1} (2+2a) \| f \|_s =: C_1 \|f \|_s \, .
\]

Next, for $W\in \cW^s_-(\delta)$, $\psi\in \cD_{a,\beta}(W)$, we have by \eqref{eq:cone 2} 
\[
\begin{split}
& |W|^{-q}\frac{|\int_W (f\pm g) \psi|}{\fint_W\psi}  \le  A \delta^{1-q} \tri f\pm g \tri_-\leq A \delta^{1-q} C_1\| f \|_s
= A(4a+4) \| f \|_s  \\
& |W|^{-q}\frac{|\int_W (f\pm g) \psi|}{\fint_W\psi} \geq |W|^{-q}\frac{|\int_W  g \psi|}{\fint_W\psi}-|W|^{-q}\frac{|\int_W  f \psi|}{\fint_W\psi} .
\end{split}
\]
Accordingly,
\begin{equation}
\label{eq:good bound}
|W|^{-q}\frac{|\int_W  g \psi|}{\fint_W\psi}\leq (A (4a+4) + 2a+2)  \| f \|_s.
\end{equation}
To estimate $\| g \|_s$, we will need to translate between functions in $C^\beta(W)$ and $\cD_{a,\beta}(W)$.  To this end, for 
$\psi\in C^\beta(W,\bR)$, $|\psi|_{C^\beta} \le 1$, we write $\psi=\psi_+-\psi_-$, where $\psi_\pm\geq 0$. 
Clearly $\|\psi_\pm\|_{C^\beta}\leq 1$. Moreover,
\begin{equation}\label{eq:psi_to_cone}
\frac{\psi_\pm(x)+a^{-1}}{\psi_\pm(y)+a^{-1}}\leq 1+a|\psi_\pm(x)-\psi_\pm(y)|\leq 1+a|x-y|^\beta\leq e^{a|x-y|^\beta}.
\end{equation}
Thus $\psi_\pm(x)+a^{-1}\in  \cD_{a,\beta}(W)$.  
Applying \eqref{eq:good bound}, we obtain 
\[
\begin{split}
|W|^{-1/p} \left|\int_W g\psi\right|&\leq |W|^{-1/p} \left|\int_W g(\psi_++a^{-1})\right| + |W|^{-1/p} \left|\int_W g(\psi_-+a^{-1})\right|\\
&\le (A(4a+4)+2a+2) \| f\|_s \left( \fint_W (\psi_+ + a^{-1}) + \fint_W (\psi_- + a^{-1}) \right) \, .
\end{split}
\]
Taking the appropriate suprema yields
\begin{equation}
\label{eq:stable bound}
\|g\|_s\leq (A(4a+4) + 2a+2) \|f \|_s 2(1+a^{-1}) =: C_2\|f\|_s.
\end{equation}

To bound the strong unstable norm of $g$, we must estimate the difference in \eqref{eq:unstable t>0} for
$W^1, W^2 \in \cW^s_-(\delta)$ with $\psi_i \in C^\alpha(W^i)$, $|\psi_i|_{C^\alpha(W^i)} \le 1$ and 
$d_0(\psi_1, \psi_2)=0$.  Without loss of generality, we may assume $|W^2| \ge |W^1|$.  As the in the
proof of Lemma~\ref{lem:cone bound}, the estimate is trivial if $|W^2|^{1/p} \le d_{\cW^s}(W^1, W^2)^\gamma$
since then using \eqref{eq:stable bound},
\[
\left|\frac{\int_{W^1} g \psi_1}{\fint_{W^1}\psi_1}  - \frac{\int_{W^2} g \psi_2}{\fint_{W^2}\psi_2} \right|
\le 2|W^2|^{1/p} \| g \|_s (2a+2) \le d_{\cW^s}(W^1, W^2)^\gamma (4a+4) C_2 \| f\|_s \, .
\]
It remains to estimate \eqref{eq:unstable t>0} under the assumption that
\begin{equation}
\label{eq:long again}
|W^2|^{1/p} \ge d_{\cW^s}(W^1, W^2)^\gamma \, .
\end{equation}
Now, for all  $W^1, W^2 \in \cW^s_-(\delta)$ such that $ d_{\cW^s}(W^1, W^2) \le \delta$ and $\psi_i \in \cD_{a, \alpha}(W^i)$ such that $d_*(\psi_1, \psi_2)=0$, we have
\[
\begin{split}
&\left|\frac{\int_{W^1} g \psi_1}{\fint_{W^1}\psi_1}  - \frac{\int_{W^2} g \psi_2}{\fint_{W^2}\psi_2} \right|-\left|\frac{\int_{W^1} f \psi_1}{\fint_{W^1}\psi_1}  - \frac{\int_{W^2} f \psi_2}{\fint_{W^2}\psi_2} \right|
\leq \left|\frac{\int_{W^1} (f\pm g) \psi_1}{\fint_{W^1}\psi_1}  - \frac{\int_{W^2} (f\pm g)  \psi_2}{\fint_{W^2}\psi_2} \right|\\
& \hspace{0.5 in} \leq d_{\cW^s}(W^1, W^2)^\gamma \, \delta^{1-\gamma}   c A \tri (f\pm g) \tri_- \leq 
d_{\cW^s}(W^1, W^2)^\gamma \, \delta^{1-\gamma}   c A C_1\|f\|_s.
\end{split}
\]
Using \eqref{eq:u-norm-compare}, with $\lambda=0$ (applied to $f$ instead of $g$), we obtain that there exists $C_3>0$ such that
\begin{equation}
\label{eq:star bound}
\begin{split}
\left|\frac{\int_{W^1} g \psi_1}{\fint_{W^1}\psi_1}  - \frac{\int_{W^2} g \psi_2}{\fint_{W^2}\psi_2} \right|
\leq &
d_{\cW^s}(W^1, W^2)^\gamma \, \delta^{1-\gamma}   c A C_1\|f\|_s  \\
&+ \| f \|_u d_{\cW^s}(W^1, W^2)^\gamma 4(2a+2) \\
&+ \|f \|_s (2\delta)^{1/p} (2a+2) \big[4B^\alpha (1+C_s)^\alpha d_{\cW^s}(W^1, W^2)^{1-\alpha} \\
&  + 6C_sd_{\cW^s}(W^1, W^2)^\gamma \big] + 2 (2a+2) \|f\|_s C_s^{1/p}d_{\cW^s}(W^1, W^2)^{1/p}\\
&\leq C_3(\| f \|_u +\|f\|_s)d_{\cW^s}(W^1, W^2)^\gamma .
\end{split}
\end{equation}
To conclude, as in the proof of Lemma~\ref{lem:cone bound}, we must reconcile the difference
between $d_0(\psi_1, \psi_2)=0$ and $d_*(\psi_1, \psi_2)=0$.

As an intermediate step, suppose $\psi_i \in \cD_{a,\alpha}(W^i)$ with $d_0(\psi_1,\psi_2)=0$.
  Then, defining $\overline W^1 = G_{W^1}(I_{W^1} \cap I_{W^2})$ and 
\[
\psi_1^* = \psi_1 \frac{\| G_{W^2}' \| \circ G_{W^1}^{-1}}{\| G_{W^1}' \| \circ G_{W^1}^{-1} } \, ,
\]
we have $\psi_1^* \in C^\alpha(\overline W^1)$ and $d_*(\psi^*_1, \psi_2)=0$.  
Remark that by \eqref{eq:G ratio}, we have $\left| \psi_1^* \big[ \fint_{\overline W^1} \psi_1^* \big]^{-1} \right|_{C^\alpha(\overline W^1)} \le D (2a+2)$.
Thus, letting $W^1_i$ denote the
at most two components of $W^1 \setminus \overline W^1$, 
\begin{equation}
\label{eq:u split}
\begin{split}
\left| \int_{W^1} g \psi_1 - \int_{W^2} g \psi_2 \right|  \le& |\psi_2|_\infty  \left| \frac{ \int_{\overline W^1} g \psi_1^*}{\fint_{\overline W^1} \psi_1^*} - \frac{ \int_{W^2} g \psi_2}{\fint_{W^2} \psi_2 } \right| \\
&+ \left| \frac{ \int_{\overline W^1} g \psi_1^* }{\fint_{\overline W^1} \psi_1^* }  \left( \fint_{\overline W^1} \psi_1^* - \fint_{W^2} \psi_2  \right) \right|
  + \left| \int_{\overline W^1} g (\psi_1 - \psi_1^*) \right|
+ \sum_{i=1}^2 \left| \int_{W^1_i} g \psi_1 \right| \\
 \le & |\psi_2|_{C^\alpha} C_3 (\| f \|_u + \| f \|_s) d_{\cW^s}(W^1, W^2)^\gamma \\
&+ |\overline W^1|^{1/p} C_2 \| f\|_s D(2a+2) 
\left| \fint_{\overline W^1} \psi_1^* - \fint_{W^2} \psi_2  \right| \\
& + |\overline W^1|^{1/p} C_2 \|f \|_s |\psi_1 - \psi_1^*|_{C^\beta(\overline W^1)} + \sum_{i=1}^2 |W^1_i|^{1/p} C_2 \| f\|_s |\psi_1|_{C^\beta(W^1)} \, ,
\end{split}
\end{equation}
where for the first term we have used \eqref{eq:star bound} and for the last three terms, we have used 
the definition of $\| g\|_s$ and \eqref{eq:stable bound}.

For the third term on the right side of \eqref{eq:u split}, \eqref{eq:G close} implies
\[
\begin{split}
|\psi_1 - \psi_1^*|_{C^\beta(\overline W^1)} & \le |\psi_1|_{C^\beta(W^1)} \left| 1 - \frac{\| G_{W^2}' \| \circ G_{W^1}^{-1}}{\| G_{W^1}' \| \circ G_{W^1}^{-1} } \right|_{C^\beta(\overline W^1)} \\
& \le |\psi_1 |_{C^\beta(W^1)}  2 B^\beta (1+C_s)^\beta d_{\cW^s}(W^1, W^2)^{1-\beta} \, .
\end{split}
\]
For the fourth term on the right side of \eqref{eq:u split}, we use $|W^1_i| \le C_s d_{\cW^s}(W^1, W^2)$.

It remains to estimate the difference between averages in the second term on the right side of \eqref{eq:u split}.
Using $\psi_1 \circ G_{W^1} = \psi_2 \circ G_{W^2}$
on $I_{W^1} \cap I_{W^2}$ (by definition of the distance $d_0(\cdot, \cdot))$ yields 
$\int_{\overline W^1} \psi_1^* = \int_{\overline W^2} \psi_2$, where $\overline W^2 = G_{W^2}(I_{W^1} \cap I_{W^2})$.  Thus,
\[
\begin{split}
\left| \fint_{\overline W^1} \psi_1^* - \fint_{W^2} \psi_2  \right|
& \le \left| \frac{1}{|\overline W^1|} - \frac{1}{|W_2|} \right| \int_{\overline W^2}  \psi_2
+ \frac{1}{|W_2|} \int_{W^2 \setminus \overline W^2} \psi_2 \\
& \le |\psi_2|_\infty \frac{|\overline W^2|}{|\overline W^1|} \frac{| |W^2| - |\overline W^1| |}{|W^2|} + 2C_s |\psi_2|_\infty \frac{d_{\cW^s}(W^1, W^2)}{|W^2|} \\
& \le |\psi_2|_\infty [C_s (1+C_s) + 2C_s ] d_{\cW^s}(W^1, W^2)^{1-p\gamma} ,
\end{split}
\]
where we have used $\frac{|\overline W^2|}{| \overline W^1|} \le C_s$,  \eqref{eq:useful} and the assumption \eqref{eq:long again}.
As before, $1-p\gamma \ge \gamma$ since we have assumed $\gamma \le 1/(p+1)$.

Gathering these estimates in \eqref{eq:u split} yields, since $1-\beta \ge \gamma$,
\begin{equation}
\label{eq:summary}
\begin{split}
d_{\cW^s}&(W^1, W^2)^{-\gamma}  \left| \int_{W^1} g \psi_1 - \int_{W^2} g \psi_2 \right| 
 \le  |\psi_2|_{C^\alpha} C_3 (\| f \|_u + \| f \|_s) \\
& + (2\delta)^{1/p} C_2 \| f\|_s D(2a+2) 
 |\psi_2|_{C^\alpha(W^2)} [3C_s + C_s^2 ] \\
& + |\overline W^1|^{1/p} C_2 \|f \|_s |\psi_1|_{C^\beta(W^1)} 2B^\beta(1+C_s)^\beta + 2 C_2 \| f\|_s |\psi_1|_{C^\beta(W^1)} \\
& =: C_4 \left[ \max \{ |\psi_1|_{C^\alpha(W^1)},  |\psi_2|_{C^\alpha(W^2)} \} \right]( \| f \|_u + \| f \|_s) \, ,
\end{split}
\end{equation}
for all $\psi_i \in \cD_{a,\alpha}(W^i)$ with $d_0(\psi_1,\psi_2)=0$.

It remains to generalize the estimate to all $\psi_i \in C^\alpha(W^i)$ with $|\psi_i|_{C^\alpha(W^i)} \le 1$
and $d_0(\psi_1, \psi_2)=0$.
Let $\psi_i\in C^\alpha(W_i,\bR)$, $|\psi_i|_{C^\alpha(W_i)}\leq 1$, such that $d_0(\psi_1,\psi_2)=0$. Letting $\psi_i^\pm$ be the positive and negative parts, respectively, we have $|\psi_i^\pm|_{C^\alpha(W_i)}\leq 1$ and  $d_0(\psi^\pm_1,\psi^\pm_2)=0$. Consequently, setting $\bar \psi_i^\pm:=\psi_i^\pm+a^{-1}\in\cD_{a,\beta}(W_i)$
using \eqref{eq:psi_to_cone}, we have
$d_0(\bar \psi_1^\pm, \bar \psi_2^\pm)=0$,  $|\bar \psi^\pm_i|_{C^\alpha(W_i)}\leq 1+a^{-1}$, and, recalling \eqref{eq:summary},
\[
\begin{split}
 \frac{\left| \int_{W^1} g \psi_1 - \int_{W^2} g \psi_2 \right|}{d_{\cW^s}(W^1, W^2)^{\gamma} }\leq &
\sum_{\sigma\in\{+,-\}} \frac{\left| \int_{W^1} g \bar\psi^\sigma_1 - \int_{W^2} g \bar\psi^\sigma_2 \right|}{d_{\cW^s}(W^1, W^2)^{\gamma} } 
\leq C_4(2+ 2 a^{-1})( \| f \|_u + \| f \|_s).
\end{split}
\]
The above implies
\[
\|g\|_u\leq C_4(2+2 a^{-1})( \| f \|_u + \| f \|_s),
\]
and completes the proof of the lemma.
\end{proof}

The two lemmas allow us to complete the proof of the theorem.

\begin{proof}[Proof of Theorem~\ref{thm:cone equiv}]
We first check the equivalence for $g\in C^1$. By the definition of 
$\|\cdot\|_*$ (and still choosing $\be = 1$) it follows that
\[
-\|g\|_*\preceq g\preceq \|g\|_*.
\]
Then Lemma \ref{lem:weak_order} implies $\upsilon \|g\|_{\cB}\leq \| \|g\|_* 1\|_{\cB}\leq \|g\|_* \| 1\|_{\cB} $. The reverse inequality follows from Lemma~\ref{lem:cone bound}.

Next, recall that we have defined $\cB$ as the completion of $C^1$ in the
$\| \cdot \|_{\cB}$ norm and that the cone is defined for functions in $\cA\supset \cC^1$ 
(c.f. \cite[Lemma~7.6]{dl cones}), hence the norm $\|\cdot\|_*$ is defined for functions in $C^1$.\footnote{Note that here, by $C^1$ we mean simply the set of continuously differentiable functions and not the Banach space with the $C^1$ norm.} 
Accordingly, the identity yields an isomorphism $\phi$ between the normed spaces $X_0:=(C^1,\|\cdot\|_\cB)$ and $X_1=(C^1,\|\cdot\|_*)$. The above discussion, in this language, is equivalent to the assertion that there exists $C_\sharp\geq 1 $ such that $\|\phi\|_{X_0\to X_1}\leq C_\sharp$ and $\|\phi^{-1}\|_{X_1\to X_0}\leq C_\sharp$. 

By density, such an isomorphism extends uniquely to an isomorphism $\Phi$ between the Banach spaces $\cB$ and $\cB_*$ (the completion of $X_1$) with the same bounds its norm. Using $\Phi$ to identify the two Banach spaces yields the equivalence of the norms.

More concretely, since the topology induced by the above norms is stronger than the topology of the space of distributions $(C^1)^*$, the above Banach spaces can be naturally identified as the elements of a subspace 
$\bV$ of the space of distributions. So $\cB=(\bV,\|\cdot\|_\cB)$ and $\cB_*=(\bV,\|\cdot\|_*)$ and the extension 
of $\Phi$ is simply the identity on $\bV$. This readily implies the equivalence of the two norms on $\bV$.
\end{proof}

\section{Decay of Correlations for the Billiard Flow}
\label{sec:flow}

The study of the decay of correlations for hyperbolic flows poses additional difficulties compared to hyperbolic maps. To appreciate such difficulties, consider that the decay of correlations for Anosov (or, more generally, Axiom A) maps was established in various works by Sinai, Ruelle, and Bowen in the late seventies, while the first results on exponential decay of correlations for flows started to appear 20 years later \cite{Do98}, for flows with $C^1$ foliations, followed almost ten years later by \cite{Li04} for contact flows, and, after almost another 20 years, by \cite{TZ23} for general three dimensional Anosov flows. The problem for general Anosov flows in higher dimensions is still open.

To clarify the problem, suppose that the stable and unstable distributions $E^s\oplus E^u$ are jointly integrable, and let $W$ be a manifold that is tangent to $E^s\oplus E^u$ at any point. Then we can use the manifold $W$ as a Poincaré section and examine the Poincaré map $F$ associated to the billiard flow $\Phi_t$ (see section \ref{sec:flowdef} for a precise definition). If the flow is hyperbolic, so is the Poincaré map.
Of course, such a map will have discontinuities, even if the flow is smooth, yet it is typically possible to prove that it enjoys exponential decay of correlations.
As usual, we can reconstruct the flow as a suspension over the Poincaré map. To do so, we only need the return time $\tau$, which gives the flow $\psi_t(x,s)=(x, t+s)$ on the space $Y=\{(x,s)\in W\times\bR_+\;:\; t\leq \tau(x)\}/\sim$ where the equivalence relation is given by $(x,\tau(x))\sim (F(x),0)$. 

Since the distributions $E^s$, $E^u$ are invariant, if they are jointly integrable, so is the manifold $W$. Accodingly, if two close-by points $x,y\in W$ return to $W$, and are not separated by a discontinuity of the map, they must return together. In other words, $\tau$ must be piecewise constant.\\

To see the potential implication of the above fact, consider the case in which all the return times are multiples of a time $\tau_0>0$. That is, for each $x\in W$, $\tau(x)/\tau_0\in\bN$. Then we can consider the set 
\[
A=\{(x,t)\in W\times \bR_+\;:\; t\in [n\tau_0,(n+1/4)\tau_0], \textrm{ for some }n\in\bN\}.
\]
Then, for each $m\in\bN$, $\psi_{m\tau_0}(A)=A$ while $\psi_{(m+1/2)\tau_0}(A)\cap A=\emptyset$. Thus $\psi_t(A)\cap A$ keeps oscillating, hence the system is not mixing.
The obvious objection is that it then suffices to check that there is no maximum common divisor. Unfortunately, Ruelle \cite{Ru83} presents an example of a suspension with a piecewise constant ceiling having only two values, with an irrational ratio, for which the decay of correlation can be arbitrarily slow.

One might think it should be easy to rule out the piecewise-constant ceiling case. However, suppose $\tau$ is cohomologous to a piecewise constant ceiling, that is, there exists $\alpha$ such that $\tau(x)=\tau_0(x)-\alpha\circ F(x)+\alpha(x)$, where $\tau_0$ is piecewise constant. Then we have two suspensions over $W$, one, $(Y, \psi_t)$ determined by the return time $\tau$, and the other $(Y_0, \psi^0_t)$ determined by the return time $\tau_0$. Define $\Gamma:Y_0\to Y$ by $\Gamma(x,s)=\psi_{\alpha(x)+s}(x,0)$. If, $s,t>0$, $s+t\leq \tau_0(x)$, then
\[
\Gamma(\psi^0_t(x,s))=\Gamma(x,s+t)=\psi_{s+t+\alpha(x)}(x,0)=\psi_{t}\Gamma(x,s).
\]
On the other hand if $\tau_0(x)<s+t\leq\tau_0(x)+ \tau_0(F(x))$, then
\[
\begin{split}
\Gamma(\psi^0_t(x,s))&=\Gamma(F(x),s+t-\tau_0(x))=\psi_{s+t+\alpha(F(x))-\tau_0(x)}(F(x),0)=\\
&=\psi_{s+t-\tau(x)+\alpha(x)}(F(x),0)=\psi_{s+t+\alpha(x)}(x,0)=\psi_t(\Gamma(x,s)).
\end{split}
\]
The above implies $\psi_t\circ \Gamma=\Gamma\circ \psi^0_t$, for all $t\in\bR$.
Consequently,
\[
\int_Y\oldh\circ \psi_t \cdot \oldf=\int_{Y_0} (\oldh\circ\Gamma) \circ \psi^0_t\cdot  (\oldf\circ \Gamma).
\]
Accordingly, provided $\alpha$ has enough regularity, the two flows have the same decay of correlations. This shows that the rate of decay depends in a non-obvious way on the ceiling function, and that establishing quantitative decay of correlations for flows is a different ball game compared to maps. 


\subsection{Billiards as a contact flow}

Early work of Hedlund \cite{hedlund} proved that geodesic flows on closed surfaces of constant negative curvature
are mixing, thereby demonstrating that certain geometric structures can prevent the above function $\tau$ from being cohomologous to a constant.  This was later extended to manifolds of variable negative curvature in two \cite{sinai mix} and higher dimensions \cite{eberlein}.  The contact structure of the geodesic flow proved to be the geometric structure that played the key role in the above arguments.

Leveraging on this fact and using fundamental insights contained in the work of Dolgopyat \cite{Do98}, Liverani \cite{Li04} established the exponential decay of correlations for smooth hyperbolic contact flows.

Billiards are a contact flow, preserving the one form $\omega = p dq$ in the notation of Section~\ref{sec:disp_bill}. Yet, when trying to apply the ideas in \cite{Do98,Li04} to billiards, one has the extra difficulties produced by the singularities of the flow.
Indeed, the flow is continuous, but the differential blows up at tangential collisions. It is then essential to combine the ideas coming from \cite{demers liverani,demzhang11,demzhang13,demzhang14} to treat singularities with the ones coming from \cite{Do98,Li04} to deal with flows. A first step, treating flows with discontinuities, was achieved in \cite{BL12}. However, the type of singularities allowed in \cite{BL12} did not include billiard systems. The last step was achieved in \cite{bdl}, where the following theorem was established.\footnote{In fact the result in \cite{bdl} is a bit more refined.}

\begin{thm}[{\cite[Theorem 1.2]{bdl}}]\label{thm:main_flow}
Let $\Phi_t:\Omega_0\to \Omega_0$ be a finite horizon
(two-dimensional) billiard flow associated to finitely many  scatterers
$B_i$ with $\cC^3$ boundaries of positive curvature and so that the $\overline B_i$
are pairwise disjoint. Then there exist a constant $\upsilon>0$, and a constant
 $C_1>0$, such that for any $\psi\in \cC^1(\Omega_0)$ and $\oldh \in \cC^2(\Omega_0)\cap \cC^0(\Omega)$ we have\footnote{For the definition of $\Omega,\Omega_0$ and $\Phi_t$ see the beginning of Section \ref{sec:disp_bill}.}
\[
\left|\int (\psi \circ \Phi_t) \, \oldh\, dm -
\int \psi dm\int \oldh \, dm \right|\le C_{1}|\oldh|_{\cC^2(\Omega_0)}|\psi|_{\cC^1(\Omega_0)} \cdot 
e ^{-\upsilon t} \, , \forall t \ge 0 \,  . 
\]
\end{thm}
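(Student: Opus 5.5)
Following the strategy of \cite{bdl} and the method of Liverani \cite{Li04}, the plan is to study the transfer operators $\cL_t f = f\circ\Phi_{-t}$ on an anisotropic Banach space $\cB$ of distributions adapted to the flow, and to pass from the semigroup to its generator $X$ via the resolvent
\[
R(z) = (z-X)^{-1} = \int_0^\infty e^{-zt}\cL_t\,dt, \qquad \Re z>0 .
\]
The norms will be defined as in Section~\ref{sec:Bt>0} (the case $t=1$), now integrating against H\"older test functions along two-dimensional \emph{weakly stable} surfaces: local stable curves thickened in the flow direction. To the stable and unstable norms I will add a \emph{neutral} part, which controls derivatives in the flow direction.

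\emph{Step 1 (Lasota--Yorke for the resolvent).} Lasota--Yorke inequalities for $\cL_t$ itself cannot hold uniformly: near times at which a stable surface crosses a singularity, the flow produces cutting that is not integrable in $t$. Integrating in time over the resolvent smooths this out. I will therefore prove, for $\Re z=a>0$ and all $n$,
\[
\|R(z)^n f\|_{\cB}\le C\big((a+\lambda)^{-n}+ a^{-n}\cdots\big)\|f\|_{\cB} + C a^{-n}|f|_w ,
\]
following the map estimates of Proposition~\ref{prop:LY}. The inputs are:
\begin{itemize}
\item the uniform hyperbolicity \eqref{eq:hyp_flow};
\item the one-step expansion Lemma~\ref{lem:one step} with $t=1$, which controls the cutting by singularities;
\item the compactness of the unit ball of $\cB$ in $\cB_w$.
\end{itemize}
Theorem~\ref{thm:hennion} then shows that $R(z)$ has essential spectral radius at most $(a+\lambda)^{-1}$. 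Consequently the spectrum of $X$ in $\{\Re z>-\lambda\}$ consists of isolated eigenvalues of finite multiplicity. Mixing of the flow (K-property, via ergodicity of $\mu_{SRB}$ and non-integrability) will exclude eigenvalues on the imaginary axis other than $0$.

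\emph{Step 2 (Dolgopyat estimate).} The main obstacle is a uniform bound for large imaginary part. I need to show that there exist $C,\gamma>0$ such that, for $z=a+ib$ with $|b|\ge b_0$ and $n\approx C\log|b|$,
\[
\|R(z)^{n}\|_{\cB}\le (a+\gamma)^{-n}.
\]
I will first try Liverani's contact argument. The weak-norm integral along a stable curve is decomposed into pieces of size $\sim|b|^{-1/2}$. Using the contact form $\omega=p\,dq$, the temporal distance between nearby weak-stable manifolds grows quadratically (a non-integrability of $E^s\oplus E^u$). This produces oscillatory cancellation in $\int e^{-ibt}\cL_t f\,dt$ when the integral is averaged over an unstable foliation.

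This is where Theorem~\ref{thm:app_fol_reg} is essential. It provides approximate unstable foliations of length $\rho$ with Lipschitz constant $C\rho^{-4/5}$ that remain uncut for time $\up$. The loss from the exceptional set has measure $C\rho$, and I will choose $\rho=|b|^{-\varpi}$ so that this loss is smaller than the gain from the oscillation. The delicate point is balancing the exponents $\rho^{-4/5}$, $\rho$ and $|b|^{-1/2}$ against the singularity cutting over $\log|b|$ iterations. I expect this balancing to be the hardest part of the proof, and to require the homogeneity-strip parameter $q=2$ appearing in Theorem~\ref{thm:app_fol_reg}.

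\emph{Step 3 (conclusion).} Steps 1 and 2 show that $X$ has no spectrum in a strip $\{-\upsilon'<\Re z<0\}$ apart from the simple eigenvalue $0$, and that $R(z)$ has polynomial bounds there. A standard Laplace inversion then gives the result. I will write $\int\psi\circ\Phi_t\,\oldh\,dm$ as a contour integral of $R(z)$ and shift the contour to $\Re z=-\upsilon$. Two smoothing steps are needed to make the contour integral converge:
\begin{itemize}
\item $\oldh\in\cC^2$ lies in $\cB$, with enough regularity in the flow direction to gain a factor $|z|^{-2}$;
\item $\psi\in\cC^1$ is a bounded functional on $\cB$.
\end{itemize}
This yields the bound $C_1|\oldh|_{\cC^2(\Omega_0)}|\psi|_{\cC^1(\Omega_0)}\, e^{-\upsilon t}$ asserted in Theorem~\ref{thm:main_flow}.
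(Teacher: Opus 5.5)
\textbf{The gap is in Step 1, and it concerns the central idea of the resolvent method.}

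You attribute the failure of a Lasota--Yorke inequality for $\cL_t$ to singularity cutting that is ``not integrable in $t$''. That is not the obstruction. Proposition~\ref{prop:L_t} controls the stable and unstable norms for every fixed $t$, singularities included, by way of the one-step expansion. What fails is the flow direction, which has no hyperbolicity:
\begin{itemize}
\item the neutral norm only satisfies $\|\cL_t g\|_0\le C\|g\|_0$;
\item the unstable bound \eqref{eq:strong unstable L} picks up a term $C\|g\|_0$.
\end{itemize}
Integrating in time does not help by itself. Indeed $\|R(z)^n g\|_0\le \int_0^\infty \frac{t^{n-1}}{(n-1)!}e^{-at}\|\cL_t g\|_0\,dt\le Ca^{-n}\|g\|_0$. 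This is exactly the ``$a^{-n}\cdots$'' term multiplying $\|f\|_{\cB}$ in your inequality. With such a term, Hennion's theorem bounds the essential spectral radius only by $a^{-1}$, the same as the spectral radius bound. So you get no quasi-compactness and no isolated eigenvalues in a strip.

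The missing step is the generator identity $XR(z)=zR(z)-\Id$, which amounts to integration by parts in $t$. Since $\|g\|_0=|Xg|_w$, it gives
\[
\|R(z)^n g\|_0\le |z|\,|R(z)^n g|_w+|R(z)^{n-1}g|_w\le Ca^{-n}(1+|z|)\,|g|_w .
\]
This moves the neutral part onto the weak norm at the cost of a factor $|z|$. After iteration, the $C\|g\|_0$ terms in the unstable estimate then become harmless. The same factor is the source of $(1+|z|)$ in \eqref{eq:strong stable R}, and it is why the Dolgopyat bound must use $n(b)\sim c\ln|b|$ iterates.

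A related point concerns your choice of integration sets. The paper integrates over one-dimensional stable curves in the kernel of the contact form, and curves displaced along the flow are declared to be at infinite distance. You integrate over weak-stable surfaces instead. On such surfaces the H\"older constants of test functions are not contracted along the flow, so the strong-stable estimate would not hold as in Proposition~\ref{prop:L_t}.

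\textbf{Step 3 takes a different route from the paper, and as stated it is not justified.}

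The paper notes that a gap for $X$ does not yield one for $\cL_t$, because the weak spectral mapping theorem fails. It therefore uses Butterley's theorem: $\cL_t=P_t+e^{\hat X t}$ with $|P_t g|_w\le Ce^{-\upsilon t}\|g\|_{\cB}$ for all $g\in\cB$. No contour bounds and no extra smoothness are needed.

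Your contour shift runs into a divergence:
\begin{itemize}
\item The Neumann series built from the Dolgopyat estimate only gives polynomial growth in the strip. It is of order $|b|^{\kappa}\ln|b|$, with $\kappa>0$, for $\|R(z)\|_{\cB\to\cB_w}$, and at least $|b|$ in $\cB$.
\item $g\in\cC^2$ gives $Xg\in\cC^1\subset\cB$, hence only one factor $|z|^{-1}$. The function $X^2g$ is merely continuous and is not in $\cB$, so the claimed $|z|^{-2}$ is not available.
\item The shifted integral, of size $\int |b|^{\kappa-1}\,db$, therefore diverges.
\end{itemize}
To close this route you would need an additional time-mollification argument that exploits the Lipschitz regularity in $t$ of the correlation function, at the price of a smaller rate. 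Alternatively, use Butterley's result as the paper does.
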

In what follows, we will review the important ideas and constructions necessary for the completion of such an 
analysis.  A more complete exposition with details in the smooth hyperbolic case can be found in
\cite[Chapter 5]{DKL}.

\subsection{Definition of norms and Banach spaces}

The basic idea to prove the above theorem is to look at the semiflow $\cL_t\oldh= \oldh\circ \Phi_{-t}$, where $\Phi_t $ is the billiard flow, on a Banach space $\cB$ equipped with an appropriate norm. The norm is quite similar to the ones in section \ref{sec:geo banach}, but there are some important differences stemming from the fact that now the phase space has one dimension more. In particular, we still consider a set of stable curves that we call $\cW^s$; these are homogeneous stable manifolds and, thanks to the contact structure, we can choose them perpendicular to the flow direction (see \eqref{eq:contact}). Hence, their projection to the Poincaré sections would correspond to the curves $\widehat\cW_{\bH}$ of the section \ref{sec:Bt>0}. We can define the unstable curves similarly. We then have to define a notion of distance. The basic idea is that we want to call close curves that, when it is iterated backward in time, have parts that get closer. Clearly, this does not happen if two curves are displaced along the flow direction, hence if there is no unstable manifold that connects them, we consider their distance infinite. If the two curves are connected by an unstable manifold, then we project them on the Poincarè section and essentially measure their distance as in section \ref{sec:Bt>0}. The distance between test functions supported on two close stable curves is defined as in \eqref{eq:psi_dist}, using the natural projection
under the flow of these curves onto $M$, the phase space of the billiard map. In fact, there is a slight technical difference with \eqref{eq:psi_dist}, but it is irrelevant and, for the sake of simplicity, we avoid stating the precise definition.

Fix $0<\alpha \le 1/3$.  For $\oldh \in \cC^1(\Omega_0)$, define the
weak norm of $\oldh$ by
\[
|\oldh|_w = \sup_{W \in \cW^s} \sup_{\substack{\psi \in \cC^\alpha(W) \\ |\psi|_{\cC^\alpha(W)} \le 1}}
\int_W \oldh \psi \, dm_W \, ,
\]
where $dm_W$ denotes arclength along $W$.
Next, choose $1 < \oldbetaparam < \infty$ and
$0<\oldq <  \min \{ \alpha, 1/\oldbetaparam,  1- \oldbeta \}$. 
For $\oldh \in \cC^1(\Omega_0)$,
define the strong stable norm of $\oldh$ by
\[
\| \oldh \|_s = \sup_{W \in \cW^s} \sup_{\substack{\psi \in \cC^\oldq(W) \\
|W|^{\oldbeta} |\psi|_{\cC^\oldq(W)} \le 1}}
\int_W \oldh \psi \, dm_W \, .
\]
Choose $0<\gamma \le \min \{ \oldbeta , \alpha-\oldq \} < 1/3$.  Define the unstable norm
of $\oldh$ by\footnote{ In previous works, $d(\psi_1,\psi_2) =0$ was replaced by
$d_\oldq(\psi_1,\psi_2) \le \ve$ where $\oldq > 0$
and the distance used the $\cC^\oldq$ instead of the $\cC^0$ norm. The two formulations are equivalent by the triangle
inequality, using the strong stable norm, and since
$d(\psi_1,\psi_2)=0$ implies $d_\oldq(\psi_1,\psi_2)=0$.}
\[
\| \oldh \|_u = \sup_{0<\ve<\ve_0} \sup_{\substack{W_1, W_2 \in \cW^s \\ d_{\cW^s}(W_1,W_2) \le \ve}} 
\sup_{\substack{\psi_i \in \cC^\alpha(W_i) \\ |\psi_i|_{\cC^\alpha(W_i)} \le 1 \\ d(\psi_1,\psi_2) =0}}
\ve^{-\gamma} \left| \int_{W_1} \oldh \psi_1 \, dm_{W_1} - \int_{W_2} \oldh \psi_2 \, dm_{W_2} \right|  \, ,
\]
for some $\ve_0>0$ small enough. The reader can check that these are the same definitions as \eqref{eq:weak}, \eqref{eq:stable t>0}, and \eqref{eq:unstable t>0}.
However, the attentive reader will have noticed that the above norm does not allow for comparing integrals along stable curves that are displaced along the foliation direction.
To this end, we define a neutral norm by
\[
\| \oldh \|_0 = 
\sup_{W \in \cW^s}
 \sup_{\substack{\psi \in \cC^\alpha(W) \\ |\psi|_{\cC^\alpha(W)} \le 1}}
\int_W  \partial_t (\oldh \circ \Phi_t)|_{t=0} \, \psi  \, dm_W   \,  .
\]
Finally, define the strong norm of $\oldh$ by 
\[
\| \oldh \|_\cB = \|\oldh\|_s + \| \oldh \|_u + \|\oldh \|_0 \, .
\]
Not surprisingly, arguing exactly as in the case of the Poincar\'e map, one obtains:
\begin{prop}{\cite[Proposition 4.1]{bdl}}
\label{prop:L_t} There exist $C>0$ and $\Lambda\in (0,1)$ such that for all $\oldh \in  \cB$ and $t > 0$,
\begin{eqnarray}
|\cL_t \oldh|_w & \le & C |f|_w \, , \label{eq:weak L} \\
\| \cL_t \oldh \|_s & \le & C  \Lambda^{-\oldq t} \| \oldh \|_s 
+ C  |\oldh|_w \, ,  \label{eq:strong stable L}  \\
\| \cL_t \oldh \|_0 & \le & C \| \oldh \|_0 \, ,   \label{eq:neutral L}\\
\| \cL_t \oldh \|_u & \le & C t^\gamma \Lambda^{-\gamma t} \| \oldh \|_u + C  \| \oldh \|_0 
+ C  \| \oldh \|_s \, .
\label{eq:strong unstable L} 
\end{eqnarray}
\end{prop}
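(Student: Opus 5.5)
The plan is to exploit that $\cL_t \oldh = \oldh\circ\Phi_{-t}$ has no weight, so that $\int_W \cL_t\oldh\,\psi\,dm_W = \int_{\Phi_{-t}W} \oldh\,\psi\circ\Phi_t\, J_{W}\Phi_t\, dm$, where $J_W\Phi_t$ is the stable Jacobian of $\Phi_t$ along $\Phi_{-t}W$. Since the curves in $\cW^s$ are homogeneous stable manifolds transverse to the flow, $\Phi_{-t}W$ decomposes into a countable family $\cG_t(W)$ of elements of $\cW^s$ (after subdividing long pieces and cutting at singularities and homogeneity strip boundaries). All the estimates then reduce to those for the billiard map $T$ along these pieces, using the uniform contraction $J_{W_i}\Phi_t\le C\Lambda^{-t}$, bounded distortion, and the fact that the flow between collisions is an isometry on the stable curves projected to the section. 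I would first establish the flow analog of Lemma~\ref{lem:one step} and the growth bound $\sum_{W_i\in\cG_t(W)}|J_{W_i}\Phi_t|_{C^0}\le C$ (this is the $t=1$ geometric case, where $\cQ_n(1)$ is bounded), together with the sum over short pieces weighted by $(|W_i|/|W|)^{1/q}$ being bounded by $C$ uniformly in $t$.

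For the weak norm, the bound \eqref{eq:weak L} follows because $|\psi\circ\Phi_t J_{W_i}\Phi_t|_{C^\alpha(W_i)}\le C|J_{W_i}\Phi_t|_{C^0}|\psi|_{C^\alpha}$ (contraction along stable curves only improves Hölder constants) combined with the growth bound. For \eqref{eq:strong stable L}, I would split $\psi = \bar\psi + (\psi-\bar\psi)$ on each $W_i$, with $\bar\psi$ the average over $W_i$ of $\psi\circ\Phi_t$. The oscillation part has $C^\oldq$ norm at most $C\Lambda^{-\oldq t}$ times that of $\psi$, which gives the contracting term after combining with $|W_i|^{1/q}/|W|^{1/q}$. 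The average part is controlled by the weak norm, since a constant has unit $C^\alpha$ norm. For \eqref{eq:neutral L}, note that $\partial_s(\cL_t \oldh\circ\Phi_s)|_{s=0} = \cL_t(\partial_s \oldh\circ\Phi_s|_{s=0})$ because the flow commutes with itself, so the weak-norm argument applies verbatim.

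The main obstacle is \eqref{eq:strong unstable L}. Given $W^1,W^2$ at distance $\ve$ connected by an unstable curve, I would pair the pieces of $\Phi_{-t}W^1$ and $\Phi_{-t}W^2$ along the unstable foliation, as in the map case. Matched pairs end up at distance $\le Ct\Lambda^{-t}\ve$, which produces the $t^\gamma\Lambda^{-\gamma t}\|\oldh\|_u$ term. Unmatched pieces have length $\lesssim\ve$ and are bounded by $\|\oldh\|_s$, using the $|W|^{1/q}$ weight and $\gamma\le 1/q$. The genuinely new difficulty is that matched pieces need not lie in the same section: the backward images of the two curves can be displaced along the flow direction by a time shift of order $\ve$, since the return times differ. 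To handle this, I would write $\int_{W_i^2}\oldh\psi = \int_{\Phi_{s}\tilde W_i^2}\oldh\psi$, with $\tilde W_i^2$ aligned to $W_i^1$ and $|s|\lesssim\ve$, and apply the fundamental theorem of calculus in $s$. This bounds the discrepancy by $|s|\,\|\oldh\|_0$, which gives the $C\|\oldh\|_0$ term. The test-function mismatch from differing Jacobians is absorbed into $\|\oldh\|_s$ via Hölder estimates, as in the proof of Lemma~\ref{lem:cone bound}.
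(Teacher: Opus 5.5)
Your overall route matches what the paper has in mind; it gives no proof, only says the argument goes ``exactly as in the case of the Poincar\'e map'' and cites \cite{bdl}. That route is: expand $\int_W \cL_t g\,\psi\,dm_W$ over the homogeneous stable pieces $W_i$ of $\Phi_{-t}W$; import the map-case growth and distortion bounds; get \eqref{eq:neutral L} from $\partial_s(\cL_t g\circ\Phi_s)|_{s=0}=\cL_t\big(\partial_s(g\circ\Phi_s)|_{s=0}\big)$; and charge the flow-direction offset of matched pieces to $\|g\|_0$. However, one step fails as written.

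\textbf{The gap.} In the strong stable estimate you control the average part $\sum_i \bar\psi_i\int_{W_i} g\,J_{W_i}\Phi_t$ by the weak norm. The only bound on $\bar\psi_i$ is $|\bar\psi_i|\le|\psi|_\infty\le|W|^{-1/q}$, so this yields $C|W|^{-1/q}|g|_w\sum_i|J_{W_i}\Phi_t|_{C^0}$. That constant is not uniform in $W$. Take $t$ fixed and $W$ very short and away from the singularities: then $\Phi_{-t}W$ is a single short curve with $|J_{W_1}\Phi_t|\ge c(t)>0$, and the constant is of order $c(t)|W|^{-1/q}\to\infty$. The weak norm cannot see that short curves carry little mass, so \eqref{eq:strong stable L} does not follow from what you wrote.

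\textbf{The fix.} What is missing is the short/long bookkeeping of the map case, which is the source of the $\theta^{(t-1/p)n}$ term in Proposition~\ref{prop:LY}.
\begin{itemize}
\item For pieces all of whose ancestors (at, say, integer times) were shorter than $\delta_1/3$, estimate the average term with $\|g\|_s$ instead. By bounded distortion and H\"older's inequality, $\sum_i (|W_i|/|W|)^{1/q}|J_{W_i}\Phi_t|_{C^0}\le C\big(\sum_i|J_{W_i}\Phi_t|_{C^0}\big)^{1-1/q}$. The one-step expansion makes the sum over always-short pieces decay at an exponential rate as close to the hyperbolicity rate as you like (enlarge $k_0$). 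This gives a contribution at most $C\Lambda^{-\beta t}\|g\|_s$ precisely because $\beta<1-1/q$, which is why that constraint appears in the choice of parameters.
\item For the remaining pieces, group them by their most recent long ancestor $V$, at time $s$, and use the weak norm. Their contribution is at most $C|W|^{-1/q}|g|_w\sum_{s\le t}\theta^{t-s}\sum_V|J_V\Phi_s|_{C^0}\le C\delta_1^{-1}|W|^{1-1/q}|g|_w$, uniformly in $t$.
\end{itemize}

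\textbf{Two smaller corrections in the unstable estimate.}
\begin{itemize}
\item The flow-direction offset between matched pieces is not generated by differing return times. It is the $O(\ve)$ offset, including the contact twist, already permitted by $d_{\cW^s}(W^1,W^2)\le\ve$. Since $\Phi_{-t}$ commutes with $\Phi_s$, it transports this offset without contracting it. That is why the offset stays $O(\ve)$ for all $t$ and must be charged to the non-contracting $\|g\|_0$, rather than being absorbed into the $t^\gamma\Lambda^{-\gamma t}$ term.
\item For unmatched pieces, it is their images in $W^\ell$, not the pieces themselves, that have length $O(\ve)$. H\"older's inequality combined with the growth lemma then gives $C\ve^{1/q}\|g\|_s$, which is where $\gamma\le 1/q$ enters.
\end{itemize}
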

\begin{lem}{\cite[Lemma 3.10]{bdl}}
\label{lem:compact}
The unit ball of $\cB$  is compactly embedded in $\cB_w$.
\end{lem}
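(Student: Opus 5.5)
The plan is a standard compactness argument in the spirit of Proposition~\ref{prop:LY}-(b), adapted to the flow phase space. I fix a bounded set $\{\oldh_n\}$ in the unit ball of $\cB$ and aim to extract a subsequence that is Cauchy in $|\cdot|_w$. Since $\cB_w$ is complete, this suffices.

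The first step is to make the family of stable curves effectively finite. For a small scale $\ve>0$, I use the compactness of $\Omega_0$ together with the uniform bounds on curvature and slope of curves in $\cW^s$ to choose finitely many curves $W_1,\dots,W_{N(\ve)}\in\cW^s$. They are chosen so that every $W\in\cW^s$ of length at least $\ve$ is either within $d_{\cW^s}$-distance $\ve$ of some $W_j$, or is a translate of such a $W_j$ by the flow over a time at most $\ve$. Short curves, with $|W|<\ve$, are handled separately by the strong stable norm: for $|\psi|_{\cC^\alpha}\le 1$,
\[
\Big|\int_W \oldh\psi\Big|\le |W|^{\oldbeta}\|\oldh\|_s\le \ve^{\oldbeta}\|\oldh\|_s .
\]

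For long curves I will compare $W$ with a reference curve in two stages. First, the displacement along the flow is controlled by the neutral norm. Write $W'=\Phi_{s}W$ with $|s|\le\ve$ and transport the test function. The difference of the integrals equals $\int_0^s$ of an integral of $\partial_t(\oldh\circ\Phi_t)$ along intermediate curves, plus the distortion in arclength and in the test function. This gives a bound $C\ve(\|\oldh\|_0+|\oldh|_w)$; checking that the intermediate curves remain in $\cW^s$ is the one point where the contact structure is used, since it keeps them perpendicular to the flow. Second, after projecting to the Poincar\'e section, I compare with $W_j$ through the unstable norm. This is done as in the proof of Lemma~\ref{lem:cone bound}, after extending $\psi$ to a test function $\psi_j$ on $W_j$ with $d(\psi,\psi_j)=0$ and $|\psi_j|_{\cC^\alpha}\le C$. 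The resulting error is $C\ve^\gamma\|\oldh\|_u$, plus the contribution of the non-overlapping end pieces, which by the strong stable norm is $C\ve^{\oldbeta}\|\oldh\|_s$.

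It then remains to handle the test functions on each fixed $W_j$. The unit ball of $\cC^\alpha(W_j)$ is compact in $\cC^\oldq(W_j)$, so I pick a finite $\ve$-net $\{\psi_{j,k}\}$ in the $\cC^\oldq$ norm. For any $\psi$ in the unit ball, $|\int_{W_j}\oldh(\psi-\psi_{j,k})|\le C\ve\|\oldh\|_s$. Hence
\[
|\oldh|_w\le \max_{j,k}\Big|\int_{W_j}\oldh\,\psi_{j,k}\Big|+C\ve^{\gamma}\|\oldh\|_\cB .
\]
The finitely many linear functionals $\oldh\mapsto\int_{W_j}\oldh\psi_{j,k}$ are bounded on the sequence, so a subsequence makes all of them converge. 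A diagonal argument over $\ve=1/m$ then yields a subsequence that is Cauchy in $|\cdot|_w$.

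The main obstacle is the approximation step for long curves. One has to show that an arbitrary $W\in\cW^s$ is reachable from the finite net using only the two allowed moves, a short flow translation and a $d_{\cW^s}$-small move in the section, with test functions transported with uniformly bounded $\cC^\alpha$ norms. Homogeneity-strip constraints complicate this, since $d_{\cW^s}=\infty$ across strips, so the net must be chosen strip by strip. Near grazing collisions there are infinitely many strips; I would first try to dispose of the high strips by the short-curve estimate, since the curves $W\in\cW^s$ in strips $H_k$ with $k$ large have length $O(k^{-q-1})$ in the relevant projected sense. If that fails, I would truncate at a strip index depending on $\ve$.
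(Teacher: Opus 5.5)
Your proposal is correct and follows essentially the route of \cite{bdl}. It is the finite-net scheme behind Proposition~\ref{prop:LY}-(b): short curves are controlled by $\|\cdot\|_s$, $d_{\cW^s}$-small transverse moves by $\|\cdot\|_u$, test functions by the compact embedding $\cC^\alpha(W_j)\hookrightarrow\cC^{\oldq}(W_j)$, and the flow-direction displacement (which $d_{\cW^s}$ does not see) by the neutral norm $\|\cdot\|_0$. The homogeneity issue resolves as you suggest, since only finitely many strips contain curves of length $\ge\ve$; in the flow step, choose the net so that the translation runs backward in time, since the backward flow is what keeps the intermediate curves in the stable cone.
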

Unfortunately, due to the presence of the neutral norm, which does not contract, we do not have a proper Lasota-Yorke inequality, and hence we cannot apply the Hennion-Neusbaum theory (see \cite[Appendix B]{DKL} for details). To overcome this problem, we need an extra idea: to look at the {\em resolvent operator}. 


\subsection{The generator and the resolvent}

To proceed, we need the following fact.

\begin{lem}[{\cite[Corollary 4.2]{bdl}}]
The operator
$\cL_t$ is  continuous  on
$\cB$ for any $t\ge 0$.
\end{lem}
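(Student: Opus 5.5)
The claim is that $\cL_t$ is a bounded operator on $\cB$ for each fixed $t\ge 0$. The plan is to show that for $\oldh\in\cC^1(\Omega_0)$ (dense in $\cB$), the quantity $\|\cL_t \oldh\|_\cB$ is bounded by $C_t\|\oldh\|_\cB$, and then extend by density. The bounds of Proposition~\ref{prop:L_t} already give exactly this on the dense set:
\begin{itemize}
\item \eqref{eq:weak L} and \eqref{eq:strong stable L} control $\|\cL_t \oldh\|_s$ by $C\|\oldh\|_s + C|\oldh|_w\le C'\|\oldh\|_s$, since $|\cdot|_w\le C\|\cdot\|_s$;
\item \eqref{eq:neutral L} controls the neutral norm;
\item \eqref{eq:strong unstable L} controls $\|\cL_t \oldh\|_u$ by $C(1+t^\gamma)\|\oldh\|_\cB$.
\end{itemize}
Summing, $\|\cL_t \oldh\|_\cB\le C(1+t^\gamma)\|\oldh\|_\cB$ with $C$ independent of $\oldh$.

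The subtle point is that $\cB$ is defined as a completion, so one must check that $\cL_t$ maps the dense class into something it makes sense to measure. For $\oldh\in\cC^1(\Omega_0)$, the function $\oldh\circ\Phi_{-t}$ is only piecewise $\cC^1$, since $\Phi_{-t}$ is discontinuous in derivative along grazing singularities. So I would argue as in the map case (cf.\ \cite{demzhang11,BD2}): first show that piecewise $\cC^1$ functions whose discontinuities lie on (pre)images of singularity curves, which are transverse to stable curves, belong to $\cB$. The way I would do this is to approximate such a function by $\cC^1$ functions, mollifying away from a $\ve$-neighborhood of the singularity set. The error in $\|\cdot\|_s$ would be controlled by the $|W|^{1/q}$ weight on the short stable pieces crossing that neighborhood, and the error in $\|\cdot\|_u$ by interpolation between the $\ve$-smallness and the $\gamma$-Hölder requirement. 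Alternatively, and more simply, the Proposition~\ref{prop:L_t} estimates are proved by changing variables on each $W$, so they hold for $\cC^1$ data directly. Hence $\cL_t$ restricted to $\cC^1$ is a bounded map into $(\cB_w)$-limits of $\cB$-bounded sequences, and lower semicontinuity of the norms under weak convergence, with the same test functions, places $\cL_t\oldh$ in $\cB$.

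With boundedness on a dense subset and values in $\cB$, $\cL_t$ extends uniquely to a bounded linear operator on $\cB$. This extension agrees with the action on distributions, because convergence in $\cB$ implies convergence in $(\cC^1)^*$, and $\oldh\mapsto \oldh\circ\Phi_{-t}$ is continuous there: Lebesgue measure is invariant, and $\Phi_{t}$ preserves $\cC^0(\Omega)$-type test functions up to the singularities, which have measure zero. The main obstacle I expect is the membership step, showing $\cL_t\cC^1\subset\cB$ given the singularities. For that I would first try the approximation argument above, with the lengths of stable curves cut by singularities handled via the strong stable weight.
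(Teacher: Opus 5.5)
Your core step is exactly how the statement arises: you sum the four bounds of Proposition~\ref{prop:L_t} to get $\|\cL_t g\|_\cB\le C(1+t^\gamma)\|g\|_\cB$ on the dense class, then extend by density. The survey gives no separate proof, and in \cite{bdl} the statement is a corollary of the bounds quoted here as Proposition~\ref{prop:L_t}. The one point that needs more is the one you isolate: that $\cL_t$ maps the dense class into $\cB$.

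There, the ``more simple'' alternative you offer does not work. Knowing that $\cL_t g$ is a $|\cdot|_w$-limit of a $\|\cdot\|_\cB$-bounded sequence, plus lower semicontinuity, only shows that the quantities defining $\|\cdot\|_s$, $\|\cdot\|_u$, $\|\cdot\|_0$ are finite at $\cL_t g$. It does not place $\cL_t g$ in the completion $\cB$. Compare the H\"older case: a $C^\gamma$-bounded limit of smooth functions need not lie in the little H\"older space, the closure of smooth functions in the $C^\gamma$ norm. $\|\cdot\|_u$ has exactly this H\"older-type $\ve^{-\gamma}$ structure. The survey makes this very point in the sketch of Proposition~\ref{prop:limit point}: the limit $\nu$ inherits $\|\nu\|_u\le C$ but is ``not necessarily an element of $\cB$.''

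So membership must come from a genuine strong-norm approximation, i.e.\ your first (mollification) route, which is the right kind of argument. As sketched, though, it controls only $\|\cdot\|_s$ and $\|\cdot\|_u$. For the flow, the strong norm also contains the neutral part $\|\cdot\|_0$, which is the weak norm of the flow derivative $X$. Any approximants $f_k$ must therefore also satisfy $|Xf_k-X(\cL_t g)|_w\to 0$. Here $X(\cL_t g)=\cL_t(Xg)$, and two irregularities matter:
\begin{itemize}
\item $Xg$ is in general discontinuous across the collision section, since incoming and outgoing derivatives differ;
\item $\nabla(\cL_t g)$ blows up near trajectories that grazed a scatterer within time $t$.
\end{itemize}
The mollification error is small in the weak norm, uniformly over $W$, only because these irregularities lie on surfaces transverse to the stable curves. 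That transversality has to be checked; the weight $|W|^{1/q}$ on short pieces does not handle it.

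Separately, your consistency remark is not right as stated, though it is not needed for the statement. For $\psi\in\cC^1$ one has $\psi\circ\Phi_t\notin\cC^1$, so $g\mapsto g\circ\Phi_{-t}$ is not continuous on $(\cC^1)^*$.
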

By general semigroup theory, the above implies that the semigroup $\cL_t$ has a generator $X$ with dense domain. Let $\rho(X)$ be the resolvent of $X$ and $\sigma(X)=\bC\setminus \rho(X)$ be the spectrum of $X$. Then, for each $z\in \rho(X)$ the resolvent operator is given by
\[
R(z):=(z\Id-X)^{-1}.
\]
Note that, for $\Re(z)>0$ we have the representation\footnote{Just multiply the representation by $z\Id-X$, recall that, by definition, $\frac{d}{dt}\cL_t h=X\cL_t h$ for all $h\in \operatorname{Dom}(X)$, and integrate by parts to verify the formula.}
\begin{equation}\label{eq:resolvent}
R(z) \, g =\int_0^\infty e^{-zt} \cL_t g dt
\end{equation}
which is convergent by Proposition \ref{prop:L_t}. The relevance of the formula \eqref{eq:resolvent} is that it contains an integral in the flow direction, exactly the direction that was not possible to control for $\cL_t$. Thanks to this, one can obtain a proper Lasota-Yorke inequality for $R(z)$.
\begin{prop}
[{\cite[Proposition 5.1]{bdl}}]
\label{prop:LY R}
There exists $C\ge 1$ and $\tilde\lambda\in (0,1)$, such that for all $z \in \mathbb{C}$ with $\Re(z) = a >0$, all
$\oldh \in \cB$ and all $n \ge 0$,
\begin{eqnarray}
|R(z)^n \oldh|_w & \le & C a^{-n} |\oldh|_w   \label{eq:weak R} \\
\| R(z)^n \oldh \|_{\cB} & \le & C(a - \ln {\tilde \lambda})^{-n} \| \oldh \|_{\cB} + C a^{-n}(1+|z|) |\oldh|_w   \label{eq:strong stable R} .
\end{eqnarray}
\end{prop}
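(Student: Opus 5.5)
We will work from the resolvent representation \eqref{eq:resolvent}. Iterating it gives
\[
R(z)^n = \frac{1}{(n-1)!}\int_0^\infty t^{n-1} e^{-zt}\cL_t\, dt \qquad (\Re z = a > 0).
\]
This follows by induction from the resolvent identity, or equivalently by differentiating $R(z)$ in $z$. The weak bound \eqref{eq:weak R} is then immediate from \eqref{eq:weak L}:
\[
|R(z)^n \oldh|_w \le \frac{C}{(n-1)!}\int_0^\infty t^{n-1}e^{-at}\,dt\, |\oldh|_w = C a^{-n}|\oldh|_w .
\]
For the strong norm we estimate the three components $\|\cdot\|_s$, $\|\cdot\|_u$, $\|\cdot\|_0$ separately and then add them.

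\textbf{Stable component.} Inserting \eqref{eq:strong stable L} into the integral, the contracting term produces
\[
\frac{C}{(n-1)!}\int_0^\infty t^{n-1}e^{-(a+\oldq\ln\Lambda)t}\,dt = C\,(a+\oldq\ln\Lambda)^{-n},
\]
where $\ln\Lambda>0$ is the hyperbolicity rate in \eqref{eq:strong stable L}. The remainder term produces $Ca^{-n}|\oldh|_w$.

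\textbf{Unstable component.} Here the factor $t^\gamma$ in \eqref{eq:strong unstable L} is absorbed by slightly lowering the rate. This gives a term of the form $C(a-\ln\tilde\lambda)^{-n}\|\oldh\|_u$, together with error terms $Ca^{-n}(\|\oldh\|_0+\|\oldh\|_s)$. The $\|\oldh\|_s$ error is harmless. The $\|\oldh\|_0$ error is not, because the neutral norm does not contract.

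\textbf{Neutral component (main obstacle).} This is where the integral along the flow is used. Because $\partial_t(\cL_t\oldh) = -\tfrac{d}{ds}\oldh\circ\Phi_{-s}$, integration by parts in \eqref{eq:resolvent} gives
\[
X R(z) = zR(z) - \Id,
\]
so $\partial_t$ of $R(z)\oldh$ along the flow is controlled by
\[
\|R(z)\oldh\|_0 \lesssim |z R(z)\oldh - \oldh|_w .
\]
More carefully, the quantity $\|\cdot\|_0$ is defined through integrals of the flow derivative against $C^\alpha$ test functions on stable curves, which is a weak-type quantity. I would show
\[
\|R(z)^n\oldh\|_0 \le C(1+|z|)\,|R(z)^{n-1}\oldh|_w \le C(1+|z|)\,a^{-(n-1)}|\oldh|_w ,
\]
using the identity above for one of the $n$ factors and the weak bound for the rest. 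This is the source of the $(1+|z|)$ factor in \eqref{eq:strong stable R}. The delicate point is justifying the identity on $\cB$ (density of $\cC^1$ and continuity of $\cL_t$).

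\textbf{Recombining.} To handle the $\|\cdot\|_0$ errors that feed into the unstable estimate, I would split $R(z)^n = R(z)^{n-k}R(z)^k$ and apply the neutral bound to the inner factor, so that $\|\oldh\|_0$ never appears on the right. Choosing constants suitably, or giving the stable and unstable parts different weights in the norm as with $c_u$ earlier, yields \eqref{eq:strong stable R} with $\tilde\lambda<1$.
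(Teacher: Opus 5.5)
Your architecture is the right one: iterate the resolvent integral, insert the flow inequalities termwise, and control $\|R(z)^n\oldh\|_0=|XR(z)^n\oldh|_w$ through $XR(z)=zR(z)-\Id$, which is where the factor $(1+|z|)$ comes from. There is a small slip in that step. Since $|zR(z)g|_w\le C|z|a^{-1}|g|_w$, the correct bound is $\|R(z)^n\oldh\|_0\le C(|z|+a)a^{-n}|\oldh|_w$, not $C(1+|z|)a^{-(n-1)}|\oldh|_w$; it is still of the required form because $a\le|z|$.

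\textbf{The gap is the recombination}, exactly at the obstacle you flag. Splitting $R(z)^n=R(z)^{n-k}R(z)^k$ does not remove $\|\oldh\|_0$.
\begin{itemize}
\item The outer unstable estimate needs $\|R(z)^k\oldh\|_u$, and by your own bound this carries $Ca^{-k}(\|\oldh\|_0+\|\oldh\|_s)$.
\item Likewise, the outer error $Ca^{-(n-k)}\|R(z)^k\oldh\|_s$ carries $a^{-(n-k)}(a+\nu)^{-k}\|\oldh\|_s$.
\item Optimizing over $k$, the coefficient of $\|\oldh\|_\cB$ is at best of order $(a(a+\nu))^{-n/2}$, where $\nu>0$ is the contraction rate of the stable and unstable terms. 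So the rate $a-\ln\tilde\lambda$ is replaced by $\sqrt{a(a+\nu)}$, whose excess over $a$ tends to $0$ as $a\to0$.
\end{itemize}
Reweighting the norm components (the $c_u$ trick) changes constants, not this exponential rate. In fact, integrating \eqref{eq:strong unstable L} against the kernel always leaves $Ca^{-n}\|\oldh\|_0$. For $n=1$ this gives $\|R(z)\oldh\|_u\le\dots+Ca^{-1}\|\oldh\|_0$, while the statement only allows $C(a-\ln\tilde\lambda)^{-1}\|\oldh\|_\cB$, which stays bounded as $a\to0$. What you actually obtain is a Lasota--Yorke inequality at each fixed $a$ with an $a$-dependent rate. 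That suffices for Hennion at that $a$, but it is not \eqref{eq:strong stable R} with $C,\tilde\lambda$ independent of $z$. For the same reason, ``the $\|\oldh\|_s$ error is harmless'' is false as stated, since its coefficient is $a^{-n}$.

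\textbf{How to close it.} Two further steps are needed.

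For the $\|\oldh\|_s$ error, split at the semigroup level, $\cL_t=\cL_{t/2}\cL_{t/2}$, \emph{before} integrating. This gives $\|\cL_t\oldh\|_u\le Ce^{-\nu t}(\|\oldh\|_u+\|\oldh\|_s)+C(\|\oldh\|_0+|\oldh|_w)$, and hence a contribution $C(a+\nu)^{-n}$ that is uniform in $a$.

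For $\|\oldh\|_0$, which never contracts, the flow average must act on $\oldh$ itself, not on an inner power of $R(z)$. Take $A_\epsilon=\epsilon^{-1}\int_0^\epsilon\cL_s\,ds$ with $\epsilon=\eta(1+|z|)^{-1}$. Then $\|A_\epsilon\oldh\|_0=\epsilon^{-1}|\cL_\epsilon\oldh-\oldh|_w\le C\eta^{-1}(1+|z|)|\oldh|_w$. Moreover,
\[
(\Id-A_\epsilon)R(z)^n\oldh=-\epsilon^{-1}\int_0^\epsilon\!\!\int_0^s\cL_r\big(zR(z)^n\oldh-R(z)^{n-1}\oldh\big)\,dr\,ds ,
\]
which has $\cB$-norm at most $C\eta\|R(z)^n\oldh\|_\cB+C\eta(1+|z|)^{-1}\|R(z)^{n-1}\oldh\|_\cB$. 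Apply your estimates to $R(z)^nA_\epsilon\oldh$ and absorb this remainder by induction on $n$, using $(a+\nu)/(1+|z|)\le\max\{1,\nu\}$ and $a\le1+|z|$. With $\eta$ small, this yields \eqref{eq:strong stable R} with constants independent of $z$.
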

We can now apply Hennion's theory, from which it follows that the spectral radius of $R(z)$, is at most $a^{-1}$ while the essential spectral radius is at most $(a - \ln {\tilde \lambda})^{-1}$. \\
Note that if $\nu$ belongs to the point spectrum of $R(z)$, then $z-\nu^{-1}$ belongs to the point spectrum of $X$ and vice versa. This is illustrated in Figure \ref{fig1}.
The spectrum of $X$ lies outside the
solid red circle in Figure~\ref{fig1}(b), and its essential spectrum must lie outside the dashed red circle.  As we can choose the imaginary part of $R(z)$ arbitrarily, this forces
the strip between the dashed blue line ($\Re(w) = \log \tilde\lambda$) and the imaginary axis to contain
only isolated eigenvalues of finite multiplicity.  In addition, since the billiard flow is known to be mixing (e.g., see \cite{km}), $X$ cannot have eigenvalues on the immaginary line apart from zero. Note, however, that it is not yet excluded that the eigenvalues of $X$ may accumulate on the imaginary axis as $|\mbox{Im}(w)| \to \infty$. This would prevent an exponential decay of correlations since if for each $\ve>0$, there exists $\oldh\in\cB$ and $a\in (0,\ve)$ such that $X \oldh=(-a+ib)\oldh$, for some $b\in\bR$,  then $\int \vf\circ \Phi_t \oldh=\int \vf \cL_t \oldh=e^{-at+ibt}\int\vf \oldh$. 
\begin{figure}
	\centering
\begin{subfigure}[b]{0.3\textwidth}
\begin{tikzpicture}
\draw[thick] (0,-2) -- (0, 2);
\draw[thick] (-2,0) -- (2,0);
\draw[thick, red] (0,0) circle (1.5);
\draw[thick, red, dashed] (0,0) circle (1);
\put (45,-8){\scriptsize $a^{-1}$}
\draw (0,0) -- (.7071,.7071);
\put (23, 24){\scriptsize $(a-\log \lambda)^{-1}$}
\end{tikzpicture}
\vspace{16 pt}

\centering  $\hspace{10 pt} \mbox{(a)}$
\end{subfigure}
\qquad\qquad
\begin{subfigure}[b]{0.4\textwidth}
\begin{tikzpicture}
\draw[thick] (0,-2.5) -- (0, 3.2);
\draw[thick] (-1.8,0) -- (2.8,0);
\draw[thick] (-.05,1.5) -- (.05, 1.5);
\draw[thick] (1,-.05) -- (1, .05);
\draw[thick, red] (1,1.5) circle (1);
\draw[thick] (1, 1.5) circle (.01);
\fill (1, 1.5) circle (.01);
\put (25, 35){\scriptsize $z$}
\put (4, 40){\scriptsize $ib$}
\put (27, -8){\scriptsize $a$}
\draw (1, 1.5) -- (1.9, 1.9);
\put (40, 42){\scriptsize $a$}
\draw[thick, red, dashed] (1, 1.5) circle (1.5);
\draw (1,1.5) -- (1.3, 2.95);
\put (32, 88){\scriptsize $a - \log \lambda$}
\draw[thick, blue, dashed] (-.5,-2.5) -- (-.5,3.2);
\put(-23, -10){\scriptsize $\log \lambda$}
\draw (0,0) node[cross = 2 pt, rotate = 90, thick, blue]{};
\draw (-.3,2) node[cross = 2 pt, rotate = 90, thick, blue]{};
\draw (-.2,-1) node[cross = 2 pt, rotate = 90, thick, blue]{};
\draw(-.3, -2) node[cross = 2pt, rotate = 90, thick, blue]{};
\draw (-.1,1) node[cross = 2 pt, rotate = 90, thick, blue]{};
\draw (-.3,2.5) node[cross = 2 pt, rotate = 90, thick, blue]{};
\draw (-.2, 2.7) node[cross = 2 pt, rotate = 90, thick, blue]{};
\draw (-.12,2.9) node[cross = 2 pt, rotate = 90, thick, blue]{};
\draw (-.08,3.1) node[cross = 2 pt, rotate = 90, thick, blue]{};
\draw (-.08, 3.4) node[red]{\scriptsize ?};
\end{tikzpicture} \\
\centering (b)
\end{subfigure}
\caption{(a) The spectrum of $R(z)$ is in the solid red circle.
The essential spectrum is within the dashed red circle.
\\
(b) The spectrum of $X$, in blue, the essential spectrum is on the left of the vertical blue dashed line.}
\label{fig1}
\end{figure}
To conclude, it is then necessary to prove that there is a strip near the imaginary axis that does not contain any eigenvalue of $X$ apart from zero.
One way to establish that, as made clear from Figure \ref{fig1}, is to show that there exists $\nu>0$ such that for each $b\in\bR$ large enough, there exists an $n(b)$ such that
\begin{equation}\label{eq:dolgopyat_bound}
\|R(a+ib)^{n(b)}\|_\cB\leq (a+\nu)^{-n(b)},
\end{equation}
since this implies that the spectral radius of $R(x)$ is $(a+\nu)^{-1}$.
Unfortunately, the existence of a spectral gap of $X$ does not imply the existence of a spectral gap of $\cL_t$. Indeed, the naive idea that $\sigma(\cL_t)=\overline{e^{\sigma (X)t}}$, called the Weak Spectral mapping Theorem, does not apply in the present case. Yet, we can use a result of \cite{Butterley} as a substitute. Indeed, \cite[Theorem 1]{Butterley} implies that
\[
\cL_t=P_t+e^{\hat X t}
\]
where $\hat X$ is a finite rank operator $\sigma(\hat X)\subset \sigma(X)$, $[P_t, e^{\hat X t}]=0$, and, for each $\upsilon<\nu$, there exist $C>0$ such that $|P_t \oldh |_w\leq C e^{-\upsilon t}\|\oldh \|_\cB$ for all $\oldh \in\cB$. In turn, this implies Theorem \ref{thm:main_flow}. 

\subsection{The Dolgopyat estimate}

It remains to prove equation \eqref{eq:dolgopyat_bound}. Note that, by \eqref{eq:strong stable R}, it suffices to prove
\begin{equation}\label{eq:dolgopyat_bound_w}
|R(a+ib)^{n(b)}|_w\leq (a+\nu)^{-n(b)},
\end{equation}
for $n(b)=c\ln|b|$, with $a>0$ small and $c$ large enough. This is the so-called Dolgopyat estimate,
which seeks to leverage cancellations due to the oscillations provided by large $b$.

Iterating \eqref{eq:resolvent} we obtain
\[
R(z)^n=\int_0^\infty \frac{t^{n-1}}{(n-1)!}e^{-zt}\cL_t dt .
\]
Note that the contribution to the integral for $t\leq c_1 a^{-1} n$ is negligible, provided $c_1$ is chosen small enough. Moreover, for each $W\in \cW^s$ and $t>0$, we have that $\Phi_{-t}W=\sum_i W_i$, where $W_i\in\cW_s$, thus, for $r >0$ small enough,
\begin{equation}\label{eq:dolgo1}
\int_t^{t+r}\hskip-6 pt ds \int_{W} \frac{s^{n-1}e^{-zs}}{(n-1)!}\cL_s \oldh  \vf dm_W =\sum_i\int_t^{t+r}\hskip-6 pt ds \int_{W_i}\hskip-3pt\frac{s^{n-1}e^{-as-ibs}}{(n-1)!} \oldh \vf\circ \Phi_{-s}J_s dm_{W_i}
\end{equation}
where $J_s$ is the Jacobian of the change of variable. That is, we have the integral over the two-dimensional center-stable manifolds 
$W_i^c=\cup_{s=t}^{t+r}\Phi_{-s}W_i$. Due to the lack of regularity of the flow, the collection $\{W_i\}$ will contain several small manifolds. Yet, it is possible to prove that the majority of such manifolds are reasonably long.\footnote{This is similar to proving that the standard pair families are invariant under the flow, for readers acquainted with such language.} The estimate \eqref{eq:weak R}, that we want to improve, is obtained by taking the modulus inside the integral in the above formulae. To do better, we have to show that cancellations occur due to the oscillatory factor $e^{ibs}$. While the implementation is rather technical, the basic idea is simple: by mixing, the manifolds $W^c_i$ are rather uniformly distributed, so many are close. Let the manifolds $W_i^c, W^c_j$ be $\rho$-close, in the sense of the $d_{\cW^s}$ distance, and we want to compute
\[
\int_{W^c_i}\frac{s^{n-1}}{(n-1)!}e^{-as-ibs} \cL_{nr} \oldh \vf\circ \Phi_{-s}J_s+\int_{W^c_j}\frac{s^{n-1}}{(n-1)!}e^{-as-ibs} \cL_{nr} \oldh \vf\circ \Phi_{-s}J_s.
\]
The reason to apply \eqref{eq:dolgo1} to $\cL_{nr} \oldh $ rather than to $\oldh $, 
is that one would like a function almost constant along the unstable manifolds, so one can compare the two integrals. Unfortunately, the unstable manifolds have very poor regularity and may be too short to intersect both $W^c_i$ and $W^c_j$. This is overcome by Theorem \ref{thm:app_fol_reg}, which allows us to first discard the complement of $\Delta_\up$ (in the notation
of that theorem), with $\up$ larger than $nr$, and then compare the remaining part of the two integrals along the approximate foliation by paying a small error proportional to the derivative of $ \cL_{nr} h$ along the leaves of the foliation, which we denote $\partial_\up\cL_{nr} h$. 
By construction, and recalling  \eqref{eq:hyp_flow}, $|\partial_\up\cL_{nr} \oldh |\leq C\lambda^{-nr}|\nabla \oldh |$.

The key fact is that, along the approximate foliation, the stable curves shift in the flow direction. This is illustrated by Figure \ref{box}. The amount of shift is proportional to the length of the curves and to their distance; this is a consequence of the contact structure of the flow and the fact that the approximate foliation is constructed respecting such a contact structure (see the \cite[Proof of Lemma 8.8]{bdl} for more details).

\begin{figure}
	\centering
                \includegraphics[height=2.0in]{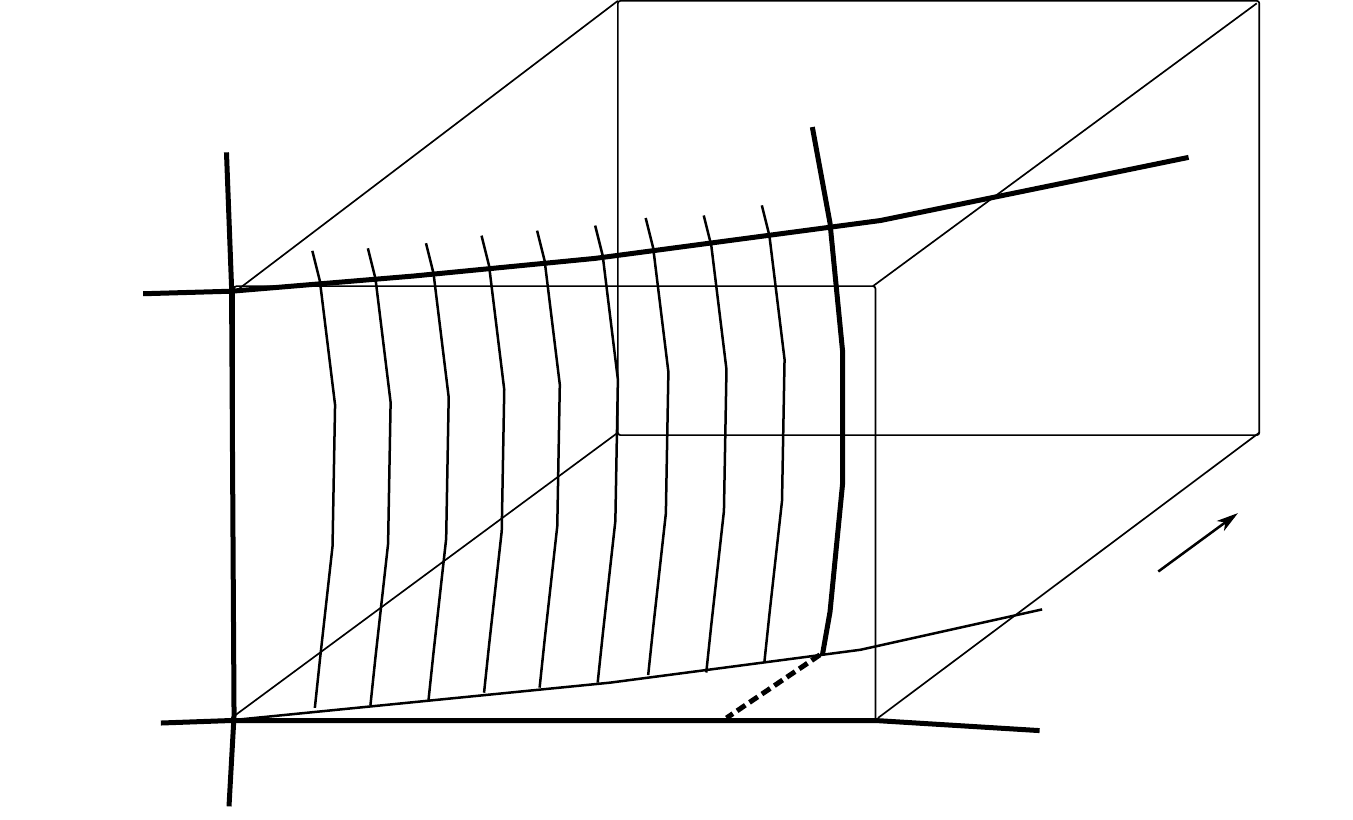}
                \put(-65,4){$x^s$}
                \put(-216,110){$x^u$}
                \put(-15,50){$x^0$}
                \put(-212,4){$x_i$}
                \put(-115,8){$\xi$}
                \put(-80,115){$W_{j}$}
                 \put(-70,52){$W^c_{i}$}
                \put(-102,56){$\gamma^i_\xi$}
  \caption{Projection along the foliation $\cF$ of $W_j$ onto $W_i^c$. The dotted line shows the displacement in the flow direction.}
  \label{box}
\end{figure}

Accordingly, the curves with constant phase in $W_j^c$, which are stable curves, project to curves that are transverse to the curves of constant phase (the stable curves in $W^c_i$).  This is due to the twist provided by the
contact structure and forces the sum of the exponentials to provide some cancellations. This is the basic mechanism one wants to exploit. To make it rigorous and effective, it is necessary to compare all the remaining nearby manifolds simultaneously, which is achieved via an $L^2$ estimate (see [Equation 8.40 and Lemma 8.8]\cite{bdl}). To perform such an estimate, first one does a rough bound of the few manifolds that are too close, and hence have too small a twist in the flow direction to be useful (\cite[Lemma 8.9]{bdl}). Then, one uses the argument above to show that there is significant phase cancellation in the remaining manifolds in each flow box
such as the one depicted in Figure \ref{box}. The one remaining issue is the fact that part of the mistake in comparing nearby manifolds is proportional to $|\nabla \oldh |$, which is not bounded by $\|\oldh \|_\cB$. To take care of such a last technical issue, it is possible to use an approximation argument, as carried out in 
\cite[Lemma 7.4]{bdl}).

\section{Open Problems}
\label{sec:open}

The functional analytic machinery surveyed in the previous sections has settled
a number of long-standing questions, but it has also opened many more.  We
collect here a (necessarily incomplete) list of problems that we find
particularly tempting, and that we believe to be within reach of the techniques
described above (or of natural extensions thereof).  In each case, we briefly
recall the relevant context and indicate what we see as the main difficulty.

\subsection{Measure of maximal entropy}
\label{sec:max quest}

The generically superpolynomial rate of mixing for the MME reviewed in Section~\ref{sec:MME rate} begs the question whether for typical Sinai billiard tables the rate is in fact exponential, as it is for the smooth
invariant measure for the billiard.  This question is made more pressing by the fact that for many classes of
smooth systems the MME mixes exponentially, even when the hyperbolicity is weak.  We mention some classes
of unimodal maps \cite{Bruin Todd, IT} and topologically transitive $C^\infty$ surface diffeomorphisms with positive
topological entropy \cite{BCS} as two notable examples.

\begin{prob}
Is the rate of decay of correlations for the measure of maximal entropy for a finite horizon Sinai billiard exponential?  Alternatively, is there a geometric condition on the billiard table that can yield improved estimates?
\end{prob}

A related question is whether the sparse recurrence condition $h_* >s_0 \log 2$ stated in \eqref{eq:s0} is necessary.  
The recent preprint
\cite{cli day} proves the existence and uniqueness of the measure of maximal entropy using a symbolic
coding that does not use this condition.  This construction, however, does not prove that the measure is Bernoulli, nor does it estimate a rate of mixing.
Another recent related work \cite{LOP} proves uniqueness of an adapted measure of maximal entropy without the
sparse recurrence condition, but does not prove existence. 

\begin{prob}
Is the sparse recurrence condition $h_* > s_0 \log 2$ necessary to establish the results of Theorem~\ref{thm:mu0}?
Alternatively, is it true that $h_* > s_0 \log 2$ holds for all finite horizon Sinai billiard tables, not just generically?
\end{prob}

An intermediate question is whether violation of the condition $h_* > s_0 \log 2$ leads to a measure of 
maximal entropy that is not adapted.  Nonadapted invariant measures with positive entropy for billiards are
constructed in \cite{CDLZ}.

\subsection{Geometric potentials}
\label{sec: geo quest}

For the family of geometric potentials, $\phi_t = - t \log J^uT$, 
Theorem~\ref{thm:thermo gap} establishes a spectral gap for $\cL_t$ and a unique equilibrium state
$\mu_t$ for all $t \in (0, t_*)$, where $t_*>1$ is defined in \eqref{eq:t star}.  

\begin{prob}
It is not known whether the restriction on $t_*$ is optimal or necessary, giving rise to the following questions.
\begin{enumerate}
	\item Are there billiard tables for which $t_* = \infty$?
	\item If $t_* < \infty$, Is there a phase transition at $t = t_*$ or can the results of Theorem~\ref{thm:thermo gap} be extended past this point?
\end{enumerate}
\end{prob}

Related to the first question, the quantity $\Lambda = 1 + 2\cK_{\min}\tau_{\min}$ is a minimum hyperbolicity constant for a single collision, yet there may be periodic orbits along which this minimum hyperbolicity is realized as a Lyapunov exponent.
For such tables, 
$P(t) \ge -t\log \Lambda$ is guaranteed for all $t \ge 0$.
Note that $P(t) > -t \log \Lambda$ for all $t \ge 0$ would imply $t_* = \infty$.

\subsection{More general dispersing billiards}
\label{sec:general quest}

The results presented above are almost entirely in the context of a finite horizon Sinai billiard table.  
This invites a host of questions related to generalizing these results to other classes of dispersing billiards.

\begin{prob}
For an infinite horizon billiard table and geometric potentials $\phi_t$, for what range of $t>0$ can the results of Theorem~\ref{thm:thermo gap} be established?
\end{prob}
While it is known that $P(0)=\infty$ for infinite horizon billiard maps, yet it is possible that some of the thermodynamic formalism may be established for $t$ closer to 1.  A first step in this direction is the
thesis \cite{G}, which establishes a spectral gap for the transfer operator $\cL_t$ for $t \in (2/3, t_*)$.

Alternatively, considering the family of potentials $\phi_t = - t \tau$ as described in Section~\ref{sec:abramov}
can give access to the infinite horizon billiard flow.  In contrast to the map, the infinite horizon billiard
flow has finite topological entropy.
 
\begin{prob}
For an infinite horizon billiard flow, can the uniqueness and further properties of a measure of maximal entropy and 
related equilibrium states for a range of $t$ be proved in analogy with those described in Section~\ref{sec:abramov}?
\end{prob}

Generalizing still further, one can ask how much of the thermodynamic formalism can be established for 
dispersing billiard tables with corner points.  Such tables are of interest in models of billiard tables permitting
energy transmission between cells \cite{BGNST}.

\begin{prob}
How much of the above program can be established for dispersing billiards with corner points?
\end{prob}

A key tool used in the analysis of finite horizon tables is the complexity bound Lemma~\ref{lem:complexity},
which is not available for billiards with corner points.  Instead, one must use the bounds derived by
De Simoi and Toth \cite{de simoi toth}, which more closely resemble a multi-step expansion version 
of Lemma~\ref{lem:one step}.  It would be necessary to determine how the introduction of the exponent
$t$ impacts this estimate.

\subsection{Sequential and open billiards (loss of memory)}
\label{sec:sequential quest}
The development of projective cones for dispersing billiards in \cite{dl cones} opens the possibility to study
sequential billiards generated by aperiodic arrangements of scatterers. Prime examples include aperiodic Lorentz gases and chaotic scattering. In the first case, one would like to establish loss of memory and a CLT, and in the second, the structure of the trapped trajectories and the scattering probability (essentially an equivalent of the scattering matrix). As explained in Section~\ref{sec:application_sequential}, both the loss of memory for the Lorentz gas and the chaotic scattering (in the general case, in which there can be an eclipse) can be studied by introducing an artificial device to ensure enough mixing (lazy gates or boxing the obstacle).  The central problem is that the cone $\cC_{c,A,L}$ is always mapped by a sequence of transfer operators in a cone $\cC_{\chi c,\chi A, 3L}$, and the extra mechanism is needed to obtain the contraction in the parameter $L$. This is not a mathematical artifact; it corresponds to the fact that the relevant transfer operators are multiplied by the characteristic function of a small set, so the resulting evolved density could, in principle, be supported on atypical trajectories. Hence, it is a major open problem.
\begin{prob}
How to apply the cone method to establish loss of memory for general aperiodic Lorentz gases and general chaotic scatterers?
\end{prob}

\subsection{Aperiodic Lorentz gases (CLT)}
\label{sec:lorentz quest}
Given the loss of memory for the aperiodic Lorentz gas (even with lazy gates), it remains the problem to establish a CLT. This is a special case of the more general case of deterministic walks in a random environment.  In \cite{AL20}, is shown that a class of aperiodic Lorentz gases in which some periodic structure is retained is essentially a random walk in a random environment with exponentially decaying memory. Unfortunately, there is no proof of a CLT for such probabilistic models. We are only aware of \cite{T86} where the transition probability depends on just one step in the past. Hence, the problem of studying random walks in a random environment with long memory is also an open problem in probability theory.

The situation is easier if the particle has a drift, and one studies the CLT for the fluctuations of the velocity around the mean. In \cite{DK23} the authors are able to establish the CLT for a deterministic random walk in a random environment where the local dynamics is an expanding map (rather than a billiard map) and the transition to different maps is arranged so that the system has a drift. This cannot be obtained in a Lorentz gas in a tube by playing with the geometry of the boundary, but it can be obtained by adding a small (electric) field to the system. However, the addition of an electric field will make the energy of the particle increase without bound, hence it is necessary to add a dissipative mechanism. This is usually done by adding a Gaussian thermostat.
\begin{prob}
Can one use projective cones constructed for dispersing billiards to establish a Central Limit Theorem for an aperiodic Lorentz tube with an electric field and a Gaussian thermostat?
\end{prob}

\subsection{Gaussian thermostats and mean-field models}
\label{sec:gaussian quest}
The study of billiards with an electric field and a Gaussian thermostat is physically very relevant.
In the seminal paper \cite{CELS}, it was proven that for a billiard on a torus subject to an electric field
and a Gaussian thermostat, the electric current exhibits linear response starting from zero electric field. In other words, calling $J(E)$ the current induced by the electric field, \cite{CELS} proves that $J'(0)$ exists. This is strictly related to Ohm's law. The situation for $E\neq 0$ is still unknown; there exists only some numerical work \cite{BDL} which suggests that $J(E)$ may not be differentiable.
\begin{prob}
Can one establish a linear response for a non-zero electric field, or characterize the regularity of $J(E)$ if not differentiable?
\end{prob}
Another natural problem is to consider the case in which more than one particle is present. This is an extremely hard problem, but one can simplify the situation by considering the case in which the particles do not interact with one another, but only via the Gaussian thermostat. This can be viewed as an example of a mean-field model.
Transfer operators have proven quite successful in investigating mean-field models consisting of coupled maps, e.g. \cite{K20, G22, BL23, BL25}. Yet, nothing is known about coupled flows.
\begin{prob}
Study the case in which $n$ non-interacting particles are subject to an external electric field and a Gaussian thermostat.
\end{prob}
The case when the particles can interact is much harder. It corresponds to studying a higher-dimensional billiard, see also Section \ref{sec:high quest}.
A series of papers by Bonetto, Lebowitz, Chernov, Korepanov, et al. \cite{BCK, BCKL, BDL} have explored such problems, using various kinds of stochastic approximations and numerical analysis, but the relevant question remains wide open.

\subsection{Billiard flows}
\label{sec:flow quest}

The use of the contact form as a fundamental mechanism for proving exponential decay of correlations for
the finite horizon billiard flows outlined in Section~\ref{sec:flow} raises the question of what other dispersing
billiard flows enjoy exponential mixing, since they all preserve the same contact form.

\begin{prob}
What classes of billiard flows have exponential decay of correlations?  Dispersing billiards with corner points or cusps?
Some classes of billiards with focusing boundaries, such as some drive-belt stadia?
\end{prob}

The assumption of finite horizon is essential here since it is known that the Sinai billiard flow with infinite horizon
mixes at the polynomial rate $1/t$ \cite{BBM}.
Under the finite horizon assumption, exponential mixing for dispersing billiard flows with corner points is expected
due to the analogous behavior for the map.  In addition, billiard tables with cusps and certain billiard tables
with focusing boundaries are known to mix at rates much faster than the map \cite{BM}, raising the 
possibility that such flows in fact enjoy exponential decay of correlations.

\subsection{Higher dimensional billiards}
\label{sec:high quest}

Our discussion in this review has been almost exclusively limited to two-dimensional billiard tables.  Yet,
some extensions to higher-dimensional billiards have been made under the additional assumption of subexponential growth of complexity of the singularity set \cite{BT2}.  This paper uses the construction of
Young towers rather than the analysis of the transfer operator presented here.

\begin{prob}
Can appropriate function spaces for the transfer operator associated with a multidimensional dispersing billiard be constructed?  If so, and in light of Proposition~\ref{lem:cone bound}, can one also construct projective cones for such 
billiards, which would allow for the study of sequential multidimensional dispersing billiards?
\end{prob}
Such a construction would enable the application of thermodynamic formalism to multidimensional dispersing
billiards, which is currently undeveloped. 

Finally, it is important to remark that a system of many interacting particles can be viewed as a multidimensional billiard, however, one which is non-uniformly hyperbolic since the balls can fail to collide for some special trajectory. The study of such systems is wide open, essentially no results exist, even though one must mention the monumental paper \cite{CD09} in which a system of two balls is studied in the limit in which the mass ratio tends to infinity.


\end{document}